\DeclareFontFamily{U}{mathx}{\hyphenchar\font45}
\DeclareFontShape{U}{mathx}{m}{n}{
      <5> <6> <7> <8> <9> <10>
      <10.95> <12> <14.4> <17.28> <20.74> <24.88>
      mathx10
      }{}
\DeclareSymbolFont{mathx}{U}{mathx}{m}{n}
\DeclareMathAccent{\widecheck}{0}{mathx}{"71}
\def\ombc{\widecheck{\omb}}
\newtheorem{theorem}{Theorem}[section]
\newtheorem{lemma}[theorem]{Lemma}
\newtheorem{proposition}[theorem]{Proposition}
\newtheorem{corollary}[theorem]{Corollary}
\newtheorem{definition}[theorem]{Definition}
\newtheorem{remark}[theorem]{Remark}
\numberwithin{equation}{section}
\newcommand{\bea}{\begin{eqnarray}}
\newcommand{\eea}{\end{eqnarray}}
\def\beaa{\begin{eqnarray*}}
\def\eeaa{\end{eqnarray*}}
\def\ba{\begin{array}}
\def\ea{\end{array}}
\def\be#1{\begin{equation} \label{#1}}
\def \eeq{\end{equation}}
\def\bsplit{\begin{split}}
\newcommand{\nn}{\nonumber}
\def\lab{\label}
\def\les{\lesssim}
\def\c{\cdot}
\def\dual{{\,^\star \mkern-2mu}}
\def\tr{\mbox{tr}}
\def\ov{\overline}
\renewcommand{\div}{\mbox{div}\,}
\newcommand{\curl}{\mbox{curl}\,}
\def\nab{\nabla}
\def\lap{\Delta}
\def\pr{\partial}
\def\dkb{ \, \mathfrak{d}     \mkern-9mu /}
\def\dk{\mathfrak{d}}
\def\rhod{ \,^\star  \hspace{-2.2pt} \rho}
\def\ovu{\overset{\circ}{ u}}
\def\ovs{\overset{\circ}{ s}}
\def\ovr{\overset{\circ}{ r}}
\def\ovla{\protect\overset{\circ}{ \la}\,}
\def\ovPhi{\protect\overset{\circ}{\Phi}\,}
\def\ovphi{\protect\overset{\circ}{\phi}\,}
\def\ovS{\overset{\circ}{ \S}}
\def\ovJ{\overset{\,\,\circ}{J}}
\def\epg{\protect\overset{\circ}{\ep}}  
\def\ug{\overset{\circ}{u}}   
\def\sg{\overset{\circ}{s}}               
\def\rg{\overset{\circ}{r}}   
\def\mg{\overset{\circ}{m}}   
\def\ovu{\overset{\circ}{ u}}
\def\ovs{\overset{\circ}{ s}}
\def\ovr{{\overset{\circ}{ r}\,}}
\def\ovm{{\overset{\circ}{ m}\,}}
\def\ovS{\overset{\circ}{ S}}
\def\ovg{\overset{\circ}{ g}}
  \def\Un{U^{(n)}}
\def\trchbS{{\trchb^\S}}
\def\a{\alpha}
\def\b{\beta}
\def\ga{\gamma}
\def\Ga{\Gamma}
\def\de{\delta}
\def\De{\Delta}
\def\ep{\epsilon}
\def\la{\lambda}
\def\La{\Lambda}
\def\om{\omega}
\def\rhod{\dual \rho}
\def\th{{\theta}}
\def\Th{{\Theta}}
\def\ka{\kappa}
\def\ze{\zeta}
\def\Up{\Upsilon}
\def\Th{\Theta}
\def\vphi{{\varphi}}
\def\vsi{\varsigma}
\renewcommand{\aa}{\protect\underline{\a}}
\newcommand{\bb}{\protect\underline{\b}}
\def\omb{\protect\underline{\om}}
\def\Omb{\underline{\Omega}}
\newcommand{\chib}{\protect\underline{\chi}}
\newcommand{\xib}{\protect\underline{\xi}}
\newcommand{\etab}{\protect\underline{\eta}}
\def\kab{\protect\underline{\kappa}}
\def\DD{{\mathcal D}}
\def\GG{{\mathcal G}}
\def\HH{{\mathcal H}}
\def\II{{\mathcal I}}
\def\KK{{\mathcal K}}
\def\MM{{\mathcal M}}
\def\NN{{\mathcal N}}
\def\QQ{{\mathcal Q}}
\def\RR{{\mathcal R}}
\def\SS{{\mathcal S}}
\def\UU{{\mathcal U}}
\def\D{{\bf D}}
\def\J{{\bf J}}
\def\M{{\bf M}}
\def\R{{\bf R}}
\def\S{{\bf S}}
\def\g{{\bf g}}
\def\fb{\protect\underline{f}}
\def\Cb{{\underline{C}}}
\def\CCC{{\Bbb C}}
\def\RRR{{\Bbb R}}
\def\SSS{{\Bbb S}}
\def\hk{\mathfrak{h}}
\def\rhoc{\check \rho}
\def\kac{\widecheck \ka}
\def\kabc{\widecheck{\underline{\ka}}}
\def\ombc{\underline{\check \omega}}
\def\chih{\widehat{\chi}}
\def\chibh{\widehat{\chib}}
\def\trch{\tr \chi}
\def\trchb{\tr \chib}
\def\c{\cdot}
\def \f12{\frac 1 2 }
\def\ov{\overline}
\def\Lab{\underline{\La}}
\def\epg{\protect\overset{\circ}{\ep}}  
\def\dg{\overset{\circ}{\de}}
\def\undB{\underline{B}}
\def\Un{U^{(n)}}
\def\Sn{{S^{(n)}}}
\def\fn{f^{(n)}}
\def\fbn{\fb^{(n)}}
\def\fnn{f^{(n+1)}}
\def\fbnn{\fb^{(n+1)}}
\def\ovlan{\ovla^{(n)}}
\def\Fn{F^{(n)}}
\def\Sn{ S^{(n)}}
\def\gn{g^{\S(n)}}
\def\DDn{\DD^{(n)}}
\def\divzero{{\overset{\circ}{ \div}}}
\def\lapzero{{\overset{\circ}{ \lap}}}
\def\Jp{J^{(p)}}
\def\JpS{{J^{(p, \S)}}} 
\def\JJpS{\J^{(p,\S)} }
\def\Cbp{\Cb^{(p)}} 
\def\Mp{M^{(p)}}
\def\CbpS{\Cb^{(\S, p)}} 
\def\MpS{M^{(\S, p)}}
\def\kadot{\dot{\ka}}
\def\kabdot{\dot{\kab}}
\def\mudot{\dot{\mu}}
\def\Cbdot{\dot{\Cb}}
   \def\Mdot{\dot{M}}
   \def\Cbn{\Cb^{(n)}}
   \def\Cbpn{\Cb^{(n), p}}
   \def\Mpn{M^{(n), p}}
   \def\Mn{M^{(n)} }
     \def\Cbndot{\Cbdot^{(n)}}
   \def\Cbpndot{\Cbdot^{(n), p}}
   \def\Mpndot{\Mdot^{(n), p}}
   \def\Mndot{\Mdot^{(n)} }
\def\gn{g^{(n)}}
\def\Psih{\widehat{\Psi}}
\def\kabcS{ \widecheck{\kab^\S}}
\def\trchbc{\widecheck{\trchb} }
\newdimen\cdsep
\def\cdstrut{\vrule height .6\cdsep width 0pt depth .4\cdsep}
\def\@cdstrut{{\advance\cdsep by 2em\cdstrut}}
\def\arrow#1#2{
  \ifx d#1
    \llap{$\scriptstyle#2$}\left\downarrow\cdstrut\right.\@cdstrut\fi
  \ifx u#1
    \llap{$\scriptstyle#2$}\left\uparrow\cdstrut\right.\@cdstrut\fi
  \ifx r#1
    \mathop{\hbox to \cdsep{\rightarrowfill}}\limits^{#2}\fi
  \ifx l#1
    \mathop{\hbox to \cdsep{\leftarrowfill}}\limits^{#2}\fi
}
\def\JpSS{\J^{(p, \S)}}
\def\Jpov{\ovJ\,^{(p)}}
\def\Psih{\widehat{\Psi}}
\begin{document}

\title{Effective results on uniformization  and intrinsic  GCM spheres in perturbations of Kerr}
\author{Sergiu Klainerman, J\'er\'emie Szeftel}

\maketitle

{\bf Abstract.} \textit{This  is a follow-up of our paper  \cite{KS-Kerr1}      on the construction of  general covariant   modulated (GCM) spheres in  perturbations of Kerr, which we expect to  play a  central  role in establishing  their  nonlinear stability.  We  reformulate the  main results of that paper  using a    canonical definition of  $\ell=1$ modes on a   $2$-sphere embedded in a  $1+3$ vacuum manifold.  This   is based on a new, effective,   version of the classical uniformization  theorem which  allows us   to   define such    modes and  prove their stability    for   spheres  with   comparable metrics.   The reformulation  allows us   to prove a second, intrinsic,  existence theorem for  GCM spheres, expressed purely in terms of geometric quantities  defined  on   it.  A natural definition of angular momentum  for such GCM  spheres is also introduced,  which we  expect  to play a key  role  in  determining  the final  angular momentum  for   general perturbations of Kerr.}

 \tableofcontents

%%%%%%%%%%%%%%%%%%%%%%%%%%%%%
    
  \section{Introduction}
  
%%%%%%%%%%%%%%%%%%%%%%%%%%%%%

  This  is a follow-up of our paper  \cite{KS-Kerr1}      on the construction of general covariant   modulated (GCM)  spheres in perturbations of Kerr.  
  We  reformulate the  main results of that paper  using a    canonical definition of  $\ell=1$ modes on a   $2$-sphere embedded in a  $1+3$ vacuum manifold.  This   is based on a new, effective,   version of the classical uniformization  theorem which  allows us   to   define such    modes and  prove their stability    for   spheres  with   comparable metrics.   The reformulation  allows us 
  to prove a second, intrinsic,   existence theorem for  GCM spheres, expressed purely in terms of geometric quantities  defined  on  it.
   A natural definition of angular momentum  for such GCM  spheres is also introduced,  which we  expect  to play a key   role  in the  definition of angular momentum  for   general perturbations of Kerr. 
  
   We refer to the introduction  in \cite{KS-Kerr1} for a  short   description of the role of GCM spheres      in the proof of the nonlinear stability of Schwarzschild in \cite{KS-Schw}  and the crucial role we expect them to play in  extending that  result  to  Kerr.

  %%%%%%%%%%%%%%%%%%%%%%%%%%%%%
  
  \subsection{Review of the main results of \cite{KS-Kerr1}}
  
  %%%%%%%%%%%%%%%%%%%%%%%%%%%%%

  %%%%%%%%%%%%%%%%%%%%%%%%%%%%%%%%%%%%%%%%%
  
   \subsubsection{Background space}
  
  %%%%%%%%%%%%%%%%%%%%%%%%%%%%%%%%%%%%%%%%%%%%

   As in  \cite{KS-Kerr1}    we consider spacetime regions $\RR$ foliated by a geodesic foliation $S(u, s)$        induced by   an outgoing  optical function\footnote{As explained  in Remark  1.5 of  \cite{KS-Kerr1},   this is not strictly necessary.  Any other foliation satisfying comparable  asymptotic assumptions would also work.}      $u$   with $s$   a properly normalized  affine parameter along the null geodesic generators  of   $L=-\g^{\a\b}\pr_\b  u\pr_\a $. We denote by $r=r(u, s)$ the area radius of $S(u,s)$ and  let  $(e_3, e_4, e_1, e_2)$ be   an adapted null frame with $e_4$   proportional  to   $L$ and $e_1, e_2 $ tangent to  spheres $S=S(u, s)$, see section \ref{sec:backgroundspacetime}. 
   The main assumptions made in   \cite{KS-Kerr1}   were that    the Ricci   and curvature coefficients,  relative to the  adapted   null frame,  have the  same asymptotics in powers of $r$ as   in  Schwarzschild space.    Note that these  assumptions  hold true in the far region of Kerr and is  expected to hold true for  the far region of realistic perturbations of Kerr.   The actual size of the perturbation from   Kerr   is measured with respect to a small parameter $\epg>0$, see  sections  \ref{subsubsect:regionRR1} and \ref{subsubsect:regionRR2} for  precise definitions.

 %%%%%%%%%%%%%%%%%%%%%%%%
  
  \subsubsection{Definition of GCM spheres}  
  
 %%%%%%%%%%%%%%%%%%%%%%%
  
  These are topological spheres $\S$  embedded in $\RR$  endowed  with a null frame  $e_3^\S, e_4^\S, e_1^\S, e_2^\S$  adapted to $\S$  (i.e.   $e_1^\S, e_2^\S$ tangent to $\S$),   relative
  to which     the  null expansions   $\ka^\S=\trch^\S$, $\kab^\S=\trchb^\S$ and  mass aspect function
  $\mu^\S$ take Schwarzschildian values, i.e.
  \bea
  \lab{Introd:GCMspheres1}
 \ka^\S -\frac{2}{r^\S} =0, \qquad  \kab^\S+\frac{2\Up^\S}{r^\S} =0, \qquad  \mu^\S - \frac{2m^\S}{(r^\S)^3} =0,
  \eea 
  where $r^\S, m^\S$ denote the area  radius and Hawking mass of $\S$ and $\Up^\S=1-\frac{2m^\S}{r^\S}$, see section \ref{sec:backgroundspacetime} for  precise definitions.

  As   explained in the introduction to \cite{KS-Kerr1}, these conditions have to be relaxed  with respect to the $\ell=0, 1$ modes  on $\S$.  To make sense of  this statement   requires  us to fix a basis of $\ell=1$  modes on $\S$, which generalize the $\ell=1$ spherical harmonics of 
   the standard sphere\footnote{Recall that on the standard sphere $\SSS^2$, in spherical coordinates $(\th, \vphi)$,  these are 
   $J^{(0, \SSS^2)}=\cos\th$, $J^{(+,\SSS^2)}=\sin\th\cos\vphi$, $J^{(-,\SSS^2)}=\sin\th\sin\vphi$.}. 
   Assuming  the existence of  such a basis $\JpS,   p\in\big\{ -, 0, +\big\}$,  we   define, for a scalar function $h$,
    \bea
  ( h)^\S_{\ell=1} &:=&\left\{\int_{\S} h \JpS, \quad  p=-, 0, +\right\}. 
  \eea
 A scalar function  $h$ is said to be  supported  on  $\ell\le 1$ modes, i.e.  $(f)^\S_{\ell\ge 2}=0$,  if   there exist  constants  
  $A_0, B_{-}, B_0, B_{+} $ such that
   \bea
   h= A_0+  B_{-} J^{( -, \S) }+B_0 J^{( 0, \S)} + B_{+}  J^{(+, \S)}. 
  \eea
  With this definition,  \eqref{Introd:GCMspheres1}  is replaced by
  \bea
  \lab{Introd:GCMspheres2}
 \ka^\S -\frac{2}{r^\S} =0, \qquad \left( \kab^\S+\frac{2\Up^\S}{r^\S}\right)_{\ell\ge 2 }=0, \qquad \left( \mu^\S - \frac{2m^\S}{(r^\S)^3} \right)_{\ell\ge 2}=0.
  \eea

%%%%%%%%%%%%%%%%%%%%%%%%%%%%%%%%%%%%
    
\subsubsection{Deformations of  spheres and frame transformations}

%%%%%%%%%%%%%%%%%%%%%%%%%%%%%%%%%%%

The construction of   GCM spheres
in \cite{KS-Kerr1} was obtained    by deforming a given  sphere  $ \ovS=S(\ovu, \ovs)$ of the  background foliation of $\RR$. An   $O(\dg)$  deformation of   $\ovS$   is    defined by a  
map $\Psi:\ovS\to \S $   of the form
   \bea\lab{eq:definitionofthedeformationmapPsiinintroduction}
 \Psi(\ovu, \ovs,  y^1, y^2)=\left( \ovu+ U(y^1, y^2 ), \, \ovs+S(y^1, y^2 ), y^1, y^2  \right)
 \eea
  with $(U, S)$ smooth functions on $\ovS$, vanishing at a fixed point of $\ovS$,  of size  proportional to the small constant  $\dg$.
  Given such a deformation we identify, at any point on $\S$,   two  important  null frames.
  \begin{enumerate}
  \item  The null frame $( e_3, e_4, e_1, e_2)$ of the background foliation  of $\RR$.
  \item A  null frame  $( e^\S_3, e^\S_4, e^\S_1, e^\S_2)$ adapted to the sphere $\S$. 
  \end{enumerate}
     In general, two  null frames   $(e_3, e_4, e_1, e_2)$ and $(e_3', e_4', e_1', e_2') $ are related by  a  frame transformation of the form, see Lemma \ref{Lemma:Generalframetransf}, 
 \bea
 \lab{eq:Generalframetransf-intro}
 \bsplit
  e_4'&=\la\left(e_4 + f^b  e_b +\frac 1 4 |f|^2  e_3\right),\\
  e_a'&= \left(\de_{ab} +\frac{1}{2}\fb_af_b\right) e_b +\frac 1 2  \fb_a  e_4 +\left(\frac 1 2 f_a +\frac{1}{8}|f|^2\fb_a\right)   e_3,\\
 e_3'&=\la^{-1}\left( \left(1+\frac{1}{2}f\c\fb  +\frac{1}{16} |f|^2  |\fb|^2\right) e_3 + \left(\fb^b+\frac 1 4 |\fb|^2f^b\right) e_b  + \frac 1 4 |\fb|^2 e_4 \right),
 \end{split}
 \eea
  where the scalar $\la$    and the 1-forms $f$ and $\fb$      are    called the transition coefficients of the transformation.  As explained in the introduction to \cite{KS-Kerr1},  one can   relate all  Ricci and curvature  coefficients  of the primed frame in terms of  the Ricci and curvature coefficients of the  un-primed one.    In particular, the GCM  conditions \eqref{Introd:GCMspheres2}    can be expressed in terms of differential  conditions for    the transition coefficients $(f, \fb, \la) $.    The condition that  the horizontal  part of the frame $(e^\S_1, e_2^\S)$  is tangent to $\S$  also leads to a relation  between the gradients of  the scalar functions  $U, S$, introduced   in \eqref{eq:definitionofthedeformationmapPsiinintroduction},  and $(f, \fb) $.  Roughly,  we thus expect   to derive a  coupled system\footnote{See the introduction in \cite{KS-Kerr1} for further explanations.}  between Laplace equations for $U, S$, on $\ovS$    and    an elliptic Hodge  system for $F=(f, \fb, \la-1)$ on $\S$.

%%%%%%%%%%%%%%%%%%%%%%%%%%%%%%%%%%%%%%%%%%%%
    
\subsubsection{GCM spheres with non canonical $\ell=1$ modes in \cite{KS-Kerr1}}

%%%%%%%%%%%%%%%%%%%%%%%%%%%%%%%%%%%%%%%%%%%     
   
      Here is a short version of our main result in  \cite{KS-Kerr1}.
 \begin{theorem}[Existence of GCM spheres in \cite{KS-Kerr1}]
\lab{Theorem:ExistenceGCMS1-intro}
Let  $\RR$   be  fixed  spacetime region, endowed with an  outgoing geodesic   foliation  $S(u, s)$,   verifying 
 specific asymptotic assumptions\footnote{Compatible with small perturbations of Kerr.}   expressed  in terms  of two parameters $0<\dg\leq \epg$.  In particular we assume that
 the  GCM quantities  of the background spheres in $\RR$, i.e. 
   \bea
  \lab{Introd:GCMspheres3}
 \ka-\frac 2 r , \qquad \left( \kab+\frac{2\Up}{r}\right)_{\ell\ge 2 }, \qquad \left( \mu- \frac{2m}{r^3} \right)_{\ell\ge 2},
  \eea 
   are small with respect to the parameter $\dg$.
 Let  $\ovS=S(\ovu, \ovs)$   be  a fixed    sphere of the foliation with  $\rg$ and $\mg$ denoting respectively its area radius and  Hawking mass, with $\rg$ sufficiently large. 
 Then, 
for any fixed triplets   $\La, \Lab \in \RRR^3$  verifying
\bea
|\La|,\,  |\Lab|  &\les & \dg,
\eea
 there 
exists a unique  GCM sphere $\S=\S^{(\La, \Lab)}$, which is a deformation of $\ovS$, 
such that 
 \bea
  \lab{Introd:GCM spheres2-again}
 \ka^\S -\frac{2}{r^\S} =0, \qquad \left( \kab^\S+\frac{2\Up^\S}{r^\S}\right)_{\ell\ge 2 }=0, \qquad \left( \mu^\S - \frac{2m^\S}{(r^\S)^3} \right)_{\ell\ge 2}=0,
  \eea 
  and
  \bea
  \lab{Introd:GCM spheres-LaLab}
  (\div^\S f)_{\ell=1}=\La, \qquad (\div^\S\fb)_{\ell=1}=\Lab,
  \eea
  where $(f, \fb, \la)$ denote  the transition coefficients of the transformation \eqref{eq:Generalframetransf-intro}  from the background frame of $\RR$ to the frame adapted to $\S$.  
    \end{theorem}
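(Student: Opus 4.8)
The plan is to set up the problem as a fixed-point/implicit-function argument. First I would parametrize a candidate deformation $\Psi:\ovS\to\S$ by the scalar pair $(U,S)$ as in \eqref{eq:definitionofthedeformationmapPsiinintroduction}, and observe that the requirement that $(e_1^\S,e_2^\S)$ be tangent to the deformed sphere ties the horizontal one-forms $(f,\fb)$ of the transformation \eqref{eq:Generalframetransf-intro} to the gradients $\nab U,\nab S$ on $\ovS$, up to lower-order error terms. The unknown is then the transition triplet $F=(f,\fb,\la-1)$, together with $(U,S)$, and the GCM conditions \eqref{Introd:GCM spheres2-again}, when the Ricci and curvature coefficients of the $\S$-frame are expanded in terms of those of the background frame via the transformation formulas recorded earlier in the paper, become a coupled elliptic system: schematically, Laplace-type equations for $U$ and $S$ on $\ovS$ sourced by $F$, and a Hodge-type system of the form $\ddsSone\ddsStwo F = (\text{background GCM quantities}) + (\text{nonlinear errors in } F,U,S)$ on $\S$. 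The constraints \eqref{Introd:GCM spheres-LaLab} precisely prescribe the $\ell=1$ part of $\div^\S f$ and $\div^\S\fb$, which is the kernel/cokernel data the Hodge operators do not see, so the system is formally well-posed once those three-vectors are fixed.

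Next I would linearize at the trivial solution $F=0$, $(U,S)=0$ (so $\S=\ovS$): the linearized operator decouples into the flat Laplacian on $\ovS$ acting on $(U,S)$ and a second-order elliptic Hodge operator acting on $F$ whose invertibility on the $\ell\ge 2$ range, supplemented by the $\ell=1$ data $(\La,\Lab)$ and the $\ell=0$ normalization coming from the area-radius/Hawking-mass bookkeeping, follows from the standard theory of $\ddd_1,\ddd_2$-type operators on a $2$-sphere with a metric comparable to the round one. I would then carry out the iteration: given an approximate sphere $\S_k$ with frame data $F_k,(U_k,S_k)$, solve the linearized system with the nonlinear terms frozen at step $k$ to produce $\S_{k+1}$, and estimate the increments in the appropriate weighted Sobolev norms (with the $r$-weights dictated by the Schwarzschildian asymptotics and the smallness parameters $\dg\le\epg$). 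Contraction follows because every nonlinear term is at least quadratic in $(F,U,S)$ or carries an explicit $\dg$ from the background GCM smallness assumption on \eqref{Introd:GCMspheres3}, so the map is Lipschitz with small constant on a ball of radius $\sim\dg$; uniqueness in that ball is automatic.

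The main obstacle I expect is the interplay between the evolving sphere $\S$ and the fixed background sphere $\ovS$: the Hodge system lives on $\S$ while the transport/Laplace equations for $(U,S)$ live on $\ovS$, and transferring quantities back and forth (pulling back the $\S$-metric, comparing $\ddd^\S$ with $\ddd^{\ovS}$, comparing the bases $\JpS$ with $J^{(p,\ovS)}$) produces error terms that must be controlled uniformly along the iteration; keeping the $\ell=1$ projections well-defined throughout — i.e. showing the basis $\{\JpS\}$ depends continuously on $\S$ and that $(\div^\S f)_{\ell=1}$ is a well-defined continuous functional — is the delicate point, and this is exactly where the effective uniformization theorem advertised in the introduction is needed. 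A secondary technical issue is dealing with the fixed-point normalization (the functions $U,S$ vanish at a marked point, and the $\ell=0$ modes of the GCM quantities are absorbed into the definitions of $r^\S,m^\S$), which requires a careful accounting so that the number of scalar constraints matches the dimension of the kernel; once that bookkeeping is done the implicit function / contraction argument closes.
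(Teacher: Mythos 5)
Your general strategy — parametrize the deformation by $(U,S)$, tie $(f,\fb)$ to $\nab U,\nab S$ through the tangency condition, translate the GCM conditions \eqref{Introd:GCM spheres2-again} into a coupled Laplace/Hodge system for $(U,S)$ on $\ovS$ and $F=(f,\fb,\la-1)$ on $\S$ with the $\ell=1$ data \eqref{Introd:GCM spheres-LaLab} fixing the kernel, then close by linearizing at the trivial deformation and iterating with smallness from $\dg\le\epg$ — is exactly the framework of \cite{KS-Kerr1}, which is what the present paper is reviewing rather than proving (compare the schematic system displayed in section 6.3 and the pointer to (6.16)--(6.19) of \cite{KS-Kerr1}).

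There is, however, one concrete misstep in your account of the ``delicate point.'' You assert that controlling the basis $\{\JpS\}$ along the iteration ``is exactly where the effective uniformization theorem advertised in the introduction is needed.'' That is not correct for this theorem. In Theorem~\ref{Theorem:ExistenceGCMS1-intro} (equivalently Theorem~\ref{Theorem:ExistenceGCMS1}), the $\ell=1$ modes on $\S$ are defined by the trivial pull-forward $\JpS=\Jp\circ\Psi^{-1}$ from a fixed, admissible but non-canonical family $\Jp$ on $\ovS$ (assumption {\bf A4}); this formula makes the dependence of $\JpS$ on the deformation explicit and its continuity in $(U,S)$ immediate, with no recourse to uniformization whatsoever — indeed Proposition 5.12 of \cite{KS-Kerr1}, recalled here, is elementary. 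The effective uniformization machinery (Theorems~\ref{Thm:effectiveU1-Intro}, \ref{Thm:effectiveU2-Intro} and the calibration of section~\ref{subsection:calibration}) is the \emph{new} ingredient of the present paper, required precisely to upgrade the non-canonical Theorem~\ref{Theorem:ExistenceGCMS1-intro} to the canonical Theorem~\ref{Theorem:ExistenceGCMS1-can-intro} (and then to the intrinsic Theorem~\ref{theorem:ExistenceGCMS2-Intro}), where the modes must be defined intrinsically on $\S$ and compared stably across different deformations. Attributing uniformization to the old existence theorem conflates the reviewed result with the paper's actual contribution. A smaller nuance: the contraction constant in the iteration is controlled by the background smallness $\epg$ (see Proposition~\ref{Prop:contractionforNN}), not merely by $\dg$ or quadraticity; your phrasing slightly blurs which parameter drives the Lipschitz constant versus which sets the radius of the ball.
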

    
    \begin{remark}
    \eqref{Introd:GCMspheres3},  \eqref{Introd:GCM spheres2-again} and \eqref{Introd:GCM spheres-LaLab} depend  on the  definition of $\ell=1$ modes  
    on $\ovS$ and $\S$.  In \cite{KS-Kerr1}   the choice $J^{(p, \S)}$  of $\ell=1$ modes for $\S$   was tied to   the modes 
    $\Jp$  of $\ovS$ via  the formula,  
   \bea
   \JpS=\Jp\circ\Psi^{-1},
   \eea 
   with $\Psi$ given by \eqref{eq:definitionofthedeformationmapPsiinintroduction}.    The  $\ell=1$   modes $\Jp$  on $\ovS$  were chosen such that  they verify certain natural conditions, see \eqref{eq:Jpsphericalharmonics}, but where  not uniquely defined and thus not canonical.
     \end{remark}

%%%%%%%%%%%%%%%%%%%%%%%%%%%%%%%

\subsection{Main  new  results on uniformization  of  spheres}

%%%%%%%%%%%%%%%%%%%%%%%%%%%%%%%

  One of the main  goals  of this paper is to  remove the arbitrariness   in the definition  of the $\ell=1$ modes  on the deformed  GCM spheres $\S$.  We do that by   appealing to  two new results  concerning  uniformization of nearly round spheres.
   The first  result,     based on   works\footnote{ We thank A. Chang   for bringing this works to our attention. The precise  formulation given in   Theorem  \ref {theorem-Eff.uniformalization-appendix}  does not seem  to appear in the literature we are aware of. } by   Onofri   \cite{Onofri} and   Chang-Yang     \cite{CY1}, \cite{CY2},   given  in  Theorem  \ref {theorem-Eff.uniformalization-appendix},  provides  an effective   version of  the classical  uniformization theorem.   The second result, given in Theorem   \ref{Thm:unifor-twometrics},  based   on  Theorem  \ref {theorem-Eff.uniformalization-appendix}   and previous  work\footnote{We thank C. De Lellis for bringing this paper to our attention.} of   Frisecke James  and  M\"uller \cite{FJM},  allows us to  formulate and prove a stability result  for effective uniformization
    of nearby spheres. Together with a notion of calibration, which we introduce  in section \ref{subsection:calibration},  this result allows us to define  a canonical definition of $\ell=1$ modes for  deformed spheres $\S$.

%%%%%%%%%%%%%%%%%%%%%%%

 \subsubsection{Classical uniformization}   
 
%%%%%%%%%%%%%%%%%%%%%%%% 
 
 According to the classical uniformization theorem, if $S$ is a closed, oriented and connected surface of genus 0 and $g^S$  is a  Riemannian metric on $S$, then,  there exists  a smooth  diffeomorphism  $\Phi:\SSS^2\to S$  and a smooth  conformal factor  
$u$ on $\SSS^2$ such that
\bea
\lab{standard-unifS-intro}
\Phi^\#( g^S) =  (r^S)^2  e^{2u}  \ga_0
\eea
where $\ga_0$ is the   canonical metric on the standard sphere $\SSS^2$ and $r^S$ the area radius  of $S$.   Unfortunately the   conformal factor $u$ is not unique and   it is thus difficult to control its size and, more importantly for our applications to GCM spheres, it does not allow one to compare the the conformal factors for two nearby spheres. We state below two results  which overcome these difficulties. Both results are applicable   to almost round spheres, i.e.  closed  $2$-surfaces $S$ with Gauss curvature $K^S$ sufficiently close to 
    that of a round sphere. More precisely, we have for some sufficiently small $\ep>0$
    \bea
    \lab{eqAlmostroound-Intro} 
    \left|K^S-\frac{1}{( r^S)^2} \right|\le \frac{\ep}{( r^S)^2}. 
    \eea

     In what follows we state, in a simplified version,
    our main new results on  effective uniformization.  

%%%%%%%%%%%%%%%%%%%%%%%%

    \subsubsection{Effective uniformization}
    
%%%%%%%%%%%%%%%%%%%%%%%%    

    \begin{theorem}[Effective uniformization 1]
    \lab{Thm:effectiveU1-Intro}
    Given an   almost round  sphere  $(S, g^S)$ as above  there exists, up to isometries\footnote{i.e.  all  the solutions are of the form $(\Phi\circ O, u\circ O)$ for $O\in O(3)$.} of $\SSS^2$,  a  unique   diffeomorphism   $\Phi:\SSS^2\to S$  and a  unique  conformal factor $u$ such that  
   \bea
   \bsplit
    \Phi^\#( g^S)& =  (r^S)^2e^{2u}  \ga_{\SSS^2},\\
     \int_{\SSS^2} e^{2u} x^i &=0, \qquad i=1,2,3.
    \end{split}
    \eea    
    Moreover  the  size of the conformal factor $u$ 
    is small with respect to the parameter $\ep$, i.e.
    \beaa
     \|  u\|_{L^\infty  (\SSS^2) } \les \ep.
    \eeaa 
    \end{theorem}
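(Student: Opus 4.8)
The plan is to treat the statement as a gauge-fixing result for the classical uniformization theorem: the centre-of-mass condition $\int_{\SSS^2}e^{2u}x^i=0$ should reduce the noncompact $6$-dimensional ambiguity in classical uniformization to the residual compact ambiguity $O(3)$, and the representative it selects should have a conformal factor of size $O(\ep)$.

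\emph{Reduction to the conformal group.} First I would invoke the classical uniformization \eqref{standard-unifS-intro} to fix one diffeomorphism $\Phi_0:\SSS^2\to S$ and a smooth conformal factor $u_0$ with $\Phi_0^\#(g^S)=(r^S)^2e^{2u_0}\ga_{\SSS^2}$. Every other uniformizing pair is then $(\Phi_0\circ\phi,\,u_0\circ\phi+\om_\phi)$ for $\phi\in\mathrm{Conf}(\SSS^2)$, where $\phi^\#\ga_{\SSS^2}=e^{2\om_\phi}\ga_{\SSS^2}$, so that the associated measure transforms by $e^{2(u_0\circ\phi+\om_\phi)}d\ga_{\SSS^2}=\phi^\#\big(e^{2u_0}d\ga_{\SSS^2}\big)$. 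Identifying $\SSS^2=\pr\mathbb{B}^3$ and using the Cartan decomposition $\mathrm{Conf}(\SSS^2)=O(3)\cdot\{\tau_a:a\in\mathbb{B}^3\}$, with $\tau_a$ the hyperbolic translation carrying the centre to $a$, and noting that $\mathrm{Area}(e^{2u}\ga_{\SSS^2})=(r^S)^{-2}\mathrm{Area}(S,g^S)=4\pi$ automatically, the problem becomes: find $a\in\mathbb{B}^3$ for which the probability measure $\nu_a:=\tfrac1{4\pi}\tau_a^\#(e^{2u_0}d\ga_{\SSS^2})$ has vanishing Euclidean barycentre $\int_{\SSS^2}x\,d\nu_a(x)=0$, and show $a$ is unique so that $(\Phi,u)$ is unique up to $O(3)$.

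\emph{Existence and uniqueness of the balancing point.} Existence of $a$ I would get from a Hersch-type degree argument: the barycentre map $\mathcal{B}:\mathbb{B}^3\to\mathbb{B}^3$, $\mathcal{B}(a)=\int_{\SSS^2}x\,d\nu_a(x)$, extends continuously to $\overline{\mathbb{B}^3}$ and restricts on $\pr\mathbb{B}^3$ to $\mp\,\mathrm{id}$, since $\tau_a^\#(e^{2u_0}d\ga_{\SSS^2})\rightharpoonup 4\pi\,\delta_{\mp a/|a|}$ as $|a|\to 1$; hence $\deg\mathcal{B}\ne 0$ and $0\in\mathcal{B}(\mathbb{B}^3)$. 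Uniqueness of $a$ I would deduce from the fact that $\mathcal{B}$ is, up to constants, the gradient of the renormalized conformal energy $\mathcal{E}(a)=\int_{\SSS^2}\log\!\big(\tfrac{1-|a|^2}{|x-a|^2}\big)d\nu_0(x)$ with $\nu_0=\tfrac1{4\pi}e^{2u_0}d\ga_{\SSS^2}$, whose Hessian is positive definite on $\mathbb{B}^3$ as soon as $\nu_0$ is not carried by two points — which holds since $\nu_0$ is absolutely continuous. Thus $\mathcal{B}$ is injective, the balancing point is unique, and the conformal automorphisms preserving the vanishing-barycentre condition are exactly $O(3)$; this is the normalization of Onofri \cite{Onofri} and Chang--Yang \cite{CY1,CY2}, and it gives existence and uniqueness of $(\Phi,u)$ up to isometries of $\SSS^2$.

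\emph{The quantitative bound.} Writing $\tilde g:=e^{2u}\ga_{\SSS^2}$, the transformation law of the Gauss curvature gives
\[
-\Delta_{\SSS^2}u+1=K_{\tilde g}\,e^{2u},\qquad K_{\tilde g}=(r^S)^2\,K^S\circ\Phi,
\]
and the almost-roundness hypothesis \eqref{eqAlmostroound-Intro} forces $|K_{\tilde g}-1|\le\ep$ pointwise on $\SSS^2$ — importantly, independently of which uniformizing $\Phi$ is used, which decouples the coefficient from the unknown diffeomorphism. With $K_{\tilde g}=1+h$, $|h|\le\ep$, and $\int_{\SSS^2}e^{2u}\,d\ga_{\SSS^2}=4\pi$, I would pair $-\Delta_{\SSS^2}u=(e^{2u}-1)+h\,e^{2u}$ with $u$ and use Aubin's improved Moser--Trudinger inequality — available precisely because of the normalization $\int_{\SSS^2}e^{2u}x^i=0$, which halves the Onofri constant — to get an a priori bound on $\int_{\SSS^2}|\nabla u|^2$; Moser--Trudinger then makes $e^{2u}$ bounded in every $L^p$, elliptic regularity gives $u\in W^{2,p}$ for all $p<\infty$, hence $\Phi\in C^{1,\beta}$ and $h\in C^\beta(\SSS^2)$ with controlled norm. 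Finally I would linearize around $u\equiv 0$, which is the unique solution when $h=0$ by the rigidity of the round metric in its conformal class: the linearized operator $v\mapsto-\Delta_{\SSS^2}v-2v$ has kernel $\mathrm{span}\{x^1,x^2,x^3\}$, exactly annihilated by the linearized constraints $\int_{\SSS^2}v\,x^i=0$, so the implicit function theorem yields $\|u\|_{L^\infty}\les\|h\|_{C^\beta}\les\ep$.

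\emph{Main obstacle.} The real difficulty is the noncompactness of $\mathrm{Conf}(\SSS^2)$ and the attendant bubbling: this is what makes the raw conformal factor $u_0$ of the classical theorem both non-unique and a priori uncontrolled, and the entire role of the centre-of-mass gauge is to eliminate exactly this degeneracy. Hence the genuinely delicate points are (i) Aubin's improved Moser--Trudinger inequality under the centre-of-mass constraint, which is the analytic crux of the quantitative bound; (ii) the positivity of the Hessian of the renormalized conformal energy — equivalently, injectivity of the barycentre map — on which the uniqueness rests; and (iii) keeping every estimate effective in $\ep$ while the curvature equation is posed for a metric that is itself part of the unknown, which is precisely what the pointwise $\Phi$-independent bound $|K_{\tilde g}-1|\le\ep$, together with the subsequent bootstrap that restores control of the diffeomorphism, is designed to circumvent.
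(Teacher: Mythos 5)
Your plan matches the paper's for existence and for the quantitative bound, but diverges in the uniqueness argument, and the quantitative step as you sketch it has two technical soft spots.

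\emph{Where you follow the paper.} Existence via classical uniformization followed by a conformal balancing (Onofri's lemma; the paper's Lemma~\ref{le:renormalization}) is exactly the paper's route. For the $L^\infty$ bound, identifying the improved Onofri/Aubin inequality under the centre-of-mass constraint as the analytic crux, then bootstrapping through the curvature equation, is also the paper's plan (Proposition~\ref{PropChang-:propB}, Corollary~\ref{cor:Chang1}, Corollary~\ref{corr:Alice}, and Proposition~\ref{proposition:effective-uniformisation-Appendix}).

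\emph{Where you differ.} For uniqueness of the balancing point you propose strict convexity of a renormalized conformal energy $\mathcal{E}(a)=\int\log\!\big(\tfrac{1-|a|^2}{|x-a|^2}\big)d\nu_0$ to make the barycentre map injective. The paper instead decomposes an arbitrary conformal diffeomorphism as $O_1\circ\Phi_{N,t}\circ O_2$ (Lemma~\ref{lemma:decompositionofhomographies}), reduces to a one-parameter family of dilations, and closes by an explicit integral identity showing the integrand $\frac{1-(x^3)^2}{t^{-2}(1+x^3)+(1-x^3)}$ is strictly positive, forcing $t=1$ (Lemma~\ref{lemma:improved-renormalization:bis}). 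Both should work; the paper's is elementary and self-contained, while yours is more conceptual but requires establishing the gradient structure and positive-definiteness of the Hessian, which you assert rather than prove.

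\emph{Two soft spots in the quantitative step.} First, testing $-\Delta_0 u=(e^{2u}-1)+he^{2u}$ against $u$ gives $\int|\nabla u|^2=\int u(e^{2u}-1)+\int hue^{2u}$. Since $u(e^{2u}-1)\ge 0$ pointwise, the dominant term on the right has the \emph{wrong sign} for an a priori upper bound on $\int|\nabla u|^2$; the actual route to smallness of $S[u]$ in \cite{CY2} is more delicate than this pairing and passes through the compactness argument the paper records as Corollary~\ref{corr:Alice}. Second, the final implicit-function-theorem step at $u\equiv 0$ only yields local existence and uniqueness of a small solution; it does not by itself control the particular centered $u$ produced by the existence part. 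To identify your $u$ with the IFT branch you need to know $\|u\|$ is already small, and that is exactly what the Chang--Yang contradiction argument (uniform a priori bounds + compactness + rigidity $\lap_0 w+e^{2w}=1,\ w\in\SS\Rightarrow w\equiv 0$) supplies. You invoke the rigidity fact but do not articulate the compactness step; once that is made explicit, the paper's bootstrap in Proposition~\ref{proposition:effective-uniformisation-Appendix} (use centering to kill the $\ell=1$ projection, elliptic estimates for $\lap_0+2$, and Onofri to bound $\|e^{2u}\|_{L^2}$) is the more direct way to close the estimate, avoiding the IFT formulation entirely.
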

      The result is restated in   Theorem \ref{theorem-Eff.uniformalization-appendix},  in the case  of radius 1 spheres,   and  in general  in Corollary \ref{proposition:effective-uniformisation}.

    We   use this effective uniformization to define  a basis of $\ell=1$ modes on $S$ as follows
\bea
\lab{intro-canmodes-intro}
 J^{(p,S)}&=&J^{(p, \SSS^2)}\circ \Phi^{-1}
\eea
where  $J^{(p, \SSS^2)}$ are  the standard  $\ell=1$ spherical harmonics of   the round sphere $\SSS^2$.
We note however  that  this definition is still ambiguous with  respect to arbitrary  rotations of $\SSS^2$.  We can remove  this arbitrariness   for  deformations  $\S$ of  $\ovS$ by  calibrating  the effective uniformization  of $\S$ with that of $\ovS$, see the discussion in   section \ref{subsection:calibration}. This requires however our stable  uniformization  result which we  discuss below.

%%%%%%%%%%%%%%%%%%%%%%%%%%%%%%%

 \subsubsection{Stability of   the effective uniformization}
 
%%%%%%%%%%%%%%%%%%%%%%%%%%%%%%% 

  \begin{theorem}[Effective uniformization 2]
  \lab{Thm:effectiveU2-Intro}
  Consider   two almost round spheres   $(S_1, g^{S_1} )$ and $(S_2, g^{S_2} )$ and their respective 
  canonical   uniformizations  $(\Phi_1, u_1)$, $(\Phi_2, u_2)$ provided by   the effective uniformization result mentioned above, and a smooth map $\Psi:S_1\to S_2$. 
   Assume also    that the metrics $g^{S_1} $ and $\Psi^\#( g^{S_2} )$
  are close to each other in  $S^1$, 
   \bea
   \lab{eq:intro.unifor-twometricsB}
 \big( r^{S_1}\big)^{-2}  \| g^{S_1}-  \Psi^\#( g^{S_2} )\|_{L^\infty(S^1)} \le  \de
  \eea
  relative to a  small parameter $\de>0$.
Then,   there exists $O\in O(3)$ such that  the map  $\Psih:=(\Phi_2)^{-1}\circ  \Psi\circ \Phi_1$  verifies,
 \bea
 \lab{eq:intro.Psih-Id}
  \| \Psih-O\|_{L^\infty(\SSS^2)}&\les &\de.
 \eea
 Moreover  the conformal factors $u_1, u_2$ verify
  \bea
 \big \|u_1- \Psih^\# u_2\big \|_{L^\infty(\SSS^2)} \les \de.
  \eea
  \end{theorem}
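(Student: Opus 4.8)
The plan is to study the composed diffeomorphism $\Psih=(\Phi_2)^{-1}\circ\Psi\circ\Phi_1:\SSS^2\to\SSS^2$, to show that it is an \emph{almost conformal} self-map of the round sphere, to invoke a quantitative Liouville-type rigidity (in the spirit of the geometric rigidity estimate of \cite{FJM}) placing $\Psih$ within $O(\de)$ of some M\"obius transformation $M$ of $\SSS^2$, and finally to use the barycenter normalizations $\int_{\SSS^2}e^{2u_i}x^i=0$ to force $M$, hence $\Psih$, to be $O(\de)$-close to an element of $O(3)$. It is convenient to read the statement as a \emph{stability result for the effective uniformization of Theorem \ref{Thm:effectiveU1-Intro}}: $(\Phi_1,u_1)$ and $(\Psi^{-1}\circ\Phi_2,u_2)$ are the canonical uniformizations of the two nearby metrics $g^{S_1}$ and $\Psi^\#(g^{S_2})$ on the \emph{same} surface $S_1$, and one wants to compare them.

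\emph{Set-up and almost conformality.} First I would compare the area radii: since the area of $(S_1,g^{S_1})$ is $4\pi(r^{S_1})^2$ while the area of $(S_1,\Psi^\#g^{S_2})$ equals that of $(S_2,g^{S_2})$, i.e. $4\pi(r^{S_2})^2$, the hypothesis \eqref{eq:intro.unifor-twometricsB} and the comparability of $g^{S_1}$ with a round metric give $|(r^{S_2})^2-(r^{S_1})^2|\les\de(r^{S_1})^2$, so after a harmless rescaling one may take $r^{S_1}=r^{S_2}=1$. From $\Phi_i^\#g^{S_i}=e^{2u_i}\ga_{\SSS^2}$ and $(\Phi_2\circ\Psih)^\#g^{S_2}=\Phi_1^\#(\Psi^\#g^{S_2})$ one obtains
\[
\Psih^\#\big(e^{2u_2}\ga_{\SSS^2}\big)=e^{2u_1}\ga_{\SSS^2}+\Phi_1^\#\big(\Psi^\#g^{S_2}-g^{S_1}\big),
\]
and the error term is $O(\de)$ in $L^\infty(\SSS^2,\ga_{\SSS^2})$, because pull-back by $\Phi_1$ is isometric for $\Phi_1^\#g^{S_1}=e^{2u_1}\ga_{\SSS^2}$, which is comparable to $\ga_{\SSS^2}$ since $\|u_1\|_{L^\infty}\les\ep$. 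Writing $w:=u_1-u_2\circ\Psih$, with $\|w\|_{L^\infty}\les\ep$, this is the same as $\Psih^\#\ga_{\SSS^2}=e^{2w}\ga_{\SSS^2}+O(\de)$; in particular the trace-free part of $\Psih^\#\ga_{\SSS^2}$ relative to $\ga_{\SSS^2}$ is $O(\de)$, i.e. $\Psih$ is almost conformal.

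\emph{Rigidity: $\Psih$ is close to a M\"obius map.} Since $\|w\|_{L^\infty}\les\ep$, the displayed identity already gives $\Psih^\#\ga_{\SSS^2}=\ga_{\SSS^2}+O(\ep+\de)$, so $\Psih$ is a small-parameter approximate isometry of the round sphere; by the geometric rigidity estimate of \cite{FJM} (applied after an isometric embedding $\SSS^2\hookrightarrow\RRR^3$, or in coordinate charts) there is $O\in O(3)$ with $\|\Psih-O\|_{W^{1,2}(\SSS^2)}\les\ep+\de$. This localizes the problem near the isometry $O$, where Liouville's theorem holds in the following quantitative, perturbative form: a diffeomorphism of $\SSS^2$ whose pulled-back metric is $O(\de)$-conformal to $\ga_{\SSS^2}$ and which is already $O(\ep+\de)$-close to $O$ must be $O(\de)$-close — in $W^{1,2}$, and then in $C^1$ after an elliptic bootstrap using the almost-harmonic-map equation satisfied by $\Psih$ — to a unique M\"obius transformation $M=O\circ M_v$, where $M_v$ is the conformal dilation with (small) parameter $v\in\RRR^3$, $|v|\les\ep+\de$; concretely, writing $\Psih=O\circ(\mathrm{Id}+\xi)$ one linearizes the conformality condition to an $O(\de)$ bound in $L^2$ on the conformal Killing operator applied to $\xi$, solves modulo the six-dimensional kernel of that operator, and exponentiates the kernel component. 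Making this rigidity effective in the $L^\infty$ (rather than merely $W^{1,2}$) topology is the main obstacle.

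\emph{The barycenter normalization, and conclusion.} Taking volume forms in the identity above gives $\Psih^\#\big(e^{2u_2}\,d\mu_0\big)=e^{2u_1}\,d\mu_0+O(\de)$ as measures, where $d\mu_0$ is the round area element, hence
\[
\int_{\SSS^2}\Psih(x)\,e^{2u_1(x)}\,d\mu_0(x)=\int_{\SSS^2}y\,e^{2u_2(y)}\,d\mu_0(y)+O(\de)=O(\de),
\]
the last equality being exactly the normalization $\int_{\SSS^2}e^{2u_2}x^i=0$. Replacing $\Psih$ by $M$ costs only $O(\de)$; using the expansion $M_v(x)=x+v-(v\c x)x+O(|v|^2)$ together with the normalization $\int_{\SSS^2}e^{2u_1}x^i=0$, the identity $\int_{\SSS^2}e^{2u_1}\,d\mu_0=4\pi$, and $\int_{\SSS^2}(v\c x)x\,d\mu_0=\tfrac{4\pi}{3}v$, one finds that $\int_{\SSS^2}M(x)\,e^{2u_1(x)}\,d\mu_0(x)$ has Euclidean norm $\tfrac{8\pi}{3}|v|\big(1-O(\ep+|v|)\big)\ges|v|$. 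Comparing with the $O(\de)$ bound yields $|v|\les\de$, hence $\|M-O\|_{L^\infty}\les\de$ and therefore $\|\Psih-O\|_{L^\infty(\SSS^2)}\les\de$, which is \eqref{eq:intro.Psih-Id}. Finally, since $|v|\les\de$ and M\"obius maps depend smoothly on their parameters, $\|\Psih-O\|_{C^1}\les\de$; as $O$ is an isometry, $\Psih^\#\ga_{\SSS^2}=\ga_{\SSS^2}+O(\de)$, and inserting this into $\Psih^\#(e^{2u_2}\ga_{\SSS^2})=e^{2u_1}\ga_{\SSS^2}+O(\de)$, using that $e^{2u_2\circ\Psih}$ is bounded, gives $e^{2u_1}=e^{2u_2\circ\Psih}+O(\de)$, i.e. $\|u_1-u_2\circ\Psih\|_{L^\infty(\SSS^2)}=\|u_1-\Psih^\#u_2\|_{L^\infty(\SSS^2)}\les\de$.
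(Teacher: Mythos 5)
Your proposal correctly identifies the two ultimate tools — the Friesecke–James–M\"uller geometric rigidity estimate and the barycenter normalization $\int_{\SSS^2}e^{2u_i}x=0$ — and the barycenter computation at the end (expanding $M_v(x)=x+v-(v\cdot x)x+O(|v|^2)$ against the centeredness of $u_1$) is in the right spirit. But there is a genuine gap at the crucial middle step, which you yourself flag as "the main obstacle."

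Your plan applies FJM directly to $\Psih$. This can only produce an $O(\ep+\de)=O(\ep)$ bound, since $\Psih^\#\ga_0=e^{2w}\ga_0+O(\de)$ with $\|w\|_{L^\infty}\les\ep$, so $\Psih$ is only an $O(\ep)$-approximate isometry. You then need the assertion that a diffeomorphism which is $O(\ep)$-close to an isometry and $O(\de)$-almost-conformal must be $O(\de)$-close to a M\"obius map. The linearization sketch you give (writing $\Psih=O\circ(\mathrm{Id}+\xi)$ and controlling $\xi$ modulo the six-dimensional kernel of the conformal Killing operator) is not carried through: because $\|\xi\|$ is only $O(\ep)$, the quadratic remainder in the conformality condition is $O(\ep^2)$, which need not be $\les\de$ under the paper's standing assumption $0<\de\le\ep$; and the promotion from an $L^2$/$W^{1,2}$ conclusion to the required $L^\infty$ conclusion requires an elliptic bootstrap that you only gesture at. In short, the quantitative "perturbative Liouville" rigidity you invoke is exactly the content that needs proof, and it is not supplied.

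The paper's actual proof of Theorem~\ref{Thm:unifor-twometrics} circumvents this by a different decomposition that never asks FJM to do more than it can. Setting $g_1=\Phi_1^\#g^{S_1}=(r^{S_1})^2e^{2u_1}\ga_0$ and $g_2=(\Psi\circ\Phi_1)^\#g^{S_2}$, the hypothesis gives $\|\ga_0-e^{-2u_1}\widetilde g_2\|_{\hk_4}\les\de$ (note the full statement \eqref{eq:unifor-twometricsB} requires $\hk_4$ control, not just $L^\infty$ as in the introductory version — this is needed because the next step requires controlling a Gauss curvature). One then applies the effective uniformization Corollary~\ref{proposition:effective-uniformisation} to the metric $e^{-2u_1}\widetilde g_2$, whose Gauss curvature is already $O(\de)$-close to $1$, producing an auxiliary diffeomorphism $\Th$ and conformal factor $v$ with $\|v\|_{H^4}\les\de$. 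Consequently $\|\Th^\#\ga_0-\ga_0\|\les\de$, i.e. $\Th$ is a genuinely $O(\de)$-approximate isometry from the outset, and applying the FJM-based Proposition~\ref{prop:diffeomorphismTh} to $\Th$ (not to $\Psih$) directly gives $\|\Th-O\|_{L^\infty}+\|\Th-O\|_{H^1}\les\de$. The renormalization Lemma~\ref{lemma:improved-renormalization} and the uniqueness statement in Corollary~\ref{proposition:effective-uniformisation} are then used to adjust $\Th$ by a conformal map $\psi$ and to identify $\Psih=O\circ(\Th\circ\psi)^{-1}$, which transports all the $O(\de)$ estimates to $\Psih$ and to $u_1-\Psih^\#u_2$. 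The key idea your proof is missing is precisely this auxiliary re-uniformization of the quotient metric $e^{-2u_1}\widetilde g_2$, which factors out the $O(\ep)$ conformal factor before FJM is ever invoked.
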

  The result is restated in   Theorem    \ref{Thm:unifor-twometrics}.  Its proof is   based on  a   result   due to G. Friesecke, R. James  and  S. M\"uller \cite{FJM} which improves on a classical result of F. John  \cite{John}.
  
To   avoid the indeterminacy with respect to $O\in O(3)$, we introduce a notion of calibration between $(\Phi_1, v_1) $ and $(\Phi_2, v_2)$  with respect to the map $\Psi$,  see   section \ref{subsection:calibration}.  Under this notion of calibration we  can replace \eqref{eq:intro.Psih-Id} by,
\bea
\| \Psih-I\|_{L^\infty(\SSS^2)}&\les &\de.
\eea

%%%%%%%%%%%%%%%%%%%%%%%%%%%%%%%

 \subsubsection{Stability of  the   canonical $\ell=1$ modes} 
 
%%%%%%%%%%%%%%%%%%%%%%%%%%%%%%% 
 
 Consider   two almost round spheres   $(S_1, g^{S_1} )$ and $(S_2, g^{S_2} )$ and their respect 
  canonical   uniformizations  $(\Phi_1, u_1)$, $(\Phi_2, u_2)$ provided by   the effective uniformization result mentioned above, and a smooth map $\Psi:S_1\to S_2$. 
   Assume also    that the metrics $g^{S_1} $ and $\Psi^\#( g^{S_2} )$
  are close to each other  in  $S^1$, i.e.,
   \bea
   \lab{eq:intro.unifor-twometricsB:bis}
  \| g^{S_1}-  \Psi^\#( g^{S_2} )\|_{H^{2}(S^1)} \le \de.
  \eea
  Assume also that the uniformization maps of $S_1, S_2$ are calibrated,  as in  section \ref{subsection:calibration}
   and let 
  \beaa
  J^i=J^{S_i}= J\circ \Phi_i^{-1}, \qquad i=1,2,
  \eeaa
  be the $\ell=1$     canonical modes of  
   $S_1, S_2 $. Then
   \bea
  \Big| J^1- J^2\circ \Psi  \Big| &\les &\de.
   \eea
   The result, stated in  Proposition \ref{Prop:comparison.canJ},    follows from    Corollary \ref{Cor:unifor-twometrics-calibrated}  which is an immediate consequence 
    of    Theorem \ref{Thm:unifor-twometrics}   and    calibration.

%%%%%%%%%%%%%%%%%%%%%%%%%%%%%%%%%%%%%%

  \subsection{Construction of GCM spheres with canonical $\ell=1$ modes}
  
%%%%%%%%%%%%%%%%%%%%%%%%%%%%%%%%%%%%%% 

%%%%%%%%%%%%%%%%%%%%%%%%%%%%%%%%%%%%%%%%%
  
  \subsubsection{Canonical version of  Theorem \ref{Theorem:ExistenceGCMS1-intro}}
  
%%%%%%%%%%%%%%%%%%%%%%%%%%%%%%%%%%%%%%%%%  
  
  \begin{theorem}[GCM spheres with canonical $\ell=1$ modes]
\lab{Theorem:ExistenceGCMS1-can-intro}  
Under the same assumptions as  in Theorem \ref{Theorem:ExistenceGCMS1-intro}  
 there exists a unique\footnote{Up to a rotation of $\SSS^2$.}  GCM sphere $\S=\S^{(\La, \Lab)}$, which is a deformation of $\ovS$, 
such that 
 \bea
  \lab{Introd:GCM spheres2-again'}
 \ka^\S -\frac{2}{r^\S} =0, \qquad \left( \kab^\S+\frac{2\Up^\S}{r^\S}\right)_{\ell\ge 2 }=0, \qquad \left( \mu^\S - \frac{2m^\S}{(r^\S)^3} \right)_{\ell\ge 2}=0,
  \eea 
  and
  \bea
  \lab{Introd:GCM spheres-LaLab'}
  (\div^\S f)_{\ell=1}=\La, \qquad (\div^\S\fb)_{\ell=1}=\Lab,
  \eea
  with canonically defined $\ell=1$ 
  modes, i.e.  $ \J^{(p,S)}=J^{(p,\SSS^2)}\circ \Phi^{-1}$, see   \eqref{intro-canmodes-intro}.  
  \end{theorem}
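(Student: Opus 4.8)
The plan is to reduce to the construction already carried out in Theorem~\ref{Theorem:ExistenceGCMS1-intro}, feeding into its fixed-point scheme the canonical $\ell=1$ modes produced by effective uniformization in place of the ad hoc ones of \cite{KS-Kerr1}. Under the asymptotic assumptions in force, $\ovS$ --- and likewise every background sphere and every $O(\dg)$ deformation of one --- is almost round with $\ep\lesssim\epg$, so Theorem~\ref{Thm:effectiveU1-Intro} applies to all of them. First I would fix, once and for all, a canonical effective uniformization $(\Phi_{\ovS},u_{\ovS})$ of $\ovS$; by Theorem~\ref{Thm:effectiveU1-Intro} it is unique up to precomposition by an element $O\in O(3)$ acting on $\SSS^2$, and this is the only source of the rotational indeterminacy in the statement. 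This yields a canonical basis $J^{(p,\ovS)}=J^{(p,\SSS^2)}\circ\Phi_{\ovS}^{-1}$ on $\ovS$, which I would use as the input $\ell=1$ basis for Theorem~\ref{Theorem:ExistenceGCMS1-intro}; since any two admissible bases of $\ell=1$ modes on $\ovS$ differ by $O(\epg)$ while the background quantities \eqref{Introd:GCMspheres3} are of size $O(\dg)$, the hypotheses of Theorem~\ref{Theorem:ExistenceGCMS1-intro} are unaffected by this choice.

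Next, for a candidate deformation $\Psi:\ovS\to\S$ --- encoded by the pair $(U,S)$ of \eqref{eq:definitionofthedeformationmapPsiinintroduction} together with the transition coefficients $F=(f,\fb,\la-1)$ --- the sphere $\S$ carries two a priori distinct bases of $\ell=1$ modes: the \emph{transported} basis $J^{(p,\ovS)}\circ\Psi^{-1}$, which is the one the construction of \cite{KS-Kerr1} attaches to $\S$ when fed the input basis $J^{(p,\ovS)}$, and the \emph{canonical} basis $J^{(p,\S)}=J^{(p,\SSS^2)}\circ\Phi_\S^{-1}$ attached to the effective uniformization $(\Phi_\S,u_\S)$ of $\S$ itself, calibrated against $(\Phi_{\ovS},u_{\ovS})$ through $\Psi$ as in section~\ref{subsection:calibration}. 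The quantitative heart of the matter is the comparison of these two bases: since $g^{\ovS}$ and $\Psi^\#(g^\S)$ are $O(\dg)$ close, Theorem~\ref{Thm:effectiveU2-Intro} together with calibration and Proposition~\ref{Prop:comparison.canJ} gives $\big|J^{(p,\S)}-J^{(p,\ovS)}\circ\Psi^{-1}\big|\lesssim\dg$, with analogous control of the derivatives in the norms used by the scheme. Running the same comparison for two nearby candidate deformations moreover shows that the correction $\mathfrak{c}^{(p)}:=J^{(p,\S)}-J^{(p,\ovS)}\circ\Psi^{-1}$ depends on the deformation data $(U,S)$ in a Lipschitz fashion, with Lipschitz constant of size $O(\dg)$.

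With this in hand I would re-run the contraction-mapping argument of \cite{KS-Kerr1} underlying Theorem~\ref{Theorem:ExistenceGCMS1-intro}, now imposing \eqref{Introd:GCM spheres2-again'} and \eqref{Introd:GCM spheres-LaLab'} with respect to the canonical modes $J^{(p,\S)}$. Substituting $J^{(p,\S)}=J^{(p,\ovS)}\circ\Psi^{-1}+\mathfrak{c}^{(p)}$ into the projections $(\cdot)_{\ell\geq 2}$ and $(\div^\S\cdot)_{\ell=1}$ turns the canonical-mode GCM system into the transported-mode system treated in \cite{KS-Kerr1} plus an extra term obtained by pairing $\mathfrak{c}^{(p)}$ against the GCM quantities and transition coefficients, which are themselves $O(\dg)$; that extra term is therefore $O(\dg^2)$ in size and Lipschitz in the unknowns with constant $O(\dg)$, by the previous paragraph. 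Consequently the fixed-point map of \cite{KS-Kerr1}, perturbed by this term, still maps the relevant ball into itself and remains a contraction, and the contraction-mapping theorem produces a unique deformation $\S=\S^{(\La,\Lab)}$ satisfying \eqref{Introd:GCM spheres2-again'}--\eqref{Introd:GCM spheres-LaLab'} with $J^{(p,\S)}=J^{(p,\SSS^2)}\circ\Phi_\S^{-1}$. Replacing the initial choice $(\Phi_{\ovS},u_{\ovS})$ by $(\Phi_{\ovS}\circ O,u_{\ovS}\circ O)$ for $O\in O(3)$ rotates the whole construction, which accounts for uniqueness up to a rotation of $\SSS^2$.

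I expect the main obstacle to be precisely the Lipschitz-in-the-data estimate for $\mathfrak{c}^{(p)}$. It requires applying the stability results Theorem~\ref{Thm:effectiveU2-Intro} and Proposition~\ref{Prop:comparison.canJ} not to a single pair of spheres but uniformly, to pairs of candidate deformations along the iteration, and tracking the calibration --- which removes the $O(3)$ ambiguity --- consistently throughout, all at the Sobolev regularity in which the scheme of \cite{KS-Kerr1} is formulated. A secondary point needing care is that the $\ell=1$ projections and the elliptic Hodge and Laplace estimates of \cite{KS-Kerr1} must be re-examined in the canonical basis; since that basis differs from the one used there by at most $O(\epg)$, this is routine.
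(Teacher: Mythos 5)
Your proposal is correct and shares all the essential ingredients with the paper's proof: effective uniformization to define canonical modes, the stability Theorem~\ref{Thm:effectiveU2-Intro} plus calibration to compare canonical and transported modes on nearby spheres, and a contraction argument built on the scheme of~\cite{KS-Kerr1}. The organizational difference is this. Where you propose to perturb the inner fixed point of~\cite{KS-Kerr1} directly, treating $\mathfrak{c}^{(p)}=J^{(p,\S)}-J^{(p,\ovS)}\circ\Psi^{-1}$ as a small Lipschitz forcing term in the GCM system, the paper instead runs an \emph{outer} iteration: it defines a sequence of deformations $\Psi(n):\ovS\to\S(n)$ by applying Theorem~\ref{Theorem:ExistenceGCMS1-intro} as a black box at each step with input basis $\ovJ(n+1)=\J^{n}_{can}\circ\Psi(n)$ obtained by pulling back the canonical modes of the previously constructed sphere $\S(n)$, and proves contraction of the outer loop by augmenting the contraction estimate of~\cite{KS-Kerr1} with the mode-comparison estimate of Lemma~\ref{lemma:comparisonJ2} (see Proposition~\ref{Prop:contractionforNN} and Lemma~\ref{lemma:comparisionofJnplus1andJnforcontractionscheme}). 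The nested formulation has the practical advantage that each outer iterate is an exact solution of Theorem~\ref{Theorem:ExistenceGCMS1-intro}, so its full suite of estimates is inherited off the shelf, and the only place where one must reopen~\cite{KS-Kerr1} is its contraction lemma; your single-fixed-point variant should go through for the same reasons but requires more intrusive modifications inside the~\cite{KS-Kerr1} machinery. You correctly flag the Lipschitz-in-data estimate for $\mathfrak{c}^{(p)}$ and the consistent tracking of calibration across iterates as the technical crux; the paper handles these precisely via Lemma~\ref{lemma:comparisonJ2} applied to consecutive iterates, the transitivity of calibrations (Lemma~\ref{Le:Transitivitycalibrations}), and Remark~\ref{remark:calibration-of-deformations}.
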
 
  The result is restated in  Theorem  \ref{Theorem:ExistenceGCMS1-2}.

%%%%%%%%%%%%%%%%%%%%%%%%%%%%%%%%%
  
  \subsubsection{Intrinsic GCM spheres}
 
 %%%%%%%%%%%%%%%%%%%%%%%%%%%%%%%%%
  
  Using  Theorem \ref{Theorem:ExistenceGCMS1-can-intro}      we  prove an intrinsic\footnote{Without reference  to the background foliation.}   result  on the    existence  and uniqueness of     GCM  surfaces  where we replace  the $\ell=1$ conditions \eqref{Introd:GCM spheres-LaLab'}  on $\div^\S(f)$ and  $\div^\S(\fb)$ by the  vanishing of the $\ell=1$ modes of $\b^\S$ and $ \widecheck{\trchbS}$.
   
  \begin{theorem}[Intrinsic GCM spheres with canonical $\ell=1$ modes]  
  \lab{theorem:ExistenceGCMS2-Intro}
  Under  slightly stronger assumptions  on the background foliation of $\RR$  there 
   exists a  unique\footnote{Up to a rotation of $\SSS^2$.}  GCM deformation of $\ovS$  verifying, in addition to   \eqref{Introd:GCM spheres2-again'}, 
    \bea
  ( \div^\S \b^\S)_{\ell=1}=0, \qquad  \widecheck{\trchbS}_{\ell=1}=0,
   \eea
   relative to the canonical $\ell=1$ modes of $\S$. 
  \end{theorem}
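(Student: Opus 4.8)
The plan is to treat this as a parameter–selection problem layered on top of Theorem~\ref{Theorem:ExistenceGCMS1-can-intro}. For every admissible pair $(\La,\Lab)\in\RRR^3\times\RRR^3$ with $|\La|,|\Lab|\les\dg$, that theorem produces a canonical GCM deformation $\S=\S^{(\La,\Lab)}$ of $\ovS$, with adapted frame of transition coefficients $(f,\fb,\la)$ and canonically defined, $\ovS$-calibrated, $\ell=1$ modes $J^{(p,\S)}=J^{(p,\SSS^2)}\circ\Phi^{-1}$. On each such sphere we evaluate the two intrinsic quantities of the statement and set
\[
\Theta:(\La,\Lab)\longmapsto \Big( (\div^\S\bS)_{\ell=1},\ \big(\widecheck{\trchbS}\big)_{\ell=1}\Big)\in\RRR^3\times\RRR^3 .
\]
The theorem is then equivalent to the assertion that $\Theta$ has a unique zero in the ball $\{\,|\La|,|\Lab|\les\dg\,\}$, the surviving $O(3)$ indeterminacy being inherited from the non-uniqueness of the calibrated uniformization — hence the ``up to a rotation of $\SSS^2$'' clause.

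First I would show that $\Theta$ is well defined and Lipschitz in $(\La,\Lab)$. This rests on two facts: (i) the Lipschitz dependence of $\S^{(\La,\Lab)}$, of its adapted frame, and of the associated Ricci and curvature coefficients on the parameters, obtained by re-running the contraction argument of Theorem~\ref{Theorem:ExistenceGCMS1-can-intro} with $(\La,\Lab)$ carried as parameters — it is precisely here that the ``slightly stronger assumptions on the background foliation'' are used, since $\bS$ involves one derivative more than the quantities treated in \cite{KS-Kerr1}; and (ii) the Lipschitz dependence of the canonical $\ell=1$ modes on the sphere, which is exactly the content of the stable uniformization result Theorem~\ref{Thm:effectiveU2-Intro} together with the calibration of section~\ref{subsection:calibration} and Proposition~\ref{Prop:comparison.canJ}. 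Combining (i) and (ii), the difference $\Theta(\La,\Lab)-\Theta(\La',\Lab')$ is bounded by a small multiple of $|\La-\La'|+|\Lab-\Lab'|$.

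The core of the argument is the identification of the principal part of $\Theta$. Using the frame-transformation formulas \eqref{eq:Generalframetransf-intro} of Lemma~\ref{Lemma:Generalframetransf} — extended from the frame vectors to the curvature and Ricci coefficients — together with the Hodge and transport systems from the construction and the already-imposed GCM relations \eqref{Introd:GCM spheres2-again'}, one writes $\bS$ and $\widecheck{\trchbS}$ in terms of background quantities and the transition coefficients $(f,\fb)$. Projecting onto $\ell=1$ and keeping only the principal terms gives
\[
\Theta(\La,\Lab)=\Theta(0,0)+\mathcal{L}(\La,\Lab)+\mathcal{E}(\La,\Lab),
\]
where $\mathcal{L}:\RRR^6\to\RRR^6$ is the linear map computed on the model (Schwarzschild/Kerr) background, $|\Theta(0,0)|\les\dg$ encodes the smallness of the background GCM quantities \eqref{Introd:GCMspheres3}, and $\mathcal{E}$ has Lipschitz constant $\les\epg$ on the relevant ball. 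On the model background $\bS$ is, to leading order, proportional to $\rho f$ with $\rho=-2m/r^3\neq0$, so $(\div^\S\bS)_{\ell=1}$ is a nonzero multiple of $(\div^\S f)_{\ell=1}=\La$; likewise the transformation of $\trchb$ contributes a nondegenerate multiple of $\div^\S\fb$, so $(\widecheck{\trchbS})_{\ell=1}$ is, to leading order, a nonzero multiple of $\Lab$, the cross coupling being lower order for $\rg$ large. Hence $\mathcal{L}$ is invertible with $\|\mathcal{L}^{-1}\|$ uniformly bounded.

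The conclusion then follows by a standard fixed-point argument: for $\rg$ large and $\dg,\epg$ small, the map $(\La,\Lab)\mapsto(\La,\Lab)-\mathcal{L}^{-1}\Theta(\La,\Lab)$ sends the ball $\{\,|\La|,|\Lab|\les\dg\,\}$ into itself and is a contraction there, so it has a unique fixed point; the corresponding $\S^{(\La,\Lab)}$ is the desired intrinsic GCM sphere, unique up to a rotation of $\SSS^2$. I expect the genuine obstacle to be the computation behind $\mathcal{L}$ and the estimate for $\mathcal{E}$: one must track how a curvature component — one derivative more singular than the coefficients handled in \cite{KS-Kerr1} — transforms under the deformation, check that the $\ell=1$ projections of the transformation laws are nondegenerate on the model, and verify that all remaining contributions, including those arising from the variation of the canonical $\ell=1$ modes themselves, are genuinely of lower order in $\epg$ and in $1/\rg$. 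The parameter-dependence bookkeeping of step (i), requiring the strengthened background hypotheses, is the secondary difficulty.
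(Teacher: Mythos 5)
Your strategy coincides with the paper's: for each $(\La,\Lab)$ invoke Theorem \ref{Theorem:ExistenceGCMS1-can-intro}, evaluate the two intrinsic quantities on $\S^{(\La,\Lab)}$, and find the unique zero by a fixed-point argument using the frame-transformation formulas to identify the leading linear dependence and the Lipschitz/differentiability of the construction in $(\La,\Lab)$ (already supplied by items 7--9 of the GCM theorem). This is exactly what Lemma \ref{Lemma:GCMS2} and the ensuing proof of Theorem \ref{theorem:ExistenceGCMS2} do, just written with the identities solved for $\La,\Lab$ rather than for $\Theta$.

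One detail in your principal-part analysis is off, though it does not break the argument. You claim $(\widecheck{\trchbS})_{\ell=1}$ is to leading order a multiple of $\Lab$ with the cross-coupling to $\La$ ``lower order for $\rg$ large.'' In fact, eliminating $\ovla$ between the $\ka$ and $\kab$ transformation laws gives, at leading order, $\widecheck{\trchbS}\approx \div^\S\fb-\Up^\S\div^\S f$, so the coupling coefficient is $\Up^\S=1-2m^\S/r^\S\to 1$ as $\rg\to\infty$, i.e.\ $O(1)$, not small. This is precisely the $\Up^\S\La$ term in \eqref{eq:ExistenceGCMS2.1}. What saves the argument is not smallness of the off-diagonal block but triangularity: $\mathcal{L}$ is lower triangular with nonvanishing diagonal entries proportional to $m^\S/(r^\S)^3$ and $m^\S/r^\S$, hence invertible with a uniformly controlled inverse. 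You should also note that the ``slightly stronger assumptions'' ({\bf A1-Strong}, {\bf A4-Strong}) are needed chiefly so that $\Ga_b$ carries the same $r^{-2}$ decay as $\Ga_g$, which keeps the $F\cdot\Ga_b$ error terms in the $\ka,\kab$ transformation laws of size $O(\epg\dg r^{-3})$, rather than because $\b$ is a curvature component.
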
 
  The result is restated in  Theorem  \ref{theorem:ExistenceGCMS2}. 
 
 \begin{remark}
The assumptions on the spacetime region $\RR$ in Theorem \ref{theorem:ExistenceGCMS2-Intro} are in particular satisfied in Kerr for $r$ sufficiency large, see Lemma \ref{lemma:controlfarspacetimeregionKerrassumptionRR}. We can thus apply Theorem \ref{theorem:ExistenceGCMS2-Intro} in that context, and obtain the existence of intrinsic GCM spheres $\S_{Kerr}$ in Kerr for $r$ sufficiency large, see Corollary \ref{cor:ExistenceGCMS1inKerr}. The intrinsic GCM spheres  $\S$ of Theorem \ref{theorem:ExistenceGCMS2-Intro} thus correspond to the analog of $\S_{Kerr}$ in perturbations of Kerr for $r$ sufficiency large.
 \end{remark}

%%%%%%%%%%%%%%%%%%%%%%%%%%%%%%%%%
  
  \subsubsection{Definition of angular momentum}
  
%%%%%%%%%%%%%%%%%%%%%%%%%%%%%%%%%  
  
  The result  of Theorem \ref{theorem:ExistenceGCMS2-Intro}  is unique up to a rotation 
  of $\SSS^2$ in the definition of the canonical  $\ell=1$ modes on $\S$, see \eqref{intro-canmodes-intro}.  We can remove this final  ambiguity\footnote{The only  case where the ambiguity cannot be removed is when \eqref{eq:removalofambiguityrotationthankstocurlbeta-intro} holds  for any  one choice of canonical modes, i.e. $(\curl\b^\S)_{\ell=1}=0$. In that  case,  we set $a^\S=0$ which is consistent with \eqref{eq:definitionofangularmomentumaSonthesphereS-intro}.}
  by    adjusting  the choice of canonical  modes  $\J^{(p, \S)} $  on $\S$,  performing a suitable rotation in $\SSS^2$,   such that 
   \bea\lab{eq:removalofambiguityrotationthankstocurlbeta-intro}
   \int_\S  \curl^\S \b^\S\, \J^{(\pm, \S)}=0.
   \eea
   This allows us to define on $\S$   the angular   parameter   $a^\S$  by the  formula\footnote{Note that in a Kerr space  $\KK(a, m)$, relative to  a geodesic foliation normalized on $\II^+$,   we have   $  \int_\S  \curl^\S  \b^\S  \J^{(\pm, \S)}=0$ and    $  \int_\S  \curl^\S  \b^\S \J^{(0, \S)} =\frac{8\pi a m}{  (r^\S)^3}+O(\frac{ma^2}{(r^\S)^4})$.} 
   \bea\lab{eq:definitionofangularmomentumaSonthesphereS-intro}
 a^\S:=\frac{(r^\S)^3}{8\pi m^\S}   \int_\S  \curl^\S  \b^\S \J^{(0, \S)}.  
   \eea
   The  result is stated more precisely in Corollary \ref{Corr:ExistenceGCMS2}.
   
\begin{remark}
 Given its relation with GCM spheres,      we expect  that the    definition given above will  play a key   role  in determining the final  angular momentum  for   general perturbations of Kerr. 
We also note that other definitions of angular momentum have been  proposed in the literature, see 
\cite{Sz} for a comprehensive  review, and    \cite{Rizzi} and \cite{Chen} for recent  interesting proposals.  
\end{remark}

  %%%%%%%%%%%%%%%%%%%%%%%% 
  
      \subsection{Structure of the paper}

  %%%%%%%%%%%%%%%%%%%%%%%% 

The structure of the paper is as follows.
\begin{itemize}
\item In section 2,  we review some of the classic results on  the uniformization theorem for  spheres.
\item In section 3,  we  provide a proof of  our first effective uniformization  result   stated in Theorem \ref{Thm:effectiveU1-Intro} and  use it to define  canonical basis of $\ell=1$ modes.
\item 
In Section 4,  we prove    our  main   stability result   for effective uniformization stated in Theorem \ref{Thm:effectiveU2-Intro}.
\item  In Section 5,  we review the geometric set-up  we need to state our GCM results.
\item In Section 6, we   review   the  main result  of \cite{KS-Kerr1}, stated in Theorem   \ref{Theorem:ExistenceGCMS1-intro},  and  prove its new version, stated in Theorem \ref{Theorem:ExistenceGCMS1-can-intro}, using  the notion of  canonical ${\ell=1}  $  basis introduced in section 3. 
\item In section 7,  we prove our intrinsic  GCM result stated in Theorem \ref{theorem:ExistenceGCMS2-Intro} and use it to define  our notion of angular momentum. We also illustrate   the result      in Corollary \ref{cor:ExistenceGCMS1inKerr}    applied  to  a  far region  of  a Kerr spacetime.
\end{itemize}

  %%%%%%%%%%%%%%%%%%%%%%%% 
  
      \subsection{Acknowledgments}

  %%%%%%%%%%%%%%%%%%%%%%%% 

      The authors are grateful to  A. Chang and  C. De Lellis  for very helpful suggestions and references in connection to  our results on effective    uniformization in sections 3 and 4.

The first author is supported   by  the  NSF grant  DMS 180841 as well as by  the Simons grant  10011738. He would like to thank the Laboratoire Jacques-Louis Lions  of Sorbonne Universit\'e  and   IHES   for their  hospitality during  his many visits.  The second author is supported by ERC grant  ERC-2016 CoG 725589 EPGR.

%%%%%%%%%%%%%%%%%%%%%%%%%%%%%%%%%

  \section{Review of uniformization results for the sphere}
  \lab{sec:reviewstandarduniformization}

%%%%%%%%%%%%%%%%%%%%%%%%%%%%%%%%%

In this section, we  review  some well-known results concerning the 
uniformization theorem for the sphere that will be used in  section \ref{sec:effectiveuniformization}.

%%%%%%%%%%%%%%%%%%%%%%%%%%%%%%%%%   
   
\subsection{Uniformization for  metrics on $\SSS^2$}  

%%%%%%%%%%%%%%%%%%%%%%%%%%%%%%%%%

We start with  the following well known  calculation.
  \begin{lemma} 
 \lab{Le:transf-Gauss} 
 Let $S$ a surface and let  $g^S$  be a  Riemannian metric on $S$.  For  a scalar function $u$ on $S$,    the Gauss curvature of    $ e^{2u} g^S$ is connected to that of  $g^S$ by the formula
 \bea
 K(  e^{2u} g^S)=e^{-2u} \big(K^S-\lap_{S} u \big)
 \eea
 where $K^S$ denotes the Gauss curvature  of $S$ and  $\lap_S $  the Laplace-Beltrami on $S$. 
\end{lemma}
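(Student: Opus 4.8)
The plan is to reduce the identity to a purely local computation in isothermal coordinates, where it becomes an elementary property of the flat Laplacian, and then to patch the local statements together (which is trivial since the asserted identity is pointwise and coordinate–free).

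First I would invoke the classical fact that every Riemannian metric on a surface is locally conformally flat: around each point of $S$ one may choose isothermal coordinates $(x^1,x^2)$ in which $g^S=e^{2\phi}\,\de$, where $\de=(dx^1)^2+(dx^2)^2$ is the Euclidean metric and $\phi$ a smooth local function. Since the two sides of the asserted identity are tensorial scalars, it suffices to verify the formula in an arbitrary such chart. Next I would record the two standard two-dimensional facts in these coordinates. On the one hand, the Gauss curvature of a conformally flat metric is
\[
K\big(e^{2\phi}\de\big) = -e^{-2\phi}\,\lap_0\phi, \qquad \lap_0:=\pr_1^2+\pr_2^2 ,
\]
which is the classical curvature formula for conformally flat surface metrics (obtainable, if desired, from a short Christoffel-symbol computation). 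On the other hand, in dimension two the Laplace–Beltrami operator is conformally covariant of weight $e^{-2\phi}$, i.e. $\lap_{g^S}=e^{-2\phi}\lap_0$, which follows at once from $\lap_g h=\tfrac{1}{\sqrt{|g|}}\pr_i\!\big(\sqrt{|g|}\,g^{ij}\pr_j h\big)$ upon inserting $g^S_{ij}=e^{2\phi}\de_{ij}$, $\sqrt{|g^S|}=e^{2\phi}$, $(g^S)^{ij}=e^{-2\phi}\de^{ij}$, so that $\sqrt{|g^S|}\,(g^S)^{ij}=\de^{ij}$.

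Then I would simply apply the first fact with $\phi$ replaced by $u+\phi$, using $e^{2u}g^S=e^{2(u+\phi)}\de$, to get
\[
K\big(e^{2u}g^S\big)=-e^{-2(u+\phi)}\lap_0(u+\phi)=-e^{-2(u+\phi)}\big(\lap_0 u+\lap_0\phi\big),
\]
and compare with the right-hand side of the lemma, computed via both facts:
\[
e^{-2u}\big(K^S-\lap_S u\big)=e^{-2u}\Big(-e^{-2\phi}\lap_0\phi-e^{-2\phi}\lap_0 u\Big)=-e^{-2(u+\phi)}\big(\lap_0\phi+\lap_0 u\big).
\]
The two expressions coincide, which proves the claim.

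An alternative route, if one prefers to avoid invoking the existence of isothermal coordinates, is a direct tensor computation: under $\tilde g=e^{2u}g^S$ the Levi-Civita connection changes by $\tilde\Gamma^k_{ij}=\Gamma^k_{ij}+\de^k_i\pr_j u+\de^k_j\pr_i u-g^S_{ij}(g^S)^{kl}\pr_l u$; contracting the induced change in the Riemann tensor and using that in dimension two the scalar curvature is $2K$ yields $\tilde R=e^{-2u}\big(R^S-2\lap_S u\big)$, which is the stated formula. In either approach the only thing to watch is the bookkeeping of conformal weights; there is no genuine obstacle here, the statement being a classical identity, and I would expect the write-up to be short.
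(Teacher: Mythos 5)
Your proof is correct, and complete. The paper itself offers no proof of this lemma — it is introduced as a ``well known calculation'' and left at that — so there is no argument to compare against. Your isothermal-coordinate reduction is the standard one: since both sides are scalars, it suffices to verify the identity in a chart where $g^S=e^{2\phi}\delta$, then use $K(e^{2\phi}\delta)=-e^{-2\phi}\lap_0\phi$ and the two-dimensional conformal covariance $\lap_{g^S}=e^{-2\phi}\lap_0$, and the two sides coincide after the substitution $\phi\mapsto u+\phi$. The computation checks out. Your alternative direct tensor argument is also correct: the conformal change of scalar curvature in dimension $n$ is $\tilde R=e^{-2u}\big(R-2(n-1)\lap u-(n-1)(n-2)|\nabla u|^2\big)$, which for $n=2$ reduces to $\tilde R=e^{-2u}(R-2\lap u)$, and dividing by $2$ (since $R=2K$ on a surface) gives the lemma. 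Either route is acceptable and both are short; I would keep the isothermal-coordinate version as the primary argument and perhaps drop the alternative to save space.
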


   According to the classical uniformization theorem, if $S$ is a closed, oriented and connected surface of genus 0 and $g^S$  is a  Riemannian metric on $S$, then,  there exists  a smooth  diffeomorphism  $\Phi:\SSS^2\to S$  and a smooth  conformal factor  
$u$ on $\SSS^2$ such that
\bea
\lab{standard-unifS}
\Phi^\#( g^S) =  e^{2u}  \ga_0
\eea
where $\ga_0$ is the   canonical metric on the standard sphere $\SSS^2$.      In view of  Lemma \ref{Le:transf-Gauss} above, if we denote by $g$   the  metric on $\SSS^2$,  $g=\Phi^\# (g^S) = e^{2u} \ga_0$,  we  derive
  \bea
 \lab{eq:Euler-K}
 \lap_0 u +K(g) e^{2u} =1
 \eea
 where we have used  the notation
 \beaa
 \De_0 &:=& \De_{\ga_0}.
 \eeaa
 We thus have the following corollary of Lemma \ref{Le:transf-Gauss}.
  \begin{corollary}
  \lab{Corr:transf-Gauss}
  If $\Phi^\# g^S= e^{2u} \ga_0 $, then $u $ verifies the equation
  \bea
  \lab{eq:Euler-K-Phi}
  \lap_0 u + (K^S\circ \Phi)  e^{2u}  =1,
  \eea
  where $K^S$ is the Gauss curvature of $g^S$ on $S$. 
  \end{corollary}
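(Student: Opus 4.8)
This corollary is essentially immediate once Lemma~\ref{Le:transf-Gauss} is in hand, so the plan is simply to chase the conformal transformation formula through the diffeomorphism $\Phi$.

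First I would pull everything back to $\SSS^2$. Set $g := \Phi^\#(g^S) = e^{2u}\ga_0$ on $\SSS^2$. Since $\Phi$ is a diffeomorphism, it is in particular a local isometry between $(\SSS^2, g)$ and $(S, g^S)$, so the Gauss curvature is natural under pullback: $K(g) = K^S \circ \Phi$. (More precisely, $\Phi^\#$ intertwines the Levi-Civita connections and hence the curvature tensors, so the scalar Gauss curvatures satisfy $K(\Phi^\# g^S) = K(g^S)\circ\Phi$.) This is the only genuinely new input beyond Lemma~\ref{Le:transf-Gauss}, and it is standard differential geometry — the naturality of curvature under pullback by a diffeomorphism.

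Next I would apply Lemma~\ref{Le:transf-Gauss} on $\SSS^2$ with the background metric taken to be $\ga_0$: writing $g = e^{2u}\ga_0$, the lemma gives
\[
K(g) = e^{-2u}\big(K(\ga_0) - \lap_0 u\big) = e^{-2u}\big(1 - \lap_0 u\big),
\]
using $K(\ga_0) = 1$ and $\lap_{\ga_0} = \lap_0$. Rearranging yields $\lap_0 u + K(g)\, e^{2u} = 1$, which is exactly \eqref{eq:Euler-K}. Substituting the identity $K(g) = K^S\circ\Phi$ from the previous step then produces
\[
\lap_0 u + (K^S\circ\Phi)\, e^{2u} = 1,
\]
which is \eqref{eq:Euler-K-Phi}, completing the proof.

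There is no real obstacle here — the statement is a bookkeeping consequence of Lemma~\ref{Le:transf-Gauss} together with the naturality of Gauss curvature under diffeomorphism pullback. The only point worth stating carefully is the identity $K(\Phi^\# g^S) = K(g^S)\circ\Phi$, so that the curvature term transported to $\SSS^2$ is correctly identified as $K^S\circ\Phi$ rather than some deformed quantity; once that is noted, the displayed computation is routine.
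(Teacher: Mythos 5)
Your argument is correct and is precisely the paper's proof: apply Lemma~\ref{Le:transf-Gauss} with background metric $\ga_0$ to obtain \eqref{eq:Euler-K}, then identify $K(g)=K^S\circ\Phi$ using the naturality of Gauss curvature under diffeomorphism pullback. You merely spell out the derivation of \eqref{eq:Euler-K} in slightly more detail than the paper does.
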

  \begin{proof}
  The proof follows from \eqref{eq:Euler-K}  in view  of the  fact that  $K\big(\Phi^\# g^S\big) = K(g^S)\circ \Phi$.
  \end{proof}
  
  \begin{definition}
  Let $\M$ denote the group of conformal transformations of  $\SSS^2$, i.e. the set of diffeomorphisms $\Phi$ of $\SSS^2$ such that $\Phi^\# \ga_0= e^{2u}  \ga_0$ for some scalar function $u$ on $\SSS^2$. 
  \end{definition}
  
  \begin{remark}\lab{rem:remarkonequationEuler-K=1}
  Let $\Phi\in\M$ so that $\Phi^\# \ga_0= e^{2u}  \ga_0$. Then, $u$ satisfies\footnote{This follows immediately from writing $\Phi^\# \ga_0= e^{2u}  \ga_0$ in matrix form, by evaluating on an orthonormal frame, and then taking the absolute value of the determinant on both sides. Also, recall that $|\det d\Phi|$ is an intrinsic scalar on $\SSS^2$, i.e. it does not depend on the particular choice of orthonormal frame.}
  \beaa 
  u= \frac 12 \log |\det d\Phi|.
  \eeaa
Also, in view of Corollary \ref{Corr:transf-Gauss},  we have
  \bea
  \lab{eq:Euler-K=1}
  \lap_0 u+ e^{2u} =1.
  \eea
 \end{remark}

   \begin{lemma} 
 All solutions of \eqref{eq:Euler-K=1}  are of the form $u=\frac 1 2 \log | \det d\Phi|$ with $\Phi\in \M$.
 \end{lemma}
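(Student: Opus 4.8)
The plan is to show that any solution $u$ of the Gauss curvature equation $\lap_0 u + e^{2u} = 1$ arises as the conformal factor of some $\Phi \in \M$, which is a converse to Remark \ref{rem:remarkonequationEuler-K=1}. The key observation is that the equation $\lap_0 u + e^{2u} = 1$ says precisely that the metric $g := e^{2u}\ga_0$ on $\SSS^2$ has Gauss curvature $K(g) \equiv 1$, by Lemma \ref{Le:transf-Gauss} (taking $S = \SSS^2$, $g^S = \ga_0$, so that $K^{\SSS^2} = 1$ and $K(e^{2u}\ga_0) = e^{-2u}(1 - \lap_0 u) = 1$). So the content of the lemma is: a metric conformal to $\ga_0$ with constant curvature $1$ is the pullback of $\ga_0$ by a conformal diffeomorphism.

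First I would invoke the classical uniformization theorem (or, more directly, the Killing--Hopf / constant curvature classification) applied to $(\SSS^2, g)$: since $g$ is a smooth metric on the $2$-sphere with $K(g) \equiv 1$, it is isometric to the round sphere, i.e.\ there exists a diffeomorphism $\Psi : \SSS^2 \to \SSS^2$ with $\Psi^\# g = \ga_0$. Set $\Phi := \Psi^{-1}$. Then $\Phi^\# \ga_0 = (\Psi^{-1})^\# \ga_0 = g = e^{2u}\ga_0$ (using $(\Psi^{-1})^\# = (\Psi^\#)^{-1}$ on the level of pulling back the relation $\Psi^\# g = \ga_0$). Hence $\Phi^\# \ga_0 = e^{2u}\ga_0$, which exhibits $\Phi$ as an element of $\M$ by definition of $\M$. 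Finally, by Remark \ref{rem:remarkonequationEuler-K=1} applied to this $\Phi$, the conformal factor of $\Phi$ equals $\frac12 \log|\det d\Phi|$, and since $\Phi^\# \ga_0 = e^{2u}\ga_0$ forces the conformal factor to be exactly $u$, we get $u = \frac12 \log|\det d\Phi|$ with $\Phi \in \M$, as claimed.

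The main obstacle, such as it is, is justifying that a smooth metric on $\SSS^2$ of constant Gauss curvature $1$ is isometric to $(\SSS^2, \ga_0)$; this is standard (the universal cover is complete, simply connected, of constant curvature $1$, hence $\SSS^2$ by Killing--Hopf, and the covering must be trivial on $\SSS^2$), but one should be careful that the isometry can be taken to be a genuine smooth diffeomorphism — which it can, since isometries between smooth Riemannian manifolds are smooth. A second, more cosmetic, point is the bookkeeping with pullbacks: one must note that $\Phi^\#(e^{2u}\ga_0) = e^{2u}\ga_0$ is \emph{not} what is being asserted — rather, starting from $\Psi^\#(e^{2u}\ga_0) = \ga_0$ and setting $\Phi = \Psi^{-1}$ gives $\Phi^\#\ga_0 = e^{2u}\ga_0$ directly, with no need to transport $u$ itself, since $u$ is already defined on the target sphere of $\Phi$. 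No genuine analytic difficulty arises beyond the already-cited uniformization input.
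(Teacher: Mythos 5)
Your proof is correct and follows essentially the same route as the paper: use Lemma \ref{Le:transf-Gauss} to recognize that the equation says $K(e^{2u}\ga_0)=1$, then invoke the constant-curvature classification to produce a diffeomorphism $\Phi$ of $\SSS^2$ with $\Phi^\#\ga_0=e^{2u}\ga_0$, i.e.\ $\Phi\in\M$, and read off $u=\tfrac12\log|\det d\Phi|$ from Remark \ref{rem:remarkonequationEuler-K=1}. The only difference is that you spell out the Killing--Hopf step and the pullback bookkeeping that the paper leaves implicit.
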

 
 \begin{proof} 
 We have already checked that for  every $\Phi\in \M$,  $u=\frac 12 \log |\det d\Phi| $ is a solution of  \eqref{eq:Euler-K=1}, see Remark \ref{rem:remarkonequationEuler-K=1}. 
 On the other hand, if $u$ is a solution  of that equation, then  $ K(e^{2u}\ga_0) =1$ by Lemma \ref{Le:transf-Gauss} which implies that there exist a diffeomorphism 
 $\Phi:\SSS^2\to \SSS^2 $ such that  $\Phi^\# \ga_0=e^{2u}\ga_0$,  i.e.  $\Phi\in \M $ and $u=\frac 1 2 \log|\det  d \Phi| $.
 \end{proof}
 
 \begin{corollary}
  If  $u$ is a solution of \eqref{eq:Euler-K}  with $K=K(g)$, and if $\Phi\in \M$,  then 
  \beaa
  u_\Phi &:=& u\circ \Phi+ \frac 1 2 \log \big|\det d\Phi \big|
  \eeaa
    is also a solution of \eqref{eq:Euler-K} with $K$ replaced by $K\circ \Phi$.
  \end{corollary}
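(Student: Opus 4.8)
The plan is to avoid any direct manipulation of $\lap_0(u\circ\Phi)$ and instead deduce everything from the naturality of the Gauss curvature together with Lemma \ref{Le:transf-Gauss}. Set $g:=e^{2u}\ga_0$ on $\SSS^2$, so that, by hypothesis and \eqref{eq:Euler-K}, $K(g)$ is literally the Gauss curvature of the metric $g$ and $u$ solves $\lap_0 u + K(g)e^{2u}=1$.

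First I would identify the pullback metric $\Phi^\# g$. Since $\Phi\in\M$, Remark \ref{rem:remarkonequationEuler-K=1} gives $\Phi^\#\ga_0 = e^{2v}\ga_0$ with $v=\tfrac12\log|\det d\Phi|$. Because pullback by a diffeomorphism distributes over the product of a positive scalar function with a metric, we get
\[
\Phi^\# g \;=\; \Phi^\#\!\big(e^{2u}\ga_0\big)\;=\;\big(e^{2u}\circ\Phi\big)\,\Phi^\#\ga_0 \;=\; e^{2(u\circ\Phi + v)}\ga_0 \;=\; e^{2u_\Phi}\ga_0 .
\]

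Next, using that the Gauss curvature is a diffeomorphism invariant, $K\big(\Phi^\# g\big) = K(g)\circ\Phi$. Applying Lemma \ref{Le:transf-Gauss} with base metric $\ga_0$ (whose Gauss curvature equals $1$) and conformal factor $u_\Phi$ then yields
\[
K(g)\circ\Phi \;=\; K\big(e^{2u_\Phi}\ga_0\big)\;=\; e^{-2u_\Phi}\big(1 - \lap_0 u_\Phi\big),
\]
and rearranging gives $\lap_0 u_\Phi + \big(K(g)\circ\Phi\big)e^{2u_\Phi} = 1$, which is exactly \eqref{eq:Euler-K} with $K$ replaced by $K\circ\Phi$. (Equivalently, one may cite Corollary \ref{Corr:transf-Gauss} applied to $\Phi^\# g$.)

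I do not anticipate any real obstacle: the argument is purely formal. The only points requiring a little care are the identification of the conformal factor of $\Phi^\#\ga_0$ with $\tfrac12\log|\det d\Phi|$, which is recorded in Remark \ref{rem:remarkonequationEuler-K=1}, and the elementary fact that pullback commutes with rescaling a metric by a function. A brute-force alternative — expanding $\lap_0(u\circ\Phi)$ via the conformal transformation law for the Laplace--Beltrami operator on $\SSS^2$ — would also work but is longer and more error-prone, so I would favor the invariance argument above.
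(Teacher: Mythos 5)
Your proof is correct and follows the same route as the paper's: compute $\Phi^\# g = e^{2u_\Phi}\ga_0$ using $\Phi^\#\ga_0 = |\det d\Phi|\,\ga_0$, invoke the diffeomorphism-invariance $K(\Phi^\# g)=K(g)\circ\Phi$, and conclude via Corollary \ref{Corr:transf-Gauss}. You merely spell out the last step (applying Lemma \ref{Le:transf-Gauss} to $e^{2u_\Phi}\ga_0$) that the paper leaves implicit.
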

  
 \begin{proof}
 If  $\Phi\in \M$  then, since $\Phi^\# \ga_0=|\det(d\Phi)|  \ga_0$ and $g=e^{2u}\gamma_0$,
 \beaa
\Phi^\# g= e^{2 u\circ \Phi}   \Phi^\# \ga_0=e^{2u\circ \Phi}  |\det (d\Phi)|  \ga_0= e^{2 v}\ga_0, \qquad v=u\circ \Phi+ \frac 1 2 \log\big| \det d\Phi\big|.
\eeaa
Hence if $u$ is a solution of \eqref{eq:Euler-K},  with $K=K(g) $,  then so   is     $ u_\Phi=u\circ \Phi+ \frac 1 2 \log \big|\det d\Phi \big| $  with $K$ replaced by $K\circ \Phi$, as $K\big(\Phi^\# g\big) = K(g)\circ \Phi$.
\end{proof}

%%%%%%%%%%%%%%%%%%%%%%%%%%%%%%%%% 
 
 \subsection{Conformal isometries of $\SSS^2$ and  the M\"obius group}
 \lab{sec:conformalisometrymoebiusgroup}
 
%%%%%%%%%%%%%%%%%%%%%%%%%%%%%%%%% 
 
 We represent the standard sphere $\SSS^2$ as $\big\{ x\in\RRR^3, |x|^2=1\big\}$. Let  $N=(0,0, 1)$  denote the north pole of $\SSS^2$.
 Through the stereographic projection  from the North pole to the   equatorial plane   plane $(x^1, x^2)$  we consider the complex coordinate
 \bea
 z=\frac{x^1+i x^2}{1-x^3}
 \eea
 with the inverse transformation
 \bea\lab{eq:inversestereographicprojectionformula}
 x^1= \frac{2}{1+|z|^2} \Re z,\quad    x^2 =\frac{2}{1+|z|^2} \Im z, \quad x^3 =\frac{|z|^2-1}{|z|^2+1}
 \eea
  and pull-back of the standard    metric $\ga_0$ on $\SSS^2$
  \bea
  \lab{standard sphere-z} 
  4 \big(1+ |z|^2\big)^{-2} |dz|^2. 
  \eea
  
 The   conformal isometry group $\M$ of $\SSS^2$ consists in fact of M\"obius transforms and conjugation of M\"obius transforms, see for example Theorem 18.10.4 and section 18.10.2.4 in \cite{Berger}, and can thus be identified with $SL(2,\CCC)$,
 \bea
 z\to \frac{az+b}{cz+d},\qquad z\to \frac{a\ov{z}+b}{c\ov{z}+d},\qquad ad-bc=1, \quad a,b, c, d\in \CCC.
 \eea
The particular case of M\"obius transforms where $d=a^{-1}>0$ and $b=c=0$ will play an important role. Given $t>0$ and a point $p\in \SSS^2$,  we can choose coordinates  such that $p$ is at  the north pole and obtain scale transformations defined by
 \bea\lab{eq:defintionscaletranformations}
 \Phi_{p, t} z= tz.
 \eea

%%%%%%%%%%%%%%%%%%%%%%%%%%%%%%%%%
 
 \subsection{Onofri  functional and inequality}

%%%%%%%%%%%%%%%%%%%%%%%%%%%%%%%%%

 Following \cite{Onofri},  as well as \cite{CY1} \cite{CY2},   we introduce  the  functional
 \bea
 \bsplit
 S[u]&=\frac{1}{4\pi} \left( \int_{\SSS^2}  |\nab u|^2+ 2 \int_{\SSS^2} u\right).
 \end{split}
 \eea

\begin{lemma}
\lab{eq:Gageforu}
The functional $S$     is invariant with respect to the transformations $ u\to u_\Phi$, i.e. 
\bea
S[u_\Phi]=S[u].
\eea
\end{lemma}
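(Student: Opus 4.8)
The plan is to verify directly that the functional $S[u]=\frac{1}{4\pi}\left(\int_{\SSS^2}|\nab u|^2+2\int_{\SSS^2}u\right)$ is unchanged when $u$ is replaced by $u_\Phi=u\circ\Phi+\frac12\log|\det d\Phi|$ for $\Phi\in\M$. Write $v=u_\Phi$ and $w=\frac12\log|\det d\Phi|$, so $v=u\circ\Phi+w$. By Remark \ref{rem:remarkonequationEuler-K=1}, $w$ is exactly the conformal factor of $\Phi$, namely $\Phi^\#\ga_0=e^{2w}\ga_0$, and $w$ satisfies $\lap_0 w+e^{2w}=1$. I would split $S[v]$ using $|\nab v|^2=|\nab(u\circ\Phi)|^2+2\nab(u\circ\Phi)\c\nab w+|\nab w|^2$ and treat the three resulting Dirichlet-type integrals plus the two linear terms $\int u\circ\Phi$ and $\int w$ separately.

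The first key step is the change of variables for the purely $u\circ\Phi$ part: since $\Phi$ is a conformal diffeomorphism, $\int_{\SSS^2}|\nab_{\ga_0}(u\circ\Phi)|^2\,dA_{\ga_0}=\int_{\SSS^2}|\nab_{\Phi^\#\ga_0}(u\circ\Phi)|^2\,dA_{\Phi^\#\ga_0}$ — the Dirichlet energy is conformally invariant in dimension $2$, so the $e^{2w}$ factors from metric and volume element cancel — and this equals $\int_{\SSS^2}|\nab_{\ga_0}u|^2\,dA_{\ga_0}$ by pulling back. So the leading quadratic term is already invariant. For the linear term, $\int_{\SSS^2}u\circ\Phi\,dA_{\ga_0}$ is not obviously $\int u$, but $\int_{\SSS^2}u\circ\Phi\,dA_{\Phi^\#\ga_0}=\int_{\SSS^2}u\,dA_{\ga_0}$ and $dA_{\Phi^\#\ga_0}=e^{2w}dA_{\ga_0}$, hence $\int_{\SSS^2}u\circ\Phi\,e^{2w}=\int_{\SSS^2}u$. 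Thus the genuine new contributions to $S[v]-S[u]$ are: the cross term $2\int\nab(u\circ\Phi)\c\nab w$, the term $\int|\nab w|^2$, the discrepancy $\int u\circ\Phi-\int u\circ\Phi\,e^{2w}=\int u\circ\Phi\,(1-e^{2w})$, and $2\int w$.

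The main obstacle — really the heart of the computation — is showing these leftover pieces sum to zero. The trick is to use the equation $\lap_0 w=1-e^{2w}$ for $w$: integrating by parts, $\int\nab(u\circ\Phi)\c\nab w=-\int (u\circ\Phi)\lap_0 w=-\int(u\circ\Phi)(1-e^{2w})=\int(u\circ\Phi)(e^{2w}-1)$, which cancels the discrepancy term (with the factors of $2$ matching). What remains is $\int|\nab w|^2+2\int w=0$; but integrating by parts $\int|\nab w|^2=-\int w\lap_0 w=-\int w(1-e^{2w})=\int w(e^{2w}-1)$, so we need $\int w\,e^{2w}-\int w+2\int w=\int w\,e^{2w}+\int w=0$, i.e. $\int w(e^{2w}+1)=0$. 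This last identity is precisely the statement that $w=\frac12\log|\det d\Phi|$ (the conformal factor of a conformal map of $\SSS^2$) has $S[w]=0$ in a suitable normalization — equivalently it follows by applying the change-of-variables argument to the constant function $u\equiv 0$ (for which $u_\Phi=w$) and invoking $S[0]=0$, or can be checked directly. I would handle it by noting that for $u\equiv0$, $v=w$, and the computation above reduces to $S[w]=\frac{1}{4\pi}\int w(e^{2w}+1)$; since the group $\M$ is connected (it is $SL(2,\CCC)$ modulo center, together with conjugation, and the identity component suffices by a density/continuity argument) and $S[w]=0$ when $\Phi=\mathrm{Id}$, one shows $\frac{d}{ds}S[w_{\Phi_s}]=0$ along any one-parameter subgroup using the same integration-by-parts identities, whence $\int w(e^{2w}+1)=0$ for all $\Phi\in\M$. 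Assembling the cancellations then gives $S[u_\Phi]=S[u]$.
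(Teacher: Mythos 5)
The paper's proof of this lemma is just a citation to Onofri \cite{Onofri}, so your direct computation is a genuinely different, self-contained route. The decomposition of $S[u_\Phi]$, the conformal invariance of the Dirichlet energy in two dimensions, the change-of-variables identity $\int_{\SSS^2} u\circ\Phi\, e^{2w}=\int_{\SSS^2} u$, and the integrations by parts using $\lap_0 w = 1-e^{2w}$ are all carried out correctly, and the reduction to the single scalar identity $\int_{\SSS^2} w(e^{2w}+1)=0$ (equivalently $S[w]=0$) is right.

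The weak point is the very last step. Your first suggested route for $S[w]=0$ --- apply the change-of-variables argument to $u\equiv 0$ and invoke $S[0]=0$ --- is circular: applied to $u\equiv 0$, that argument returns exactly $S[w]-S[0]=\frac{1}{4\pi}\int_{\SSS^2} w(e^{2w}+1)$, i.e.\ the identity you are trying to prove. The differential route along a one-parameter family is the right idea, but ``the same integration-by-parts identities'' does not name the crucial extra input, which is the area identity $\int_{\SSS^2} e^{2w_s}=4\pi$ for every $s$ (you established it implicitly, taking $u\equiv 1$ in your change of variables, but did not invoke it here). Differentiating it in $s$ gives $\int \dot w_s\, e^{2w_s}=0$, while integrating by parts with $\lap_0 w_s=1-e^{2w_s}$ gives $\frac{d}{ds}S[w_s]=\frac{1}{2\pi}\int \dot w_s\, e^{2w_s}$; combining, $\frac{d}{ds}S[w_s]=0$, so $S[w_s]\equiv S[w_0]=0$. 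Lemma \ref{lemma:decompositionofhomographies} then reduces a general $\Phi\in\M$ to the family $\Phi_{N,t}$ modulo isometries of $\SSS^2$, under which $S$ is manifestly invariant, closing the argument. A shorter alternative finish: the conformal factor of $\Phi^{-1}$ is $-w\circ\Phi^{-1}$, and the same change of variable shows $S[w_{\Phi^{-1}}]=-S[w_{\Phi}]$; since $\Phi_{N,t}^{-1}=\Phi_{N,1/t}$ is conjugate to $\Phi_{N,t}$ by the isometry $z\mapsto 1/z$, we also have $S[w_{\Phi_{N,1/t}}]=S[w_{\Phi_{N,t}}]$, hence $S[w_{\Phi_{N,t}}]=-S[w_{\Phi_{N,t}}]=0$.
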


\begin{proof}
See \cite{Onofri}.
\end{proof}

\begin{definition}
We define the center of mass of  $e^{2u}$  to be
\bea
CM[e^{2u}]&=&\frac{\int_{\SSS^2}  x  e^{2u} }{\int_{\SSS^2} e^{2u}} 
\eea
where $x=(x^1, x^2, x^3)$ on the sphere $\SSS^2$.  
Also, we define  the spaces of functions
\bea
\bsplit
\SS:=&\Big\{ u\in H^1(\SSS^2)\,\,\textrm{ such that }\,\,  CM[e^{2u}]=0\Big\}, \\
\SS_0:=&\left\{ u\in \SS \,\,\textrm{ such that }\,\,  \int_{\SSS^2} e^{2u}=4\pi\right\}. 
\end{split}
\eea
\end{definition}

\begin{remark}\lab{rem:ifuiscenteredthensoisucricO}
If $u$ belongs to $\SS$ (resp. $\SS_0$), then $u\circ O$ also belongs to $\SS$ (resp. $\SS_0$) for any $O\in O(3)$. 
\end{remark}

\begin{lemma}
\lab{le:renormalization}
Given $u$ a smooth function on $\SSS^2$ there exists  a   conformal transformation  $\Phi$ such that  $u_\Phi\in \SS$, i.e. $\int_{\SSS^2}  e^{2 u_\Phi} x =0$.
\end{lemma}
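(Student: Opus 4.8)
The plan is to exhibit an explicit one–parameter family of conformal transformations, act on $u$ with it, and find a member of the family for which the center of mass of $e^{2u_\Phi}$ vanishes, by a topological/degree argument. First I would recall that, by the classification in section \ref{sec:conformalisometrymoebiusgroup}, for each point $p\in\SSS^2$ and each $t>0$ we have the scale transformation $\Phi_{p,t}$ of \eqref{eq:defintionscaletranformations}, and that $\Phi_{p,t}^\#\ga_0=e^{2w_{p,t}}\ga_0$ with $w_{p,t}=\tfrac12\log|\det d\Phi_{p,t}|$, so that
\bea
u_{\Phi_{p,t}}=u\circ\Phi_{p,t}+\tfrac12\log|\det d\Phi_{p,t}|
\eea
is again of the relevant form and $e^{2u_{\Phi_{p,t}}}\ga_0=\Phi_{p,t}^\#(e^{2u}\ga_0)$. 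The key point is that $\int_{\SSS^2}e^{2u_{\Phi_{p,t}}}x$ equals $\int_{\SSS^2}\Phi_{p,t}(x)\,e^{2u}$ up to normalization, i.e. pushing the measure $e^{2u}\,dA_{\ga_0}$ forward by the conformal map and then computing its Euclidean center of mass in $\RRR^3$.

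Next I would set up the standard continuity argument. Parametrize the family by the open unit ball $B^3\subset\RRR^3$: to $\xi\in B^3$ with $\xi=s\,p$, $p\in\SSS^2$, $s\in[0,1)$, associate the conformal dilation $\Phi_\xi$ fixing $\pm p$ that ``moves mass toward $p$'' (for $\xi=0$ this is the identity), built from $\Phi_{p,t}$ with $t=t(s)\to 0$ or $\infty$ as $s\to1$. Define
\bea
\Theta(\xi)=CM\big[e^{2u_{\Phi_\xi}}\big]=\frac{\int_{\SSS^2}\Phi_\xi(x)\,e^{2u}}{\int_{\SSS^2}e^{2u}}\in \ov{B^3}.
\eea
One checks $\Theta(0)=CM[e^{2u}]$ and, crucially, that as $\xi\to q\in\SSS^2=\pr B^3$ the pushed-forward measure concentrates at the fixed point of the dilation, so $\Theta(\xi)\to q$. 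Thus $\Theta$ extends continuously to $\ov{B^3}\to\ov{B^3}$ and restricts to the identity on the boundary sphere. A degree-theory argument (the identity map of $S^2$ is not null-homotopic, so a continuous self-map of $\ov{B^3}$ agreeing with the identity on $\pr B^3$ is surjective) then forces $0\in\Theta(\ov{B^3})$, i.e. there is $\xi_0$ with $\Theta(\xi_0)=0$; taking $\Phi=\Phi_{\xi_0}$ gives $\int_{\SSS^2}e^{2u_\Phi}x=0$, which is exactly $u_\Phi\in\SS$.

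The main obstacle is the boundary behavior: one must verify carefully that the family of conformal dilations is set up so that $\Theta$ genuinely extends continuously up to $\pr B^3$ with boundary values the identity, which requires a uniform-concentration estimate for $(\Phi_\xi)_\#(e^{2u}\,dA)$ as $\xi\to\pr B^3$ — here the smoothness (hence boundedness away from $0$ and $\infty$ of $e^{2u}$) of $u$ on the compact sphere is what makes the concentration uniform in the direction $p$. Once that is in hand the degree argument is routine. I would also remark that this only produces one such $\Phi$; uniqueness up to $O(3)$ (the refinement used later) is not claimed here and would be argued separately via the Onofri functional and its gradient flow, as in \cite{Onofri}, \cite{CY1}, \cite{CY2}.
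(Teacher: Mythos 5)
The paper does not actually prove this lemma; its ``proof'' is a citation to \cite{Onofri}, Proposition~2.2 of \cite{CY1}, and Lemma~2 of \cite{Chang}. The degree-theoretic argument you propose is precisely the standard one found in those references, and your write-up is essentially correct: parametrize the dilations of $\M$ (modulo rotations) by the open $3$-ball, observe that the center-of-mass map is continuous on the ball and tends to the boundary as the dilations blow up, and conclude $0$ is attained because a continuous self-map of $\ov{B^3}$ whose boundary restriction has nonzero degree must hit $0$. You also rightly flag the only genuine technical content (uniform concentration of the conformally pushed-forward measure as $\xi\to\pr B^3$, which is where smoothness of $u$ enters) and correctly separate off the uniqueness-up-to-$O(3)$ statement, which in the paper is Lemma~\ref{lemma:improved-renormalization:bis}.

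One small slip worth correcting: you write $CM\big[e^{2u_{\Phi_\xi}}\big]=\big(\int_{\SSS^2}\Phi_\xi(x)\,e^{2u}\big)/\big(\int_{\SSS^2}e^{2u}\big)$, but the change of variables
$$
\int_{\SSS^2} e^{2u_{\Phi}}\,x\,da_{\ga_0}=\int_{\SSS^2}e^{2u\circ\Phi}\,|\det d\Phi|\,x\,da_{\ga_0}=\int_{\SSS^2}e^{2u}\,\big(\Phi^{-1}\big)\,da_{\ga_0}
$$
produces $\Phi_\xi^{-1}$, not $\Phi_\xi$, in the integrand (compare the computation in the paper's proof of Lemma~\ref{lemma:improved-renormalization:bis}). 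With the natural parametrization this makes the boundary map of $\Theta$ the antipodal map rather than the identity. This changes nothing in the degree argument, since $\pm\mathrm{Id}$ both have nonzero degree and one only needs $\Theta|_{\pr B^3}$ to be non-nullhomotopic in $\RRR^3\setminus\{0\}$; alternatively one can absorb the inverse by redefining the family $\Phi_\xi\mapsto\Phi_\xi^{-1}$, since that family also exhausts the dilations. Either way the conclusion $\int_{\SSS^2}e^{2u_\Phi}x=0$ for some conformal $\Phi$ follows. So the proof is sound once this bookkeeping is made consistent.
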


\begin{proof}
See \cite{Onofri}, as well as Proposition 2.2. in \cite{CY1} and Lemma 2 in \cite{Chang}.
\end{proof}

\begin{remark}
In fact, $\Phi$ in Lemma \ref{le:renormalization} is unique up to isometries of $\SSS^2$, see Lemma  \ref{lemma:improved-renormalization:bis}.
\end{remark}

\begin{proposition}[Onofri]
\lab{Thm:Onofri}
Given $f\in H^1(\SSS^2) $ we have,
\bea
\lab{eq:Thm.Onofri}
\log\left(\int_{\SSS^2} e^{2u}\right) &\le S[u]
\eea
with equality iff  $u=\frac 1 2 \log |\det \Phi| $ for some conformal $\Phi\in \M$.
\end{proposition}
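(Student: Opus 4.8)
The plan is to exploit the conformal invariance of the Onofri functional $S$ to reduce \eqref{eq:Thm.Onofri} to a sharp lower bound on the normalized class $\SS_0$, and then to obtain that bound by a variational argument in which the centering constraint restores the compactness that is missing in the bare Moser--Trudinger inequality; the equality statement will then drop out of the Euler--Lagrange equation together with the classification, recorded in the Lemma following Remark \ref{rem:remarkonequationEuler-K=1}, of the solutions of $\lap_0 u+e^{2u}=1$. To set up the reduction, for $\Phi\in\M$ put $u_\Phi=u\circ\Phi+\frac12\log|\det d\Phi|$. By Lemma \ref{eq:Gageforu} one has $S[u_\Phi]=S[u]$, while $e^{2u_\Phi}=(e^{2u}\circ\Phi)\,|\det d\Phi|$ and the change of variables formula give $\int_{\SSS^2}e^{2u_\Phi}=\int_{\SSS^2}e^{2u}$, so the difference of the two sides of \eqref{eq:Thm.Onofri} is unchanged under $u\mapsto u_\Phi$; it is likewise unchanged under $u\mapsto u+c$ for a constant $c$. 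By Lemma \ref{le:renormalization} we may therefore assume $CM[e^{2u}]=0$, and after a further additive constant that $\int_{\SSS^2}e^{2u}=4\pi$, i.e. $u\in\SS_0$; it then suffices to prove \eqref{eq:Thm.Onofri} on $\SS_0$, where it amounts to $S[u]\ge S[0]$.

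On $\SS_0$, Jensen applied to $\frac1{4\pi}\int e^{2u}=1$ gives $\int_{\SSS^2}u\le 0$, and the non-sharp Moser--Trudinger inequality gives $S[u]\ge -C_0$ uniformly, so $m:=\inf_{\SS_0}S$ is finite. The decisive point — the analytic heart of the statement, and the content of the works of Onofri \cite{Onofri} and Chang--Yang \cite{CY1}, \cite{CY2} — is that the centering condition $CM[e^{2u}]=0$ rules out the concentrating families $\frac12\log|\det d\Phi_{p,t}|$ (with $t\to0,\infty$) that saturate Moser--Trudinger, so a minimizing sequence is compact and $m$ is attained at some $u_*\in\SS_0$. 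A Lagrange multiplier computation for the constraint $\frac1{4\pi}\int e^{2u}=1$ (the multipliers of the centering constraints vanishing by symmetry at the minimizer) yields $\lap_0 u_*+\la\,e^{2u_*}=1$, and integrating over $\SSS^2$ forces $\la=1$. Thus $u_*$ solves \eqref{eq:Euler-K=1}, so by the Lemma following Remark \ref{rem:remarkonequationEuler-K=1} we have $u_*=\frac12\log|\det d\Phi_*|$ for some $\Phi_*\in\M$; applying the invariance of the difference with $\Phi_*^{-1}$, for which $(u_*)_{\Phi_*^{-1}}\equiv 0$, gives that the difference at $u_*$ is that at $0$, whence $m=S[0]$. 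Undoing the reduction of the first paragraph then proves \eqref{eq:Thm.Onofri}.

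For the equality case, suppose equality holds in \eqref{eq:Thm.Onofri}; normalizing as above to $\tilde u\in\SS_0$ leaves the difference unchanged, so $\tilde u$ realizes $m$, hence $\tilde u=\frac12\log|\det d\Phi_*|$ for some $\Phi_*\in\M$ by the previous paragraph, and unwinding the substitution expresses $u$, up to an additive constant, as $\frac12\log|\det d\Phi|$ for a conformal $\Phi$ — which is the asserted characterization, both sides of \eqref{eq:Thm.Onofri} being insensitive to that constant. Conversely, if $u=\frac12\log|\det d\Phi|$ with $\Phi\in\M$, then $u_{\Phi^{-1}}\equiv 0$ and equality follows from the invariance established at the outset. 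The single genuine obstacle in this scheme is the attainment of $m$ on $\SS_0$ — equivalently, the upgrade of the subcritical Moser--Trudinger bound to the sharp constant — which is precisely what the centering normalization from Lemma \ref{le:renormalization} makes possible, and carrying it out rigorously is where the arguments of Onofri and of Chang--Yang are needed; everything else is bookkeeping.
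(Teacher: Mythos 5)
The paper does not prove this proposition: its ``proof'' is the single line ``See \cite{Onofri},'' so there is no paper argument for your attempt to track. Your sketch is a reasonable high-level outline of the classical variational approach (reduce by conformal invariance and Lemma \ref{le:renormalization} to the centered, volume-normalized class $\SS_0$; minimize $S$ there; read off the Euler--Lagrange equation and classify minimizers via the Lemma following Remark \ref{rem:remarkonequationEuler-K=1}), and you are candid that the attainment of the infimum on $\SS_0$ — the genuine analytic content — is exactly what the works of Onofri and Chang--Yang supply. To that extent your proposal, like the paper's, ultimately defers the hard step to the literature.

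Two points deserve more care. First, the parenthetical ``the multipliers of the centering constraints vanishing by symmetry at the minimizer'' is not a justification. A priori the minimizer of $S$ over $\SS_0$ satisfies $\lap_0 u_*+(1+\mu\cdot x)\,e^{2u_*}=1$ with $\mu\in\RRR^3$ the multiplier of the constraints $\int_{\SSS^2} x^i e^{2u}=0$; one must then prove $\mu=0$. The standard route is a Kazdan--Warner-type identity: pairing the equation with $\nabla x^j$ and integrating by parts yields, for the matrix $A_{ij}=\int_{\SSS^2} x^i x^j e^{2u_*}$, the relation $4\pi\,\mu=A\mu$; since $A$ is symmetric, positive definite, and has trace $\int e^{2u_*}=4\pi$, its eigenvalues are strictly less than $4\pi$, forcing $\mu=0$. ``By symmetry'' is not a substitute for this argument, because the minimizer is not known to have any symmetry in advance. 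Second, a small but consequential normalization issue: as printed, \eqref{eq:Thm.Onofri} is missing a factor $\frac{1}{4\pi}$ (the correct statement is $\log\!\big(\tfrac{1}{4\pi}\int_{\SSS^2}e^{2u}\big)\le S[u]$; at $u\equiv 0$ the printed version reads $\log 4\pi\le 0$). Your reduction to $S[u]\ge S[0]=0$ on $\SS_0$ implicitly and correctly uses the $\tfrac{1}{4\pi}$-normalized form, but it is worth saying so explicitly, since otherwise the step ``on $\SS_0$ the inequality amounts to $S[u]\ge S[0]$'' does not match the displayed inequality.
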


\begin{proof}
See \cite{Onofri}.
\end{proof}

%%%%%%%%%%%%%%%%%%%%%%%%%%%%%%%%%

\subsection{Improved Onofri inequality and applications}

%%%%%%%%%%%%%%%%%%%%%%%%%%%%%%%%%

The following is Proposition B in \cite{CY1} which improves  the Onofri inequality  for  $u\in \SS$.
\begin{proposition}
\lab{PropChang-:propB}
There exists an $a<1$ such that for all  $u\in \SS$
\bea
\frac{1}{4\pi}\int e^{2u}\le \exp{\left(a\frac{1}{4\pi}\int |\nab u|^2+ 2 \frac{1}{4\pi}\int u \right)}.
\eea
\end{proposition}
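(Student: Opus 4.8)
The plan is to deduce the improved inequality for $u\in\SS$ from the classical Onofri inequality (Proposition \ref{Thm:Onofri}) together with a spectral gap argument, exploiting the fact that the constraint $CM[e^{2u}]=0$ forces the ``$\ell=1$ part'' of $u$ to be quadratically small. First I would split $u=\ov u+u_1+u_{\geq2}$ into its projections onto the constant, $\ell=1$, and $\ell\geq2$ spherical harmonics on $\SSS^2$. The key point is that the centering condition $\int_{\SSS^2}x^i e^{2u}=0$, expanded around $u=0$, gives $\int_{\SSS^2}x^i u\,d\mu=O(\|u\|^2)$ (and in general a relation controlling $\|u_1\|$ by the nonlinear terms), so on the constraint set $\SS$ the dangerous low mode is under control. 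Since $\lap_0$ has eigenvalue $2$ on the $\ell=1$ space and eigenvalue $\geq 6$ on $\ell\geq 2$, removing the $\ell=1$ direction improves the Poincaré-type constant entering Onofri's argument, which is exactly the source of the gain $a<1$.

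The key steps, in order, are: (1) recall/establish the variational characterization — by Proposition \ref{Thm:Onofri} the functional $J[u]:=S[u]-\log(\tfrac{1}{4\pi}\int_{\SSS^2}e^{2u})$ is nonnegative on $H^1(\SSS^2)$, with equality exactly on $\{u=\tfrac12\log|\det d\Phi|:\Phi\in\M\}$; (2) restrict $J$ to $\SS$ and show $\inf_{\SS}J=:\delta>0$, i.e. the only minimizers of $J$ on all of $H^1$, namely $u=\tfrac12\log|\det d\Phi|$, do not lie in $\SS$ unless $\Phi$ is an isometry — but then $u\equiv0$, which is a strict interior point where one checks the second variation is strictly positive in the directions transverse to the $\M$-orbit; (3) upgrade $\inf_{\SS}J>0$ to the scale-invariant statement $\tfrac{1}{4\pi}\int e^{2u}\leq \exp(a\,S_{\mathrm{Dir}}[u]+2\ov u)$ with $a<1$, using the scaling $u\mapsto u_\lambda$ (replacing $u$ by $u+c$ and by $u\circ\Phi_{p,t}+\tfrac12\log|\det d\Phi_{p,t}|$, under which both sides transform compatibly by Lemma \ref{eq:Gageforu} and Remark \ref{rem:ifuiscenteredthensoisucricO}), so that a uniform gap $\delta$ translates into a multiplicative improvement $a=a(\delta)<1$ on the Dirichlet term. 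I would carry this out by a compactness/concentration-compactness argument: take a minimizing sequence for $J$ on $\SS_0$, rule out concentration at a point (which is precisely prevented by the centering constraint, since a bubble concentrating at $p$ would have $CM[e^{2u}]\to p\neq 0$), conclude strong $H^1$ convergence to a genuine minimizer $u_*\in\SS_0$, and then $J[u_*]>0$ because $u_*$ cannot be of the form $\tfrac12\log|\det d\Phi|$ with the centering condition unless it vanishes identically, at which point $J[0]=0$ but the Hessian of $J$ at $0$ restricted to $\SS$ is positive definite — contradiction with $u_*$ being a minimizer on the constraint set at level $0$. Hence $\delta>0$.

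The main obstacle I expect is step (2)–(3): showing that the infimum of $J$ over the constraint set is \emph{strictly} positive, i.e. not attained in the limit by a degenerating conformal factor. This is genuinely the crux, because the Onofri functional is $\M$-invariant and its minimizers form a noncompact family (the scale transformations $\Phi_{p,t}$ of \eqref{eq:defintionscaletranformations} with $t\to 0$ or $t\to\infty$); one must show that the centering normalization $CM[e^{2u}]=0$ is a slice that intersects this orbit only at the round point, and that near that point the transverse Hessian is coercive. The second-variation computation is where the numbers $2$ (the $\ell=1$ eigenvalue) and $6$ (the $\ell\geq2$ eigenvalue) enter: at $u=0$, $J''[0](v,v)=\tfrac{1}{4\pi}\int|\nab v|^2-\tfrac{2}{4\pi}\int v^2$ (after normalizing $\int v=0$), which vanishes exactly on the $\ell=1$ modes — but those are precisely the directions killed by the linearized centering constraint, so on the constraint tangent space $J''[0]$ is bounded below by a positive multiple of $\|v\|_{H^1}^2$. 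Combining this local coercivity with the compactness at infinity yields the uniform gap, and rescaling yields the stated $a<1$.
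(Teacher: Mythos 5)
First, note that the paper itself gives no proof of this proposition: it is stated as a quotation of Proposition~B of Chang--Yang \cite{CY1}, so there is nothing internal to compare against. What you propose is a from-scratch proof, and the high-level mechanism you identify --- the centering condition $CM[e^{2u}]=0$ kills the $\ell=1$ kernel of the Hessian of the Onofri functional at $u=0$, and rules out bubbling at a point, which is exactly the source of degeneracy in Onofri's inequality --- is the right one and is consistent with what Chang--Yang actually do.

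However, steps (2) and (3) contain a genuine logical error that breaks the argument as written. You claim $\inf_{\SS}J=:\delta>0$, where $J[u]=S[u]-\log\bigl(\tfrac{1}{4\pi}\int e^{2u}\bigr)$. This is false: $u\equiv 0$ lies in $\SS$ (and in $\SS_0$) and $J[0]=0$, so $\inf_{\SS}J=0$. Your own parenthetical remark acknowledges that the Onofri minimizers in $\SS$ are exactly $u\equiv 0$, yet you then claim a contradiction; there is none, since a positive-definite transverse Hessian is perfectly compatible with a minimizer at level $0$. Because of this, the ``upgrade'' in step (3) --- turning an additive gap $\delta>0$ into a multiplicative improvement $a<1$ on the Dirichlet term --- has no input. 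Moreover the scaling you invoke for this upgrade, $u\mapsto u_\Phi=u\circ\Phi+\tfrac12\log|\det d\Phi|$, does \emph{not} preserve $\SS$ (only $\Phi\in O(3)$ does, by Lemma~\ref{lemma:improved-renormalization:bis}), so one cannot translate a uniform additive bound on the slice into a multiplicative bound this way. What one must actually prove is the scale-sensitive estimate
\beaa
J[u] \;\ge\; (1-a)\,\frac{1}{4\pi}\int_{\SSS^2}|\nab u|^2 \qquad\text{for all } u\in\SS,
\eeaa
which is precisely equivalent to the proposition (after the trivial shift $u\mapsto u+c$ reducing to $\SS_0$). Near $u=0$, your Hessian computation establishes this with constant of order $\ell(\ell+1)-2\ge 4$ on $\ell\ge 2$; far from $u=0$, a compactness argument in the \emph{ratio} $\bigl(-2\ov u\bigr)/\bigl(\tfrac{1}{4\pi}\int|\nab u|^2\bigr)$ (with $u\in\SS_0$, so $\ov u\le 0$ by Jensen) is needed, ruling out concentration via the centering constraint as you indicate. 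As it stands, your write-up never proves the ratio bound, only the (true but vacuous) statement that $J\ge 0$ with equality only at $u=0$.
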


\begin{corollary}
\lab{cor:Chang1}
Let $a<1$ as in Proposition \ref{PropChang-:propB}. If $u\in \SS_0$ then,
\bea
\frac{1}{4\pi} \int|\nab u|^2 \le (1-a)^{-1}   S[u].
\eea
\end{corollary}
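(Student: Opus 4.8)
The statement to prove is Corollary \ref{cor:Chang1}: if $u\in\SS_0$ and $a<1$ is the constant from Proposition \ref{PropChang-:propB}, then $\frac{1}{4\pi}\int|\nab u|^2\le (1-a)^{-1}S[u]$.

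The plan is to combine the improved Onofri inequality from Proposition \ref{PropChang-:propB} with the normalization $\int_{\SSS^2}e^{2u}=4\pi$ that defines $\SS_0$. First I would take logarithms in Proposition \ref{PropChang-:propB}. Since $u\in\SS_0\subset\SS$, the proposition applies and gives
\bea
\log\left(\frac{1}{4\pi}\int e^{2u}\right)\le a\frac{1}{4\pi}\int|\nab u|^2+2\frac{1}{4\pi}\int u.
\eea
Now use $\int e^{2u}=4\pi$, so the left-hand side is $\log 1=0$. This yields
\bea
0\le a\frac{1}{4\pi}\int|\nab u|^2+2\frac{1}{4\pi}\int u,
\eea
equivalently $-2\frac{1}{4\pi}\int u\le a\frac{1}{4\pi}\int|\nab u|^2$.

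Next I would bring in the definition of the functional $S[u]=\frac{1}{4\pi}\left(\int|\nab u|^2+2\int u\right)$, so that $2\frac{1}{4\pi}\int u=S[u]-\frac{1}{4\pi}\int|\nab u|^2$. Substituting this into the inequality above gives
\bea
-\left(S[u]-\frac{1}{4\pi}\int|\nab u|^2\right)\le a\frac{1}{4\pi}\int|\nab u|^2,
\eea
that is, $\frac{1}{4\pi}\int|\nab u|^2-S[u]\le a\frac{1}{4\pi}\int|\nab u|^2$, hence $(1-a)\frac{1}{4\pi}\int|\nab u|^2\le S[u]$. Dividing by $1-a>0$ (using $a<1$) produces the claimed bound.

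This argument is entirely elementary once Proposition \ref{PropChang-:propB} is in hand — it is just algebra plus the normalization — so there is no real obstacle; the only point requiring a line of care is that $\SS_0$ is a subset of $\SS$ so the hypothesis of Proposition \ref{PropChang-:propB} is met, and that $1-a$ is strictly positive so that the final division is legitimate. The substantive content is all contained in the improved Onofri inequality, which is quoted from \cite{CY1}.
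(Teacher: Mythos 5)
Your proof is correct and follows exactly the same route as the paper: apply Proposition \ref{PropChang-:propB}, use the normalization $\int_{\SSS^2}e^{2u}=4\pi$ to conclude $a\frac{1}{4\pi}\int|\nab u|^2+2\frac{1}{4\pi}\int u\ge 0$, then rearrange using the definition of $S[u]$.
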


\begin{proof}
Since $u\in \SS_0$, we have in view of Proposition \ref{PropChang-:propB}
\bea
1=\frac{1}{4\pi}\int e^{2u}\le \exp{\left(a\frac{1}{4\pi}\int |\nab u|^2+ 2 \frac{1}{4\pi}\int u \right)}
\eea
from which we  deduce  
\beaa
a\frac{1}{4\pi}\int|\nab u|^2+ 2 \frac{1}{4\pi}\int u &\ge& 0.
\eeaa 
By definition of $S[u]$  we infer
\beaa
(1-a)\frac{1}{4\pi}\int |\nab u|^2 =S[u]-\left( a \frac{1}{4\pi}\int |\nab u|^2 +2 \frac{1}{4\pi}\int u\right)\le S[u]
\eeaa
as desired.
\end{proof}

The following is Proposition   4.1 in \cite{CY2}.
\begin{proposition}
\lab{prop:Chang}
Let $K>0$ be  the Gauss curvature of the metric  $e^{2w} \ga_0$ on $\SSS^2$, so that  $w$ verifies \eqref{eq:Euler-K}, i.e. $\lap w +K e^{2 w} =1$.
Then $S[w] \leq C(K)$ for some constant  $C$ depending only on $\max_{\SSS^2} K$ and $\min_{\SSS^2} K$.
\end{proposition}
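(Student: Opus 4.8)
The plan is to normalize $w$ so that it is centered with $\int_{\SSS^2}e^{2w}=4\pi$, to extract from the equation an elementary identity for $S[w]$ together with the improved Onofri bound, and then to rule out loss of compactness by a concentration argument in which the centering condition is decisive. For Step~1 I would use that $S$ is invariant under $w\mapsto w_\Phi$ for conformal $\Phi$ (Lemma \ref{eq:Gageforu}), which only replaces $K$ by $K\circ\Phi$ with the same maximum and minimum, so by Lemma \ref{le:renormalization} I may assume $w\in\SS$, i.e. $\int_{\SSS^2}e^{2w}x=0$. Integrating $\lap w+Ke^{2w}=1$ gives the Gauss--Bonnet relation $\int_{\SSS^2}Ke^{2w}=4\pi$, hence $4\pi/\max K\le\int_{\SSS^2}e^{2w}\le 4\pi/\min K$; adding the constant $c=\frac12\log\big(4\pi/\!\int e^{2w}\big)$, which obeys $|c|\le\frac12\log(\max K/\min K)$, puts $w$ in $\SS_0$, changes $S[w]$ by the controlled amount $2c$, and replaces $K$ by $Ke^{-2c}$, whose extrema are still controlled by $\min K,\max K$. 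So I may assume $w\in\SS_0$.

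Next I would record the identities. By definition $4\pi S[w]=\|\nabla w\|_{L^2(\SSS^2)}^2+2\int_{\SSS^2}w$, and multiplying the equation by $w$ and integrating by parts gives $\|\nabla w\|^2=\int_{\SSS^2}Kwe^{2w}-\int_{\SSS^2}w$, so $4\pi S[w]=\int_{\SSS^2}Kwe^{2w}+\int_{\SSS^2}w$. Jensen's inequality with $\int e^{2w}=4\pi$ yields $\int w\le 0$, hence $4\pi S[w]\le\int_{\SSS^2}Kwe^{2w}$ (and $S[w]$ is bounded below by Onofri, Proposition \ref{Thm:Onofri}, so the real content is the upper bound). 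Evaluating the improved Onofri inequality (Proposition \ref{PropChang-:propB}) on $w\in\SS_0$ gives $-\int w\le\frac a2\|\nabla w\|^2$ with $a<1$, whence (Corollary \ref{cor:Chang1}) $\|\nabla w\|^2\le 4\pi(1-a)^{-1}S[w]$. It then remains to bound $\int_{\SSS^2}Kwe^{2w}$ from above, and here I do not expect a purely variational estimate to close: splitting $w=w_+-w_-$ and using $t\le C_\delta e^{\delta t}$ with the sharp Moser--Trudinger (Onofri) inequality only gives $S[w]\les C(\max K)\exp(\theta S[w])$ with $\theta=\frac{(2+\delta)^2}{4(1-a)}>1$, which is vacuous, while the Jensen-for-$\log$ bound $\int_{\SSS^2}Kwe^{2w}\les 2\pi\log(\max K)+2\|\nabla w\|^2$ loses a constant $2>1-a$ on $\|\nabla w\|^2$.

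So I would argue by contradiction and compactness: suppose $w_n$ solve $\lap w_n+K_ne^{2w_n}=1$ with $\min K_n,\max K_n$ held fixed, $w_n\in\SS_0$, but $S[w_n]\to\infty$. Then the identity above and $\int w_n\le 0$ force $\|\nabla w_n\|^2\to\infty$, so the probability measures $\frac1{4\pi}e^{2w_n}\,dv_{\ga_0}$ lose compactness (the alternatives $W^{2,p}$-boundedness and $w_n\to-\infty$ uniformly being excluded, the latter by $\int e^{2w_n}=4\pi$). By the Brezis--Merle alternative and Li--Shafrir quantization for the Liouville-type equation $\lap w_n=1-K_ne^{2w_n}$, along a subsequence $e^{2w_n}\,dv_{\ga_0}\rightharpoonup\sum_j\beta_j\delta_{x_j}$ for finitely many $x_j\in\SSS^2$ with $\sum_j\beta_j=4\pi$ and each $\beta_j$ bounded below by a universal constant. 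The centering $w_n\in\SS_0$ forces $\sum_j\beta_j x_j=0$: a single bubble is then impossible, since its mass would concentrate at one $x_1\in\SSS^2$ and give center of mass $x_1\ne 0$, while a multi-bubble profile should be ruled out by comparing the quantized Dirichlet energy carried near the $x_j$ with $\|\nabla w_n\|^2\le 4\pi(1-a)^{-1}S[w_n]$. \emph{The hard part is exactly this last quantization step: showing that every concentration profile compatible with vanishing center of mass is excluded by the strict improvement $a<1$ in Proposition \ref{PropChang-:propB}.} Once compactness is restored, $S[w]$ is bounded in terms of $\min K$ and $\max K$ only, and undoing the shift of the first step gives $S[w]\le C(K)$.
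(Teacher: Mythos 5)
Your normalization to $\SS_0$, the identities $4\pi S[w]=\int K w e^{2w}+\int w$ and $\int w\le 0$, and the deduction $\|\nabla w\|^2\le 4\pi(1-a)^{-1}S[w]$ from Corollary \ref{cor:Chang1} are all correct, and your quantitative demonstration that the obvious variational bootstraps fail — the exponent $\frac{(2+\delta)^2}{4(1-a)}>1$ in the Moser--Trudinger route, the loss of a factor $2>1-a$ in the Jensen route — is a genuine and useful observation. The paper's own ``proof'' only points to Lemmas \ref{le:renormalization} and \ref{eq:Gageforu}, mentions using the equation together with ``an estimate satisfied by functions in $\SS$'', and then refers to \cite{CY2} for details; so the normalization step is exactly what the paper does, and a concentration--compactness argument is indeed the natural way to read the reference, consistent with your proposal.

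That said, your last step has a real gap, which you yourself flag. Writing the equation as $\lap(2w_n)+2K_n e^{2w_n}=2$, the Brezis--Merle alternative gives concentration masses $\alpha_j\ge 4\pi$ for $2K_n e^{2w_n}$, and Gauss--Bonnet gives $\sum_j\alpha_j=\int 2K_ne^{2w_n}=8\pi$, so \emph{a priori} two bubbles are allowed. The centering condition does not exclude two bubbles: a limit $K_n e^{2w_n}\rightharpoonup 2\pi\delta_{p_1}+2\pi\delta_{p_2}$ with $p_1,p_2$ antipodal and $K(p_1)=K(p_2)$ has vanishing center of mass, so the obstruction must come from mass quantization. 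Li--Shafrir quantization ($\alpha_j\in 8\pi\mathbb{N}$, hence one bubble, which centering then kills) would close the argument, but it requires $V_n=2K_n\to V$ in $C^0_{\mathrm{loc}}$, and here $K_n$ is only controlled through $\max K_n,\min K_n$; no subsequential $C^0$ limit is available. Your alternative of ``comparing the quantized Dirichlet energy near the $x_j$ with $\|\nabla w_n\|^2\le 4\pi(1-a)^{-1}S[w_n]$'' is not spelled out — both sides go to $+\infty$, so it is not clear what comparison is intended — and as written does not rule out the two-bubble scenario. So the multi-bubble exclusion, which you correctly identify as the crux, remains open in your proposal. (Two small additional remarks: your bound $|c|\le\frac12\log(\max K/\min K)$ for the volume normalization constant is not quite right when $\min K>1$ or $\max K<1$, though $|c|$ is in any case controlled by $\max K$ and $\min K$; and for the application in Corollary \ref{corr:Alice}, where $\|K-1\|_\infty\to0$, $K_n\to 1$ uniformly and Li--Shafrir \emph{does} apply, so your argument would close in that perturbative regime even if the general case needs more care.)
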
 

\begin{proof}
In view of  Lemma \ref{le:renormalization}  and Lemma \ref{eq:Gageforu},  we may assume that $w$ is centered, i.e. $w\in \SS$. The proof then follows by exploiting that $w$ satisfies $\lap w +K e^{2 w} =1$, and relying on an  estimate satisfied by functions in $\SS$. See  \cite{CY2} for the details. 
\end{proof}

The following is  Corollary 4.3. in \cite{CY2}.
\begin{corollary} 
\lab{corr:Alice}
Given any  $\de>0$, there exists $\ep>0$ sufficiently small such that if  $\|K-1\|_{\infty}  \leq \ep$ then  $S[w]  \le \de$  for all  $w, K$  which verify  equation   \eqref{eq:Euler-K}.
\end{corollary}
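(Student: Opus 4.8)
The plan is to argue by contradiction, reducing the claim to a compactness property for \emph{centered} solutions of the Liouville-type equation \eqref{eq:Euler-K} on $\SSS^2$.

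First I would reduce to centered solutions normalized so that $\int_{\SSS^2}e^{2w}=4\pi$. Since $S$ is invariant under $w\mapsto w_\Phi$ by Lemma \ref{eq:Gageforu}, and since $u_\Phi$ solves \eqref{eq:Euler-K} with $K$ replaced by $K\circ\Phi$ — which obeys $\|K\circ\Phi-1\|_\infty=\|K-1\|_\infty\le\ep$ — Lemma \ref{le:renormalization} lets me assume $w\in\SS$. Integrating \eqref{eq:Euler-K} over $\SSS^2$ gives $\int_{\SSS^2}Ke^{2w}=4\pi$, hence $\int_{\SSS^2}e^{2w}\in[\frac{4\pi}{1+\ep},\frac{4\pi}{1-\ep}]$; setting $c:=\frac{1}{2}\log\frac{4\pi}{\int_{\SSS^2}e^{2w}}$ (so $|c|\les\ep$), the function $\tilde w:=w+c$ lies in $\SS_0$, solves $\lap_0\tilde w+(Ke^{-2c})e^{2\tilde w}=1$ with $\|Ke^{-2c}-1\|_\infty\les\ep$, and satisfies $S[\tilde w]=S[w]+2c$. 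Thus it suffices to show: for every $\de>0$ there is $\ep>0$ such that any $\tilde w\in\SS_0$ with $\lap_0\tilde w+\tilde Ke^{2\tilde w}=1$ and $\|\tilde K-1\|_\infty\le\ep$ has $S[\tilde w]\le\de$.

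Next I would assume this fails and extract a contradiction. Then there exist $\de_0>0$, $\tilde K_n\to 1$ in $L^\infty$, and $\tilde w_n\in\SS_0$ solving $\lap_0\tilde w_n+\tilde K_ne^{2\tilde w_n}=1$ with $S[\tilde w_n]>\de_0$. Since $\tilde K_n>0$ with $\max\tilde K_n,\min\tilde K_n$ uniformly bounded, Proposition \ref{prop:Chang} gives $S[\tilde w_n]\le C$; Corollary \ref{cor:Chang1} then bounds $\|\nab\tilde w_n\|_{L^2}$; and the inequality $a(4\pi)^{-1}\|\nab\tilde w_n\|_{L^2}^2+2(4\pi)^{-1}\int_{\SSS^2}\tilde w_n\ge 0$ from the proof of Corollary \ref{cor:Chang1}, together with $\int_{\SSS^2}\tilde w_n\le 0$ (Jensen's inequality and $\int_{\SSS^2}e^{2\tilde w_n}=4\pi$), bounds $\int_{\SSS^2}\tilde w_n$. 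Hence $\tilde w_n$ is bounded in $H^1(\SSS^2)$. Using the Moser--Trudinger inequality to control $e^{2\tilde w_n}$ in $L^p$ and then standard elliptic regularity for $\lap_0\tilde w_n=1-\tilde K_ne^{2\tilde w_n}$, one upgrades this to a uniform $C^{1,\a}$ bound, so a subsequence converges in $C^1(\SSS^2)$ to some $w_\infty$, which still lies in $\SS_0$ (the constraints pass to the uniform limit) and solves \eqref{eq:Euler-K=1}, i.e. $\lap_0 w_\infty+e^{2w_\infty}=1$.

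Finally, by the Lemma following Remark \ref{rem:remarkonequationEuler-K=1}, $w_\infty=\frac{1}{2}\log|\det d\Phi|$ for some $\Phi\in\M$; but $0\in\SS_0$ is already a centered representative of the round metric, and by the uniqueness of the renormalizing conformal map up to isometries (the Remark after Lemma \ref{le:renormalization}, i.e. Lemma \ref{lemma:improved-renormalization:bis}) together with $|\det dO|=1$ for $O\in O(3)$, it is the only one; hence $w_\infty\equiv 0$. Then $C^1$ convergence forces $S[\tilde w_n]\to S[0]=0$, contradicting $S[\tilde w_n]>\de_0$, and the corollary follows. The main obstacle is this compactness step: controlling $\tilde w_n$ well enough — via Moser--Trudinger, to preclude concentration of $e^{2\tilde w_n}$, and via elliptic estimates — to obtain convergence of the full functional $S$ rather than just its weakly lower-semicontinuous part, which is where one must invoke the quantitative analysis of \cite{CY2}; note in particular that $\liminf_n S[\tilde w_n]\ge S[w_\infty]$ alone is useless, so \emph{strong} convergence is essential.
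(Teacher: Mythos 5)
Your argument is correct and follows the same route the paper indicates (a contradiction/compactness argument via Onofri invariance, Proposition \ref{prop:Chang}, and the uniqueness of the trivial centered solution of $\lap_0 w+e^{2w}=1$, the details being delegated to \cite{CY2}); you have simply supplied the $\SS_0$-normalization, the $H^1$ bound via Corollary \ref{cor:Chang1}, and the Moser--Trudinger/elliptic bootstrap needed to upgrade to strong $C^1$ convergence so that $S[\tilde w_n]\to S[0]=0$. The only point worth noting is that the shift to $\SS_0$ is a small but genuine necessity (without it one gets only an $O(\ep)$-perturbed form of Corollary \ref{cor:Chang1}), and your closing remark about the insufficiency of weak lower semicontinuity correctly identifies why the elliptic step cannot be dispensed with.
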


\begin{proof}
The proof,  based on a contradiction argument, relies on  Lemma  \ref{eq:Gageforu}, Proposition \ref{prop:Chang} and the fact that  any  centered solution of $\lap_0 w +e^{2w}=1$ must vanish identically. See  \cite{CY2} for the details.
\end{proof}

We restate the result in the following.
\begin{corollary}
\lab{corr:Alice2}
Let $g$  a metric  on $\SSS^2$   such that  $ g= e^{2u} \ga_0 $ with $u\in \SS_0$. 
 Then,  given any  $\de>0$,  there exists  $\ep>0$  sufficiently small  such that if 
 $ \| K(g)-1\|_{L^\infty} \leq \ep$,  then 
 \bea
 \| u\|_{H^1} &\leq& \de.
 \eea
\end{corollary}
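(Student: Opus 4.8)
The plan is to deduce this essentially immediately from Corollary \ref{corr:Alice} together with Corollary \ref{cor:Chang1}. First I observe that the hypothesis $g = e^{2u}\ga_0$ with $u \in \SS_0$ is precisely the normalization under which the Onofri-type estimates apply: since $u \in \SS_0 \subset \SS$, the function $u$ is centered, $\int_{\SSS^2} e^{2u} x = 0$, and moreover $\int_{\SSS^2} e^{2u} = 4\pi$. By Lemma \ref{Le:transf-Gauss} (or directly Corollary \ref{Corr:transf-Gauss} / equation \eqref{eq:Euler-K}), the pair $(u, K(g))$ satisfies $\lap_0 u + K(g) e^{2u} = 1$, so it is an admissible solution of equation \eqref{eq:Euler-K} in the sense required by Corollary \ref{corr:Alice}. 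Hence, given $\de > 0$, applying Corollary \ref{corr:Alice} with a threshold $\de'$ to be chosen, there is $\ep > 0$ such that $\|K(g) - 1\|_{L^\infty} \le \ep$ forces $S[u] \le \de'$.

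The next step is to convert the smallness of the Onofri functional $S[u]$ into smallness of $\|u\|_{H^1}$. Since $u \in \SS_0$, Corollary \ref{cor:Chang1} gives $\frac{1}{4\pi}\int |\nab u|^2 \le (1-a)^{-1} S[u] \le (1-a)^{-1}\de'$, so the Dirichlet energy of $u$ is controlled by $S[u]$. It remains to control $\|u\|_{L^2}$; here I would use the constraint $\int_{\SSS^2} e^{2u} = 4\pi$ together with a Poincaré-type argument. Writing $\bar u = \frac{1}{4\pi}\int_{\SSS^2} u$ for the mean of $u$, the Poincaré inequality on $\SSS^2$ controls $\|u - \bar u\|_{L^2}$ (and indeed $\|u-\bar u\|_{L^\infty}$ via the Moser--Trudinger/Onofri embedding) by $\|\nab u\|_{L^2}$, which is already small. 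To control $\bar u$ itself, combine $\log\big(\frac{1}{4\pi}\int e^{2u}\big) = 0$ with Jensen's inequality $\frac{1}{4\pi}\int e^{2u} \ge e^{2\bar u}$, giving $\bar u \le 0$, and with the Onofri inequality $0 = \log\big(\int e^{2u}\big) \le S[u] + \log(4\pi) - \log(4\pi)$ — more usefully, $0 = \log\big(\frac{1}{4\pi}\int e^{2(u-\bar u)}\big) + 2\bar u \le S[u - \bar u] + 2\bar u$, which bounds $\bar u$ from below by $-\frac12 S[u-\bar u]$, itself small since $S[u-\bar u] = \frac{1}{4\pi}\int|\nab u|^2$ is small. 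Thus $|\bar u|$ is small, and combining with the Poincaré bound yields $\|u\|_{L^2} \lesssim \de'$, hence $\|u\|_{H^1} \lesssim \de'$. Choosing $\de'$ small enough (depending on $\de$ and the fixed constant $a$) gives $\|u\|_{H^1} \le \de$.

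The main obstacle, such as it is, is the $L^2$ control of $u$ — the Dirichlet part is free from Corollary \ref{cor:Chang1}, but pinning down the average $\bar u$ requires genuinely using the area normalization $\int e^{2u} = 4\pi$ (this is exactly why the statement insists $u \in \SS_0$ rather than merely $u \in \SS$). I expect this to be a short quantitative argument balancing Jensen's inequality against the Onofri inequality applied to the mean-zero part $u - \bar u$, with no serious analytic difficulty once the right inequalities are lined up. Everything else is a direct quotation of the results already established above.
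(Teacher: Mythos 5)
Your proposal is correct and follows the paper's high-level structure almost exactly: invoke Corollary \ref{corr:Alice} to make $S[u]$ small, Corollary \ref{cor:Chang1} to make the Dirichlet energy small, pin down the mean $\bar u$, then close with Poincar\'e. The only genuine divergence is in how the mean is controlled. The paper combines $S[u]\le\de^2$ and $\frac{1}{4\pi}\int|\nab u|^2\les\de^2$ directly to conclude $|\int u|\les\de$; the upper bound on $\int u$ is immediate from $S[u]\le\de^2$ and $\int|\nab u|^2\geq 0$, while the lower bound implicitly uses $S[u]\geq 0$ on $\SS_0$, which is a consequence of the inequality $a\frac{1}{4\pi}\int|\nab u|^2+\frac{1}{2\pi}\int u\geq 0$ already established inside the proof of Corollary \ref{cor:Chang1}. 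You instead pin down $\bar u$ by using the normalization $\int e^{2u}=4\pi$ twice: Jensen gives $\bar u\le 0$, and the (area-normalized) Onofri inequality applied to the mean-zero function $u-\bar u$ gives $-2\bar u\le\frac{1}{4\pi}\int|\nab u|^2$, hence $\bar u\geq -O(\de')$. This is more explicit about where the $\SS_0$ constraint enters, and it avoids reaching back into the proof of Corollary \ref{cor:Chang1} for the sign of $S[u]$, at the small cost of one extra application of the Onofri inequality. Both arguments are sound; note only that the final choice $\de'\sim\de^2$ is needed because the Dirichlet bound $\int|\nab u|^2\les\de'$ yields $\|\nab u\|_{L^2}\les\sqrt{\de'}$ — this is exactly why the paper applies Corollary \ref{corr:Alice} with threshold $\de^2$ rather than $\de$.
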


\begin{proof} 
 According to  Corollary \ref{corr:Alice}, given $\de >0$   sufficiently small, 
we can find  $\ep>0$  such that  if   $ \| K(g)-1\|_{L^\infty} \leq \ep$,  then $S[u]\le \de^2$. In view of Corollary \ref{cor:Chang1},  we deduce
\beaa
\frac{1}{4\pi}\int |\nab u|^2 &\leq& (1-a)^{-1}\de^2.
\eeaa
Together with the bound $S[u]\leq \de^2$, we infer
\beaa
\|\nabla u\|_{L^2}+\left|\int u\right| &\les& \de.
\eeaa
Together with the Poincar\'e inequality for scalars on $\SSS^2$, we infer 
\beaa
\|u\|_{H^1} &\les &  \de
\eeaa
as desired.
\end{proof}

%%%%%%%%%%%%%%%%%%%%%%%%%%%%%%%%%%

  \section{Effective uniformization for nearly round spheres}
  \lab{sec:effectiveuniformization}

%%%%%%%%%%%%%%%%%%%%%%%%%%%%%%%%%%

Let $(S, g^S)$ be a fixed sphere of  area radius $1$, i.e. $|S| =4\pi$.  The goal of this section is to obtain the following improvement of  the results reviewed in section \ref{sec:reviewstandarduniformization}. 

\begin{theorem}[Effective uniformization]
\lab{theorem-Eff.uniformalization-appendix}
Let $(S, g^S)$ be a fixed sphere with $|S| =4\pi$. There exists, up to isometries\footnote{i.e.  all  the solutions are of the form $(\Phi\circ O, u\circ O)$ for $O\in O(3)$. In particular, recall that if $u$ is centered, then so is $u\circ O$ for $O\in O(3)$, see Remark  \ref{rem:ifuiscenteredthensoisucricO}.} of $\SSS^2$,  a  unique   diffeomorphism   $\Phi:\SSS^2\to S$  and a  unique centered conformal factor $u$, i.e. $u\in\SS$, such that  $ \Phi^\#( g^S) =  e^{2u}  \ga_0$. Moreover,  under the almost round condition
\bea
\lab{eq:Almostround}
\|K^S-1\|_{L^\infty} \le \ep
\eea
where  $K^S=K(g^S)$,  the following properties are verified for sufficiently small $\ep>0$.
\begin{enumerate}
\item We have
\bea
\lab{eq:Thm.-Eff.uniformalization1-appendix}
 \| u\circ\Phi^{-1} \|_{L^\infty  (S) } \les \ep.
 \eea
 \item If in addition $\|K^S -1\|_{H^s(S)} \leq \ep$ for some $s\geq 0$, then 
 \bea
 \lab{eq:Thm.-Eff.uniformalization1-appendix:bis}
 \| u\circ \Phi^{-1} \|_{H^{2+s}(S) } \les \ep.
 \eea
\end{enumerate}
\end{theorem}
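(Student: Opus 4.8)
The plan is to combine the variational/a-priori estimates from Section \ref{sec:reviewstandarduniformization} (existence of a centered conformal factor with small $H^1$ norm) with a bootstrap elliptic regularity argument for the nonlinear equation \eqref{eq:Euler-K-Phi}, together with a uniqueness argument based on the rigidity of centered solutions of $\lap_0 w + e^{2w} = 1$.

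\textbf{Step 1: Existence and $L^\infty$ bound.} First I would invoke the classical uniformization theorem to obtain \emph{some} diffeomorphism $\Phi_0:\SSS^2\to S$ and conformal factor $u_0$ with $\Phi_0^\#(g^S) = e^{2u_0}\ga_0$; since $|S|=4\pi$ we automatically get $\int_{\SSS^2} e^{2u_0} = 4\pi$. By Lemma \ref{le:renormalization} there is a conformal transformation $\Psi\in\M$ such that $u := (u_0)_\Psi\in\SS$, and setting $\Phi := \Phi_0\circ\Psi$ we still have $\Phi^\#(g^S) = e^{2u}\ga_0$ and $\int_{\SSS^2}e^{2u} = 4\pi$, so $u\in\SS_0$. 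Now $K(e^{2u}\ga_0) = K^S\circ\Phi$, so the almost round condition \eqref{eq:Almostround} gives $\|K(e^{2u}\ga_0) - 1\|_{L^\infty}\le\ep$, and Corollary \ref{corr:Alice2} yields $\|u\|_{H^1}\lesssim\ep$. To upgrade to $L^\infty$: $u$ solves $\lap_0 u = 1 - (K^S\circ\Phi)e^{2u}$ by Corollary \ref{Corr:transf-Gauss}; the right-hand side is bounded in, say, $L^2$ (using $\|u\|_{H^1}\lesssim\ep$ and Moser-Trudinger to control $\|e^{2u}\|_{L^p}$), so elliptic estimates give $\|u\|_{H^2}\lesssim\ep$, and then Sobolev embedding $H^2(\SSS^2)\hookrightarrow L^\infty$ gives $\|u\|_{L^\infty}\lesssim\ep$; since $\Phi$ is a diffeomorphism this is \eqref{eq:Thm.-Eff.uniformalization1-appendix}. (One should check the implied constant in the elliptic estimate is controlled independently of $\Phi$, which is fine since these are estimates on the fixed manifold $(\SSS^2,\ga_0)$.)

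\textbf{Step 2: Higher regularity.} Assuming in addition $\|K^S - 1\|_{H^s(S)}\le\ep$, I would run a standard bootstrap on $\lap_0 u = 1 - (K^S\circ\Phi)e^{2u}$. Having $u\in H^2\cap L^\infty$ with small norm, the product $(K^S\circ\Phi)e^{2u}$ lies in $H^{\min(2,s)}$ with norm $\lesssim\ep$ (the composition $K^S\circ\Phi$ inherits $H^s$ regularity because $\Phi$ is a smooth, in fact $H^{3}$-or-better by the bootstrap, diffeomorphism uniformly close to an isometry; and $e^{2u}\in H^2$ since $H^2$ is an algebra in dimension $2$). Elliptic regularity then gives $u\in H^{2+\min(2,s)}$; iterating, since each round lifts the regularity of $\Phi$ as well (as $d\Phi$ is controlled through $e^{2u}$ and the metric), one reaches $u\in H^{2+s}$ with $\|u\|_{H^{2+s}}\lesssim\ep$, hence \eqref{eq:Thm.-Eff.uniformalization1-appendix:bis}. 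The mild technical point here is keeping track of how much regularity of $\Phi$ is available at each stage and ensuring the composition estimates close; I would phrase it as: $\Phi$ and $u$ are simultaneously bootstrapped, the regularity of $\Phi$ always staying one step ahead of what is needed for the composition $K^S\circ\Phi$.

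\textbf{Step 3: Uniqueness up to $O(3)$.} Suppose $(\Phi_1,u_1)$ and $(\Phi_2,u_2)$ are two centered solutions. The map $\Theta := \Phi_2^{-1}\circ\Phi_1:\SSS^2\to\SSS^2$ satisfies $\Theta^\#(e^{2u_2}\ga_0) = e^{2u_1}\ga_0$, i.e. $\Theta^\#\ga_0 = e^{2w}\ga_0$ with $w = u_1 - u_2\circ\Theta - \tfrac12\log|\det d\Theta|$, so $\Theta\in\M$ is a conformal transformation of $\SSS^2$ and $(u_1)$ equals $(u_2)_{\Theta^{-1}}$ up to the cocycle; more precisely $u_1 = (u_2)_{\Theta}$ in the notation of the Corollary following Corollary \ref{Corr:transf-Gauss} (composing $u_2$ with $\Theta$ and adding $\tfrac12\log|\det d\Theta|$). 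Both $u_1$ and $u_2$ are centered, i.e. lie in $\SS$. The key rigidity input: a conformal transformation $\Theta\in\M$ which maps a centered conformal factor to a centered conformal factor must be an isometry. This follows from the uniqueness in Lemma \ref{le:renormalization} (stated in the remark after it, Lemma \ref{lemma:improved-renormalization:bis}): the centering condition $CM[e^{2u}]=0$ fixes $\Theta$ modulo $O(3)$. Hence $\Theta\in O(3)$, giving $\Phi_1 = \Phi_2\circ\Theta$ and $u_1 = u_2\circ\Theta$ (since $\log|\det dO| = 0$ for $O\in O(3)$), which is exactly the claimed uniqueness up to isometries.

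\textbf{Main obstacle.} The hardest part is the simultaneous bootstrap of Step 2: making precise that $\Phi$ gains regularity in tandem with $u$ so that the composition $K^S\circ\Phi$ can be estimated in $H^s$, and verifying that all elliptic and composition constants are uniform (depending only on $\ep$ and the fixed background $(\SSS^2,\ga_0)$, not on the particular $S$). The existence and $L^\infty$ bound in Step 1 are essentially a repackaging of the Onofri/Chang--Yang machinery already recalled, and the uniqueness in Step 3 reduces cleanly to the uniqueness in the renormalization lemma.
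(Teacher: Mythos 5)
There is a genuine gap in Step 1, and Step 2 takes a route the paper deliberately avoids.

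\textbf{Gap in Step 1.} You write that Corollary \ref{corr:Alice2} yields $\|u\|_{H^1}\lesssim\ep$. It does not: that corollary only says that for any $\de>0$ there exists $\ep_0(\de)$ such that $\ep\le\ep_0$ implies $\|u\|_{H^1}\le\de$. This is soft smallness, not a linear estimate in $\ep$. With only $\|u\|_{H^1}\le\de$ in hand, your elliptic bootstrap on $\lap_0 u = 1-(K^S\circ\Phi)e^{2u}$ fails to close to $O(\ep)$: the right-hand side expands as $-2u - f(u) - (K-1)e^{2u}$ with $f(u)=e^{2u}-1-2u$, so its $L^2$ norm is $O(\de)+O(\ep)$, giving at best $\|u\|_{H^2}\lesssim\de$, not $\lesssim\ep$. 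The upgrade from $O(\de)$ to $O(\ep)$ is precisely the nontrivial content of Proposition \ref{proposition:effective-uniformisation-Appendix} in the paper, and it is where the centering condition $u\in\SS$ earns its keep. The paper's argument shifts the linear term to the left, writing $\lap_0 u + 2u = -(K-1)e^{2u}-f(u)$, then confronts the fact that $\lap_0+2$ has a kernel spanned by the $\ell=1$ spherical harmonics $x^i$; the elliptic estimate $\|u\|_{H^2}\lesssim\|(\lap_0+2)u\|_{L^2} + |\int_{\SSS^2} u\, x|$ therefore needs the kernel projection $\int u\,x$ to be controlled. Centering gives $0=\int e^{2u}x = 2\int u\,x + \int f(u)\,x$, so $|\int u\,x|\lesssim\|f(u)\|_{L^2}\lesssim\|u\|_{L^\infty}^2$, a quadratic error. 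Combining this with the interpolation $\|u\|_{L^\infty}\lesssim\|u\|_{L^2}^{1/2}\|u\|_{H^2}^{1/2}$ and the a priori smallness $\|u\|_{H^1}\le\de$, the quadratic term is absorbed, yielding $\|u\|_{H^2}\lesssim\ep$. You used the centering only to invoke the Chang--Yang machinery; the paper uses it a second time, quantitatively, to control the kernel of $\lap_0+2$. Without that second use, \eqref{eq:Thm.-Eff.uniformalization1-appendix} does not follow.

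\textbf{On Step 2.} Your plan to simultaneously bootstrap $\Phi$ and $u$ on $\SSS^2$ (so that $K^S\circ\Phi$ can be estimated in Sobolev spaces) is workable in principle, but you yourself flag that controlling the regularity of $\Phi$ in tandem with $u$ is a ``mild technical point.'' The paper sidesteps this entirely by transferring the equation to $S$: writing $v:=u\circ\Phi^{-1}$ and noting $(\Phi^{-1})^\#\ga_0=e^{-2v}g^S$, one gets $\lap_S v = (e^{-2v}-1)-(K^S-1)$ with $\|v\|_{L^\infty(S)}\lesssim\ep$. This equation lives on $(S,g^S)$, involves no composition with $\Phi$, and bootstraps by ordinary elliptic regularity: $\|K^S-1\|_{H^s(S)}\le\ep$ gives $\|v\|_{H^{2+s}(S)}\lesssim\ep$ directly. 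If you rewrite your Step 2 this way, the annoyance disappears.

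\textbf{Step 3} matches the paper's uniqueness argument and is fine as written.
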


\begin{remark}
 To the best of our knowledge the estimates   \eqref{eq:Thm.-Eff.uniformalization1-appendix},  \eqref{eq:Thm.-Eff.uniformalization1-appendix:bis}  have not been stated as such in the literature.    The  uniqueness statement  also appears to be new.
\end{remark}

\begin{remark}
One can easily adapt the statement of Theorem \ref{theorem-Eff.uniformalization-appendix}  to the case $|S|\neq 4\pi$, see Corollary \ref{proposition:effective-uniformisation}.  
\end{remark}

Theorem \ref{theorem-Eff.uniformalization-appendix} will be proved in section \ref{sec:proofof-theorem-Eff.uniformalization-appendix}. We first provide improvements of Lemma \ref{le:renormalization} and Corollary \ref{corr:Alice2}.

%%%%%%%%%%%%%%%%%%%%%%%%%%%%%%%%%%%%%%%%%%%

\subsection{Uniqueness for Lemma \ref{le:renormalization}}

%%%%%%%%%%%%%%%%%%%%%%%%%%%%%%%%%%%%%%%%%%%

In this section, we provide a uniqueness statement for Lemma \ref{le:renormalization}, see  Lemma  \ref{lemma:improved-renormalization:bis}.  We also strengthen the conclusions of Lemma \ref{le:renormalization} for  nearly round spheres, see Lemma  \ref{lemma:improved-renormalization}. To this end, we start with the following decomposition of conformal isometries of $\SSS^2$.
\begin{lemma}\lab{lemma:decompositionofhomographies}
Any $\Phi\in \M$ admits the following decomposition
\beaa
\Phi=O_1\circ\Phi_{N, t}\circ O_2
\eeaa
where $O_1, O_2\in O(3)$, $N=(0,0,1)$, $t>0$, and the scale transformation $\Phi_{N, t}$ has been introduced in \eqref{eq:defintionscaletranformations}. 
\end{lemma}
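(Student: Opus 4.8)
The plan is to reduce the statement to the Cartan ($KAK$) decomposition of $SL(2,\CCC)$, exploiting the identification of $\M$ with (conjugates of) M\"obius transforms recalled in section \ref{sec:conformalisometrymoebiusgroup}. First I would dispose of the antiholomorphic elements: by that identification, every $\Phi\in\M$ is either a holomorphic M\"obius transform $z\mapsto\frac{az+b}{cz+d}$ with $ad-bc=1$, or the composition $M\circ c$ of such a transform $M$ with the conjugation $c:z\mapsto\bar z$. Using the inverse stereographic projection \eqref{eq:inversestereographicprojectionformula} one checks directly that $c$ is the restriction to $\SSS^2$ of the reflection $(x^1,x^2,x^3)\mapsto(x^1,-x^2,x^3)$, hence $c\in O(3)$. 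Therefore, once the decomposition $\Phi=O_1\circ\Phi_{N,t}\circ O_2$ is proved for holomorphic M\"obius transforms with $O_1,O_2$ actually rotations, the remaining case follows by writing $M\circ c=O_1\circ\Phi_{N,t}\circ(O_2\circ c)$ with $O_2\circ c\in O(3)$.

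Next I would record the standard dictionary for the holomorphic case, identifying the group of holomorphic M\"obius transforms of $\SSS^2$ with $PSL(2,\CCC)=SL(2,\CCC)/\{\pm I\}$ (see e.g. \cite{Berger}): the image of $SU(2)$ is exactly the subgroup of M\"obius transforms that are isometries of $(\SSS^2,\ga_0)$, i.e. the rotation group $SO(3)$, and the diagonal matrix $a_t:=\mathrm{diag}(t^{1/2},t^{-1/2})$, $t>0$, induces the map $z\mapsto tz$, which is precisely the scale transformation $\Phi_{N,t}$ of \eqref{eq:defintionscaletranformations}. Consequently it is enough to show that every $M\in SL(2,\CCC)$ can be written $M=k_1\,a_t\,k_2$ with $k_1,k_2\in SU(2)$ and $t>0$, and then to project to $PSL(2,\CCC)$.

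For this last matrix statement I would invoke (or, since the surrounding material is essentially self-contained, quickly reprove) the Cartan decomposition of $SL(2,\CCC)$: by the spectral theorem the positive definite matrix $M^*M$, which has determinant $1$, is $M^*M=W\,\mathrm{diag}(t,t^{-1})\,W^*$ for some $W\in U(2)$ and $t>0$; setting $P:=W\,a_t\,W^*$ one has $P>0$ and $P^2=M^*M$, so $U:=MP^{-1}$ satisfies $U^*U=I$ and $\det U=1$, i.e. $U\in SU(2)$, whence $M=U\,W\,a_t\,W^*$; finally, writing $W=e^{i\theta}W_0$ with $W_0\in SU(2)$ for a suitable $\theta$ and observing that $W\,a_t\,W^*=W_0\,a_t\,W_0^*$ (the scalar $e^{i\theta}$ commuting with everything), we obtain $M=(UW_0)\,a_t\,W_0^*=:k_1\,a_t\,k_2$ with $k_1,k_2\in SU(2)$. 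Passing to $PSL(2,\CCC)$ and translating through the dictionary yields $\Phi=O_1\circ\Phi_{N,t}\circ O_2$ with $O_1,O_2\in SO(3)$, which together with the first step completes the proof. I do not expect a genuine obstacle here: the only point demanding care is the bookkeeping in moving between $SL(2,\CCC)$, $PSL(2,\CCC)$ and the induced action on $\SSS^2$, and in absorbing the scalar $U(2)$-ambiguity so that both outer factors land in $SU(2)$ (equivalently $SO(3)$) while the middle factor keeps $t>0$.
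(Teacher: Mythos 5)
Your proof is correct and follows essentially the same route as the paper's: reduce to the holomorphic case by absorbing a reflection into $O(3)$, translate through the $SL(2,\CCC)$ dictionary, and establish the Cartan-type decomposition $A=U_1\,\mathrm{diag}(\sqrt{t},1/\sqrt{t})\,U_2$ with $U_1,U_2\in SU(2)$. The only cosmetic difference is that you diagonalize $M^*M$ and absorb the central $U(1)$ phase explicitly, whereas the paper applies the polar decomposition $A=RU_3$ and then diagonalizes the positive factor $R$ with an $SU(2)$ matrix directly; both amount to the same $KAK$ argument.
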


\begin{proof}
Upon multiplying $\Phi$ by a reflexion, which belongs to $O(3)$, we may assume that $\Phi$ corresponds, in view of section \ref{sec:conformalisometrymoebiusgroup}, to a M\"obius transformation, and hence to a matrix in $SL(2,\CCC)$. Then, using that, see for example \cite{JoSi},
\begin{enumerate}
\item composition of M\"obius transformations corresponds to matrix multiplication in $SL(2,\CCC)$, 

\item $SO(3)$ corresponds to M\"obius transformations with matrices in $SU(2)$, 

\item scale transformations $\Phi_{N, t}$ with $t>0$ correspond to the following matrix  in $SL(2,\CCC)$
\beaa
\left(\ba{cc}
\sqrt{t} & 0\\
0 & \frac{1}{\sqrt{t}}
\ea\right),
\eeaa
\end{enumerate}
 the lemma reduces to proving the following decomposition for a matrix $A\in SL(2,\CCC)$
\bea\lab{eq:reductionofthedecompositionMtoSL2C}
A=U_1DU_2, \qquad U_1, U_2\in SU(2), \qquad D=\left(\ba{cc}
\sqrt{t} & 0\\
0 & \frac{1}{\sqrt{t}}
\ea\right).
\eea
To prove the claim \eqref{eq:reductionofthedecompositionMtoSL2C}, we apply the polar decomposition to $A$
\beaa
A=RU_3, \qquad R=R^*, \qquad R>0, \qquad \det(R)=1, \qquad U_3\in SU(2),
\eeaa
where $\det(R)=1$ and $U_3\in SU(2)$ since $\det(A)=1$. Also, $R$ being hermitian semidefinite positive, we may  diagonalize it as follows 
\beaa
R=U_1\left(\ba{cc}
\la & 0\\
0 & \la^{-1}
\ea\right)U_1^*, \qquad U_1\in SU(2), \qquad \la>0,
\eeaa
where we have used the fact that  $\det(R)=1$. Finally, we have
\beaa
A=U_1\left(\ba{cc}
\sqrt{t} & 0\\
0 & \frac{1}{\sqrt{t}}
\ea\right)U_2, \textrm{ where }U_2=U_1^*U_3, \qquad t=\la^2>0, 
\eeaa
which is \eqref{eq:reductionofthedecompositionMtoSL2C} as desired.
\end{proof}

\begin{lemma}
\lab{lemma:improved-renormalization:bis}
Let $u$ be a smooth function on $\SSS^2$  and let  $\Phi$ be a conformal transformation. Assume that both $u$ and $u_\Phi$ belong to $\SS$. Then,  we have\footnote{Recall from Remark \ref{rem:ifuiscenteredthensoisucricO} that if $u$ belongs to $\SS$, then $u\circ O$ also belongs to $\SS$ for any $O\in O(3)$.} $\Phi\in O(3)$.
\end{lemma}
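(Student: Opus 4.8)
The plan is to use the decomposition of conformal isometries from Lemma \ref{lemma:decompositionofhomographies} together with the invariance of the functional $S$ (Lemma \ref{eq:Gageforu}) and the strict inequality case of Onofri's inequality. Concretely, write $\Phi=O_1\circ\Phi_{N,t}\circ O_2$ with $O_1,O_2\in O(3)$ and $t>0$; the goal is to show $t=1$, since then $\Phi$ is a composition of elements of $O(3)$, hence $\Phi\in O(3)$ as claimed.

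First I would reduce to the scale transformation alone. Since $u\in\SS$, Remark \ref{rem:ifuiscenteredthensoisucricO} gives $u\circ O_2\in\SS$ as well, and one checks that $(u\circ O_2)_{\Phi_{N,t}\circ O_2^{-1}\circ\Phi}$ — more cleanly, using the composition rule for the $u\mapsto u_\Phi$ operation established in the corollaries above — that $u_\Phi\in\SS$ together with $O_1\in O(3)$ forces the intermediate function $v:=(u\circ O_2)_{\Phi_{N,t}}$ to lie in $\SS$ too (composing with $O_1$ on the outside only rotates the center of mass, which is already zero). So after relabeling $w:=u\circ O_2\in\SS$, I am reduced to the following statement: if $w\in\SS$ and $w_{\Phi_{N,t}}\in\SS$, then $t=1$.

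Next I would pin down $t$ by a center-of-mass computation. The condition $w\in\SS$ says $\int_{\SSS^2}x\,e^{2w}=0$, and $w_{\Phi_{N,t}}\in\SS$ says $\int_{\SSS^2}x\,e^{2w_{\Phi_{N,t}}}=0$, which by the change-of-variables identity $e^{2w_{\Phi_{N,t}}}\,d\mathrm{vol}_0 = \Phi_{N,t}^\#(e^{2w}\,d\mathrm{vol}_0)$ becomes $\int_{\SSS^2}(\Phi_{N,t}(x))\,e^{2w}\,d\mathrm{vol}_0=0$. Using the explicit form of $\Phi_{N,t}$ from \eqref{eq:inversestereographicprojectionformula}–\eqref{eq:defintionscaletranformations}, the third coordinate $\Phi_{N,t}(x)^3$ is a strictly monotone function of $x^3$ for each fixed $t$ (strictly increasing in $x^3$, and for $t\neq 1$ strictly lying on one side of the identity: $\Phi_{N,t}(x)^3 - x^3$ has a fixed sign depending on whether $t<1$ or $t>1$, except at the poles). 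Therefore $\int_{\SSS^2}\big(\Phi_{N,t}(x)^3-x^3\big)e^{2w}=0$ with the integrand of one sign forces it to vanish identically, hence $t=1$. This is the crux of the argument.

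The main obstacle I expect is making the monotonicity/sign argument fully rigorous and clean: one must verify that $\Phi_{N,t}(x)^3-x^3$ genuinely has a constant sign on $\SSS^2$ (vanishing only at the two poles, which is a null set and so does not affect the integral against the positive weight $e^{2w}$), and one must be careful that the relevant coordinate of $\Phi_{N,t}$ really does behave this way — this is a short explicit computation with $x^3=\frac{|z|^2-1}{|z|^2+1}$ and $z\mapsto tz$, giving $\Phi_{N,t}(x)^3=\frac{t^2|z|^2-1}{t^2|z|^2+1}$, whose difference with $x^3$ factors as a positive multiple of $(t^2-1)|z|^2$ over a positive denominator. Once that sign is established, the rest is immediate. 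An alternative, slightly slicker route avoiding coordinates is to use Lemma \ref{eq:Gageforu} (invariance of $S$) together with the equality case of Proposition \ref{Thm:Onofri}, but I find the direct center-of-mass computation more transparent and self-contained, so that is the route I would write up.
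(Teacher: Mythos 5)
Your proposal follows essentially the same route as the paper: decompose $\Phi=O_1\circ\Phi_{N,t}\circ O_2$ via Lemma \ref{lemma:decompositionofhomographies}, reduce to the pure scale transformation using invariance of $\SS$ under $O(3)$, and then show the third-coordinate discrepancy $x^3\circ\Phi_{N,1/t}-x^3$ has a fixed sign (your factorization $2(t^2-1)|z|^2$ over a positive denominator is exactly the paper's identity rewritten in $z$-coordinates), forcing $t=1$. The argument is correct; the only slip is notational ($w$ should be $u\circ O_1$, not $u\circ O_2$, and the pulled-back coordinate should involve $\Phi_{N,t}^{-1}=\Phi_{N,1/t}$), neither of which affects the conclusion.
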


\begin{proof}
Decomposing $\Phi$ using Lemma \ref{lemma:decompositionofhomographies}, it suffices in fact to prove that if  $u$ and $u_{\Phi_{N,t}}$ are in $\SS$, where $t>0$, then $\Phi_{N,t}=I$, i.e. $t=1$. Thus, from now on, we assume that $u$ and $u_{\Phi_{N,t}}$ are in $\SS$ with $t>0$.

Since $u$ and $u_{\Phi_{N,t}}$ are in $\SS$, we have
\bea\lab{eq:consequenceofuanduPhitzarebothinSS}
\nn 0 &=& \int_{\SSS^2}e^{2u_{\Phi_{N,t}}}x -\int_{\SSS^2}e^{2u}x =\int_{\SSS^2}e^{2u\circ\Phi_{N,t}}|\det(d\Phi_{N,t})|x -\int_{\SSS^2}e^{2u}x\\
& =& \int_{\SSS^2}e^{2u}x\circ\Phi_{N,t}^{-1} -\int_{\SSS^2}e^{2u}x  = \int_{\SSS^2}e^{2u}\Big(x\circ\Phi_{N,\frac{1}{t}}  -x\Big)
\eea
where we used the definition of $u_{\Phi_{N,t}}$, the change of variable formula, and the fact that $\Phi_{N,t}^{-1}=\Phi_{N,t^{-1}}$. Now, in view of the formula for $\Phi_{N,t}$, we have, using the formula for stereographic coordinates, see \eqref{eq:inversestereographicprojectionformula},  
\beaa
 x^1\circ\Phi_{N,t^{-1}}=\frac{2t^{-1}}{1+t^{-2}|z|^2} \Re z,\quad    x^2\circ\Phi_{N,t^{-1}} =\frac{2t^{-1}}{1+t^{-2}|z|^2} \Im z, \quad x^3\circ\Phi_{N,t^{-1}} =\frac{t^{-2}|z|^2-1}{t^{-2}|z|^2+1}.
 \eeaa
Since
\beaa
|z|^2=\frac{1+x^3}{1-x^3},
\eeaa
we infer, using   \eqref{eq:inversestereographicprojectionformula} again,
\bea\lab{eq:xcomposedwithscaletransformation}
\bsplit
 x^1\circ\Phi_{N,-t} &=  \frac{2t^{-1}}{t^{-2}(1+x^3)+(1-x^3)}x^1,\\
 x^2\circ\Phi_{N,-t} &=  \frac{2t^{-1}}{t^{-2}(1+x^3)+(1-x^3)}x^2, \\ 
 x^3\circ\Phi_{N,-t} &= \frac{t^{-2}(1+x^3)-(1-x^3)}{t^{-2}(1+x^3)+(1-x^3)}.
 \end{split}
 \eea
 In particular, the last identity of \eqref{eq:xcomposedwithscaletransformation} yields
\beaa
x^3\circ\Phi_{N,-t} -x^3 &=& (t^{-2}-1)\frac{1-(x^3)^2}{t^{-2}(1+x^3)+(1-x^3)}
\eeaa
and thus, 
\beaa
 \int_{\SSS^2}e^{2u}\Big(x^3\circ\Phi_{N,\frac{1}{t}}  -x^3\Big) &=& (t^{-2}-1)\int_{\SSS^2}e^{2u}\frac{1-(x^3)^2}{t^{-2}(1+x^3)+(1-x^3)}
\eeaa
so that we have, in view of \eqref{eq:consequenceofuanduPhitzarebothinSS},  
\beaa
(t^{-2}-1)\int_{\SSS^2}e^{2u}\frac{1-(x^3)^2}{t^{-2}(1+x^3)+(1-x^3)} &=& 0.
\eeaa
Since the integral is strictly positive and $t>0$, we infer $t=1$ as desired. 
\end{proof}

 \begin{lemma}
\lab{lemma:improved-renormalization}
Let $\ep$ and $\de$ two constants such that $0<\de\leq\ep$. Given $u$ a smooth function on $\SSS^2$ such that 
\beaa
\left|\int_{\SSS^2}e^{2u}x\right|\leq\de, \qquad \|u\|_{L^\infty}\leq \ep.
\eeaa
Then, for $\ep>0$ sufficiently small,  there exists  a   conformal transformation  $\Phi$ such that  $u_\Phi\in \SS$ and satisfying in addition  $\big| \Phi-I  \big|\les \de$. 
\end{lemma}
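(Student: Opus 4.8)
The plan is to run a quantitative version of the renormalization argument in Lemma \ref{le:renormalization}, using the explicit parametrization of conformal transformations from Lemma \ref{lemma:decompositionofhomographies} together with an implicit-function/degree argument to locate the minimizer near the identity. First I would recall that by Lemma \ref{le:renormalization} there exists a conformal $\Phi$ with $u_\Phi\in\SS$; by Lemma \ref{lemma:improved-renormalization:bis} this $\Phi$ is unique up to composition with $O(3)$ on the right, and since $u\circ O\in\SS$ would already work whenever $u\in\SS$, the only genuine freedom is in the scale-type part. Decomposing $\Phi=O_1\circ\Phi_{N,t}\circ O_2$ via Lemma \ref{lemma:decompositionofhomographies}, and using that $u_{\Phi}\in\SS$ reads, after the change of variables as in \eqref{eq:consequenceofuanduPhitzarebothinSS}, $\int_{\SSS^2}e^{2u}\big(x\circ\Phi^{-1}\big)=0$, I would reformulate the problem as: find a conformal $\Phi$ with $\int_{\SSS^2}e^{2u}\big(x\circ\Phi^{-1}\big)=0$.

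The core is a fixed-point/continuity argument. Define the map $G:\M\to\RRR^3$ by $G(\Phi)=\frac{1}{4\pi}\int_{\SSS^2}e^{2u}\big(x\circ\Phi^{-1}\big)$. When $u\equiv 0$ this is exactly the classical center-of-mass map $\Phi\mapsto CM[|\det d\Phi^{-1}|\,\text{(push-forward)}]$ whose only zero among conformal maps is $O(3)$, and whose derivative transverse to $O(3)$ (i.e. in the $\Phi_{N,t}$ and translation directions) is nondegenerate — this is the standard nondegeneracy computed in \cite{CY1}, \cite{Onofri}. For general $u$ with $\|u\|_{L^\infty}\leq\ep$, we have $G(I)=\frac{1}{4\pi}\int_{\SSS^2}e^{2u}x$, which by hypothesis has size $\les\de$, and the restriction of $G$ to a small neighborhood of $I$ in the $6$-dimensional transverse slice is, for $\ep$ small, a small $C^1$-perturbation of the $u\equiv 0$ model, hence still a local diffeomorphism onto a neighborhood of $0$ with uniformly bounded inverse. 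Therefore there is a unique $\Phi$ in that slice, at distance $\les|G(I)|\les\de$ from $I$, with $G(\Phi)=0$; quotienting out the residual $O(3)$ as above gives a representative with $\big|\Phi-I\big|\les\de$ and $u_\Phi\in\SS$.

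The main obstacle is making the nondegeneracy of $dG$ at $\Phi=I$ (equivalently, at the identity element of the slice) completely explicit and uniform in $u$: one must check that the $6\times 6$ Jacobian of $G$ restricted to the transverse directions — three infinitesimal dilations/translations generating $\M/O(3)$ — stays invertible with a bound independent of $\ep$. For $u\equiv 0$ this is the classical computation that $\int_{\SSS^2}x^i\,\partial_s\big(x^j\circ\Phi_s^{-1}\big)\big|_{s=0}$ (with $\Phi_s$ the conformal flow in direction $i$) equals a fixed nonzero multiple of $\delta^{ij}$, using $\int_{\SSS^2}x^ix^j=\frac{4\pi}{3}\delta^{ij}$ and the explicit formulas \eqref{eq:xcomposedwithscaletransformation}; the general case follows by perturbation once $\|e^{2u}-1\|_{L^\infty}\les\ep$. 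I would carry this out by: (i) writing the transverse slice through $I$ explicitly using \eqref{eq:defintionscaletranformations} and its $O(3)$-conjugates; (ii) computing $dG|_{I}$ in the $u\equiv 0$ case and verifying invertibility; (iii) estimating $\|dG-dG_{u\equiv 0}\|\les\ep$ on a fixed small ball; (iv) invoking the quantitative inverse function theorem to solve $G(\Phi)=0$ with the stated bound $\big|\Phi-I\big|\les|G(I)|\les\de$; and finally (v) checking, via Lemma \ref{lemma:improved-renormalization:bis} and Remark \ref{rem:ifuiscenteredthensoisucricO}, that this solution is the $\SS$-renormalizing $\Phi$ we sought and that no other $O(3)$ factor is needed.
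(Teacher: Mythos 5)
Your proposal is correct and follows essentially the same strategy as the paper: parametrize the transverse directions to $O(3)$ in $\M$ by the three scale transformations $\Phi_{p_i,t_i}$, observe that the center-of-mass map at the identity has size $O(\de)$, compute that its Jacobian at the identity is $-\tfrac{8\pi}{3}I+O(\ep)$, and apply the quantitative inverse function / Banach fixed-point theorem. Note only that the transverse slice and the Jacobian you compute are $3$-dimensional (not $6$): $\dim\M=6$, $\dim O(3)=3$, so $\M/O(3)$ is $3$-dimensional, matching the $\RRR^3$-valued constraint $G(\Phi)=0$.
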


\begin{proof}
Let $p_1=(1,0,0)$, $p_2=(0,1,0)$ and $p_3=(0,0,1)$, and let us consider the map
\beaa
\Theta: \RRR^3\to\RRR^3, \qquad \Theta(t)=\int_{\SSS^2}e^{2u_{\Phi_{p_1, t_1}\circ\Phi_{p_2, t_2}\circ\Phi_{p_3, t_3}}}x.
\eeaa
To prove the lemma, it suffices to exhibit $t\in\RRR^3$ such that $\Theta(t)=0$ and $|t-(1,1,1)|\les\de$. 

Using the definition of $u_{\Phi_{p_i,t_i}}$, the change of variable formula, and the fact that $\Phi_{p_i,t_i}^{-1}=\Phi_{p_i,t_i^{-1}}$, we have
\beaa
\Theta(t) &=& \int_{\SSS^2}e^{2u}x\circ\Phi_{p_3, t_3^{-1}}\circ\Phi_{p_2, t_2^{-1}}\circ\Phi_{p_1, t_1^{-1}}.
\eeaa
Since $|\int_{\SSS^2}e^{2u}x|\leq\de$, and since $\Phi_{p_i,1}$ is the identity, we have
\bea\lab{eq:estimateofThetaforapplyingtheBanachfixedpointtheoremforzerosofTheta}
\Theta(1,1,1) &=& \int_{\SSS^2}e^{2u}x=O(\de). 
\eea

Next, we compute the differential of $\Theta$ at $t=(1,1,1)$. To this end, note that we have in view of \eqref{eq:xcomposedwithscaletransformation}
\beaa
\Theta(1,1,t_3) &=& \int_{\SSS^2}e^{2u}x\circ \Phi_{p_3, \frac{1}{t_3}}=\Theta(1,1,1)+\int_{\SSS^2}e^{2u}\Big(x\circ \Phi_{N, \frac{1}{t_3}}-x\Big)\\
&=& \Theta(1,1,1)+\int_{\SSS^2}e^{2u}
\left(\ba{c}
\frac{2t^{-1}}{t^{-2}(1+x^3)+(1-x^3)}x^1\\
\frac{2t^{-1}}{t^{-2}(1+x^3)+(1-x^3)}x^2\\
\frac{t^{-2}(1+x^3)-(1-x^3)}{t^{-2}(1+x^3)+(1-x^3)}
\ea\right).
\eeaa
We infer, using also $\|u\|_{L^\infty}\leq \ep$, 
\beaa
\pr_{t_3}\Theta(1,1,1) &=& \int_{\SSS^2}e^{2u}\left(\ba{c}
2x^1x^3\\
2x^2x^3\\
-(1-(x^3)^2)
\ea\right)=O(\ep)+\int_{\SSS^2}\left(\ba{c}
2x^1x^3\\
2x^2x^3\\
-(1-(x^3)^2)
\ea\right)
\eeaa
and hence
\beaa
\pr_{t_3}\Theta(1,1,1)=-\frac{8\pi}{3}\left(\ba{c}
0\\
0\\
1
\ea\right)+O(\ep).
\eeaa
One easily derives corresponding identities for $\pr_{t_1}\Theta(1,1,1)$ and $\pr_{t_2}\Theta(1,1,1)$ which yields
\bea\lab{eq:estimateofThetaforapplyingtheBanachfixedpointtheoremforzerosofTheta:bis}
d\Theta_{|_{(1,1,1)}}=-\frac{8\pi}{3}I+O(\ep).
\eea
Thus, finding $t$ such that $\Theta(t)=0$ amounts to solving the following fixed point 
\beaa
t-(1,1,1) &=& -(d\Theta_{|_{(1,1,1)}})^{-1}\left[\Theta(1,1,1) +\left(\Theta(t) -\Theta(1,1,1)-d\Theta_{|_{(1,1,1)}}\big(t-(1,1,1)\big)\right)\right]
\eeaa
whose existence, together with the desired estimate $|t-(1,1,1)|\les\de$, follows, in view of \eqref{eq:estimateofThetaforapplyingtheBanachfixedpointtheoremforzerosofTheta} and \eqref{eq:estimateofThetaforapplyingtheBanachfixedpointtheoremforzerosofTheta:bis}, from the Banach fixed point theorem.
\end{proof}

%%%%%%%%%%%%%%%%%%%%%%%%%%%%%%%%%%%%%%%%%

\subsection{Improvement of Corollary \ref{corr:Alice2} for nearly round spheres}

%%%%%%%%%%%%%%%%%%%%%%%%%%%%%%%%%%%%%%%%%

In the proposition below, we improve the results  of Corollary \ref{corr:Alice2} for nearly round spheres.
\begin{proposition}
\lab{proposition:effective-uniformisation-Appendix}
Assume that  the metric $g= e^{2u} \ga_0$  on $\SSS^2$, with $u\in \SS_0$, is     such that   $ \| K(g)-1\|_{L^\infty} \leq \ep$. 
Then, for $\ep>0$ sufficiently small,  we have 
\bea\lab{eq:boundin:proposition:effective-uniformisation-Appendix}
\| u\|_{L^\infty(\SSS^2)} \les \ep.
\eea 
\end{proposition}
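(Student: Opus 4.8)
The plan is to upgrade the $H^1$ bound of Corollary \ref{corr:Alice2} to an $L^\infty$ bound by elliptic bootstrapping on the equation \eqref{eq:Euler-K}, namely $\lap_0 u + K e^{2u} = 1$ with $\|K-1\|_{L^\infty}\le\ep$. First, I would invoke Corollary \ref{corr:Alice2}: given any $\de_0>0$, choosing $\ep$ small enough forces $\|u\|_{H^1(\SSS^2)}\le\de_0$. The difficulty is that $H^1(\SSS^2)$ does not embed into $L^\infty$ in two dimensions, so the nonlinearity $e^{2u}$ is only controlled via Moser–Trudinger-type estimates; one cannot immediately close the bootstrap at the top. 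So the first genuine step is to gain integrability: rewrite the equation as $\lap_0 u = 1 - Ke^{2u}$ and observe that by the Moser–Trudinger inequality on $\SSS^2$, $e^{2u}\in L^p$ for every $p<\infty$ with norm controlled by $\|u\|_{H^1}$, hence the right-hand side lies in $L^p(\SSS^2)$ for all finite $p$ with small norm. By $L^p$ elliptic regularity for $\lap_0$ on $\SSS^2$ (and the fact that $u$ has mean value controlled by $\|u\|_{H^1}$, so we can normalize away the kernel), we get $\|u\|_{W^{2,p}(\SSS^2)}\lesssim \de_0 + (\text{small})$ for each fixed large $p$, and then Sobolev embedding $W^{2,p}\hookrightarrow L^\infty$ for $p>1$ (in fact $p>1$ suffices in dimension $2$ since $2 - 2/p > 0$) gives $\|u\|_{L^\infty(\SSS^2)}\lesssim\de_0$.

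Concretely I would argue as follows. Fix $p=2$, say. From $\|u\|_{H^1}\le\de_0$ and Moser–Trudinger, $\int_{\SSS^2} e^{4u}\le C$ with $C$ depending only on a universal constant once $\de_0\le 1$; combined with $\|u\|_{L^1}\le\de_0$ one actually gets $\|e^{2u}-1\|_{L^2(\SSS^2)}\lesssim\de_0$ by writing $e^{2u}-1 = 2u + (e^{2u}-1-2u)$ and estimating the quadratic remainder using the uniform $L^4$ control of $e^{2u}$ together with the smallness of $\|u\|_{L^4}$ (which follows from $\|u\|_{H^1}\le\de_0$ via the subcritical Sobolev embedding $H^1\hookrightarrow L^4$ on a surface). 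Hence the right-hand side $1 - Ke^{2u} = (1-K) - K(e^{2u}-1)$ has $L^2$ norm $\lesssim\ep + \de_0 \lesssim\de_0$. Since $\int_{\SSS^2}(1-Ke^{2u}) = \int_{\SSS^2}\lap_0 u = 0$, the right-hand side is orthogonal to constants, so $L^2$ elliptic theory on $\SSS^2$ gives $\|u - \bar u\|_{W^{2,2}(\SSS^2)}\lesssim\de_0$, where $\bar u$ is the mean; and $|\bar u|\le\|u\|_{L^1}\lesssim\de_0$. Therefore $\|u\|_{W^{2,2}(\SSS^2)}\lesssim\de_0$, and by the Sobolev embedding $W^{2,2}(\SSS^2)\hookrightarrow C^0(\SSS^2)$ in dimension two we conclude $\|u\|_{L^\infty(\SSS^2)}\lesssim\de_0$. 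Choosing $\de_0$ (hence $\ep$) small enough and renaming constants yields \eqref{eq:boundin:proposition:effective-uniformisation-Appendix}.

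The main obstacle, and the place where care is needed, is precisely the passage from $H^1$ to an $L^p$ bound on the nonlinear term: one must use Moser–Trudinger to know $e^{2u}$ is integrable to high powers, and crucially that its $L^p$-distance from the constant $1$ is small (not just that it is bounded), so that the elliptic estimate produces a small — not merely finite — $W^{2,p}$ norm. Once that quantitative smallness is in hand, the rest is standard Schauder/$L^p$ elliptic regularity on the closed surface $\SSS^2$ plus Sobolev embedding. A secondary technical point is handling the one-dimensional kernel of $\lap_0$ (the constants): this is dealt with by the observation that the source term has zero average, so the solution is determined up to an additive constant which is itself controlled by $\|u\|_{L^1}\le\|u\|_{H^1}$.
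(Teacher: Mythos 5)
There is a genuine gap: your argument proves that $\|u\|_{L^\infty}$ is small when $\ep$ is small, but it does not establish the stated \emph{quantitative} rate $\|u\|_{L^\infty}\lesssim\ep$. The point at which this slips is the estimate of the right-hand side of $\lap_0 u = 1-Ke^{2u}$. You bound $\|e^{2u}-1\|_{L^2}\lesssim\de_0$ and conclude $\|u\|_{W^{2,2}}\lesssim\ep+\de_0\lesssim\de_0$, but the quantity $\de_0$ coming from Corollary \ref{corr:Alice2} has no quantitative relationship to $\ep$ — that corollary, via Corollary \ref{corr:Alice}, is obtained by a contradiction/compactness argument, which only gives $\de_0(\ep)\to 0$, not $\de_0\lesssim\ep$. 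The issue is structural: $e^{2u}-1$ contains the linear term $2u$, whose $L^2$ norm is itself comparable to the quantity $\|u\|_{L^2}$ that you are trying to estimate, so keeping it on the right-hand side forces you to pay the non-quantitative price $\de_0$. The final ``choosing $\de_0$ (hence $\ep$) small enough and renaming constants'' step cannot repair this.

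To close the gap one must split off the linear part and move it to the left, i.e.\ rewrite the equation as $(\lap_0+2)u=-(K-1)e^{2u}-f(u)$ with $f(u)=e^{2u}-1-2u$. Then the right-hand side consists of a term that is genuinely $O(\ep)$ in $L^2$ (using Onofri to bound $\|e^{2u}\|_{L^2}$) plus a term $f(u)$ that is \emph{quadratic} in $u$ and hence absorbable once $u$ is known to be small in a weaker norm. But $\lap_0+2$ has a two-dimensional kernel beyond the constants, namely the $\ell=1$ spherical harmonics $x^i$, and this is precisely where the hypothesis $u\in\SS_0$ — which you never actually use — becomes essential: centeredness $\int e^{2u}x=0$ combined with $\int x=0$ yields $|\int u\,x|\lesssim\|f(u)\|_{L^2}$, which controls the projection of $u$ onto this kernel. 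With that in hand the elliptic estimate reads $\|u\|_{H^2}\lesssim\ep+\|u\|_{L^\infty}^2$, and the calculus inequality $\|u\|_{L^\infty}\lesssim\|u\|_{L^2}^{1/2}\|u\|_{H^2}^{1/2}$ together with the (non-quantitative but sufficient) smallness $\|u\|_{L^2}\le\de_0$ lets you absorb the quadratic term and obtain the sharp $\|u\|_{H^2}\lesssim\ep$, hence $\|u\|_{L^\infty}\lesssim\ep$. This is the paper's argument; your preliminary $L^\infty$ smallness is a useful warm-up but by itself falls short of the claim.
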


\begin{proof}
Let $\de>0$ a sufficiently small universal constant which will be chosen later. Then,  according to  Corollary \ref{corr:Alice2},  if $\ep$, such that $\|K-1\|_{L^\infty}\le \ep$ holds, is sufficiently small, i.e. $0<\ep\leq \ep(\de)\ll 1$, we have
\beaa
\| u\|_{H^1} &\le&  \de.
\eeaa
Our goal is to improve this estimate by showing that the stronger bound \eqref{eq:boundin:proposition:effective-uniformisation-Appendix} holds.

Since $g= e^{2u} \ga_0$,   the scalar function $u$ verifies  equation \eqref{eq:Euler-K}, i.e.
\beaa
\lap_0 u+ K e^{2u} =1,
\eeaa
where $K=K(g)$, which we rewrite in the form,
\beaa
\lap_0 u+ (e^{2u}-1)= -(K-1 ) e^{2u} 
\eeaa
or, 
\beaa
\lap_0 u+ 2 u &=& - (K-1 ) e^{2u}  - f(u), \quad\textrm{ where }\quad f(u):= e^{2u}-1-2u=\sum_{n\ge 2 } \frac{2^n}{n!}  u^n.
\eeaa
Also, since $u\in\SS_0$, we have $\int e^{2u} x =0$. Together with  $\int x=0$,  we deduce,
\beaa
0 &=& \int e^{2u}x=\int  \big( 1+2u+ f(u) \big)x=2 \int  u x +\int f(u) x 
\eeaa
and hence
\beaa
\left|\int  u\, x \right| \les   \int    |f(u)|\les \|f(u)\|_{L^2}.
\eeaa
Note that $f(u)\le   4  u^2 e^{ 2u} $.
By standard elliptic estimates we have\footnote{Note that the kernel of $\lap_0 + 2$ is given by the $\ell=1$ spherical harmonics, and that $\int  u x$ corresponds to the projection on these spherical harmonics.}
\beaa
\| u\|_{H^2}&\les&  \| (\lap_0 + 2)u \|_{L^2}+ \left|\int  u\, x \right| 
\eeaa
and hence
\beaa
\| u\|_{H^2}&\les& \|(K-1) e^{2u} \|_{L^2}  +\| f(u) \|_{L^2} + \left|\int  u\, x \right| \\
&\les&\ep   \| e^{2u} \|_{L^2}+  \| f(u) \|_{L^2}\\
&\les&  \ep   \| e^{2u} \|_{L^2} + 4 \| u\|_{L^\infty}^2   \| e^{2u} \|_{L^2}.
\eeaa
In view of Onofri inequality \eqref{eq:Thm.Onofri},  we have 
\beaa
\log \int e^{4 u} &\le&  S[2u]  \les  \| u\|_{H^1}^2 +\| u\|_{H^1}   \les  \de   \le 1 
\eeaa
where we have used $\| u\|_{H^1} \le\de$ and chosen $\de>0$ sufficiently small. We deduce,
\beaa
\| u\|_{H^2}&\les&   \ep +  \|u\|_{L^\infty}^2.
\eeaa
By a calculus inequality
\beaa
\| u\|_{L^\infty}\les  \|u\|_{L^2}^{\frac{1}{2}} \| u\|_{H^2}^{\frac{1}{2}}. 
\eeaa
Hence,
\beaa
\| u\|_{H^2}&\les&     \ep+    \|u\|_{L^2} \| u\|_{H^2}.
\eeaa
Thus, since $\|u\|_{H^1}\le \de$, we infer that, in fact, for $\de>0$ small
\beaa
\| u\|_{H^2}&\les& \ep.
\eeaa
 In other words, if $\|K-1\|_{L^\infty} \le \ep $, for $\ep>0$ sufficiently small, then  $\|u\|_{H^2}\les \ep$.  In particular, by Sobolev inequality,  we have $\| u\|_{L^\infty}\les \ep$ as stated.
\end{proof}

%%%%%%%%%%%%%%%%%%%%%%%%%%%%%%%%%%%%%%%

\subsection{Proof of  Theorem \ref{theorem-Eff.uniformalization-appendix}}
\lab{sec:proofof-theorem-Eff.uniformalization-appendix}

%%%%%%%%%%%%%%%%%%%%%%%%%%%%%%%%%%%%%%%

%%%%%%%%%%%%%%%%%%%%%%%%%

\subsubsection{Existence  part}

%%%%%%%%%%%%%%%%%%%%%%%%%

According to the standard uniformization theorem,  there exists a scalar function $\tilde{u}$ and a map $\widetilde{\Phi}:\SSS^2\to S$ such that      we have $\widetilde{\Phi}^\# (g^S)= e^{2\tilde{u}} \ga_0$ with $ \int_{\SSS^2}  e^{2\tilde{u}}=4\pi$.  In view of  Lemma \ref{le:renormalization}, we can find     $\Psi\in \M$  such that    $u=\tilde{u}_\Psi\in \SS_0$, and  then,  $g=\Phi^\# (g^S)=e^{2u} \ga_0$ with $\Phi=\widetilde{\Phi}\circ\Psi$.

%%%%%%%%%%%%%%%%%%%%%%%%%

\subsubsection{Uniqueness  part}

%%%%%%%%%%%%%%%%%%%%%%%%%

 We address below the issue of  uniqueness,  up to isometries of $\SSS^2$, of $(\Phi, u)$. Assume that we have
 \bea
( \Phi_i)^\# g^S= e^{2u_i} \ga_0, \qquad i=1,2,
 \eea
  with $\Phi_i:\SSS^2\to S$ diffeomorphisms and   $u_i$ centered conformal factors.  Let $\Psi=\Phi_2^{-1} \circ \Phi_1$ so that $\Psi:\SSS^2\to \SSS^2$. In view of the above, we have 
  \beaa
  \Psi^\# \ga_0&=& \Phi_1^\#(\Phi_2^{-1})^\#\ga_0=\Phi_1^\#(e^{-2u_2\circ\Phi_2^{-1}}g^S) =e^{-2u_2\circ\Phi_2^{-1}\circ\Phi_1}e^{2u_1}\ga_0
  \eeaa
  and hence
  \beaa
  \Psi^\# \ga_0&=& e^{2(u_1- u_2\circ \Psi)} \ga_0.
\eeaa
Therefore  $\Psi$ is a conformal isometry of $\SSS^2$, and we  deduce in view of Remark \ref{rem:remarkonequationEuler-K=1}
\beaa
u_1- u_2\circ \Psi=\frac 1 2 \log |\det\Psi|,
\eeaa
i.e.,
\beaa
u_1=( u_2)_\Psi.
\eeaa
Since both $u_1$ and $u_2$ are centered  we deduce, in view of Lemma \ref{lemma:improved-renormalization:bis},   $\Psi=O$ with $O\in O(3)$ and therefore $\Phi_1=\Phi_2\circ O$ and $u_1=u_2\circ O$ as stated.

%%%%%%%%%%%%%%%%%%%%%%%%%

\subsubsection{Estimates}

%%%%%%%%%%%%%%%%%%%%%%%%%

From now on, we assume in addition the almost round condition \eqref{eq:Almostround}. In view of  \eqref{eq:Almostround}, and the fact that $g=e^{2u} \ga_0$ with $u\in \SS_0$, we deduce by Proposition \ref{proposition:effective-uniformisation-Appendix} that $\| u\|_{L^\infty(\SSS^2) } \les \ep $. This implies $\|u\circ\Phi^{-1} \|_{L^\infty(S)} \les \ep $ as stated in \eqref{eq:Thm.-Eff.uniformalization1-appendix}. To prove  the higher derivative estimates we  observe  that
\bea
(\Phi^{-1})^\# \ga_0 &=& e^{-2 v}g^S, \qquad v:= u\circ\Phi^{-1}, 
\eea
which together with Lemma \ref{Le:transf-Gauss} implies
 \beaa
 1=K((\Phi^{-1})^\# \ga_0)=K(  e^{-2v} g^S)=e^{2v} \big(K^S+\lap_{S} v \big)
 \eeaa
so that $v$ verifies
\beaa
\lap_S v  = \big(e^{-2v}-1\big) -\big(K^S-1\big), \qquad \|v\|_{L^\infty(S)} \les \ep.
\eeaa
In view of the  almost round condition \eqref{eq:Almostround}, we easily deduce  that  in fact $\| v\|_{H^2(S)} \les \ep $. 
Moreover, if  we have in addition $\|K^S-1\|_{H^s} \le \ep$, then, by  standard elliptic regularity, we obtain  $\| v\|_{H^{2+s} (S)} \les \ep$ as stated.

%%%%%%%%%%%%%%%%%%%%%%%%%%%%%%%%%%%%%%%%%%%%%%%%%%

\subsection{Effective uniformization for nearly round spheres of arbitrary area}

%%%%%%%%%%%%%%%%%%%%%%%%%%%%%%%%%%%%%%%%%%%%%%%%%%

Let $(S, g^S)$ be a fixed sphere, and let $r^S$ denote its area radius, i.e. $r^S$ satisfies 
\beaa
|S|=4\pi (r^S)^2.
\eeaa
Given a positive integer $s$, we introduce  the following norm on $S$
   \bea
   \lab{definition:spaceH^k(Sbutnotbold)}
\| f\|_{\hk_s(S)}:&=&\sum_{i=0}^s \|( r^S \nab^S )^i f\|_{L^2(S)}.
\eea
The goal of the following corollary is to extend Theorem \ref{theorem-Eff.uniformalization-appendix} to the case  $r^S\neq 1$.

   \begin{corollary}
   \lab{proposition:effective-uniformisation} 
 Let $(S, g^S)$ be a fixed sphere.    There exists, up to isometries\footnote{i.e.  all  the solutions are of the form $(\Phi\circ O, u\circ O)$ for $O\in O(3)$.} of $\SSS^2$,  a  unique   diffeomorphism   $\Phi:\SSS^2\to S$  and a  unique centered conformal factor $u$, i.e. $u\in\SS$, such that  
   \beaa
    \Phi^\#( g^S) =  (r^S)^2e^{2u}  \ga_0.
    \eeaa  
    Moreover,  under the almost round condition
    \bea\lab{eq:Almostround-S} 
   \left\| K^S-\frac{1}{ (r^S) ^2} \right\|_{L^\infty(S)} &\leq& \frac{\ep}{(r^S)^2},
   \eea
     the following properties are verified for sufficiently small $\ep>0$. 
   
\begin{enumerate}
\item  We have
\bea
\lab{eq:Thm.-Eff.uniformalization1}
 \| u\circ\Phi^{-1} \|_{L^\infty  (S) } &\les& \ep.
 \eea
 
 \item If in addition 
 \bea\lab{eq:Almostround-S-k}
\left \|K^S -\frac{1}{(r^S)^2}\right\|_{\hk_s(S)} &\leq& \frac{\ep}{r^S}
 \eea
  for some $s\geq 0$, then
  \bea
\left \|u\circ\Phi^{-1}\right\|_{\hk_{s+2}(S)} &\les&  \ep r^\S.
 \eea

 \end{enumerate}
\end{corollary}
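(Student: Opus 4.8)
The plan is to deduce Corollary \ref{proposition:effective-uniformisation} from Theorem \ref{theorem-Eff.uniformalization-appendix} by a straightforward rescaling argument. First I would introduce the rescaled metric $\tilde{g}^S:=(r^S)^{-2}g^S$ on $S$, which has area $|S|/(r^S)^2=4\pi$, so that Theorem \ref{theorem-Eff.uniformalization-appendix} applies directly to $(S,\tilde{g}^S)$. This yields, up to isometries of $\SSS^2$, a unique diffeomorphism $\Phi:\SSS^2\to S$ and a unique centered conformal factor $u\in\SS$ with $\Phi^\#(\tilde{g}^S)=e^{2u}\ga_0$; multiplying through by $(r^S)^2$ gives $\Phi^\#(g^S)=(r^S)^2e^{2u}\ga_0$, which is the desired normalized uniformization. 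The uniqueness statement transfers verbatim since the conformal class and the centering condition $u\in\SS$ are unaffected by the constant rescaling.

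Next I would translate the almost-round hypotheses. Under a constant rescaling $g^S\mapsto \tilde{g}^S=(r^S)^{-2}g^S$ the Gauss curvature scales as $K(\tilde{g}^S)=(r^S)^2K^S$, so \eqref{eq:Almostround-S} becomes precisely $\|K(\tilde g^S)-1\|_{L^\infty(S)}\le\ep$, which is condition \eqref{eq:Almostround} for $\tilde g^S$. Hence Theorem \ref{theorem-Eff.uniformalization-appendix}(1) gives $\|u\circ\Phi^{-1}\|_{L^\infty(S)}\les\ep$, which is \eqref{eq:Thm.-Eff.uniformalization1} with no change since the $L^\infty$ norm of a scalar does not see the metric. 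For the higher-order estimate, I would check how the weighted norms $\hk_s(S)$ and $H^s(S)$ interact with the rescaling: the covariant derivative $\nab^S$ is unchanged by a constant conformal factor, but the volume form picks up a factor $(r^S)^2$, so $\|f\|_{L^2(S,g^S)}=r^S\|f\|_{L^2(S,\tilde g^S)}$, and the weight $r^S\nab^S$ in \eqref{definition:spaceH^k(Sbutnotbold)} is exactly designed to make $\|f\|_{\hk_s(S)}=r^S\|f\|_{H^s(S,\tilde g^S)}$. Translating \eqref{eq:Almostround-S-k}: since $\|K^S-(r^S)^{-2}\|_{\hk_s(S)}=(r^S)^{-2}\|K(\tilde g^S)-1\|_{\hk_s(S)}=(r^S)^{-1}\|K(\tilde g^S)-1\|_{H^s(S,\tilde g^S)}$, the hypothesis $\le\ep/r^S$ becomes $\|K(\tilde g^S)-1\|_{H^s(S,\tilde g^S)}\le\ep$, so Theorem \ref{theorem-Eff.uniformalization-appendix}(2) yields $\|u\circ\Phi^{-1}\|_{H^{2+s}(S,\tilde g^S)}\les\ep$, and multiplying by $r^S$ gives $\|u\circ\Phi^{-1}\|_{\hk_{s+2}(S)}\les\ep\, r^S$ as claimed.

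The argument is essentially bookkeeping, so there is no genuine obstacle; the only point requiring a little care is getting the powers of $r^S$ consistent across the three places they appear — the conformal factor normalization, the curvature scaling, and the definition of the weighted norms $\hk_s$. I would state the scaling laws $K(c^2 g)=c^{-2}K(g)$, $\nab^{c^2 g}=\nab^g$, and $dV_{c^2 g}=c^2\,dV_g$ explicitly at the outset, then each of the four displayed conclusions follows by matching factors. One should also note in passing that $\nab^S$ in \eqref{definition:spaceH^k(Sbutnotbold)} refers to the Levi-Civita connection of $g^S$ itself, and verify that the statement of Theorem \ref{theorem-Eff.uniformalization-appendix}(2), which is phrased with $H^s(S)$ norms relative to $g^S$ (there normalized to area $4\pi$), is being applied to $\tilde g^S$; this is legitimate since $\tilde g^S$ is itself an admissible metric of area $4\pi$ on the abstract surface $S$.
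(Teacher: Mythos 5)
Your proposal is correct and takes essentially the same approach as the paper: rescale to $\tilde g^S=(r^S)^{-2}g^S$ (which has area radius $1$), apply Theorem \ref{theorem-Eff.uniformalization-appendix}, and translate the hypotheses and conclusions back to $(S,g^S)$. The paper's proof is more terse and leaves the $\hk_s$ norm bookkeeping implicit, whereas you spell it out correctly, including the identity $\|f\|_{\hk_s(S)}=r^S\|f\|_{H^s(S,\tilde g^S)}$.
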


\begin{proof}
Consider the metric $\tilde{g}^S$ on $S$ given by
\beaa
\tilde{g}^S &:=& \frac{1}{(r^S)^2}g^S.
\eeaa
Then, $(S, \tilde{g}^S)$ has area radius 1, and $K(\tilde{g}^S)=(r^S)^{2}K(g^S)$ so that, in view of \eqref{eq:Almostround-S}, 
the almost round condition \eqref{eq:Almostround} holds for $\tilde{g}^S$. Thus, we may apply Theorem \ref{theorem-Eff.uniformalization-appendix} to $(S, \tilde{g}^S)$ which then  implies the first two conclusions   of Corollary \ref{proposition:effective-uniformisation} for $(S, g^S)$.
\end{proof}

%%%%%%%%%%%%%%%%%%%%%%%%%%%%%

\subsection{Canonical basis of $\ell=1$ modes  on $S$}

%%%%%%%%%%%%%%%%%%%%%%%%%%%%%

Let $S$ be an almost round sphere, i.e. verifying \eqref{eq:Almostround-S}. The goal of this section is to define  on $S$ a canonical generalization of the $\ell=1$ spherical harmonics. 

Recall that  the $\ell=1$ spherical harmonics $J^{\SSS^2}=(J^{(-, \SSS^2)}, J^{(0, \SSS^2)}, J^{(+, \SSS^2)} ) $ are given by the restriction of  $x^1, x^2,  x^3$  to $\SSS^2$.  More precisely, in polar coordinates,
\bea\lab{eq:defofstandardsphericalharmonics}
J^{(0, \SSS^2)} =x^3= \cos\th, \qquad J^{(+, \SSS^2)} =  x^1=\sin\th \cos \vphi, \qquad  J^{(-, \SSS^2)} = x^2=\sin\th \sin \vphi.
\eea

\begin{lemma}
We have, for $p, q\in\{-, 0, +\}$,
\bea
\bsplit
\lap_0 J^{(p, \SSS^2)} &= - 2 J^{(p, \SSS^2)},\\
\int_{\SSS^2 }  J^{(p, \SSS^2)} J^{(q, \SSS^2)} da_{\ga_0} &= \frac{4\pi}{3} \de_{pq}, \\
\int_{\SSS^2}  J^{(p, \SSS^2)} da_{\ga_0} &=0. 
\end{split}
\eea
\end{lemma}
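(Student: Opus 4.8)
The statement collects three elementary facts about $J^{(p,\SSS^2)}$, $p\in\{-,0,+\}$, namely that each is an eigenfunction of $\lap_0$ with eigenvalue $-2$, that the $J^{(p,\SSS^2)}$ are pairwise $L^2$-orthogonal with common norm-squared $\tfrac{4\pi}{3}$, and that each has vanishing integral over $\SSS^2$. The plan is to prove all three by reducing them to standard facts about the restriction of the linear coordinate functions $x^1,x^2,x^3$ to $\SSS^2$, since $J^{(p,\SSS^2)}$ is precisely this restriction by \eqref{eq:defofstandardsphericalharmonics}.

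\textbf{Eigenfunction property.} First I would recall the well-known identity relating the Laplace--Beltrami operator on $\SSS^2\subset\RRR^3$ to the flat Laplacian $\lap_{\RRR^3}$: for any smooth function $F$ on $\RRR^3$, restricting to $\SSS^2$ one has $\lap_0(F|_{\SSS^2}) = \big(\lap_{\RRR^3}F - \pr_r^2 F - \tfrac{2}{r}\pr_r F\big)\big|_{r=1}$, where $\pr_r$ is the radial derivative. Applying this to $F=x^i$, which is harmonic in $\RRR^3$, homogeneous of degree $1$ (so $\pr_r^2 x^i=0$ and $\pr_r x^i = x^i/r$), gives $\lap_0(x^i|_{\SSS^2}) = -2\,x^i|_{\SSS^2}$, i.e. $\lap_0 J^{(p,\SSS^2)}=-2J^{(p,\SSS^2)}$. (Alternatively one can simply differentiate $\cos\th$, $\sin\th\cos\vphi$, $\sin\th\sin\vphi$ directly in polar coordinates using $\lap_0 = \frac{1}{\sin\th}\pr_\th(\sin\th\,\pr_\th) + \frac{1}{\sin^2\th}\pr_\vphi^2$; both routes are routine.)

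\textbf{Orthogonality and normalization.} For $\int_{\SSS^2} x^i x^j\, da_{\ga_0}$, I would invoke the $O(3)$-invariance of both $\SSS^2$ and $da_{\ga_0}$: the matrix $M_{ij}:=\int_{\SSS^2} x^i x^j\, da_{\ga_0}$ satisfies $O^T M O = M$ for every $O\in O(3)$, hence $M=c\,I$ for some constant $c$; taking the trace, $3c = \int_{\SSS^2}|x|^2\,da_{\ga_0} = \int_{\SSS^2} 1\,da_{\ga_0} = 4\pi$, so $c = \tfrac{4\pi}{3}$, giving $\int_{\SSS^2} J^{(p,\SSS^2)}J^{(q,\SSS^2)}\,da_{\ga_0} = \tfrac{4\pi}{3}\de_{pq}$. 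For the last identity, $\int_{\SSS^2} x^i\,da_{\ga_0}=0$ follows from the symmetry $x\mapsto -x$, which preserves $\SSS^2$ and $da_{\ga_0}$ but sends $x^i$ to $-x^i$; equivalently it is the orthogonality of $J^{(p,\SSS^2)}$ (an $\ell=1$ harmonic) against the constant function ($\ell=0$) combined with the eigenfunction property just established. None of these steps presents a genuine obstacle — the only mild care needed is bookkeeping of the normalization constant $\tfrac{4\pi}{3}$, and the main "idea," such as it is, is simply to exploit the rotational symmetry of the round sphere rather than compute the integrals in coordinates.
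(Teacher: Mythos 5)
Your proof is correct. The paper itself records only ``Straightforward verification,'' so there is no competing argument to compare against; your write-up simply supplies the elementary details the authors chose to omit. All three steps are sound: the radial-decomposition identity for $\lap_0$ applied to the harmonic, degree-one functions $x^i$ gives the eigenvalue $-2$; the $O(3)$-equivariance argument plus the trace identity $\int_{\SSS^2}|x|^2 = 4\pi$ gives the Gram matrix $\tfrac{4\pi}{3}\de_{pq}$; and the antipodal symmetry gives the vanishing mean. Nothing to add.
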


\begin{proof}
Straightforward verification.
\end{proof}

 \begin{definition}[Basis of canonical $\ell=1$ modes  on $S$]
 \lab{definition:ell=1mpdesonS}
 Let $(S,g^S)$ be an almost round sphere, i.e. verifying \eqref{eq:Almostround-S}. Let   $(\Phi, u)$  the unique, up to isometries of $\SSS^2$,  uniformization pair  given by Corollary \ref{proposition:effective-uniformisation},
i.e.,       
\beaa
\Phi:\SSS^2\longrightarrow S, \qquad \Phi^\# (g^S) = (r^S)^2e^{2u}\ga_0,\qquad u\in\SS.
\eeaa
We define the basis of canonical $\ell=1$ modes on $S$ by
\bea
J^S &:=& J^{\SSS^2}\circ \Phi^{-1}, 
\eea
where $J^{\SSS^2}$ denotes the $\ell=1$ spherical harmonics, see \eqref{eq:defofstandardsphericalharmonics}.  
 \end{definition}
 \begin{remark}
 Note that  the canonical basis  is unique up to a rotation on $\SSS^2$.
 \end{remark}
 \begin{lemma}
 \lab{lemma:basicpropertiesofJforcanonicalell=1basis}
  Consider  $(S,g^S)$   a sphere of  area radius $r^S$  verifying  the   almost round condition \eqref{eq:Almostround-S}. Let   $(\Phi, u)$  the unique, up to isometries of $\SSS^2$,  uniformization pair  given by Corollary \ref{proposition:effective-uniformisation}. 
Let $J^S$ denote the basis of canonical $\ell=1$ modes on $S$ of Definition \ref{definition:ell=1mpdesonS}. Then,  we have 
 \bea
 \bsplit
 \lap_S J^{(p,S)}  &=-\frac{2}{(r^S)^2} J^{(p,S)}  +\frac{2}{(r^S)^2}\big(1- e^{-2v}\big)J^{(p,S)}, \\
 \int_S J^{(p,S)} J^{(q,S)} da_g &= \frac{4\pi}{3}(r^S)^2 \de_{pq}+ \int_S J^{(p,S)} J^{(q,S)} \big(1- e^{-2v} \big)   da_{g^S},\\
 \int_S J^{(p,S)} da_g &=0,
 \end{split}
 \eea
 with $\lap^S $ the Laplace-Beltrami of the metric $g^S$ and with $v:= u\circ\Phi^{-1}$. 
 Moreover  we have,
  \bea
  \lab{eq:PropertiesofJ^S}
 \bsplit
 \lap_S J^{(p,S)}  &=\left(-\frac{2}{(r^S)^2} +O\left(\frac{\ep}{(r^S)^2}\right) \right)J^{(p,S)}, \\
 \int_S J^{(p,S)}J^{(q,S)} da_g &= \frac{4\pi}{3}(r^S)^2\de_{pq}+ O(\ep(r^S)^2),
 \end{split}
 \eea
 where $\ep>0$ is the smallness constant appearing in the almost round condition \eqref{eq:Almostround-S}. 
 \end{lemma}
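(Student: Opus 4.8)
The plan is to start from the explicit uniformization data $(\Phi,u)$ provided by Corollary \ref{proposition:effective-uniformisation}, pull back the relation $\Phi^\#(g^S)=(r^S)^2e^{2u}\ga_0$ to $S$ by writing $v:=u\circ\Phi^{-1}$, so that $(\Phi^{-1})^\#\ga_0=(r^S)^{-2}e^{-2v}g^S$, i.e. $g^S=(r^S)^2e^{2v}\,(\Phi^{-1})^\#\ga_0$. Since $J^{(p,S)}=J^{(p,\SSS^2)}\circ\Phi^{-1}$, all three identities are obtained by pulling back the three corresponding identities for $J^{(p,\SSS^2)}$ on $(\SSS^2,\ga_0)$ recorded in the Lemma just above, using the standard conformal transformation rules. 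For the Laplacian: $\Phi$ is an isometry between $(\SSS^2,(r^S)^2e^{2u}\ga_0)$ and $(S,g^S)$, hence $\lap_S(J^{(p,\SSS^2)}\circ\Phi^{-1})=(\lap_{(r^S)^2e^{2u}\ga_0}J^{(p,\SSS^2)})\circ\Phi^{-1}$. Then I would use the $2$-dimensional conformal scaling of the Laplace–Beltrami operator, $\lap_{(r^S)^2e^{2u}\ga_0}=(r^S)^{-2}e^{-2u}\lap_0$, together with $\lap_0 J^{(p,\SSS^2)}=-2J^{(p,\SSS^2)}$, to get $\lap_{(r^S)^2e^{2u}\ga_0}J^{(p,\SSS^2)}=-2(r^S)^{-2}e^{-2u}J^{(p,\SSS^2)}$; composing with $\Phi^{-1}$ and writing $e^{-2u}=1-(1-e^{-2v})\circ\Phi$ pulled back gives exactly the stated first identity with the $\tfrac{2}{(r^S)^2}(1-e^{-2v})J^{(p,S)}$ correction term.

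For the two integral identities I would use the change of variables $\Phi:\SSS^2\to S$ together with the volume form relation $da_{g^S}\circ\Phi=(r^S)^2e^{2u}\,da_{\ga_0}$, equivalently $\Phi^\#(da_{g^S})=(r^S)^2e^{2u}da_{\ga_0}$, and the defining property $\int_{\SSS^2}e^{2u}x^i\,da_{\ga_0}=0$ (i.e. $u\in\SS$) which is precisely what kills the main would-be error. Concretely, $\int_S J^{(p,S)}J^{(q,S)}\,da_{g^S}=\int_{\SSS^2}J^{(p,\SSS^2)}J^{(q,\SSS^2)}(r^S)^2e^{2u}\,da_{\ga_0}$; I split $e^{2u}=1+(e^{2u}-1)$, the $1$-term gives $\tfrac{4\pi}{3}(r^S)^2\de_{pq}$ by the orthogonality of the standard $\ell=1$ harmonics, and the remainder, rewritten back on $S$ as $\int_S J^{(p,S)}J^{(q,S)}(1-e^{-2v})\,da_{g^S}$ (since $e^{2u}da_{\ga_0}$ pulled forward equals $da_{g^S}$ and $(e^{2u}-1)da_{\ga_0}=(1-e^{-2u})e^{2u}da_{\ga_0}$), is the stated correction term. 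The identity $\int_S J^{(p,S)}\,da_{g^S}=\int_{\SSS^2}J^{(p,\SSS^2)}(r^S)^2e^{2u}\,da_{\ga_0}=0$ is then \emph{exactly} the centering condition $u\in\SS$, so here the correction term vanishes identically — this is the one place where using the centered conformal factor rather than an arbitrary one is essential.

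Finally, the quantitative statements \eqref{eq:PropertiesofJ^S} follow by estimating the correction terms using $\|u\circ\Phi^{-1}\|_{L^\infty(S)}=\|v\|_{L^\infty(S)}\les\ep$ from Corollary \ref{proposition:effective-uniformisation} (conclusion \eqref{eq:Thm.-Eff.uniformalization1}): then $|1-e^{-2v}|\les\ep$ pointwise, $|J^{(p,S)}|\les 1$ since the standard harmonics are bounded by $1$ and composition with a diffeomorphism preserves the sup-norm, and $\mathrm{Area}(S)=4\pi(r^S)^2$; multiplying these bounds gives the $O(\ep/(r^S)^2)$ coefficient in the Laplacian identity and the $O(\ep(r^S)^2)$ error in the integral identity. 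I expect no serious obstacle here: the only subtlety is bookkeeping the conformal factors and volume forms consistently (which exponent of $e^u$ goes with $da_{\ga_0}$ versus $da_{g^S}$, and the sign in $v$ versus $u$), and making sure the first identity's correction is written in terms of $v$ on $S$ rather than $u$ on $\SSS^2$; the centering property $u\in\SS$ does all the real work in the third identity, and $\|v\|_{L^\infty}\les\ep$ does all the real work in the estimates. One small point worth a line of care is that $\lap_0 J^{(p,\SSS^2)}=-2J^{(p,\SSS^2)}$ holds for \emph{unit} area radius, so the $(r^S)^{-2}$ factors must be tracked through the rescaled metric $(r^S)^2e^{2u}\ga_0$ exactly as in the proof of Corollary \ref{proposition:effective-uniformisation}.
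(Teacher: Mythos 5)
Your proposal is correct and follows essentially the same line of argument as the paper's proof: both rest on conformal rescaling of the two-dimensional Laplacian, change of variables for the volume forms, orthogonality of the standard $\ell=1$ harmonics, the centering condition $u\in\SS$ for the third identity, and the $L^\infty$ bound on $v$ from Corollary \ref{proposition:effective-uniformisation} for \eqref{eq:PropertiesofJ^S}. The only cosmetic difference is that you carry out the computation on $\SSS^2$ and then push forward, whereas the paper first transports $\ga_0$ to $g_0=(\Phi^{-1})^\#\ga_0$ on $S$ and works there directly; the steps correspond one-to-one.
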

 
 \begin{proof}
 Since $(\Phi, u)$ is the unique, up to isometries of $\SSS^2$,  uniformization pair  given by Corollary \ref{proposition:effective-uniformisation}, we have
\beaa
\Phi:\SSS^2\longrightarrow S, \qquad \Phi^\# (g^S) = (r^S)^2e^{2u}\ga_0,\qquad u\in\SS.
\eeaa
We rewrite this as follows 
\bea
g^S=(r^S)^2e^{2v} g_0, \qquad g_0:=\big(\Phi^{-1}\big)^\# \ga_0, \qquad v:= u\circ\Phi^{-1}. 
\eea
Since $\lap_0J^{\SSS^2}=-2J^{\SSS^2}$ and $g_0= \big(\Phi^{-1}\big)^\# \ga_0$, we have, in view of the definition of $J^S$, i.e. $J^S=J^{\SSS^2}\circ \Phi^{-1}=\big(\Phi^{-1}\big)^\# J^{\SSS^2}$, 
 \beaa
 \lap_{g_0}   J^S =-2 J^S.
 \eeaa
On the other hand, since $g^S=(r^2)^2e^{2v} g_0$, we have   in view of the conformal invariance of the Laplacian, $\lap_S = (r^S)^{-2}e^{-2v} \lap_{g_0} $. We deduce
 \beaa
 \lap_S J^S= (r^S)^{-2}e^{-2v} \lap_{g_0}  J^S=-2(r^S)^{-2}  e^{-2v}  J^S=-\frac{2}{(r^S)^2} J^S+\frac{2}{(r^S)^2}\Big(- e^{-2v}+1  \Big)J^S.
 \eeaa
 Also,
\beaa
\int_S J^{(p,S)} J^{(q,S)} da_{g^S}&=&\int_S J^{(p,S)} J^{(q,S)} (r^S)^2 e^{2v}  da_{g_0}\\
&=& (r^S)^2\int_S\big(\Phi^{-1}\big)^\#J^{(p,\SSS^2)}\big(\Phi^{-1}\big)^\#J^{(q,\SSS^2)}da_{\big(\Phi^{-1}\big)^\#\ga_0}\\
&&+(r^S)^2 \int_S J^{(p,S)}  J^{(q,S)}  \big( e^{2v}-1\big)   da_{g_0}\\
&=& (r^S)^2\int_{\SSS^2}J^{(p,\SSS^2)}J^{(q,\SSS^2)}da_{\ga_0}+ \int_S J^{(p,S)} J^{(q,S)}  \big( e^{2v}-1\big)  e^{-2v}  da_{g^S} \\
&=& \frac{4\pi}{3}(r^S)^2 \de_{pq}+ \int_SJ^{(p,S)}  J^{(q,S)}    \big(1- e^{-2v} \big)  da_{g^S}.
\eeaa
Also, in view of the  centeredness of $u$,
\beaa
\int_S J^S  da_{g^S}&=&\int_S J^{\SSS^2}\circ \Phi^{-1} (r^S)^2e^{2v}   da_{g_0}=(r^S)^2\int_{S}\big(\Phi^{-1}\big)^\#(e^{2u}J^{\SSS^2}) da_{\big(\Phi^{-1}\big)^\#\ga_0}\\
&=& (r^S)^2\int_{\SSS^2} e^{2u}x  da_{\ga_0} =0
\eeaa
as stated.
The proof of \eqref{eq:PropertiesofJ^S} follows immediately in view of the fact that $v=u\circ\Phi$  satisfies $\| v\|_{L^\infty}\les \ep$, see \eqref{eq:Thm.-Eff.uniformalization1}.
 \end{proof}

\begin{corollary}
  Let $(S,g^S)$ verifying \eqref{eq:Almostround-S}. Let   $(\Phi, u)$  the unique, up to isometries of $\SSS^2$,  uniformization pair  given by Corollary \ref{proposition:effective-uniformisation}. 
Let $J^S$ denote the basis of canonical $\ell=1$ modes on $S$ of Definition \ref{definition:ell=1mpdesonS}. Then,  for sufficiently small $\ep>0$, the following holds\footnote{Note that, a priori, one would expect the right-hand side of \eqref{eq:specialcancellationell=1modeofGausscurvature} to be $O(\ep)$. The fact that it is actually $O(\ep^2)$ is an application of Corollary \ref{proposition:effective-uniformisation} and \eqref{eq:PropertiesofJ^S}, see the proof below.} 
\bea\lab{eq:specialcancellationell=1modeofGausscurvature}
\int_S\left(K^S-\frac{1}{(r^S)^2}\right)J^{(p,S)} &=& O(\ep^2),\qquad p=0,+,-,
\eea
where $K^S$ and $r^S$ denote respectively the Gauss curvature and the area radius of $S$.
\end{corollary}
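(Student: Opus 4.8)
The plan is to compute the $\ell=1$ mode $\int_S \big(K^S - (r^S)^{-2}\big) J^{(p,S)} \, da_{g^S}$ by pulling everything back to $\SSS^2$ via the uniformization map $\Phi$ and exploiting the Euler-type equation for the conformal factor together with the special cancellation properties of $J^{\SSS^2}$ recorded in Lemma \ref{lemma:basicpropertiesofJforcanonicalell=1basis}. Write $g^S = (r^S)^2 e^{2v} g_0$ with $g_0 = (\Phi^{-1})^\# \ga_0$ and $v = u\circ\Phi^{-1}$, so that $\Phi^\#$ turns the integral into $(r^S)^2 \int_{\SSS^2}\big(K^S\circ\Phi - (r^S)^{-2}\big) e^{2u} J^{(p,\SSS^2)} \, da_{\ga_0}$. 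By Corollary \ref{Corr:transf-Gauss} the conformal factor $u$ satisfies $\lap_0 u + (K^S\circ\Phi)\, (r^S)^2 e^{2u} = 1$, i.e. $(K^S\circ\Phi)(r^S)^2 e^{2u} = 1 - \lap_0 u$, so $\big(K^S\circ\Phi - (r^S)^{-2}\big)(r^S)^2 e^{2u} = 1 - \lap_0 u - e^{2u}$. Hence
\[
\int_S\left(K^S-\frac{1}{(r^S)^2}\right)J^{(p,S)}\, da_{g^S} = \int_{\SSS^2}\big(1 - \lap_0 u - e^{2u}\big)J^{(p,\SSS^2)}\, da_{\ga_0}.
\]

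Now break the right-hand side into three pieces. First, $\int_{\SSS^2} J^{(p,\SSS^2)} \, da_{\ga_0} = 0$ by the last identity of Lemma \ref{lemma:basicpropertiesofJforcanonicalell=1basis}. Second, integrating by parts and using $\lap_0 J^{(p,\SSS^2)} = -2 J^{(p,\SSS^2)}$ gives $\int_{\SSS^2} (\lap_0 u) J^{(p,\SSS^2)} = \int_{\SSS^2} u\, \lap_0 J^{(p,\SSS^2)} = -2\int_{\SSS^2} u\, J^{(p,\SSS^2)}$. Third, expand $e^{2u} = 1 + 2u + f(u)$ with $f(u) = e^{2u} - 1 - 2u = O(|u|^2 e^{2|u|})$, so that $\int_{\SSS^2} e^{2u} J^{(p,\SSS^2)} = \int_{\SSS^2} J^{(p,\SSS^2)} + 2\int_{\SSS^2} u J^{(p,\SSS^2)} + \int_{\SSS^2} f(u) J^{(p,\SSS^2)} = 2\int_{\SSS^2} u J^{(p,\SSS^2)} + \int_{\SSS^2} f(u) J^{(p,\SSS^2)}$. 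Collecting the three pieces, the terms $2\int_{\SSS^2} u J^{(p,\SSS^2)}$ coming from the $\lap_0 u$ term and the $e^{2u}$ term \emph{cancel exactly}, leaving
\[
\int_S\left(K^S-\frac{1}{(r^S)^2}\right)J^{(p,S)}\, da_{g^S} = -\int_{\SSS^2} f(u)\, J^{(p,\SSS^2)}\, da_{\ga_0}.
\]

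It remains to estimate this last integral. Since $|f(u)| \lesssim |u|^2 e^{2|u|}$ and $\|u\|_{L^\infty(\SSS^2)} \lesssim \ep$ by Corollary \ref{proposition:effective-uniformisation} (equivalently \eqref{eq:Thm.-Eff.uniformalization1}), we get $\|f(u)\|_{L^\infty(\SSS^2)} \lesssim \ep^2$, hence $\big|\int_{\SSS^2} f(u) J^{(p,\SSS^2)} \, da_{\ga_0}\big| \lesssim \ep^2$, which is the claimed bound. (Alternatively one may bound $\|f(u)\|_{L^1}$ by $\|u\|_{L^2}^2$ plus higher-order terms, using $\|u\|_{H^1} \lesssim \ep$, which also suffices.) I do not expect any genuine obstacle here: the one point that deserves care is verifying that the two $2\int u J^{(p,\SSS^2)}$ contributions enter with opposite signs so that the naively $O(\ep)$ terms cancel and only the quadratic remainder $f(u)$ survives — this is exactly the mechanism referred to in the footnote to the statement, and it hinges on the precise form of the Euler equation in Corollary \ref{Corr:transf-Gauss} and on $\lap_0 J^{(p,\SSS^2)} = -2 J^{(p,\SSS^2)}$.
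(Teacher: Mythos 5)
Your proof is correct, and it takes a route that differs slightly but genuinely from the paper's. The paper works directly on $S$: it derives the identity $K^S - (r^S)^{-2} = -\big(\Delta_S + \tfrac{2}{(r^S)^2}\big)v + (r^S)^{-2}f(-v)$ with $v=u\circ\Phi^{-1}$, multiplies by $J^{(p,S)}$, integrates by parts to move the operator onto $J^{(p,S)}$, and then invokes the \emph{approximate} eigenfunction relation $\big(\Delta_S + \tfrac{2}{(r^S)^2}\big)J^{(p,S)} = O(\ep/(r^S)^2)J^{(p,S)}$ from \eqref{eq:PropertiesofJ^S}; the product of this $O(\ep)$ with $|v|\les\ep$ gives one $O(\ep^2)$ contribution, and $f(-v)=O(\ep^2)$ gives the other. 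You instead pull the entire integral back to $\SSS^2$, where the Euler equation $\lap_0 u + (K^S\circ\Phi)(r^S)^2 e^{2u}=1$ converts the integrand into $1-\lap_0 u - e^{2u}$, and the \emph{exact} eigenfunction relation $\lap_0 J^{(p,\SSS^2)}=-2J^{(p,\SSS^2)}$ plus the expansion $e^{2u}=1+2u+f(u)$ makes the two $2\int u J^{(p,\SSS^2)}$ terms cancel identically, so that only the quadratic remainder $-\int_{\SSS^2} f(u)J^{(p,\SSS^2)}$ survives. Your version is a bit cleaner in that the linear-in-$\ep$ cancellation is exact rather than up to an $O(\ep^2)$ error, and it uses only the elementary eigenfunction property on the round sphere rather than the approximate one on $S$ established in Lemma \ref{lemma:basicpropertiesofJforcanonicalell=1basis}; what it costs is passing the measure $da_{g^S}=(r^S)^2e^{2u}da_{\ga_0}$ carefully through the change of variables, which you do correctly. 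Both routes rest on the same two inputs — the Euler equation satisfied by the conformal factor and the $L^\infty$ smallness $\|u\|_{L^\infty}\les\ep$ from Corollary \ref{proposition:effective-uniformisation} — and both yield the stated $O(\ep^2)$ bound.
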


\begin{proof}
Since $\Phi^*(g^S)=(r^S)^2e^{2u}\ga_0$, we have $K((r^S)^{-2}e^{-2u}\Phi^*(g^S))=1$ and hence 
\beaa
1 &=& K((r^S)^{-2}e^{-2u}\Phi^*(g^S))=K((r^S)^{-2}\Phi^*(e^{-2v}g^S))\\
&=&K((r^S)^{-2}e^{-2v}g^S)\circ\Phi, \qquad\quad v:=u\circ\Phi^{-1},
\eeaa
which together with Lemma \ref{Le:transf-Gauss}  yields
 \beaa
1= K(  (r^S)^{-2}e^{-2v} g^S)=(r^S)^2 e^{2v} \big(K^S+\lap_{S} v \big).
 \eeaa 
We infer
\bea
\lab{eq:K^S}
K^S -\frac{1}{(r^S)^2} &=& -\left(\Delta_S+\frac{2}{(r^S)^2}\right)v+\frac{f(-v)}{(r^S)^2}, \quad\qquad f(v):=e^{2v}-1-2v. 
\eea
Multiplying by $J^{(p,S)}$, integrating on $S$ and using integration by parts, we deduce
\beaa
\int_S\left(K^S-\frac{1}{(r^S)^2}\right)J^{(p,S)} &=& -\int_S J^{(p,S)}\left(\Delta_S+\frac{2}{(r^S)^2}\right)v+\int_S\frac{f(-v)}{(r^S)^2}J^{(p,S)}\\
&=& -\int_Sv\left(\Delta_S+\frac{2}{(r^S)^2}\right)J^{(p,S)}+\int_S\frac{f(-v)}{(r^S)^2}J^{(p,S)}.
\eeaa
Together with \eqref{eq:PropertiesofJ^S}, the control $|v|\les\ep$ provided by \eqref{eq:Thm.-Eff.uniformalization1}, and the definition of $f$,  we infer
\beaa
\left|\int_S\left(K^S-\frac{1}{(r^S)^2}\right)J^{(p,S)}\right| &\les& \int_S\frac{\ep}{(r^S)^2}|v|+\int_S\frac{|v|^2}{(r^S)^2}\les \ep^2
\eeaa
as desired.
\end{proof}

%%%%%%%%%%%%%%%%%%%%%%%%%%%%%     
    
    \section{Stability of uniformization for nearby  spheres}
    
%%%%%%%%%%%%%%%%%%%%%%%%%%%%%%    
    
  Consider two almost round spheres spheres $(S_1, g^{S_1} )$ and $(S_2, g^{S_2} )$, i.e. verifying \eqref{eq:Almostround-S},  and their respective  uniformization pairs $(\Phi_1, u_1)$, $(\Phi_2, u_2)$, i.e. 
  \bea
  \lab{eq:unifor-twometricsA}
  \bsplit
 & \Phi_1:\SSS^2 \longrightarrow S_1, \qquad   g_1:=\Phi_1^\#  (g^{S_1})= (r^{S_1})^2e^{2u_1} \ga_0,\\
  & \Phi_2:\SSS^2 \longrightarrow S_2, \qquad   g_2:=\Phi_2^\#(  g^{S_2})= (r^{S_2})^2e^{2u_2} \ga_0,
  \end{split}
  \eea
  and $u_1, u_2$ defined on $\SSS^2$   verifying the  conclusions of Corollary \ref{proposition:effective-uniformisation}. 
  We assume in addition given a   smooth  diffeomorphism   $\Psi:S_1\to S_2$ such  that the metrics $g^{S_1} $ and $\Psi^\#( g^{S_2} )$ are close to each other  in  $S^1$ with respect to   the coordinate chart provided by $\Phi_1$, i.e. for some $0<\de\le \ep$, 
 \bea
   \lab{eq:unifor-twometricsB}
         \left\| g^{S_1}-  \Psi^\#( g^{S_2}) \right\|_{L^\infty(S_1)} +     \frac{1}{(r^{S_1})}  \left\| g^{S_1}-  \Psi^\#( g^{S_2}) \right\|_{\hk_{4}(S^1)} \le (r^{S_1})^2  \de.
  \eea 
  The goal of this section is to     show  the existence of  a    canonical  diffeomorphism  $\Psih:\SSS^2\to \SSS^2$   which  relates the two uniformization maps. 
  More precisely  we prove the following.
    
  \begin{theorem}
  \lab{Thm:unifor-twometrics}
  Under the assumptions above, let $\Psih:\SSS^2\to \SSS^2$ be the  unique smooth diffeomorphism    such that $\Psi\circ \Phi_1=\Phi_2\circ \Psih$. Then,  the following holds true.
  \begin{enumerate}
  \item The diffeomorphism    $\Psih$ is smooth and  there exists $O\in O(3)$ such that
 \bea
  \lab{eq:unifor-twometrics1}
  \| \Psih-O\|_{L^\infty(\SSS^2)}+ \| \Psih-O\|_{H^1(\SSS^2)}&\les &\de.
 \eea
 \item The conformal factors $u_1, u_2$ verify
  \bea
  \lab{eq:unifor-twometrics2}
 \big \|u_1- \Psih^\# u_2\big \|_{L^\infty(\SSS^2)} \les \de.
  \eea
  \end{enumerate}
  \end{theorem}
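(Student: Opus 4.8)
The plan is to first produce the map $\Psih$ and establish the $O(3)$-rigidity statement \eqref{eq:unifor-twometrics1}, and only then deduce the conformal-factor estimate \eqref{eq:unifor-twometrics2}. Existence and uniqueness of $\Psih$ is immediate: since $\Phi_1:\SSS^2\to S_1$ and $\Phi_2:\SSS^2\to S_2$ are diffeomorphisms and $\Psi:S_1\to S_2$ is a diffeomorphism, we simply set $\Psih:=\Phi_2^{-1}\circ\Psi\circ\Phi_1$, which is a smooth diffeomorphism of $\SSS^2$, and smoothness is automatic. The substance is the quantitative estimate. The first key observation is to compute $\Psih^\#\ga_0$. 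Using \eqref{eq:unifor-twometricsA}, $\Psih^\#\ga_0 = (r^{S_2})^{-2}\Phi_1^\#\Psi^\#(e^{-2u_2\circ\Phi_2^{-1}}g^{S_2})$. Rewriting $\Psi^\#(g^{S_2}) = g^{S_1} + E$ with $E := \Psi^\#(g^{S_2})-g^{S_1}$ controlled by \eqref{eq:unifor-twometricsB}, and substituting $g^{S_1}=(r^{S_1})^2 e^{2u_1}(\Phi_1^{-1})^\#\ga_0$, one obtains an expression of the form $\Psih^\#\ga_0 = e^{2w}\ga_0 + \widetilde E$ where $w$ is an explicit combination of $u_1$, $u_2\circ\Phi_2^{-1}\circ\Psi\circ\Phi_1$ and a $\log$ of a ratio of area radii, and $\widetilde E$ is an error of size $O(\de)$ (in the relevant Sobolev norm, pushed to $\SSS^2$ via $\Phi_1$, which is controlled since $\|u_1\|_{L^\infty}\les\ep$ by Corollary \ref{proposition:effective-uniformisation}).

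The heart of the argument is then the rigidity step. Because $\Psih^\#\ga_0$ is $O(\de)$-close to a \emph{conformal} metric $e^{2w}\ga_0$ on $\SSS^2$, the deformation tensor of $\Psih$ — i.e. the trace-free part of $\Psih^\#\ga_0$ relative to $\ga_0$, or equivalently the symmetrized gradient of $\Psih$ minus its trace — is $O(\de)$ small in $L^\infty$ and in the Sobolev norm $\hk_4$. This is precisely the hypothesis of the quantitative geometric-rigidity / conformal-rigidity estimate of Friesecke--James--M\"uller \cite{FJM} (the sharp $L^2$ version of John's theorem \cite{John}): it yields a single $O\in O(3)$ (the conformal group of $\SSS^2$ is generated by $O(3)$ together with the scalings $\Phi_{p,t}$, but the scaling component is pinned down to be $O(\de)$-close to the identity by the smallness of the deformation together with the calibration/normalization that both $u_i$ are centered) such that $\|\Psih - O\|_{L^\infty(\SSS^2)}+\|\Psih-O\|_{H^1(\SSS^2)}\les\de$. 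One must be slightly careful here: a priori the rigidity theorem only sees the conformal class, so it produces an element of the M\"obius group $\M$; one then invokes Lemma \ref{lemma:decompositionofhomographies} to write it as $O_1\circ\Phi_{N,t}\circ O_2$, and uses that $w$ centered and $\Psih^\#\ga_0 = e^{2w}\ga_0 + O(\de)$, together with the uniqueness statement Lemma \ref{lemma:improved-renormalization:bis}, to force $|t-1|\les\de$, absorbing the scaling into the $O(3)$ part up to an $O(\de)$ error. I expect \emph{this} conformal-vs-isometric dichotomy — extracting a genuine $O(3)$ element rather than a general M\"obius map, with the scaling parameter quantitatively controlled — to be the main technical obstacle, and the one that genuinely uses the centeredness normalization of the canonical uniformization.

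Finally, for \eqref{eq:unifor-twometrics2}: from $\Psih^\#\ga_0 = e^{2w}\ga_0 + \widetilde E$ with $\widetilde E = O(\de)$ and $\|\Psih - O\|\les\de$, write $\Psih = O + (\Psih - O)$; since $O^\#\ga_0 = \ga_0$ exactly, Taylor-expanding the pullback gives $\Psih^\#\ga_0 = \ga_0 + O(\de)$, hence $e^{2w} = 1 + O(\de)$, i.e. $\|w\|_{L^\infty(\SSS^2)}\les\de$. Unwinding the definition of $w$, which contains $u_1 - u_2\circ\Phi_2^{-1}\circ\Psi\circ\Phi_1 = u_1 - u_2\circ\Psih$ up to the $\log(r^{S_1}/r^{S_2})$ constant, and noting that this constant is itself $O(\de)$ (compare areas: $|S_1|$ and $|\Psi^\#$ of $S_2|=|S_2|$ differ by $O(\de)$ by \eqref{eq:unifor-twometricsB}), we conclude $\|u_1 - \Psih^\# u_2\|_{L^\infty(\SSS^2)}\les\de$ as claimed. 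The $H^1$ part of \eqref{eq:unifor-twometrics1}, if one wants it beyond $L^\infty$, comes from the $\hk_4$-control in \eqref{eq:unifor-twometricsB} fed through the elliptic/rigidity estimate, and I would treat it as a routine upgrade of the $L^\infty$ argument rather than a separate difficulty.
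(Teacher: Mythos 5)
Your outline has a genuine gap at the rigidity step, and it is not the one you flag. You apply Friesecke--James--M\"uller to $\Psih$ directly, on the grounds that the trace-free part of $\Psih^\#\ga_0-\ga_0$ is $O(\de)$. But Theorem \ref{Thm:FJM1} is not a conformal-rigidity result: its hypothesis is that $\textrm{dist}(\nab\psi,SO(n))$ is small, i.e. that the \emph{full} deviation $\Psih^\#\ga_0-\ga_0$ is small, not merely its trace-free part. In your setup $\Psih^\#\ga_0=e^{2w}\ga_0+\widetilde E$ with $\widetilde E=O(\de)$, but $w$ is built from $u_1$ and $u_2\circ\Psih$ and so is only $O(\ep)$ a priori. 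Consequently $\nab\Psih$ is within $O(\ep)$ of $\RRR_{>0}\cdot SO(3)$, hence within $O(\ep)$ of $SO(3)$, and FJM applied to the radial extension yields $\|\Psih-O\|\les\ep$, not $\les\de$. Promoting this to $O(\de)$ is exactly what you are trying to prove, so feeding the conclusion of the rigidity step back into its hypothesis is circular. Nor can one replace FJM by a genuine "conformal FJM" for maps of $\SSS^2$: in two real dimensions the infinitesimal conformal maps are all holomorphic vector fields, an infinite-dimensional space far exceeding the M\"obius Lie algebra, so small conformal distortion does not force proximity to a M\"obius map and no such quantitative statement holds.

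The paper's proof works precisely because it never applies rigidity to $\Psih$ itself. Instead it divides out the $e^{2u_1}$ factor first: it applies Corollary \ref{proposition:effective-uniformisation} to the metric $e^{-2u_1}\widetilde g_2$ on $\SSS^2$, whose deviation from $\ga_0$ is $O(\de)$ by \eqref{eq:unifor-twometricsB} --- the $O(\ep)$-sized $u_1$ contribution has been cancelled --- producing a new centered conformal factor $v$ with $\|v\|_{H^4}\les\de$ and a diffeomorphism $\Th$ with $\|\Th^\#\ga_0-\ga_0\|_{H^4}\les\de$. It is $\Th$, not $\Psih$, that is an almost-isometry at scale $\de$, so Proposition \ref{prop:diffeomorphismTh} (the FJM-based statement for almost-isometries) applies at the correct scale. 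The remainder of the argument --- recentering via Lemma \ref{lemma:improved-renormalization}, identifying $\Th\circ\psi$ with $\Psih'^{-1}$, and using the uniqueness part of Corollary \ref{proposition:effective-uniformisation} to match $\Phi_2'$ with $\Phi_2$ up to $O(3)$ --- is where the centeredness of $u_1,u_2$ enters, and is closer in spirit to your final paragraphs. But without the preliminary uniformization of $e^{-2u_1}\widetilde g_2$ there is no almost-isometry of scale $\de$ to which rigidity can be applied, and your argument does not produce the claimed $O(\de)$ bound.
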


\begin{remark}
Let us note the following concerning assumption  \eqref{eq:unifor-twometricsB}. 
\begin{itemize}
\item It is clearly not sharp in terms of regularity. Sharper results could be obtained by working in H\"older spaces. On the other hand,  in view of our applications, $\hk_s(S)$ are the natural spaces.

\item It is coordinate dependent. Though it is sufficient for our applications, it would be nice find a coordinate independent condition sufficient to recover the conclusions of Theorem \ref{Thm:unifor-twometrics}. 
\end{itemize}
\end{remark}

The proof of Theorem \ref{Thm:unifor-twometrics} is postponed to section  \ref{sec:proofofThm:unifor-twometrics}. 
It will rely in particular on the control of almost isometries of $\SSS^2$ discussed in the next section.

%%%%%%%%%%%%%%%%%%%%%%%%%%%%%%%%% 
 
 \subsection{Almost isometries of $\SSS^2$}
 
%%%%%%%%%%%%%%%%%%%%%%%%%%%%%%%%%% 

   \begin{proposition} 
   \lab{prop:diffeomorphismTh}
   Let $\Th :\SSS^2\to \SSS^2$  be a $C^2$ diffeomorphism  such that, 
     \beaa
   \| \Th^\#\ga_0  -\ga_0\|_{L^\infty(\SSS^2)} \le \de.
   \eeaa
   Then, there exists  $O\in O(3)$ such that
   \bea
    \lab{eq:unifor-twometrics6}
\|\Theta - O\|_{L^\infty(\SSS^2)}+\|\Theta - O\|_{H^1(\SSS^2)} &\les& \de.
\eea
   \end{proposition}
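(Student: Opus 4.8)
The plan is to view the hypothesis as saying that $\Theta$ is an \emph{almost isometry} of $\SSS^2$ and then invoke the quantitative rigidity theory of Friesecke--James--M\"uller \cite{FJM} (cited in the introduction precisely for this purpose), which gives the conclusion that an almost isometric embedding is $L^2$-close, up to a rigid motion, to an honest isometry. Concretely, I would first pass from the intrinsic statement on $\SSS^2$ to an extrinsic one in $\RRR^3$: realize $\SSS^2$ as the unit sphere in $\RRR^3$ and regard $\Theta$ as a map $\Theta:\SSS^2\to\RRR^3$ landing in $\SSS^2$. The condition $\|\Theta^\#\ga_0-\ga_0\|_{L^\infty}\le\de$ says that the differential $d\Theta$, as a linear map between the tangent planes, is within $O(\de)$ of an isometry pointwise; equivalently $\operatorname{dist}(d\Theta(q), SO(2_q))\les\de^{1/2}$ at every point $q$, where $SO(2_q)$ denotes the rotations of the relevant $2$-planes (the square root is the usual loss in going from a bound on $g^\#g-\mathrm{Id}$ to a bound on $g - R$; one can in fact do better here since the deviation is small, but even the lossy version suffices if one then iterates).

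\textbf{Key steps.} First I would establish the pointwise near-isometry of $d\Theta$ from the hypothesis by a purely algebraic computation: writing $\Theta^\#\ga_0 = \ga_0 + h$ with $\|h\|_{L^\infty}\le\de$, the matrix $A:=d\Theta$ satisfies $A^t A = \mathrm{Id} + (\text{error of size }\de)$ in any local orthonormal frame, so by the polar decomposition there is a rotation $R(q)$ with $|A(q)-R(q)|\les\de$. Next, I would apply the geometric rigidity estimate of \cite{FJM} (in its form for maps from a sphere, or after localizing to a finite atlas of geodesic balls and patching) to deduce the existence of a \emph{single} constant rotation $\ov R\in SO(3)$ with $\|d\Theta - \ov R\|_{L^2(\SSS^2)}\les \|{\operatorname{dist}(d\Theta,SO(3))}\|_{L^2}\les\de$. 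Then, since $\Theta$ maps $\SSS^2$ into $\SSS^2$, one upgrades this: the composition $\ov R^{-1}\circ\Theta$ has differential $L^2$-close to the identity and takes values on the unit sphere, so by a Poincar\'e/Sobolev inequality on $\SSS^2$ (subtracting the mean, which can be absorbed into $O$ since it must itself lie within $O(\de)$ of a point of $\SSS^2$) we get $\|\Theta - O\|_{H^1(\SSS^2)}\les\de$ for an appropriate $O\in O(3)$ obtained by correcting $\ov R$. Finally, the $L^\infty$ bound follows from the $H^1$ bound together with the a priori $C^0$-closeness of $\Theta$ to an isometry: indeed a map of $\SSS^2$ to itself that is an $L^\infty$-almost isometry of the metric is $C^0$-close to \emph{some} element of $O(3)$ (this is a soft compactness/connectedness argument on the compact group $O(3)$), and the $H^1$ estimate pins down \emph{which} element; alternatively, bootstrap using that $\Theta$ is $C^2$ and the equation $\Theta^\#\ga_0 = \ga_0 + h$ to gain a uniform modulus of continuity.

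\textbf{Main obstacle.} I expect the delicate point to be the passage from the pointwise/$L^2$ closeness of the \emph{differential} to the closeness of the \emph{map itself} by a single fixed $O\in O(3)$ — i.e. ruling out that $\Theta$ drifts slowly around the sphere so that no one rotation works globally. On a simply connected domain with the sharp rigidity estimate this is automatic (the rotation field produced by \cite{FJM} is already constant up to $O(\de)$), but one must be careful that the geometric-rigidity constant is uniform (it is, by scaling, since $\SSS^2$ is compact and one covers it by finitely many bi-Lipschitz images of Euclidean balls) and that the ``values in $\SSS^2$'' constraint is used to turn an $L^2$ bound on $d\Theta - \ov R$ into a genuine $C^0$ bound — this is where one exploits that $\Theta$ is a diffeomorphism of the compact manifold $\SSS^2$ and that $O(3)$ is the full isometry group, so there is no room for a nontrivial affine part. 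Once that is arranged, the remaining estimates are routine elliptic/Sobolev bookkeeping.
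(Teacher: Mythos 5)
Your proposal shares the backbone of the paper's argument — pointwise near-isometry of the differential via polar decomposition, a single global rotation via Friesecke--James--M\"uller, then Poincar\'e and Sobolev to close — but the route through which FJM is actually invoked is where the two diverge, and this is the crux.

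The paper's device is a degree-one (cone) extension: set $\widetilde{\Th}(r\om):=r\,\Th(\om)$ on the annulus $D=\{1/2\le|x|\le 2\}\subset\RRR^3$. This converts $\Th:\SSS^2\to\SSS^2$ into an honest map $\widetilde{\Th}:D\to\RRR^3$ with a $3\times 3$ gradient, so the hypothesis becomes $\|A A^t - I\|_{L^\infty(D)}\les\de$ with $A=\nab\widetilde{\Th}$, and Theorem \ref{Thm:FJM1} applies \emph{verbatim} with $n=3$ (after fixing a sign of $\det A$ using connectedness of $D$). Restricting back to $\SSS^2$ gives $\|\nab\Th-O_0\|_{L^2(\SSS^2)}+\|\nab\Th-O_0\|_{L^3(\SSS^2)}\les\de$, and Poincar\'e plus Sobolev finish. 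Your plan instead works with $\Th:\SSS^2\to\RRR^3$ directly and suggests either ``FJM in its form for maps from a sphere'' — which is not what Theorem \ref{Thm:FJM1} states; the domain is a bounded Lipschitz domain in $\RRR^n$ and the gradient is $n\times n$ — or localizing to geodesic balls and patching. The latter is where the difficulty hides: in a $2$-dimensional chart FJM produces a rotation in $SO(2)$ of the tangent planes, and these local rotations live in chart-dependent frames; assembling them into a single element of $SO(3)$ acting on the ambient $\RRR^3$ is precisely the nontrivial step that the cone extension makes unnecessary. (You flag this as the ``main obstacle'', which is the right instinct; the resolution is the radial extension, not simple connectedness or the ``values in $\SSS^2$'' constraint.)

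Two smaller points. First, the $\de^{1/2}$ loss you anticipate when passing from $A^tA=I+O(\de)$ to $\operatorname{dist}(A,SO(3))$ does not occur: the polar decomposition gives $|A-O|=|(AA^t)^{1/2}-I|\les|AA^t-I|\les\de$ directly, since the square root is Lipschitz near the identity. Second, your suggestion to recover the $L^\infty$ bound from $H^1$ by a separate soft-compactness argument on $O(3)$ is more than is needed; once one has the $L^3$ gradient bound from FJM, the Sobolev embedding $W^{1,3}(\SSS^2)\hookrightarrow L^\infty(\SSS^2)$ together with Poincar\'e already yields the stated $L^\infty$ and $H^1$ estimates in one stroke, as in the paper. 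In short: correct in spirit, but the cone extension is the missing idea that turns the hard patching step into a direct application of FJM.
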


   \begin{proof}
  We first extend the map $\Th$ to  a map  $\widetilde{\Th} :\RRR^3\setminus \{0\} \to \RRR^3\setminus \{0\} $  such that  for every $ x=r\om, \om\in \SSS^2$, $r>0$,
 \bea\lab{eq:definitionofwidetildeThetafortheproofofdiffeomorphismTh}
 \widetilde{\Th} (r\om)=r \Th(\om).
 \eea
 It is easy to check that, denoting by $e$ the euclidian metric,
 \bea\lab{eq:themapwidetildeThetaisanalmostisometryoftheEuclideanspace}
 \| \widetilde{\Th} ^\# e - e\|_{L^\infty(D) } \les \de, \qquad D:=\left\{ x\in \RRR^3\,\,\Big/\,\, \frac 1 2 \le |x|\le 2\right\}.
 \eea
 We deduce
 \beaa
 \|\nab \widetilde{\Th}  \big(\nab \widetilde{\Th} \big)^t -I \|_{L^\infty(D)} \les \de, 
 \eeaa
 i.e.  
 \beaa
 \|A A^t -I \|_{L^\infty(D)} \les \de, \qquad A:= \nab  \widetilde{\Th}, 
 \eeaa
  where $A$ is a function on $D$ taking values in 3 by 3 matrices.
 
 By the polar decomposition of  $A$,  there exists, at every point $p\in D$, a unique orthogonal matrix $O\in O(3)$  such  that
 \beaa
 A= (AA^t)^{\frac{1}{2}}  O.
 \eeaa
 We deduce, at every $p\in D$,
 \beaa
 |A-O |&=& \big|(AA^t)^{\frac{1}{2}} -I\big|\les    \de. 
 \eeaa
 Therefore,
 \beaa
 \sup_{D}\min_{ O\in O(3) } \Big| \nab \widetilde{\Th} -O\Big|&\les \de
 \eeaa
 i.e.,
 \beaa
\| \textrm{dist}(\nab \widetilde{\Th}, O(3))\|_{L^\infty(D)}\les \de.
 \eeaa
In particular, the determinant of $\nab \widetilde{\Th}$ cannot vanish on $D$, and hence, since $D$ is connected, is either strictly positive or strictly negative everywhere on $D$. From now on, we assume for simplicity that the determinant of $\nab \widetilde{\Th}$ is strictly positive everywhere on $D$, the other case being similar. We infer
 \bea\lab{eq:disctanceofnabwidetildeThtoOof3issmall}
\| \textrm{dist}(\nab \widetilde{\Th}, SO(3))\|_{L^\infty(D)}\les \de.
 \eea

 We rely on the following  result\footnote{The result in \cite{FJM} actually  covers only  $p=2$. The case $1<p<+\infty$ is in section 2.4 of S. Conti and B. Schweizer \cite{CoSc}.} of  G. Friesecke, R. James and S. M\"uller in \cite{FJM} pointed to us by  Camillo De Lellis. 
  \begin{theorem}
  \lab{Thm:FJM1}
 Let $ D\subset \RRR^n $  a bounded Lipschitz domain, $n\ge 2$ and  $1<p<+\infty$. There exists  a constant $C>0$  depending only on $D$  and $p$  with the property:
 
 For  every    $\psi \in  W^{1,p} (D, \RRR^n)$ there exists a rotation $O_0\in SO(n) $ such that,
 \bea
 \|\nab \psi - O_0\|_{L^p(D)} &\les&  C\| \textrm{dist}(\nab \psi, SO(n))\|_{L^p(D)}. 
 \eea
  \end{theorem} 
  
  \begin{remark}\lab{rem:FirtzJohnBMOestimate}
  The estimate of Theorem \ref{Thm:FJM1}  fails  in $L^\infty$ but still holds in BMO from  a  well known  estimate of F. John \cite{John}.
  \end{remark}

 We infer from \eqref{eq:disctanceofnabwidetildeThtoOof3issmall} and Theorem \ref{Thm:FJM1} the existence of a rotation 
 $O_0\in SO(3) $ such that 
\beaa
\|\nab  \widetilde{\Th} - O_0\|_{L^3(D)} + \|\nab  \widetilde{\Th} - O_0\|_{L^2(D)} &\les& \de.
\eeaa
Restricting  to the sphere, we infer,
\beaa
\|\nab \Th- O_0\|_{L^3(\SSS^2)} +\|\nab \Th- O_0\|_{L^2(\SSS^2)}  &\les& \de.
\eeaa
Together with Poincar\'e inequality and the Sobolev embedding, this yields
\beaa
\|\Theta - O_0\|_{L^\infty(\SSS^2)}+\|\nab (\Theta - O_0)\|_{L^2(\SSS^2)} &\les& \de
\eeaa
 as stated. This concludes the proof of Proposition \ref{prop:diffeomorphismTh}.
\end{proof}

%%%%%%%%%%%%%%%%%%%%%%%%%%%%%%%     
    
    \subsection{Proof of Theorem \ref{Thm:unifor-twometrics}}
    \lab{sec:proofofThm:unifor-twometrics}
    
%%%%%%%%%%%%%%%%%%%%%%%%%%%%%%%  
  
We note that  the following  is an  immediate consequence of  
 \eqref{eq:unifor-twometricsB} 
 \bea
 \lab{comp:rS1-to-rS2}
 \left|\frac{r^{S_1}}{  r^{S_2} }-1\right| &\les& \de.
 \eea
   Denote 
   \beaa
   g_1= \Phi^\#_1( g^{S_1}), \qquad  g_2= \big(\Psi\circ\Phi_1\big)^\#  ( g^{S_2}).
   \eeaa
    In view of  our  assumptions, we have
   \bea
    \lab{eq:unifor-twometrics3}
   g_1= (r^{S_1})^2e^{2u_1} \ga_0, \qquad \int_{\SSS^2} e^{2u_1} x =0, \qquad \|u_1\|_{H^4(\SSS^2)} \le \ep,
   \eea
and
 \bea
    \lab{eq:unifor-twometrics4}
   \|g_1-g_2\|_{L^\infty(\SSS^2)} +  \|g_1-g_2\|_{H^{4}(\SSS^2)} \les    (r^{S_1})^2   \de.
   \eea
  We deduce,
 \beaa
  \left\| e^{2 u_1} \ga_0 -    \frac{1}{(r^{S_1})^2 }   g_2 \right\|_{L^\infty(\SSS^2)} + \left\| e^{2 u_1} \ga_0 -    \frac{1}{(r^{S_1})^2 }   g_2 \right\|_{H^{4}(\SSS^2)} \les   \de, 
 \eeaa
 or, in view of the control of $u_1$ as well as \eqref{comp:rS1-to-rS2}
 \bea
  \| \ga_0 -   e^{-2 u_1}     \widetilde{g}_2  \|_{L^\infty(\SSS^2)}+   \| \ga_0 -   e^{-2 u_1}      \widetilde{g}_2 \|_{H^{4}(\SSS^2)} \les \de, \qquad  \widetilde{g}_2:=  \frac{1}{(r^{S_2})^2 }   g_2. 
 \eea
Consequently
 \beaa
 \| K\big( e^{-2u_1}  \widetilde{g}_2\big)-1\|_{H^2(\SSS^2)} \les \de. 
 \eeaa
We can thus apply Corollary \ref{proposition:effective-uniformisation} to the metric $e^{-2u_1} \widetilde{g}_2 $ and deduce that there exists, up to an isometry of $\SSS^2$,  a unique centered    conformal  $v$
 and a unique smooth  diffeomorphism $\Th:\SSS^2\to \SSS^2$  such that
\bea
  \lab{eq:unifor-twometrics5}
\Th^\#\Big(  e^{-2u_1}  \widetilde{g}_2 \Big)=(r_{1, 2})^2e^{2v} \ga_0, \qquad \|v \|_{H^{4}(\SSS^2)} \les \de.
\eea
 Hence,
\beaa
e^{-2 u_1}  \widetilde{g}_2 =(r_{1, 2})^2 e^{2 v\circ \Th^{-1} }  (\Th^{-1}) ^\# \ga_0.
\eeaa
Since $ \| \ga_0 -   e^{-2 u_1}    \widetilde{g}_2 \|_{H^{4}} \les \de$ it follows that,
\beaa
\| \ga_0 -  (r_{1, 2})^2e^{2 v\circ \Th^{-1} }  (\Th^{-1}) ^\# \ga_0\|_{H^{4}(\SSS^2)} \les \de, \qquad |r_{1, 2}-1|\les\de,
\eeaa
where we also used $\| v\|_{L^\infty(\SSS^2)} \les \de$ for the second inequality.  
Since $\| v\|_{H^{4}(\SSS^2)} \les \de$ and  $\Th$ is smooth  we deduce,
\bea
  \lab{eq:unifor-twometrics5'}
\| \ga_0 - \Th ^\# \ga_0\|_{H^{4}(\SSS^2)} \les \de.
\eea
  We now appeal to Proposition \ref{prop:diffeomorphismTh} which yields the existence of $O\in O(3)$ such that 
   \bea\lab{eq:consequenceof:unifor-twometrics5'}
\|\Theta - O\|_{L^\infty(\SSS^2)}+\|\Theta - O\|_{H^1(\SSS^2)} &\les& \de.
\eea

We rewrite \eqref{eq:unifor-twometrics5} in the form
\beaa
\Th^\# \widetilde{g}_2= (r_{1, 2})^2e^{2v+2u_1\circ \Th} \ga_0= e^{2\tilde{u}_2 }\ga_0, \qquad \tilde{u}_2:= v+ u_1\circ  \Th+\log(r_{1, 2}).
\eeaa
Note that   $\tilde{u}_2$ is a priori not   centered. On the other hand, we have
\bea\lab{eq:Linftyboundfortildeu2onSSS2}
\|\tilde{u}_2\|_{L^\infty} &\leq& \|v\|_{L^\infty}+\|u_1\|_{L^\infty}+|\log(r_{1, 2})|\les\ep.
\eea
Also, since $|r_{1, 2}-1|\les\de$, $\| v\|_{H^4}\les \de$  and  $\|\Th - O\|_{L^\infty}\les \de$,
\beaa
\int_{\SSS^2}  e^{\tilde{u}_2}  x da_{\ga_0} &=& \int_{\SSS^2}  e^{2v+2u_1\circ \Th+2\log(r_{1, 2})}  x da_{\ga_0}\\
 &=& \int_{\SSS^2}  e^{2u_1\circ \Th}    x da_{\ga_0} +  \int_{\SSS^2}  e^{2u_1\circ \Th} \big( (r_{1, 2})^2e^{2v}-1\big)   x da_{\ga_0}   \\
&= &\int_{\SSS^2}  e^{2u_1}   (x\circ \Th^{-1} ) \frac{1}{|\det  d \Th|} d a_{\ga_0}+O(\de)\\
&= &\int_{\SSS^2}  e^{2u_1}    x\circ O^{-1} d a_{\ga_0}    +O(\de).
\eeaa
We deduce, remembering that $O$ is a matrix in $O(3)$ and $x\in \mathbb{R}^3$,  
\beaa
\int_{\SSS^2}  e^{\tilde{u}_2}  x da_{\ga_0} &= &\int_{\SSS^2}  e^{2u_1}  O^{-1} x d a_{\ga_0}+O(\de)\\
 &=& O^{-1}\left(\int_{\SSS^2}  e^{2u_1}  x d a_{\ga_0}\right)+O(\de).
\eeaa
Since  $u_1$ is centered, we infer
\bea
  \lab{eq:unifor-twometrics7}
\int_{\SSS^2}  e^{2\tilde{u}_2} x &= & O(\de).
\eea
In view of \eqref{eq:Linftyboundfortildeu2onSSS2} and \eqref{eq:unifor-twometrics7}, Lemma \ref{lemma:improved-renormalization} applies so that there exists   $\psi\in \M$ such that\footnote{The statement of Lemma \ref{lemma:improved-renormalization} actually yields $|\psi-I|\les\de$. Here we use the fact that $\SS$ is invariant by $O(3)$, see Remark \ref{rem:ifuiscenteredthensoisucricO}, so that we may indeed assume, by composing with $O^{-1}\in O(3)$, that  $|\psi-O^{-1}|\les \de$.}
 \bea
  u_2 ':= (\widetilde{u}_2)_\psi = \tilde{u}_2\circ \psi+\frac 1 2 \log|\det \psi|  \in \SS,    \qquad \big\|\psi-O^{-1} \big\|_{L^\infty(\SSS^2)}\les \de.
  \eea
 We deduce
  \beaa
\psi^\#  \Th^\# \widetilde{g}_2 = e^{\tilde{u}_2 \circ \psi} \psi^\# \ga_0= e^{2\tilde{u}_2 \circ \psi}    | \det d\psi |\ga_0 =e^{2\tilde{u}_2 \circ \psi +\log|\det d\psi| } \ga_0= e^{2 u'_2} \ga_0.
\eeaa
Also, in view of the control on $\Th$ and $\psi$, we have
  \beaa
 \| \Th \circ \psi   -I\|_{L^\infty(\SSS^2)}+   \|   \Th \circ \psi  -I\|_{H^1(\SSS^2)}  \les \de.
  \eeaa
  Therefore we have found  a diffeomorphism 
  \bea
  \Psih' :=(\Th\circ \psi)^{-1}, \qquad \Psih':\SSS^2\longrightarrow\SSS^2,
  \eea
  with the properties
  \bea
  (\Psih'^{-1})^\# \widetilde{g_2} &=& e^{2 u'_2} \ga_0, \qquad u_2' \in \SS, 
  \eea
 and
\bea\lab{eq:almosttherightestimateetoconcludestabilityunifromizationproofupmissingrotation:0}
   \| \Psih'   -I\|_{L^\infty(\SSS^2)}+   \|  \Psih'  -I\|_{H^1(\SSS^2)}  \les \de.
 \eea
 Also, recall that  
 \beaa
u_2' &=&  \tilde{u}_2\circ \psi+\frac 1 2 \log|\det \psi|  = \big( v+ u_1\circ  \Th +2\log(r_{1, 2})\big)\circ\psi +\frac 1 2 \log|\det \psi|\\
&=&  u_1\circ \Psih'^{-1} + v_\psi +2\log(r_{1, 2}).
  \eeaa
 We infer
  \beaa
  \Psih'^\# u'_2 -  u_1 &=&   \Psih'^\#\big( v_\psi\big)  +2\log(r_{1, 2}),
   \eeaa
 and thus, since $|r_{1, 2}-1|\les\de$, $\| v\|_{H^{4}}  \le \de$,  $ \| \Psih'   -I\|_{L^\infty}\les\de$   and  $\|\psi-O^{-1}\|_{L^\infty} \les \de$, we deduce,
\bea\lab{eq:almosttherightestimateetoconcludestabilityunifromizationproofupmissingrotation:1}
  \|u_1 - \Psih'^\#(u'_2)  \|_{L^\infty(\SSS^2)} \les \de.
  \eea
  
 Next, let
 \beaa
 \Phi_2':=\Psi\circ \Phi_1\circ \Psih'^{-1}.
 \eeaa
 
Then, by        the definition of  $g_2$ and $\widetilde{g}_2$,
  \beaa
  \widetilde{g}_2=\frac{1}{(r^{S_2})^2 }   g_2= \frac{1}{(r^{S_2})^2 }(\Psi\circ \Phi_1)^\# g^{S_2},  \eeaa
   and      the       construction of $\Psih'$  we have
 \beaa
( \Phi_2')^\#  g^{S_2}&=&   (\Psih'^{-1})^\#  \big( \Psi\circ \Phi_1\big)^\# g^{S_2} =   (r^{S_2})^2 (\Psih'^{-1})^\#  \widetilde{g}_2=    (r^{S_2})^2   e^{2u_2'} \ga_0
 \eeaa
 and by the definition of $\Phi_2, u_2$ we also have,
 \beaa
 ( \Phi_2)^\#  g^{S_2}= (r^{S_2})^2   e^{2u_2} \ga_0.
 \eeaa
 Therefore, since both $u_2$ and $u_2'$ are centered, from the uniqueness statement
 of Corollary \ref{proposition:effective-uniformisation}, there existe $O\in O(3)$ such that,   
 \bea\lab{eq:almosttherightestimateetoconcludestabilityunifromizationproofupmissingrotation:2}
 u_2'=u_2\circ O, \qquad \Psi\circ \Phi_1=\Phi_2\circ O\circ \Psih',
 \eea
where the last identity corresponds to $\Phi_2'=\Phi_2\circ O$.   We now define
 \beaa
 \Psih &:=& O\circ \Psih'.
 \eeaa
 Then, together with \eqref{eq:almosttherightestimateetoconcludestabilityunifromizationproofupmissingrotation:0}, \eqref{eq:almosttherightestimateetoconcludestabilityunifromizationproofupmissingrotation:1} and \eqref{eq:almosttherightestimateetoconcludestabilityunifromizationproofupmissingrotation:2}, we infer
 \beaa
 \Psi\circ \Phi_1=\Phi_2\circ \Psih, \qquad  \| \Psih   -O\|_{L^\infty}+   \|  \Psih -O\|_{H^1}  \les \de, \qquad \|u_1 - \Psih^\#(u_2) \|_{L^\infty(\SSS^2)} \les \de,
\eeaa
as stated.  This concludes the proof of Theorem \ref{Thm:unifor-twometrics}.

%%%%%%%%%%%%%%%%%%%%%%%%%%%%%%%%%%%%%%%%%%%
 
 \subsection{Higher regularity estimates for  Theorem  \ref{Thm:unifor-twometrics}} 
 
%%%%%%%%%%%%%%%%%%%%%%%%%%%%%%%%%%%%%%%%%%%

 Though \eqref{eq:unifor-twometrics1} \eqref{eq:unifor-twometrics2}  suffice for  our main application to GCM spheres,  we provide here, for the sake of completeness,  a discussion of corresponding higher regularity estimates. This relies on a recent personal communication of Camillo De Lellis and  Stefan M\"uller. More precisely  they can prove the following.
 \begin{theorem}[C. De Lellis, S. M\"uller \cite{DeLellis}]
 \lab{th:DeLellisMullerhigherregularity}
Let $n\geq 2$. Consider  the balls  $B_1\subset B_2 \subset \RRR^n$ and $e$ the euclidian metric on $\RRR^n$. 
 Consider a map   $u\in H^1( B_2, \RRR^n)$  verifying $\det \nab u>0$ and,
 \bea
\big \| u^\#  e - e\big \|_{L^\infty(B_1)} \le \frac 1 2. 
 \eea
 Then, there exists a rotation $O\in SO(n)$  such that, relative to the H\"older norms $C^\a$, $0<\a<1$,
 \beaa
 \big\| \nab u- O\big  \|_{C^\a( B_1)} \les \big \| u^\#  e - e\big \|_{C^\a(B_2)}.
 \eeaa
 \end{theorem}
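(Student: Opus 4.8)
The plan is the following. Write $g:=u^\#e$ for the pulled-back metric, so $g_{ij}=\pr_iu\c\pr_ju=\big((\nab u)^t\nab u\big)_{ij}$; the hypothesis $\|g-e\|_{L^\infty(B_1)}\le\frac12$ gives $\frac12 I\le g\le\frac32 I$ on $B_1$, and for the conclusion to be non-vacuous we may take $M:=\|g-e\|_{C^\a(B_2)}<\infty$. At every point of $B_1$ the polar decomposition $\nab u=RP$, $P=g^{1/2}$, $R\in O(n)$, together with $\det\nab u>0$, gives $R\in SO(n)$ and
\beaa
\mathrm{dist}(\nab u,SO(n)) &\le& |P-I|\ \les\ |g-e| \qquad\textrm{on }B_1,
\eeaa
since $S\mapsto S^{1/2}$ is Lipschitz near $I$. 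Hence Theorem \ref{Thm:FJM1}, applied on $B_1$ with $p=2$, provides $O_0\in SO(n)$ with $\|\nab u-O_0\|_{L^2(B_1)}\les\|\mathrm{dist}(\nab u,SO(n))\|_{L^2(B_1)}\les\|g-e\|_{L^\infty(B_1)}\les M$. Replacing $u$ by $O_0^{-1}u$, which alters neither $g$ nor $\det\nab u$ (orthogonal invariance), we may assume $O_0=I$; this controls the constant part of $\nab u$.

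Next I use that $u$ is $g$-harmonic: since $u$ is a local isometry of $(B_1,g)$ onto a flat domain, $\lap_gu^k=0$ for each $k$; equivalently, by the Piola (null-Lagrangian) identity,
\beaa
\pr_i\big(a^{ij}\pr_ju^k\big) &=& \pr_i\big(\mathrm{cof}(\nab u)_{ki}\big)=0 \quad\textrm{on }B_1, \qquad a^{ij}:=\sqrt{\det g}\,g^{ij}.
\eeaa
Here $a$ is uniformly elliptic on $B_1$, lies in $C^\a(B_2)$, and --- being a smooth function of $g$ with $a(e)=I$, evaluated on values of $g$ in $\{\|S-I\|\le\frac12\}$ --- satisfies $\|a-I\|_{C^\a(B_1)}\les\|g-e\|_{C^\a(B_1)}\le M$. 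Together with $\curl\nab u=0$ (recording that $\nab u$ is a gradient), the field $F:=\nab u$ solves on $B_1$ a first-order div-curl elliptic system with $C^\a$ coefficients.

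The core is then a Campanato iteration. On balls $B_\rho(x_0)\Subset B_1$ I compare $u$ with the solution $\bar u$ of the frozen system ($a$ replaced by the constant $a(x_0)$, same boundary data); the energy identity gives $\|\nab(u-\bar u)\|_{L^2(B_\rho(x_0))}\les[a]_{C^\a(B_1)}\rho^\a\|\nab u\|_{L^2(B_\rho(x_0))}\les M\rho^\a\|\nab u\|_{L^2(B_\rho(x_0))}$, while $\bar u$, being harmonic for a constant metric, has the standard decay $\inf_{O\in SO(n)}|B_{\theta\rho}|^{-1}\!\int_{B_{\theta\rho}}|\nab\bar u-O|^2\les\theta^2\inf_{O\in SO(n)}|B_\rho|^{-1}\!\int_{B_\rho}|\nab\bar u-O|^2$. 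Feeding in the comparison bound, and using the pointwise estimate $\mathrm{dist}(\nab u,SO(n))\les|g-e|$ together with Theorem \ref{Thm:FJM1} at each scale so that the non-convexity of $SO(n)$ does not obstruct the passage between the $SO(n)$-valued excess and the excess about the mean, the iteration yields
\beaa
\inf_{O\in SO(n)}|B_\rho(x_0)|^{-1}\!\int_{B_\rho(x_0)}|\nab u-O|^2 &\les& \rho^{2\a}M^2+\rho^n\|\nab u-I\|_{L^2(B_1)}^2 \qquad (B_\rho(x_0)\Subset B_1).
\eeaa
By Campanato's theorem $[\nab u]_{C^\a(B_1)}\les M$; combined with the $L^2$-bound from the first step and the trivial $\|\nab u-I\|_{L^\infty(B_1)}\les1$ (pointwise closeness to $SO(n)$; this is $\les M$ when $M\ges1$, and the range $M\les1$ is handled directly, all constants then being uniform), we get $\|\nab u-I\|_{C^\a(B_1)}\les M$. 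Undoing the rotation of the first step gives the theorem with $O:=O_0$ (a routine covering argument passing from interior balls to $B_1$).

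The main obstacle is the \emph{linearity} of the estimate: the constant must be independent of $M=\|g-e\|_{C^\a(B_2)}$, which may be large. A direct Schauder estimate for $\pr_i(a^{ij}\pr_jw^k)=\pr_if^{ik}$ gives a constant growing polynomially in $[a]_{C^\a}$ and so is inadequate. The remedy, carried through the iteration above, is to work exclusively with the $SO(n)$-valued excess, exploiting the curl-free constraint on $\nab u$ and the quadratic relation $g=(\nab u)^t\nab u$ --- the same rigidity mechanism behind Theorem \ref{Thm:FJM1} and F. John's endpoint estimate (Remark \ref{rem:FirtzJohnBMOestimate}), with the extra $C^\a$-regularity of $g$ furnishing exactly what one needs to upgrade John's BMO bound to the H\"older scale. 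Transporting this $SO(n)$-valued excess through the dyadic scales while keeping all constants uniform is where the real work lies.
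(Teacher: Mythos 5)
The paper itself does \emph{not} prove Theorem \ref{th:DeLellisMullerhigherregularity}: it is stated as a personal communication of De Lellis and M\"uller (reference \cite{DeLellis}), with no argument given. So there is no paper proof to compare against, and what you have written is a genuine reconstruction rather than a parallel to something in the text.

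Your reconstruction has the right skeleton and the right ingredients: polar decomposition to convert the hypothesis $\|g-e\|_{L^\infty}\le\frac12$ into pointwise $\mathrm{dist}(\nabla u,SO(n))\lesssim|g-e|$; Theorem \ref{Thm:FJM1} (with $p=2$) to fix the zeroth-order rotation; the Piola/null-Lagrangian identity to recognize $u$ as $g$-harmonic, $\partial_i(a^{ij}\partial_j u^k)=0$ with $a=\sqrt{\det g}\,g^{-1}\in C^\alpha$; and a frozen-coefficient Campanato iteration to upgrade to $C^\alpha$ of $\nabla u$. You have also correctly isolated the nontrivial point, namely that the final constant must be independent of $M=\|g-e\|_{C^\alpha(B_2)}$, whereas a black-box Schauder estimate carries a constant growing in $[a]_{C^\alpha}$.

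Two remarks on the details. First, the part about transporting an $SO(n)$-valued excess through the iteration --- ``using Theorem \ref{Thm:FJM1} at each scale so that the non-convexity of $SO(n)$ does not obstruct the passage'' --- is more delicate than it reads and, in fact, unnecessary: the ordinary mean excess $\Phi(\rho):=\int_{B_\rho}|\nabla u-(\nabla u)_{B_\rho}|^2$ suffices. The point is that the comparison energy estimate gives $\|\nabla(u-\bar u)\|_{L^2(B_\rho)}\lesssim[a]_{C^\alpha}\rho^{\alpha}\|\nabla u\|_{L^2(B_\rho)}$, and since $\|g-e\|_{L^\infty}\le\frac12$ on $B_1$ one has $|\nabla u|\lesssim 1$ pointwise, hence $\|\nabla u\|_{L^2(B_\rho)}\lesssim\rho^{n/2}$ with a \emph{universal} constant. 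Feeding this into the standard Campanato scheme for $\Phi$ (the frozen solution $\bar u$ decays because $a(x_0)$ is uniformly elliptic, again with a universal constant from the $L^\infty$ hypothesis) yields $\Phi(\rho)\lesssim\rho^{n+2\alpha}\big(M^2+R^{-n-2\alpha}\Phi(R)\big)$, and $\Phi(R)\lesssim\|\nabla u-I\|^2_{L^2(B_R)}\lesssim M^2$ from the FJM step. No pointwise projection onto $SO(n)$ at intermediate scales is required; the pointwise bound $|\nabla u|\lesssim 1$ is what delivers the linearity in $M$. Second, the iteration as written produces estimates on balls compactly contained in $B_1$, since the $L^\infty\le\frac12$ hypothesis --- hence uniform ellipticity of $a$ and the bound $|\nabla u|\lesssim1$ --- is asserted only on $B_1$; ``a routine covering argument'' does not by itself push the estimate up to $\partial B_1$. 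This is arguably a quirk of the statement as recorded in the paper (one expects the smallness hypothesis to hold on a neighbourhood of $\overline{B_1}$), and it is immaterial in the application (Proposition \ref{prop:diffeomorphismTh:higherregularity}, where a finite family of interior pairs covers the region of interest), but you should flag the loss of a boundary layer rather than wave at a covering.
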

 
 The goal of this section is to prove the   following proposition that extends the estimates \eqref{eq:unifor-twometrics1},  \eqref{eq:unifor-twometrics2}  of Theorem \ref{Thm:unifor-twometrics} to corresponding higher regularity estimates.
  \begin{proposition}
  \lab{Thm:unifor-twometrics:higherregularity}
  Assume, in addition to  the assumptions of Theorem \ref{Thm:unifor-twometrics}, that 
    \bea
   \lab{eq:unifor-twometricsB:higherregularityassumption}
  \left\| g^{S_1}-  \Psi^\#( g^{S_2}) \right\|_{\hk_{4+s}(S^1)} \le \de
  \eea
  for some $s\geq 0$. Then, the following higher regularity analogs of  \eqref{eq:unifor-twometrics1}, \eqref{eq:unifor-twometrics2}  hold true.
    \begin{enumerate}
  \item The diffeomorphism    $\Psih$ is smooth and  there exists $O\in O(3)$ such that
 \bea
  \lab{eq:unifor-twometrics1:higherregularity}
  \| \Psih-O\|_{H^{5+s}(\SSS^2)}&\les &\de.
 \eea
 \item The conformal factors $u_1, u_2$ verify
  \bea
  \lab{eq:unifor-twometrics2:higherregularity}
 \big \|u_1- \Psih^\# u_2\big \|_{H^{4+s}(\SSS^2)} \les \de.
  \eea
  \end{enumerate}
  \end{proposition}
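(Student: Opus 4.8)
The plan is to revisit the proof of Theorem \ref{Thm:unifor-twometrics} line by line and upgrade every estimate that was stated in $L^\infty$ and $H^1$ to the corresponding $\hk_{4+s}$ or $H^{4+s}$ (resp. $H^{5+s}$) bound, using the higher-regularity hypothesis \eqref{eq:unifor-twometricsB:higherregularityassumption} and the higher-regularity uniformization estimate \eqref{eq:Thm.-Eff.uniformalization1-appendix:bis} of Theorem \ref{theorem-Eff.uniformalization-appendix} (equivalently Corollary \ref{proposition:effective-uniformisation}) in place of its $L^\infty$ counterpart. First I would record that \eqref{eq:unifor-twometricsB:higherregularityassumption}, together with the already available $\hk_4$ control and the control of $u_1$, upgrades \eqref{eq:unifor-twometrics4} to $\|g_1-g_2\|_{H^{4+s}(\SSS^2)}\les (r^{S_1})^2\de$, and hence (using $\|u_1\|_{H^{4+s}}\les\ep$, which follows from \eqref{eq:Thm.-Eff.uniformalization1-appendix:bis} applied to $S_1$ under the corresponding higher-regularity almost-round hypothesis on $K^{S_1}$, which is implied by \eqref{eq:unifor-twometricsB:higherregularityassumption}) that $\|\ga_0 - e^{-2u_1}\widetilde g_2\|_{H^{4+s}(\SSS^2)}\les\de$, so that $\|K(e^{-2u_1}\widetilde g_2)-1\|_{H^{2+s}(\SSS^2)}\les\de$.

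Next I would apply the higher-regularity part of Corollary \ref{proposition:effective-uniformisation} to the metric $e^{-2u_1}\widetilde g_2$ to obtain the centered conformal factor $v$ and diffeomorphism $\Th$ of \eqref{eq:unifor-twometrics5} with the improved bound $\|v\|_{H^{4+s}(\SSS^2)}\les\de$; tracking this through the same manipulations as in the proof of Theorem \ref{Thm:unifor-twometrics} yields $\|\ga_0 - \Th^\#\ga_0\|_{H^{4+s}(\SSS^2)}\les\de$. At this point I would invoke the De Lellis--M\"uller result, Theorem \ref{th:DeLellisMullerhigherregularity}, in place of Theorem \ref{Thm:FJM1}: extending $\Th$ radially to $\widetilde\Th$ on an annulus as in the proof of Proposition \ref{prop:diffeomorphismTh}, the $C^\a$ estimate on $\nab\widetilde\Th - O$ controlled by $\|\widetilde\Th^\# e - e\|_{C^\a}$, combined with a bootstrap that differentiates the almost-isometry equation $\Th^\#\ga_0 = (\textrm{close to }\ga_0)$ and uses elliptic regularity on $\SSS^2$, gives $\|\Th - O\|_{H^{5+s}(\SSS^2)}\les\de$. (Alternatively, once $\nab\Th$ is $C^\a$-close to a fixed rotation one differentiates the relation $\Th^\#\ga_0=e^{2v}(\ldots)$ repeatedly and feeds the $H^{4+s}$ control of $v$ through Schauder/$L^2$ elliptic estimates to climb up to $H^{5+s}$.) The renormalization step via Lemma \ref{lemma:improved-renormalization} only affects $\psi\in\M$, a finite-dimensional object, so the bound $\|\psi - O^{-1}\|_{L^\infty}\les\de$ automatically upgrades to any $H^k$ norm with the same right-hand side; hence $\Psih' = (\Th\circ\psi)^{-1}$ inherits $\|\Psih' - I\|_{H^{5+s}(\SSS^2)}\les\de$, and composing with the rotation $O$ from the uniqueness step gives \eqref{eq:unifor-twometrics1:higherregularity}. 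Finally the identity $\Psih^\# u_2 - u_1 = \Psih^\#(v_\psi) + 2\log(r_{1,2})$ together with $\|v\|_{H^{4+s}}\les\de$, the $H^{5+s}$ control of $\Psih$, and the composition/multiplication estimates in $H^{4+s}(\SSS^2)$ (which is an algebra since $4+s\geq 4 > 1$) yields \eqref{eq:unifor-twometrics2:higherregularity}.

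The main obstacle I anticipate is the climb from the $C^\a$-gradient bound furnished by Theorem \ref{th:DeLellisMullerhigherregularity} to the $H^{5+s}$ bound on $\Psih$: one must carefully set up the bootstrap so that each application of elliptic regularity on $\SSS^2$ (or on the Euclidean annulus, then restricted) gains exactly the derivatives paid for by the $\hk_{4+s}$ hypothesis on $g^{S_1}-\Psi^\#(g^{S_2})$, without losing a power of $\de$, and so that the nonlinear terms $e^{2v}-1$, $\log|\det d\psi|$, and the compositions with $\Th$ are handled by the algebra property of $H^{k}(\SSS^2)$ for $k>1$ and by Moser-type composition estimates. The rest of the argument is a bookkeeping exercise that mirrors the proof of Theorem \ref{Thm:unifor-twometrics} verbatim, with every norm index shifted upward.
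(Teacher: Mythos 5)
Your overall architecture matches the paper's: reduce to a higher-regularity version of Proposition~\ref{prop:diffeomorphismTh}, extend $\Th$ radially to $\widetilde{\Th}$ on the annulus $D$, invoke the De~Lellis--M\"uller result (Theorem~\ref{th:DeLellisMullerhigherregularity}) in place of Theorem~\ref{Thm:FJM1} to obtain a $C^\a$ bound on $\nab\widetilde{\Th}-O$, and then bootstrap to $H^{s+3}$. You correctly identify the bootstrap as the main obstacle, but the mechanism you propose for it --- ``differentiate the almost-isometry equation and use elliptic regularity on $\SSS^2$'' (or Schauder/$L^2$ elliptic estimates in the alternative) --- does not close as stated. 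After writing $\widetilde{\Th}=(I+\widetilde{\Th}')O$ via the $C^\a$ estimate and linearizing $\nab\widetilde{\Th}(\nab\widetilde{\Th})^t=I+E$, one controls only the \emph{symmetrized} gradient $S(\widetilde{\Th}')=\nab\widetilde{\Th}'+(\nab\widetilde{\Th}')^t$ up to the quadratic remainder $\nab\widetilde{\Th}'(\nab\widetilde{\Th}')^t$, not the full gradient. The operator $f\mapsto S(f)$ annihilates infinitesimal rigid motions and is not elliptic in the ordinary sense, so generic elliptic regularity does not recover $\nab\widetilde{\Th}'$ from $S(\widetilde{\Th}')$. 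The tool the paper uses here is Korn's inequality on the annulus $D_1\subset\RRR^3$, namely $\|f\|_{H^1(D_1)}\les \|f\|_{L^2(D_1)}+\|S(f)\|_{L^2(D_1)}$, applied iteratively after differentiating $S(\widetilde{\Th}')=E-\nab\widetilde{\Th}'(\nab\widetilde{\Th}')^t$ and absorbing the quadratic term via the $L^\infty$ smallness of $\widetilde{\Th}'$ and $\nab\widetilde{\Th}'$ already furnished by De~Lellis--M\"uller. This is the missing ingredient; without naming Korn your bootstrap does not close.

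The remaining pieces of your outline --- applying the higher-regularity part of Corollary~\ref{proposition:effective-uniformisation} to $e^{-2u_1}\widetilde g_2$ to get $\|v\|_{H^{4+s}}\les\de$, observing that the M\"obius renormalization $\psi$ from Lemma~\ref{lemma:improved-renormalization} lies in a finite-dimensional group so that any $H^k$ upgrade is automatic, and passing the higher control through compositions and conformal factors using the algebra property of $H^{4+s}(\SSS^2)$ --- are all correct and agree with the paper's ``shift every norm index up'' reduction to Proposition~\ref{prop:diffeomorphismTh:higherregularity}.
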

 
 The only part of the proof of Theorem \ref{Thm:unifor-twometrics} which does not immediately extend to higher regularity is when applying Proposition \ref{prop:diffeomorphismTh} to infer \eqref{eq:consequenceof:unifor-twometrics5'} from \eqref{eq:unifor-twometrics5'}. Thus, to complete the proof of Proposition \ref{Thm:unifor-twometrics:higherregularity}, it suffices to prove the following extension of Proposition \ref{prop:diffeomorphismTh} to higher regularity. 
 
   \begin{proposition} 
   \lab{prop:diffeomorphismTh:higherregularity}
   Let $\Th :\SSS^2\to \SSS^2$  be a $C^2$ diffeomorphism  such that, 
     \bea\lab{eq:themapThetaisanalmostisometryofthestandardspherehigherreg}
   \| \Th^\#\ga_0  -\ga_0\|_{H^{s+2}(\SSS^2)} \le \de
   \eea
  for some $s\geq 0$. Then, there exists  $O\in O(3)$ such that
   \bea
    \lab{eq:unifor-twometrics6:higherregularityesitmate}
\|\Theta - O\|_{H^{s+3}(\SSS^2)} &\les& \de.
\eea
   \end{proposition}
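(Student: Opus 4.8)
The plan is to bootstrap from the conclusion of Proposition \ref{prop:diffeomorphismTh}, which already gives us an $O\in O(3)$ with $\|\Theta-O\|_{H^1(\SSS^2)}\les\de$, by exploiting the equation satisfied by $\Theta$ that comes from the almost-isometry hypothesis \eqref{eq:themapThetaisanalmostisometryofthestandardspherehigherreg}. After composing with $O^{-1}$ we may assume $O=I$, so it suffices to show $\|\Theta-I\|_{H^{s+3}(\SSS^2)}\les\de$. First I would set $w:=\Theta-I$ (thought of as a map into $\RRR^3$ via the embedding $\SSS^2\subset\RRR^3$, extended as in the proof of Proposition \ref{prop:diffeomorphismTh}, or more cleanly working directly on $\SSS^2$ with a covariant derivative $\nab^{\SSS^2}$). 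The hypothesis says the pulled-back metric $\Theta^\#\ga_0=\ga_0+h$ with $\|h\|_{H^{s+2}(\SSS^2)}\les\de$. Writing this out in components, $\ga_0(\nab_a\Theta,\nab_b\Theta)=(\ga_0)_{ab}+h_{ab}$, which is a first-order quasilinear system for $\Theta$ with source $h$. Linearizing around $\Theta=I$: the symmetrized gradient of $w$ is controlled by $h$ plus quadratic terms in $\nab w$, i.e. schematically $\mathrm{Sym}\,\nab w = \tfrac12 h + Q(\nab w,\nab w)$.

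The key step is then to promote the control of the symmetric part of $\nab w$ to control of $\nab w$ itself, which is exactly where a Korn-type inequality on $\SSS^2$ enters: for a vector field (or $\RRR^3$-valued map) $w$ on the sphere, $\|\nab w\|_{H^{k}}$ is controlled by $\|\mathrm{Sym}\,\nab w\|_{H^{k}}$ plus the projection of $w$ onto the finite-dimensional kernel (the Killing fields / infinitesimal rotations, which correspond precisely to the $O(3)$ ambiguity we have already quotiented out using the $H^1$ smallness). Granting this, one argues by induction on $k$ from $1$ up to $s+2$: assuming $\|w\|_{H^{k}}\les\de$, the quadratic term $Q(\nab w,\nab w)$ lies in $H^{k-1}$ (by the algebra property of $H^{k-1}$ once $k-1> 1$, with the low-regularity steps handled by interpolation or by the already-known $L^\infty$ bound $\|w\|_{L^\infty}\les\de$ from Proposition \ref{prop:diffeomorphismTh}), hence $\|\mathrm{Sym}\,\nab w\|_{H^{k}}\les \|h\|_{H^{k}}+\|w\|_{L^\infty}\|\nab w\|_{H^{k}}\les \de + \de\|\nab w\|_{H^k}$; Korn then upgrades this, and for $\de$ small the $\de\|\nab w\|_{H^k}$ term is absorbed, yielding $\|w\|_{H^{k+1}}\les\de$. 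Running the induction up to $k=s+2$ gives $\|\Theta-I\|_{H^{s+3}(\SSS^2)}\les\de$, which is \eqref{eq:unifor-twometrics6:higherregularityesitmate}. Finally, the smoothness of $\Theta$ asserted in Proposition \ref{Thm:unifor-twometrics:higherregularity} follows in the same way by iterating once $h$ is smooth (here it is as smooth as $g^{S_1}-\Psi^\#g^{S_2}$ allows, hence for the applications smooth by the standing smoothness assumptions).

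The main obstacle I anticipate is making the absorption argument rigorous at the lowest regularity level, i.e. closing the first step of the induction where $Q(\nab w,\nab w)$ is only controlled in $L^1$ or $L^2$ rather than in a space with a spare derivative: one must be careful to use the $L^\infty$ control of $w$ (not of $\nab w$) to estimate the quadratic term, and to invoke the Korn inequality on $\SSS^2$ in the correct form — including the cancellation of the kernel contribution, which is legitimate precisely because Proposition \ref{prop:diffeomorphismTh} has already fixed the rotation $O$ and given quantitative $H^1$ smallness of $\Theta-O$. An alternative, and perhaps cleaner, route is to bypass the intrinsic Korn inequality by reusing the radial extension $\widetilde\Theta$ of the proof of Proposition \ref{prop:diffeomorphismTh} together with the higher-regularity rigidity estimate of De Lellis--M\"uller (Theorem \ref{th:DeLellisMullerhigherregularity}): from \eqref{eq:themapThetaisanalmostisometryofthestandardspherehigherreg} one gets $\|\widetilde\Theta^\#e-e\|_{C^\a(B_2)}\les\de$ (via Sobolev embedding, after noting $s+2\geq 2$ so $H^{s+2}\hookrightarrow C^\a$ in dimension $2$, and the extension preserves this), hence $\|\nab\widetilde\Theta-O\|_{C^\a(B_1)}\les\de$, and then a standard elliptic/Schauder bootstrap on the system $\widetilde\Theta^\#e=e+\widetilde h$ propagates $C^{k,\a}$ control and, restricting to $\SSS^2$ and converting back to $H^{s+3}$, yields \eqref{eq:unifor-twometrics6:higherregularityesitmate}. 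Either way, the substance is a fixed scheme — linearize, use rigidity/Korn to control the full gradient, absorb the quadratic error by smallness, induct on regularity — and the only delicate point is the bookkeeping at the bottom of the induction.
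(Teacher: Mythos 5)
Your main route has a genuine gap precisely at the point you half-acknowledge. After linearizing, the equation reads $\mathrm{Sym}\,\nab w = \tfrac12 h + Q(\nab w,\nab w)$, and the quadratic error is bilinear in $\nab w$, not in $w$. The tame estimate that lets you absorb is $\|Q(\nab w,\nab w)\|_{H^k}\les\|\nab w\|_{L^\infty}\|\nab w\|_{H^k}$, so you need $\|\nab w\|_{L^\infty}\les\de$; but Proposition \ref{prop:diffeomorphismTh} only controls $\|w\|_{L^\infty}$, not $\|\nab w\|_{L^\infty}$, and your proposal to estimate the quadratic term by $\|w\|_{L^\infty}\|\nab w\|_{H^k}$ is simply incorrect. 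Gagliardo--Nirenberg in dimension $2$ also fails to close the bottom step of your induction: one gets something of the form $\|w\|_{H^2}\les\de+\de^{1/2}\|w\|_{H^2}\|w\|_{H^3}^{1/2}$, which cannot be absorbed. This is exactly why the paper does not argue directly from Proposition \ref{prop:diffeomorphismTh}: it first invokes the De Lellis--M\"uller rigidity Theorem \ref{th:DeLellisMullerhigherregularity}, via the Sobolev embedding $H^2(D)\hookrightarrow C^{1/2}(D)$ on the $3$-dimensional annulus $D$ carrying the radial extension $\widetilde\Th$, to obtain $\|\nab\widetilde\Th-O\|_{C^{1/2}(D_1)}\les\de$. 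This supplies the $L^\infty$ smallness of $\nab\widetilde\Th'$ in the decomposition $\widetilde\Th=(I+\widetilde\Th')O$, which is precisely the missing ingredient. With that in hand, the Korn-plus-$H^k$ bootstrap you describe in your main route does go through and is exactly what the paper does.

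Your ``alternative route'' correctly identifies the De Lellis--M\"uller step, but it should not be an alternative — it is the needed base case of your main bootstrap. The remaining issue with the alternative is the H\"older/Schauder bootstrap you append at the end: passing the $H^{s+2}$ information on $h$ through H\"older spaces and back to $H^{s+3}$ is lossy at the Sobolev endpoint and would not yield the sharp conclusion $\|\Theta-O\|_{H^{s+3}}\les\de$. The paper instead stays in the $L^2$-Sobolev scale after the De Lellis--M\"uller step: it linearizes $\nab\widetilde\Th(\nab\widetilde\Th)^t=I+E$ to the symmetrized-gradient equation $S(\widetilde\Th')=E-\nab\widetilde\Th'(\nab\widetilde\Th')^t$, differentiates, and applies the $L^2$ Korn inequality on $D_1$ at each order, absorbing the quadratic term thanks to the $C^{1/2}$ control of $\nab\widetilde\Th'$. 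A small bookkeeping point as well: you cite $H^{s+2}\hookrightarrow C^\a$ ``in dimension $2$'', but the De Lellis--M\"uller step is applied on the $3$-dimensional shell $D$, where the relevant embedding is $H^2(D)\hookrightarrow C^{1/2}(D)$.
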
 
 
 \begin{proof}
 Recall that the proof of Proposition \ref{prop:diffeomorphismTh} proceeds by extending the map $\Theta$ to a  map  $\widetilde{\Th} :\RRR^3\setminus \{0\} \to \RRR^3\setminus \{0\} $  such that  for every $ x=r\om, \om\in \SSS^2$, $r>0$,  
 \beaa
 \widetilde{\Th} (r\om)=r \Th(\om),
 \eeaa
see \eqref{eq:definitionofwidetildeThetafortheproofofdiffeomorphismTh}. Under the assumption \eqref{eq:themapThetaisanalmostisometryofthestandardspherehigherreg}, we obtain the following higher regularity analog of the estimate \eqref{eq:themapwidetildeThetaisanalmostisometryoftheEuclideanspace}
\bea\lab{eq:esaimteofwidetildeThetainviewofthelinearizationofitseequation}
 \| \widetilde{\Th}^\# e - e\|_{H^{s+2}(D) } \les \de, \qquad D=\left\{ x\in \RRR^3\,\,\Big/\,\, \frac 1 2 \le |x|\le 2\right\}.
 \eea
In view of Theorem \ref{th:DeLellisMullerhigherregularity},  covering $\SSS^2$ by finitely many euclidean balls\footnote{More precisely,   $B_1$ and $B_2$ of Theorem \ref{th:DeLellisMullerhigherregularity} are chosen to be euclidean balls centered on points of   $\SSS^2$ of radius given respectively  by $1/4$ and $1/2$ so that both $B_1$ and $B_2$ are included in $D$ and the union of the $B_1$'s includes $D_1$.}, and using that $H^{2}(D)$ embeds in $C^{1/2}(D)$  by Sobolev, we infer the existence of $O\in O(3)$ such that 
   \beaa
\|\nabla\widetilde{\Th} - O\|_{C^{\frac{1}{2}}(D_1)} \les \de, \qquad D_1:=\left\{ x\in \RRR^3\,\,\Big/\,\, \frac{3}{4} \le |x|\le \frac{5}{4}\right\}.
\eeaa
We deduce the following decomposition for $\widetilde{\Th}$
 \bea\lab{eq:decompostionofwidetildeThetainviewofthelinearizationofitseequation}
\widetilde{\Th}=(I+\widetilde{\Th}')O, \qquad \|\widetilde{\Th}'\|_{L^\infty(D_1)}+\|\nabla\widetilde{\Th}'\|_{L^\infty(D_1)}\les\de.
\eea

Next, we rewrite \eqref{eq:esaimteofwidetildeThetainviewofthelinearizationofitseequation} in matrix form 
\bea\lab{eq:linearizationofitseequationforwidetildeTheta:notlinearizedyet}
\nabla\widetilde{\Th}(\nabla\widetilde{\Th})^t =I+ E, \qquad \|E\|_{H^{s+2}(D) } \les \de.
 \eea
Linearizing this equation based on the decomposition of $\widetilde{\Th}$ provided by \eqref{eq:decompostionofwidetildeThetainviewofthelinearizationofitseequation}, we infer on $D_1$
\bea\lab{eq:linearizationofitseequationforwidetildeTheta}
S(\widetilde{\Th}') &=& E-\nabla\widetilde{\Th}'(\nabla\widetilde{\Th}')^t,  \qquad S(\widetilde{\Th}'):=\nabla\widetilde{\Th}'+(\nabla\widetilde{\Th}')^t.
\eea
Note that $S$ satisfies the well-known   Korn inequality, see for example \cite{Cia},
 \bea\lab{eq:KorninequalityonD1}
\|f\|_{H^1(D_1)}\leq C_{D_1}\Big(\|f\|_{L^2(D_1)}+\|S(f)\|_{L^2(D_1)}\Big), \qquad f:D_1\to \RRR^3. 
 \eea
 where $S(f)$ denotes  the  symmetric part of the gradient of $f$.
Differentiating  \eqref{eq:linearizationofitseequationforwidetildeTheta}, relying on the estimate \eqref{eq:linearizationofitseequationforwidetildeTheta:notlinearizedyet} for $E$ and the a priori bound \eqref{eq:decompostionofwidetildeThetainviewofthelinearizationofitseequation} for $\widetilde{\Th}'$ on $D_1$, and making use of Korn inequality \eqref{eq:KorninequalityonD1}, we infer
 \beaa
 \|\widetilde{\Th}'\|_{H^{s+3}(D_1)}\les\de
\eeaa
and hence
 \beaa
 \|\widetilde{\Th}-O\|_{H^{s+3}(D_1)}\les\de.
\eeaa
Restricting to the sphere, we obtain 
\beaa    
    \|\Theta - O\|_{H^{s+3}(\SSS^2)} &\les& \de
\eeaa 
 as desired.
 \end{proof}

%%%%%%%%%%%%%%%%%%%%%%%%%%%%%%%%%% 
 
  \subsection{Calibration  of  uniformization maps  between spheres}
  \lab{subsection:calibration}
  
%%%%%%%%%%%%%%%%%%%%%%%%%%%%%%%%%%  

 In order to eliminate the  arbitrariness with respect to isometries of $\SSS^2$  in Theorem  \ref{Thm:unifor-twometrics}, see \eqref{eq:unifor-twometrics1},  we calibrate 
  the effective   uniformization maps\footnote{Given by Corollary \ref{proposition:effective-uniformisation}.}  $\Phi_1:\SSS^2 \to S_1 $, $\Phi_2:\SSS_2  \to S_2  $, for  given  diffeomorphism  $\Psi :S_1 \to S_2  $,   as follows.
   \begin{definition}
   \lab{def:calibration}
    On $\SSS^2$ we  fix\footnote{In particular, one can choose $N=(0,0,1)$ and $v=(1,0,0)$.} a point $N$ and a unit vector $v$ in the tangent space $T_N\SSS^2$. Given $\Psi:S_1\to S_2$,  we say that the  effective uniformization maps  $\Phi_1:\SSS^2 \to S_1 $, $\Phi_2:\SSS^2  \to S_2  $  are calibrated relative to $\Psi$  if  the map  $\Psih:=(\Phi_2)^{-1}\circ \Psi\circ \Phi_1:\SSS^2\to \SSS^2$ is such that
\begin{enumerate}
\item  The map $\Psih$ fixes the point $N$, i.e. $\Psih(N)=N$.

\item    The tangent map    $\Psih_\#$ fixes the direction of $v$, i.e.    $\Psih_\#(v)=a_{1,2}v$ where $a_{1,2}>0$.

\item  The tangent map $\Psih_\#$ preserves the orientation of $T_N\SSS^2$.
 \end{enumerate}
 \end{definition}
 
 \begin{lemma}\lab{lemma:existenceofcalibrationforagivenmapPhi1}
  Given $\Psi:S_1\to S_2$ and a fixed  effective uniformization map $\Phi_1:\SSS^2 \to S_1 $ for $S_1$. Then,  there exists a unique,  effective,   uniformization  for $S_2$  calibrated   with   that of $S_1$ relative to $\Psi$.
 \end{lemma}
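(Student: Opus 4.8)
The plan is to reduce everything to the uniqueness statement of Corollary \ref{proposition:effective-uniformisation} and then to check that the three conditions of Definition \ref{def:calibration} pin down the remaining $O(3)$ freedom. First I would fix \emph{any} effective uniformization pair $(\Phi_2^{(0)},u_2^{(0)})$ for $S_2$, which exists by Corollary \ref{proposition:effective-uniformisation}, and set $\Psih_0:=(\Phi_2^{(0)})^{-1}\circ\Psi\circ\Phi_1$, a smooth diffeomorphism of $\SSS^2$ (being a composition of diffeomorphisms). By the uniqueness part of Corollary \ref{proposition:effective-uniformisation} together with Remark \ref{rem:ifuiscenteredthensoisucricO} (which guarantees that $u_2^{(0)}\circ O$ is again centered and that $O^\#\ga_0=\ga_0$ for $O\in O(3)$), the effective uniformizations of $S_2$ are exactly the pairs $(\Phi_2^{(0)}\circ O,\,u_2^{(0)}\circ O)$ with $O\in O(3)$, and distinct $O$ give distinct maps $\Phi_2^{(0)}\circ O$. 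For such a choice one has $\Psih=O^{-1}\circ\Psih_0$, so the lemma reduces to showing that there is exactly one $O\in O(3)$ for which $\Psih=O^{-1}\circ\Psih_0$ satisfies conditions (1)--(3) of Definition \ref{def:calibration}.

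Next I would translate the three conditions into linear algebra for $O$. Condition (1), $\Psih(N)=N$, reads $O(N)=p$ with $p:=\Psih_0(N)$; since $O\in O(3)$ is a Euclidean isometry with $O(N)=p$, its differential at $N$ is the restriction $O|_{N^\perp}:T_N\SSS^2=N^\perp\to p^\perp=T_p\SSS^2$, a linear isometry, and hence the tangent map of $\Psih$ at $N$ is $(O|_{N^\perp})^{-1}\circ(\Psih_0)_\#$. Writing $w:=(\Psih_0)_\#(v)\in T_p\SSS^2$, which is nonzero because $\Psih_0$ is a diffeomorphism, condition (2) becomes $(O|_{N^\perp})^{-1}(w)=a_{1,2}v$ with $a_{1,2}>0$; applying $O|_{N^\perp}$ and comparing lengths, using $|v|=1$ and that $O$ is an isometry, forces $a_{1,2}=|w|$ and $O(v)=w/|w|$. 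Finally, condition (3) requires $(O|_{N^\perp})^{-1}\circ(\Psih_0)_\#$ to preserve the orientation of $T_N\SSS^2$, i.e. the orientation type of $O|_{N^\perp}$ must equal that of $(\Psih_0)_\#$ as a map $T_N\SSS^2\to T_p\SSS^2$; equivalently, this prescribes the sign of $\det O$.

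Finally I would solve the resulting system. Complete $v$ to the positively oriented orthonormal basis $(N,v,v')$ of $\RRR^3$ with $v':=N\times v$. Any $O\in O(3)$ with $O(N)=p$ and $O(v)=w/|w|$ must send $v'$ to a unit vector orthogonal to both $p$ and $w$, hence $O(v')=\pm\,p\times(w/|w|)$; the two signs are precisely the two possible values of $\det O$ (equivalently, the two orientation types of $O|_{N^\perp}$), and each sign specifies $O$ on the basis $(N,v,v')$ and thus determines a unique element of $O(3)$ (orthogonal, since it maps an orthonormal basis to an orthonormal basis). Condition (3) selects the sign, hence a unique $O$, and with it the unique calibrated effective uniformization $(\Phi_2,u_2):=(\Phi_2^{(0)}\circ O,\,u_2^{(0)}\circ O)$; conversely, any calibrated effective uniformization arises in this way, which yields uniqueness. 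The only point requiring care is the bookkeeping of push-forwards versus pull-backs and of $O$ versus $O^{-1}$; there is no analytic difficulty, the content being simply that the data distinguished by calibration --- a point of $\SSS^2$, a tangent direction there, and an orientation of that tangent plane --- amounts to an orthonormal frame of $\RRR^3$, on which $O(3)$ acts simply transitively.
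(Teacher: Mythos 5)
Your proof is correct and uses the same initial reduction as the paper --- by the uniqueness part of Corollary \ref{proposition:effective-uniformisation}, the effective uniformizations of $S_2$ are exactly $\{\Phi_2^{(0)}\circ O : O\in O(3)\}$, so the task is to exhibit a unique $O$ making $\Psih$ satisfy the three conditions of Definition \ref{def:calibration}. The difference is in the final step. The paper concludes by appealing to the quantitative estimate \eqref{eq:unifor-twometrics1} of Theorem \ref{Thm:unifor-twometrics}, i.e.\ that $\Psih'$ is $O(\de)$-close to an element of $O(3)$; this is a perturbative argument which implicitly presupposes the metric-closeness hypothesis \eqref{eq:unifor-twometricsB}. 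You instead give a self-contained algebraic argument: conditions (1) and (2) translate into $O(N)=p$ and $O(v)=w/|w|$ with $p=\Psih_0(N)$ and $w=(\Psih_0)_\#(v)\neq 0$, condition (3) then fixes the sign of $\det O$, and since an element of $O(3)$ is determined by its action on the orthonormal basis $(N,v,N\times v)$, the $O$ is unique. Your route is more elementary and strictly more general: it proves the lemma exactly as stated, for an arbitrary diffeomorphism $\Psi:S_1\to S_2$, with no closeness hypothesis on the metrics. In the paper's applications the closeness is anyway available, so both arguments work there, but yours does cleanly what the paper's compressed final sentence only gestures at.
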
 
 
 \begin{proof}
 Let  $\Phi_2'$ be an effective uniformization  map  for $S_2$  and let $\Psih':=(\Phi'_2)^{-1}\circ \Psi\circ \Phi_1$ so that $\Psih':\SSS^2\to \SSS^2$. Recall that for any $O\in O(3)$,  $\Phi_2$ given by $\Phi_2=\Phi_2'\circ O^{-1}$ is also an effective uniformization  map  for $S_2$.  We have in this case $\Psih=O\Psih'$, and $\Phi_1$ and $\Phi_2$  are  calibrated  relative to $\Psi$ if and only if $O$ satisfies the following.
\begin{enumerate}
\item $O(\Psih'(N))=N$.

\item $\Psih'_\#(Ov)=av$ where $a>0$.

\item $\Psih'_\#\circ O$ preserves the orientation of $T_N\SSS^2$.
 \end{enumerate} 
 Since $\Psih'$ is $O(\de)$ close to an element of $O(3)$, see  \eqref{eq:unifor-twometrics1}, there exists a unique $O\in O(3)$ satisfying the conditions above.
    \end{proof}

   We now state the following corollary to Theorem  \ref{Thm:unifor-twometrics}.   
  \begin{corollary}
  \lab{Cor:unifor-twometrics-calibrated}
  In addition to the assumptions    of Theorem  \ref{Thm:unifor-twometrics},  assume  that the 
  maps $\Phi_1, \Phi_2$ are calibrated  relative to $\Psi$ according to  Definition \ref{def:calibration}.  Then $\Psih$ verifies
  \bea
  \lab{eq:unifor-twometrics1-calibr}
  \| \Psih-I\|_{L^\infty(\SSS^2)}+ \| \Psih-I\|_{H^1(\SSS^2)}&\les &\de.
 \eea
  The conformal factors $u_1, u_2$ verify
  \bea
  \lab{eq:unifor-twometrics2-calibr}
 \big \|u_1- \Psih^\# u_2\big \|_{L^\infty(\SSS^2)} \les \de.
  \eea
  \end{corollary}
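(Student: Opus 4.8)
The plan is to obtain the corollary as a refinement of Theorem~\ref{Thm:unifor-twometrics}: since calibration only rigidifies the otherwise arbitrary rotation, it suffices to show that the rotation $O\in O(3)$ produced by that theorem is itself $O(\de)$-close to the identity. Granting this, \eqref{eq:unifor-twometrics1-calibr} follows from \eqref{eq:unifor-twometrics1} and the triangle inequality, using $\|O-I\|_{L^\infty(\SSS^2)}+\|O-I\|_{H^1(\SSS^2)}\les|O-I|$ for the fixed linear map $O-I$; and \eqref{eq:unifor-twometrics2-calibr} is nothing but \eqref{eq:unifor-twometrics2}, which does not involve $O$, so there is nothing new to prove there. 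Hence the whole content is the bound $|O-I|\les\de$.

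The calibration conditions of Definition~\ref{def:calibration} are conditions on the value and the $1$-jet of $\Psih$ at the marked point $N$, so I first need a pointwise $C^1$ bound on $\Psih-O$, not merely the $H^1$ bound of \eqref{eq:unifor-twometrics1}. This is available under the running hypothesis \eqref{eq:unifor-twometricsB}, which controls the $\hk_4$-norm of $g^{S_1}-\Psi^\#(g^{S_2})$: retracing the proof of Theorem~\ref{Thm:unifor-twometrics}, the key estimate \eqref{eq:unifor-twometrics5'} holds in fact with the $H^4$-norm, so Proposition~\ref{prop:diffeomorphismTh:higherregularity} applies with $s=2$ and yields $\|\Th-O\|_{H^{5}(\SSS^2)}\les\de$; propagating this through the composition defining $\Psih$ (and using that a conformal transformation of $\SSS^2$ which is $C^0$-close to a fixed element of $O(3)$ is $C^1$-close to it, since $\M$ is a finite-dimensional Lie group acting smoothly on $\SSS^2$), one gets $\|\Psih-O\|_{C^1(\SSS^2)}\les\de$ by the Sobolev embedding $H^5(\SSS^2)\hookrightarrow C^1(\SSS^2)$. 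One may alternatively simply invoke Proposition~\ref{Thm:unifor-twometrics:higherregularity}.

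With the $C^1$ bound in hand the rest is elementary linear algebra in $O(3)$. From the first calibration condition, $\Psih(N)=N$, so $|O(N)-N|\le\|\Psih-O\|_{L^\infty(\SSS^2)}\les\de$, i.e. $O$ is $O(\de)$-close to the stabilizer of $N$ in $O(3)$. Comparing $1$-jets at $N$, the $C^1$ bound gives that the endomorphism $D\Psih|_N$ of $T_N\SSS^2$ differs by $O(\de)$ (inside $\RRR^3$, after the harmless $O(\de)$-small identification of $T_{O(N)}\SSS^2$ with $T_N\SSS^2$) from the restriction of $O$; since by the third calibration condition $D\Psih|_N$ is orientation preserving and its determinant is bounded away from $0$, that restriction is orientation preserving as well, so $O$ is $O(\de)$-close to a rotation $R_\theta$ about the axis through $N$. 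Finally the second calibration condition reads $D\Psih|_N(v)=a_{1,2}v$ with $a_{1,2}>0$, whence $|a_{1,2}v-R_\theta v|\les\de$; as $R_\theta$ is an isometry preserving $T_N\SSS^2$ and $v$ is a unit vector orthogonal to the rotation axis, $|R_\theta v|=1$, so $|a_{1,2}-1|\les\de$ and then $|v-R_\theta v|=2|\sin(\theta/2)|\les\de$, forcing $\theta$ to be $O(\de)$ modulo $2\pi$. Thus $R_\theta$, and with it $O$, is within $O(\de)$ of the identity, which is the bound we wanted.

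The one genuinely delicate point is the $C^1$ upgrade in the second step: as stated, Theorem~\ref{Thm:unifor-twometrics} controls $\Psih-O$ only in $L^\infty$ and $H^1$, which does not determine the $1$-jet at $N$, so one must either re-run its proof with Proposition~\ref{prop:diffeomorphismTh:higherregularity} in place of Proposition~\ref{prop:diffeomorphismTh}, or quote Proposition~\ref{Thm:unifor-twometrics:higherregularity}, and in either case verify carefully that the $\hk_4$-assumption in \eqref{eq:unifor-twometricsB} is precisely what produces the $H^4$-version of \eqref{eq:unifor-twometrics5'}. Everything else — the reduction to $|O-I|\les\de$, the role of Lemma~\ref{lemma:existenceofcalibrationforagivenmapPhi1}, and the $O(3)$-computation above — is routine.
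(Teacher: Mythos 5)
Your proof is correct and follows the paper's strategy: reduce to showing that the $O\in O(3)$ furnished by Theorem~\ref{Thm:unifor-twometrics} is itself $O(\de)$-close to $I$, by exploiting the three calibration conditions at the marked point $N$. The paper's own proof derives the three facts ``$O(N)=N+O(\de)$'', ``$O(v)=av+O(\de)$'', ``$O$ preserves orientation of $T_N\SSS^2$'' directly from the $L^\infty/H^1$ bound of \eqref{eq:unifor-twometrics1}, and then concludes $|O-I|\les\de$ in one line — exactly the elementary $O(3)$ computation you carry out in your last paragraph.

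The one place where you go beyond the paper is the $C^1$-upgrade. You are right that conditions 2 and 3 of Definition~\ref{def:calibration} involve the $1$-jet of $\Psih$ at $N$, and that the bare $L^\infty$/$H^1$ bound of \eqref{eq:unifor-twometrics1} does not by itself control $D\Psih|_N$ pointwise; the paper asserts the needed closeness ``in view of \eqref{eq:unifor-twometrics1:bisfortheproof}'' without commenting on this. Your fix — that assumption \eqref{eq:unifor-twometricsB} gives the $H^4$ version of \eqref{eq:unifor-twometrics5'}, so Proposition~\ref{prop:diffeomorphismTh:higherregularity} (equivalently Proposition~\ref{Thm:unifor-twometrics:higherregularity}) applies and Sobolev embedding then yields $\|\Psih-O\|_{C^1}\les\de$ — is exactly the argument the paper has set up in section~4.3, just not explicitly invoked in the proof of the corollary. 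So your write-up is not a different route, but the same route with an implicit step made explicit; that is the right instinct, and the remaining $O(3)$ algebra (stabilizer of $N$, orientation forcing $O$ near a rotation $R_\theta$ about the $N$-axis, and $|v-R_\theta v|=2|\sin(\theta/2)|\les\de$) is correct.
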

 
  \begin{proof}
In view of Theorem  \ref{Thm:unifor-twometrics}, there exists $O\in O(3)$ such that $\Psih$ satisfies 
\bea
  \lab{eq:unifor-twometrics1:bisfortheproof}
  \| \Psih-O\|_{L^\infty(\SSS^2)}+ \| \Psih-O\|_{H^1(\SSS^2)}&\les &\de
 \eea
and \eqref{eq:unifor-twometrics2-calibr} holds.

It remains to check   \eqref{eq:unifor-twometrics1-calibr}. In view of \eqref{eq:unifor-twometrics1:bisfortheproof} and the  fact that $\Phi_1, \Phi_2$ are calibrated  relative to $\Psi$ according to  Definition \ref{def:calibration}, we infer that
\begin{enumerate}
\item $O(N)=N+O(\de)$,

\item $O(v)=av+O(\de)$ with $a>0$.

\item $O$ preserves the orientation of $T_N\SSS^2$.
 \end{enumerate}
These conditions imply immediately that $|O-I|\les\de$ which together with \eqref{eq:unifor-twometrics1:bisfortheproof}  yields \eqref{eq:unifor-twometrics1-calibr} as desired.
  \end{proof}

 \begin{lemma}[Transitivity of calibrations]
 \lab{Le:Transitivitycalibrations}
 Let $\Phi_1:\SSS^2 \to S_1 $, $\Phi_2:\SSS^2  \to S_2  $ and $\Phi_3:\SSS^2  \to S_3  $ three  effective uniformization maps. Let $\Psi_{12}: S_1\to S_2$  and $\Psi_{13}:S_1\to S_3$ satisfying \eqref{eq:unifor-twometricsB}
 and assume that $\Phi_1$, $\Phi_2$  are calibrated relative to $\Psi_{12}$, while $\Phi_1$, $\Phi_3$  are calibrated relative to $\Psi_{13}$. Then, $\Phi_2$, $\Phi_3$  are calibrated relative to $\Psi_{23}:=\Psi_{13}\circ\Psi_{12}^{-1}$. 
\end{lemma}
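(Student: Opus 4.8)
The plan is to verify the three conditions of Definition \ref{def:calibration} directly for the pair $(\Phi_2,\Phi_3)$ relative to $\Psi_{23}=\Psi_{13}\circ\Psi_{12}^{-1}$, by expressing the relevant map in terms of the two given calibrated maps. Write $\Psih_{12}:=\Phi_2^{-1}\circ\Psi_{12}\circ\Phi_1$ and $\Psih_{13}:=\Phi_3^{-1}\circ\Psi_{13}\circ\Phi_1$, both maps $\SSS^2\to\SSS^2$, each of which satisfies the three calibration properties at $(N,v)$ by hypothesis. First I would compute
\[
\Psih_{23}:=\Phi_3^{-1}\circ\Psi_{23}\circ\Phi_2=\Phi_3^{-1}\circ\Psi_{13}\circ\Psi_{12}^{-1}\circ\Phi_2
=\big(\Phi_3^{-1}\circ\Psi_{13}\circ\Phi_1\big)\circ\big(\Phi_1^{-1}\circ\Psi_{12}^{-1}\circ\Phi_2\big)=\Psih_{13}\circ\Psih_{12}^{-1}.
\]
So the statement reduces to the purely $\SSS^2$-level fact: if $\Psih_{12}$ and $\Psih_{13}$ each fix $N$, each send $v$ to a positive multiple of $v$, and each preserve the orientation of $T_N\SSS^2$, then the composite $\Psih_{13}\circ\Psih_{12}^{-1}$ has the same three properties.

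The three verifications are then immediate. For (1): $\Psih_{12}^{-1}(N)=N$ since $\Psih_{12}(N)=N$, and then $\Psih_{13}(N)=N$, so $\Psih_{23}(N)=N$. For (2): by the chain rule $(\Psih_{23})_\#=(\Psih_{13})_\#\circ(\Psih_{12}^{-1})_\#$ at $N$; since $(\Psih_{12})_\#(v)=a_{12}v$ with $a_{12}>0$ we get $(\Psih_{12}^{-1})_\#(v)=a_{12}^{-1}v$ with $a_{12}^{-1}>0$, and applying $(\Psih_{13})_\#$ gives $(\Psih_{13})_\#(a_{12}^{-1}v)=a_{12}^{-1}a_{13}v$, again a positive multiple of $v$. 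For (3): the tangent map of a composition is the composition of tangent maps, and the composition of two orientation-preserving linear isomorphisms of the oriented plane $T_N\SSS^2$ is orientation-preserving; an inverse of an orientation-preserving isomorphism is likewise orientation-preserving. Hence $\Psih_{23}$ satisfies all three conditions, which is exactly the assertion that $\Phi_2,\Phi_3$ are calibrated relative to $\Psi_{23}$.

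There is essentially no obstacle here: the only point requiring a word of care is that $\Psih_{23}$ is indeed the well-defined $\SSS^2$-level map attached to the pair $(\Phi_2,\Phi_3)$ and the map $\Psi_{23}$, i.e. that the algebraic rearrangement $\Phi_3^{-1}\circ\Psi_{13}\circ\Psi_{12}^{-1}\circ\Phi_2=\Psih_{13}\circ\Psih_{12}^{-1}$ is valid, which is just inserting $\Phi_1\circ\Phi_1^{-1}$ in the middle. One should also note that $\Psi_{23}$ satisfies \eqref{eq:unifor-twometricsB} (so that the notion of calibration for the pair $(\Phi_2,\Phi_3)$ is meaningful), which follows from the triangle-type bound on the metrics together with \eqref{eq:unifor-twometrics2} applied to the pairs $(S_1,S_2)$ and $(S_1,S_3)$; alternatively this hypothesis may simply be assumed as part of the setup, since $\Phi_2,\Phi_3$ being effective uniformization maps and $\Psih_{23}$ being $O(\de)$-close to the identity already suffices to make the calibration conditions well-posed. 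Thus the lemma follows from the chain rule and elementary properties of orientation.
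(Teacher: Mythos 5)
Your proposal is correct and takes essentially the same approach as the paper: both reduce to the identity $\Psih_{23}=\Psih_{13}\circ\Psih_{12}^{-1}$ (obtained by inserting $\Phi_1\circ\Phi_1^{-1}$) and then observe that the three calibration conditions are stable under composition and inversion. You are merely more explicit than the paper in spelling out the three verifications, and you add a sensible remark about $\Psi_{23}$ satisfying \eqref{eq:unifor-twometricsB}; neither changes the method.
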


\begin{proof}
Since  $\Phi_1$, $\Phi_2$  are calibrated relative to $\Psi_{12}$, and $\Phi_1$, $\Phi_3$  are calibrated relative to $\Psi_{13}$, $\Psih_{12}:=(\Phi_2)^{-1}\circ \Psi_{12}\circ \Phi_1$ and $\Psih_{13}:=(\Phi_3)^{-1}\circ \Psi_{13}\circ \Phi_1$ satisfy the three properties of Definition \ref{def:calibration}. Then, introducing
\beaa
\Psi_{23}=\Psi_{13}\circ\Psi_{12}^{-1}, \qquad \Psih_{23}:=(\Phi_3)^{-1}\circ \Psi_{23}\circ \Phi_2,
\eeaa
we have $\Psih_{23}=\Psih_{13}\circ\Psih_{12}^{-1}$ so that $\Psih_{23}$ also satisfy the three properties of Definition \ref{def:calibration}. Hence, $\Phi_2$, $\Phi_3$  are calibrated relative to $\Psi_{23}$ as desired.
\end{proof}

%%%%%%%%%%%%%%%%%%%%%%%%%%%%%%%%%% 
 
  \subsection{Comparison of $\ell=1$ modes between two spheres}
  
%%%%%%%%%%%%%%%%%%%%%%%%%%%%%%%%%%  
  
  Consider, as in Theorem  \ref{Thm:unifor-twometrics},   two almost round spheres   $(S_1, g^{S_1} )$ and $(S_2, g^{S_2} )$  and 
 a  smooth map $\Psi:S_1\to S_2$ such that, as in   \eqref{eq:unifor-twometricsB},     that the metrics $g^{S_1} $ and $\Psi^\#( g^{S_2} )$  are close to each other  in  $S^1$. Assume that   $(\Phi_1, u_1)$ , $(\Phi_2, u_2)$ are  effective  uniformization
 maps of $S_1$ and $S_2$,    calibrated  as in  definition \ref{def:calibration}.
We define 
  \beaa
  J^i=J^{S_i}= J^{\SSS^2}\circ \Phi_i^{-1}, \qquad i=1,2,
  \eeaa
to   be the $\ell=1$     canonical modes of  
   $S_1, S_2 $  according to  Definition  \ref{definition:ell=1mpdesonS}. 
   We  want to compare $J^1$ with $ J^2\circ \Psi^{-1} $. We prove the following result.
   \begin{proposition}
   \lab{Prop:comparison.canJ}
   Under the assumptions of  Corollary  \ref{Cor:unifor-twometrics-calibrated} we have
   \bea
  \Big| J^1- J^2\circ \Psi  \Big| &\les &\de.
   \eea
   \end{proposition}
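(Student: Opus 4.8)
The plan is to reduce the claim entirely to the calibrated stability estimate of Corollary \ref{Cor:unifor-twometrics-calibrated}, since all the analytic work has already been done there. First I would re-express $J^2\circ\Psi$ as a function pulled back from $\SSS^2$. By Definition \ref{definition:ell=1mpdesonS} we have $J^i=J^{\SSS^2}\circ\Phi_i^{-1}$ on $S_i$, and by the defining property of $\Psih$ in Theorem \ref{Thm:unifor-twometrics}, namely $\Psi\circ\Phi_1=\Phi_2\circ\Psih$, we get $(\Phi_2)^{-1}\circ\Psi=\Psih\circ\Phi_1^{-1}$ as maps $S_1\to\SSS^2$. Hence
\[
J^2\circ\Psi=J^{\SSS^2}\circ(\Phi_2)^{-1}\circ\Psi=J^{\SSS^2}\circ\Psih\circ\Phi_1^{-1},\qquad J^1=J^{\SSS^2}\circ\Phi_1^{-1},
\]
so that
\[
J^1-J^2\circ\Psi=\big(J^{\SSS^2}-J^{\SSS^2}\circ\Psih\big)\circ\Phi_1^{-1}\quad\text{on }S_1.
\]

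Next I would exploit that $J^{\SSS^2}=(x^1,x^2,x^3)\big|_{\SSS^2}$ is nothing but the inclusion $\SSS^2\hookrightarrow\RRR^3$, in particular a $1$-Lipschitz map for the Euclidean norm: for every $\omega\in\SSS^2$,
\[
\big|J^{\SSS^2}(\omega)-J^{\SSS^2}(\Psih(\omega))\big|=\big|\omega-\Psih(\omega)\big|\le\|\Psih-I\|_{L^\infty(\SSS^2)}.
\]
Since the hypotheses here are precisely those of Corollary \ref{Cor:unifor-twometrics-calibrated}, that corollary gives $\|\Psih-I\|_{L^\infty(\SSS^2)}\les\de$; see \eqref{eq:unifor-twometrics1-calibr}. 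Precomposing with the diffeomorphism $\Phi_1^{-1}:S_1\to\SSS^2$ leaves the sup-norm of the difference unchanged, so $\big|J^1-J^2\circ\Psi\big|\les\de$ pointwise on $S_1$, which is the assertion.

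The argument is essentially immediate, and there is no genuine obstacle: the only content is the bookkeeping identity $J^1-J^2\circ\Psi=(J^{\SSS^2}-J^{\SSS^2}\circ\Psih)\circ\Phi_1^{-1}$ together with the already-established $L^\infty$ closeness of $\Psih$ to the identity. If an $H^s$-version of the comparison were wanted one would instead differentiate this identity and feed in the $H^1$ bound on $\Psih-I$ from \eqref{eq:unifor-twometrics1-calibr} (and its higher-regularity analogues) together with the chain rule, but for the pointwise estimate stated the Lipschitz bound above is all that is needed.
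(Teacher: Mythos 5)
Your proof is correct and follows essentially the same route as the paper: you rewrite $J^1-J^2\circ\Psi=(J^{\SSS^2}-J^{\SSS^2}\circ\Psih)\circ\Phi_1^{-1}$ using $\Psi\circ\Phi_1=\Phi_2\circ\Psih$, and then invoke $\|\Psih-I\|_{L^\infty(\SSS^2)}\les\de$ from Corollary \ref{Cor:unifor-twometrics-calibrated}. The only difference is that you spell out explicitly why the bound on $\Psih-I$ transfers (since $J^{\SSS^2}$ is the $1$-Lipschitz inclusion $\SSS^2\hookrightarrow\RRR^3$), a step the paper leaves implicit.
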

   
   \begin{proof}
   Indeed, using that $\Psi\circ\Phi_1=\Phi_2\circ\Psih$,
   \beaa
   J^2\circ \Psi= J^{\SSS^2}\circ \Phi_2^{-1} \circ \Psi =J^{\SSS^2}\circ\Psih\circ \Phi_1^{-1}. 
   \eeaa
   Hence,
   \beaa
    J^1- J^2\circ \Psi= J^{\SSS^2}\circ \Phi_1^{-1} -J^{\SSS^2}\circ\Psih\circ \Phi_1^{-1}. 
   \eeaa
   This implies, together with \eqref{eq:unifor-twometrics1-calibr},
   \beaa
   \Big| J^1- J^2\circ \Psi\Big| \les \sup_{\SSS^2} \Big|  I -\Psih\Big| \les \de
   \eeaa
    as stated.
   \end{proof}

%%%%%%%%%%%%%%%%%%%%%%%%%%%%%%%%%

\section{Review of the  geometric set-up in \cite{KS-Kerr1}}

%%%%%%%%%%%%%%%%%%%%%%%%%%%%%%%%%

%%%%%%%%%%%%%%%%%%%%%%%%%%%%%%%%%

\subsection{Background spacetime}
\lab{sec:backgroundspacetime}

%%%%%%%%%%%%%%%%%%%%%%%%%%%%%%%%%

As in   \cite{KS-Kerr1},  we  consider given a  vacuum spacetime $\RR$   with metric $\g$  
endowed with an outgoing geodesic foliation  by  spheres $S(u, s)$   of fixed $(u, s) $,     where $u$  is an outgoing optical function\footnote{That is  $\g^{\a\b}\pr_\a u\pr_\b u  =0$.}  with $L=-\g^{\a\b} \pr_\b u \pr_\a  $ its  null geodesic generator and $s$ chosen such that
\beaa
L(s) =\frac{1}{\vsi}, \qquad L(\vsi)=0.
\eeaa
Let $e_4=\vsi L $ and $e_3$  the unique    null vectorfield orthogonal to  $S(u,s)$   and such that $\g(e_3, e_4)=-2$. We  then  let $(e_1, e_2)$  an orthogonal basis of the tangent space of $S(u, s)$.   
The  corresponding  connection coefficients   relative to the null frame $(e_3, e_4, e_1, e_3)$  are denoted by $\chi, \chib, \xi, \xib, \om, \omb, \eta, \etab, \ze$ and  the null components of the curvature tensor by $\a, \aa, \b, \bb, \rho, \rhod$.    For the convenience of the reader  we recall their definition below.
 \beaa
 \begin{split}
\chib_{ab}&=g(\D_ae_3, e_b),\qquad \,\,\chi_{ab}=g(\D_ae_4, e_b),\\
\xib_a&=\frac 1 2 \g(\D_3 e_3 , e_a),\qquad \xi_a=\frac 1 2 \ g(\D_4 e_4, e_a),\\
\omb&=\frac 1 4 g(\D_3e_3 , e_4),\qquad\,\, \om=\frac 1 4 g(\D_4 e_4, e_3),\qquad \\
\etab_a&=\frac 1 2 (\D_4 e_3, e_a),\qquad \quad \eta_a=\frac 1 2 g(\D_3 e_4, e_a),\qquad\\
 \ze_a&=\frac 1 2 g(\D_{e_a}e_4,  e_3),
 \end{split}
\eeaa
and
\beaa
\a_{ab} &=&\R(e_a, e_4, e_b, e_4) , \qquad \aa_{ab} =\R(e_a, e_3, e_b, e_3), \\
\b_{a} &=&\frac 1 2 \R_(e_a, e_4, e_3, e_4), \qquad \bb_{a} =\frac 1 2 \R_(e_a, e_3, e_3, e_4),\\
\rho&=& \frac 1 4  \R_(e_3, e_4, e_3, e_4), \qquad \rhod =\dual R( e_3, e_4, e_3, e_4) .
\eeaa

  The null second fundamental forms $\chi, \chib$ are further  decomposed in their traces $\ka=\trch$ and $\kab=\trchb$, and traceless parts $\chih$ and $\chibh$.         We recall, see Lemma 2.4 in \cite{KS-Kerr1}, that the geodesic nature of the foliation implies
\beaa
\om=\xi=0,  \qquad  \etab = -\ze, \qquad  \vsi=\frac{2}{e_3(u)}.
\eeaa
The volume radius $ r=r(u, s)$ is defined   such  that the volume of $S$ is given by $4\pi r^2$. The Hawking mass  $m=m(u, s)$ of $S=S(u, s) $ is given by the formula 
 \bea
\frac{2m}{r}=1+\frac{1}{16\pi}\int_{S_{}}\trch \trchb.
\eea
The Gauss curvature of $S$ is denoted by $K$. It verifies the Gauss equation
\bea\lab{eq:Gaussequation}
K=-\rho -\frac{1}{4} \trch \trchb +\frac{1}{2}\chih\c\chibh.
\eea
The mass aspect function $\mu$  is defined by
\bea
\label{def:massaspectfunctions.general}
\mu &:=& - \div \ze -\rho+\frac 1 2  \chih\c \chibh.
\eea
As in \cite{KS-Kerr1}, we define the renormalized quantities 
\beaa
&&\widecheck{\trch} := \trch -\frac{2}{r}, \qquad
\widecheck{\trchb} := \trchb +\frac{2\Up}{r},\qquad 
\widecheck{\omb} := \omb -\frac{m}{r^2},\\
&&\widecheck{K} := K -\frac{1}{r^2},\qquad\,\,\,\,\widecheck{\rho} := \rho +\frac{2m}{r^3},\qquad \quad\,\,\,
\widecheck{\mu} := \mu -\frac{2m}{r^3}, \\
&&\widecheck{\Omb} :=\Omb+\Up, \qquad\quad 
\widecheck{\varsigma} := \varsigma-1,
\eeaa
where 
\beaa
\Up :=1-\frac{2m}{r},
\eeaa
and the sets
\bea
\lab{definition:Ga_gGa_b}
\bsplit
\Ga_g &:= \Bigg\{\widecheck{\trch},\,\, \chih, \,\, \ze, \,\, \widecheck{\trchb},\,\,  r\widecheck{\mu} ,\,\,  r\widecheck{\rho}, \,\, r\dual\rho, \,\, r\b, \,\, r\a, \,\, r\widecheck{K}, \,\, r^{-1} \big(e_4(r)-1\big),\,\, r^{-1}e_4(m)\Bigg\},\\
\Ga_b &:= \Bigg\{\eta, \,\,\chibh, \,\, \ombc, \,\, \xib,\,\,  r\bb, \,\, \aa, \,\, r^{-1}\widecheck{\Omb}, \,\,r^{-1}\widecheck{\varsigma}, \,\, r^{-1}(e_3(r)+\Up\big),  \,\, r^{-1}e_3(m)  \Bigg\}.
\end{split}
\eea

%%%%%%%%%%%%%%%%%%%%

\subsubsection{Adapted coordinates}

%%%%%%%%%%%%%%%%%%%%%

Recall, see \cite{KS-Kerr1},   that  a coordinate system $(u, s, y^1, y^2)$ is  said to be adapted to  an outgoing geodesic foliation   as above if $e_4(y^1)= e_4(y^2)=0$.  In that case the spacetime metric can be written in the form
\bea
\lab{spacetimemetric-y-coordinates}
\g &=& - 2\vsi du ds + \vsi^2\Omb  du^2 +g_{ab}\big( dy^a- \vsi \undB^a du\big) \big( dy^b-\vsi \undB^b du\big),
\eea
where
\bea
\Omb=e_3(s), \qquad \undB^a =\frac{1}{2} e_3(y^a), \qquad  g_{ab}=\g(\pr_{y^a}, \pr_{y^b}).
\eea
Relative to these coordinates
\beaa
e_4=\pr_s, \qquad \pr_u = \vsi\left(\frac{1}{2}e_3-\frac{1}{2}\Omb e_4-\undB^a\pr_{y^a}\right), \qquad \pr_{y^a}= \sum_{c=1,2} Y_{(a)}^c e_c, \qquad   a=1,2,
\eeaa
with coefficients  $ Y_{(a)}^b $ verifying
\beaa
g_{ab}=\sum_{c=1, 2} Y_{(a)}^c Y_{(b)}^c.
\eeaa

As in \cite{KS-Kerr1}  we assume  that $\RR $ is covered by two coordinate systems , i.e. $\RR=\RR_N\cup \RR_S$,
such that
\begin{enumerate}
\item The North coordinate chart   $\RR_N$ is given by the coordinates
$(u, s, y_{N}^1, y_{N}^2)$ with    $(y^1_{N})^2+(y^2_{N})^2<2$. 

\item The South coordinate chart  $\RR_S$ is  given by the coordinates
$(u, s, y_{S}^1, y_{S}^2)$  with $(y^1_{S})^2+(y^2_{S})^2<2$. 

 \item   The two coordinate charts   intersect in  the  open equatorial region
 $\RR_{Eq}:=\RR_N\cap \RR_S$ in which both coordinate systems are defined.
 
 \item  In $\RR_{Eq} $   the transition functions  between the two coordinate  systems are given by  the smooth  functions $ \varphi_{SN}$ and $\varphi_{NS}= \varphi_{SN}^{-1} $. 
 \end{enumerate}
The metric coefficients for the two coordinate systems   are given by
 \beaa
\g &=& - 2\vsi du ds + \vsi^2\Omb  du^2 +g^{N}_{ab}\big( dy_N^a- \vsi \undB_{N}^a du\big) \big( dy_N^b-\vsi \undB_N^b du\big),\\
\g &=& - 2\vsi du ds + \vsi^2\Omb  du^2 +g^{S}_{ab}\big( dy_S^a- \vsi \undB_{S}^a du\big) \big( dy_S^b-\vsi \undB_S^b du\big),
\eeaa
where
\beaa
\Omb=e_3(s), \qquad \undB_N^a =\frac{1}{2} e_3(y_N^a), \qquad \undB_S^a =\frac{1}{2} e_3(y_S^a).
\eeaa
On  a 2-sphere $S(u,s)$ and   $f\in \SS_p(S)$, $p=0,1,2$,    we consider   the following   norms, 
  \bea
  \label{Norms-spacetimefoliation-GSMS}
  \bsplit
  \| f\|_{\infty} :&=\| f\|_{L^\infty(S)}, \qquad  \| f\|_{2} :=\| f\|_{L^2(S)}, \\
  \|f\|_{\infty,k} &= \sum_{i=0}^k \|\dk^i f\|_{\infty },  \qquad 
\|f\|_{2,k}=\sum_{i=0}^k \|\dk^i f\|_{2},
\end{split}
  \eea
  where $\dk^i$ stands for any   combination  of length $i$ of operators  of the form 
   $e_3, r e_4,  r\nab $.

%%%%%%%%%%%%%%%%%%%%%%

\subsubsection{Reduced region  $\RR$}
\lab{subsubsect:regionRR1}

%%%%%%%%%%%%%%%%%%%%%%

In what follows we restrict our attention to  smaller region   of the original $\RR$,  still denoted  $\RR$ defined as follows.
 \begin{definition} 
\label{defintion:regionRRovr}
Let $m_0>0$ a constant.   Let $\epg >0$   two sufficiently   small   constants, and let  $(\ug, \sg, \rg)$ three real numbers with $\rg$ sufficiently large so that
\bea\lab{eq:rangeofrgandepsilon}
\epg\ll m_0, \qquad\qquad  \rg\gg m_0.
\eea
We define  $\RR=\RR$  to be the region
\bea
\lab{definition:RR(dg,epg)}
\RR:=\left\{|u-\ug|\leq\epg,\quad |s-\sg|\leq  \epg \right\},
\eea
such that  assumptions {\bf A1-A4} below  with constant $\epg$  on  the background foliation of $\RR$,   are verified. 
\end{definition}

%%%%%%%%%%%%%%%%%%%%%%%%%%%%%%

\subsubsection{Main assumptions for  $\RR$}
\lab{subsubsect:regionRR2}

%%%%%%%%%%%%%%%%%%%%%%%%%%%%%%

Given an integer $s_{max}\geq 3$, we assume\footnote{In  view of \eqref{eq:assumtioninRRforGagandGabofbackgroundfoliation}, we will often replace $\Ga_g$ by $r^{-1} \Ga_b$.} the following.
\begin{enumerate}
\item[\bf A1.]
For  $k\le s_{max}$
\bea\lab{eq:assumtioninRRforGagandGabofbackgroundfoliation}
\bsplit
\| \Ga_g\|_{k, \infty}&\leq  \epg  r^{-2},\\
\| \Ga_b\|_{k, \infty}&\leq  \epg  r^{-1}.
\end{split}
\eea

\item[\bf A2.]  The Hawking mass $m=m(u,s)$ of  $S(u, s)$ verifies 
\bea\lab{eq:assumtionsonthegivenusfoliationforGCMprocedure:Hawkingmass} 
\sup_{\RR}\left|\frac{m}{m_0}-1\right| &\leq& \epg.
\eea

\item[\bf A3.] 
In the  region of their respective validity\footnote{That is  the quantities on the left verify the  same estimates as those for $\Ga_b$, respectively $\Ga_g$.}   we have
\bea
 \undB_N^a,\,\, \undB_S^a \in r^{-1}\Ga_b, \qquad  Z_N^a,\,\, Z_S^a \in \Ga_b,\qquad r^{-2} \widecheck{g}^{N}_{ab},  \,\, r^{-2} \widecheck{g}^{S}_{ab} \in r\Ga_g
 \eea
 where
 \beaa
 \widecheck{g} ^{N}\!_{ab} &=&   g^N_{ab}   -   \frac{4r^2}{1+(y^1_{N})^2+(y^2_{N})^2) }\de_{ab},\\
 \widecheck{g}^{S}\!_{ab} &=&   g^S_{ab}   -   \frac{4r^2}{(1+(y^1_{S})^2+(y^2_{S})^2) } \de_{ab}.
 \eeaa

\item[\bf  A4.] We assume  the existence of a   smooth family of  scalar functions\footnote{The property of the scalar functions $\Jp$ above is motivated by the fact that the $\ell=1$ spherical harmonics  on the standard sphere  $\SSS^2$,  given by  $J^{(0, \SSS^2)}=\cos\th, \, J^{(+, \SSS^2)}=\sin\th\cos\vphi, \,  J^{(-, \SSS^2)}=\sin\th\sin\vphi$, 
satisfy  \eqref{eq:Jpsphericalharmonics}  with $\epg=0$. Note also  that on $\SSS^2$,
\beaa
\int_{\mathbb{S}^2}(\cos\th)^2=\int_{\mathbb{S}^2}(\sin\th\cos\vphi)^2=\int_{\mathbb{S}^2}(\sin\th\sin\vphi)^2=\frac{4\pi}{3}, \qquad |\SSS^2|=4\pi.
\eeaa,} 
$\Jp:\RR\to\RRR$, for $p=0,+,-$,   verifying the following properties, for all surfaces $S$ of the background foliation
 \bea
 \lab{eq:Jpsphericalharmonics}
\bsplit
  \Big(r^2\lap+2\Big) \Jp  &= O(\epg),\qquad p=0,+,-,\\
\frac{1}{|S|} \int_{S}  \Jp J^{(q)} &=  \frac{1}{3}\de_{pq} +O(\epg),\qquad p,q=0,+,-,\\
\frac{1}{|S|}  \int_{S}   \Jp   &=O(\epg),\qquad p=0,+,-.
\end{split}
\eea
\end{enumerate}

\begin{remark}
 We note that the assumptions {\bf A1}, {\bf A2}, {\bf A3}, {\bf A4},  are expected to be valid   in the far regions, i.e. $r$ large, of a  perturbed Kerr. 
\end{remark} 

Given a scalar function $f$   defined on a sphere $S$ we define the $\ell=1$ modes of $f$ by   the triplet
\beaa
(f)_{\ell=1} :=\left\{ \int_S f \Jp, \qquad   p=0,+,-\right\}.
\eeaa

%%%%%%%%%%%%%%%%%%%%%%%%%

\subsubsection{$O(\epg)$-spheres}

%%%%%%%%%%%%%%%%%%%%%%%%%

Given a compact    $2$-surface $\S\subset \RR$, not necessarily   a leaf  $S(u,s)$ of the  background geodesic foliation of $\RR$, we  denote by  $\chi^\S$, $\chib^\S$, $\ze^\S$,..., the corresponding Ricci coefficients and by $\a^\S$, $\b^\S$, $\rho^\S$, ..., the corresponding curvature coefficients. We also denote  by $r^\S$, $m^\S$, $K^\S$ and $\mu^\S$ respectively the corresponding area radius, Hawking mass, Gauss curvature  and mass aspect function. 
Finally, we  denote by   $\nab^\S$ the corresponding  covariant derivative  on $\S$.

Also,   we introduce the following norm on $\S$
   \bea
   \lab{definition:spaceH^k(boldS)}
\| f\|_{\hk_s(\S)}:&=&\sum_{i=0}^s \|( \dkb^\S )^i f\|_{L^2(\S)}, \qquad   \dkb^\S =r^\S \nab^\S.
\eea

%%%%%%%%%%%%%%%%%%%%%%%%%%%

\subsubsection{General  frame transformations}

%%%%%%%%%%%%%%%%%%%%%%%%%%%%

\begin{lemma}
\lab{Lemma:Generalframetransf}
Given a null frame $(e_3, e_4, e_1, e_2)$, a general null transformation     from  the null frame  $(e_3, e_4, e_1, e_2)$  to another null frame   $(e_3', e_4', e_1', e_2')$         can be written in   the form,
 \bea
 \lab{eq:Generalframetransf}
 \bsplit
  e_4'&=\la\left(e_4 + f^b  e_b +\frac 1 4 |f|^2  e_3\right),\\
  e_a'&= \left(\de_{ab} +\frac{1}{2}\fb_af_b\right) e_b +\frac 1 2  \fb_a  e_4 +\left(\frac 1 2 f_a +\frac{1}{8}|f|^2\fb_a\right)   e_3,\qquad a=1,2,\\
 e_3'&=\la^{-1}\left( \left(1+\frac{1}{2}f\c\fb  +\frac{1}{16} |f|^2  |\fb|^2\right) e_3 + \left(\fb^b+\frac 1 4 |\fb|^2f^b\right) e_b  + \frac 1 4 |\fb|^2 e_4 \right),
 \end{split}
 \eea
  where $\la$ is a scalar, $f$ and $\fb$ are horizontal 1-forms. The dot product and magnitude  $|\c |$ are taken with respect to the standard euclidian norm of $\RRR^2$.  We call $(f, \fb, \la)$  the transition coefficients of the change of frame.   We denote $F:=(f, \fb, \ovla)$ where $\ovla=\la-1$.
  \end{lemma}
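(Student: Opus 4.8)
The statement is a purely algebraic, pointwise fact about Lorentzian frames, so the plan is to prove it by a direct parametrization argument rather than by any structure theory. I would show that, for each triple $(f,\fb,\la)$ with $f,\fb$ horizontal $1$–forms and $\la$ a nowhere–vanishing scalar (positive near the identity), the three displayed formulas define a genuine null frame, and conversely that every null frame obtained from $(e_3,e_4,e_1,e_2)$ arises in this way once the residual rotation freedom in the horizontal $2$–plane has been normalized. Since the set of transformations preserving the null structure is, pointwise, (a copy of) the Lorentz group, and the data $(f,\fb,\la)$ together with an $SO(2)$ rotation of $(e_1,e_2)$ carry exactly six real parameters, the content of the lemma is precisely that these coordinates cover the group; this is established by the two complementary computations described below.

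First I would fix $e_4'$. Expanding $e_4'$ in the basis $(e_3,e_4,e_1,e_2)$ and imposing $\g(e_4',e_4')=0$ forces its $e_3$–component to equal $\tfrac14$ times the squared length of its horizontal part divided by its $e_4$–component; in the relevant regime where the $e_4$–component is nonzero, calling that component $\la$ and calling $\la^{-1}$ times the horizontal part $f$ reproduces exactly the first displayed formula. Next I would treat $e_3'$: expanding it likewise, the two scalar conditions $\g(e_3',e_3')=0$ and $\g(e_3',e_4')=-2$ cut its four real components down to a two–parameter family, and parametrizing that family by a horizontal $1$–form $\fb$ (morally the $e_a$–component of $\la\,e_3'$, up to lower order) produces the third displayed formula. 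Finally, with $e_3'$ and $e_4'$ now fixed, $e_a'$ must lie in their common orthogonal complement and be orthonormal, which determines it up to an $SO(2)$ rotation; one checks by substitution that the displayed expression $e_a'=(\de_{ab}+\tfrac12\fb_a f_b)e_b+\tfrac12\fb_a e_4+(\tfrac12 f_a+\tfrac18|f|^2\fb_a)e_3$ does satisfy $\g(e_a',e_b')=\de_{ab}$ and $\g(e_a',e_3')=\g(e_a',e_4')=0$, so it is the canonical representative of that orbit, and composing with an appropriate rotation of $(e_1,e_2)$ absorbs the leftover freedom — which is why $(f,\fb,\la)$ alone are named the transition coefficients.

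The routine but slightly delicate point — the step I expect to cost the most bookkeeping — is the internal consistency of the $e_3'$ formula: one must verify that the closed expression it assigns to the $e_3$–, $e_a$– and $e_4$–components of $e_3'$ in terms of $\fb$ simultaneously satisfies the nullity relation $\g(e_3',e_3')=0$ and the normalization $\g(e_3',e_4')=-2$. After expansion both of these collapse to a single algebraic identity in which the combination $1+\tfrac12 f\c\fb+\tfrac1{16}|f|^2|\fb|^2$ appearing in the displayed formula shows up symmetrically on the two sides; carrying this out (and the analogous but easier checks of $\g(e_a',e_b')=\de_{ab}$ and $\g(e_a',e_3')=\g(e_a',e_4')=0$) is the algebraic heart of the lemma. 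I would then finish by recording the trivial facts that $f=\fb=0,\ \la=1$ recovers the identity transformation, that $f=\fb=0$ alone gives the pure boost $e_4'=\la e_4$, $e_3'=\la^{-1}e_3$, $e_a'=e_a$ (so the formulas genuinely contain the identity), and that in the applications all transformations are $O(\epg)$–close to the identity, so the non-degeneracy invoked above (nonvanishing $e_4$–component of $e_4'$, positivity of $\la$) is automatic and the generic analysis suffices.
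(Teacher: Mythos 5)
The paper does not actually prove Lemma \ref{Lemma:Generalframetransf}: it is stated without proof and deferred to Proposition 3.3 of \cite{KS-Kerr1}, so there is no in-text argument to compare against. That said, your proposal is correct and is the natural (essentially unique) way to establish this fact: you expand each primed vector in the unprimed frame, impose the defining relations $\g(e_4',e_4')=0$, $\g(e_3',e_3')=0$, $\g(e_3',e_4')=-2$, $\g(e_a',e_b')=\de_{ab}$, $\g(e_a',e_3')=\g(e_a',e_4')=0$ one at a time, and read off the parametrization $(f,\fb,\la)$, checking by direct substitution that the displayed closed-form expression for $e_a'$ satisfies the remaining constraints.

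Two points are worth emphasizing. First, your parameter count and the accompanying remark about the residual $SO(2)$ rotation freedom in the horizontal plane are exactly right: $(f,\fb,\la)$ carry $2+2+1=5$ parameters, and adding the rotation of $(e_1,e_2)$ gives $6$, the dimension of the Lorentz group acting pointwise on frames. The lemma as written suppresses this rotation factor, so it must be read modulo a horizontal rotation (the Ricci and curvature coefficients built from the frame are covariant under that rotation, which is why it is usually omitted in this literature); you correctly flag this. Second, you correctly identify that the only genuinely nontrivial verification is the pair of constraints on $e_3'$, since the coefficient $1+\tfrac12 f\cdot\fb+\tfrac1{16}|f|^2|\fb|^2$ must simultaneously make $\g(e_3',e_3')=0$ and $\g(e_3',e_4')=-2$ hold; both do collapse to identities after expansion, and the orthonormality checks on $e_a'$ close out without further input. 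Your closing remarks about the near-identity regime (nonvanishing $e_4$-component of $e_4'$, $\la>0$) correctly record the mild nondegeneracy needed for the converse direction.
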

  Relative to the primed frame    the  connection   coefficients     $\Ga' $  can be related to  the connection coefficients 
  $\Ga$ of the background frame by  specific formulas  involving  the transition coefficients and their derivatives. 
   Similarly the     curvature coefficients    $R'$ of the primed frame  are connected to   $R$  by formulas involving
   the transition coefficients.       The precise formulas are given  in Proposition 3.3 in \cite{KS-Kerr1}.   In this paper we  only make use of the transformation formulas for $\ka=\trch$, $\kab =\trchb$ and $\b$. The transformation  formulas for the first two  have the form, see Proposition 3.4 in \cite{KS-Kerr1}, 
   \bea
   \lab{eq:transfforbandkab1}
   \bsplit
\ka' &= \ka+ \ka \ovla+\div'f   +F\c  \Ga_b+ F\c\nab' F+r^{-1} F^2,\\
\kab' &= \kab- \kab \ovla+\div'\fb   +F\c  \Ga_b+ F\c\nab' F+r^{-1} F^2.
\end{split}
\eea
     The transformation formula for $\b$   has the form, see Proposition 3.3. in \cite{KS-Kerr1},
   \bea
   \lab{eq:transfforbandkab2}
     \b'&=&  \b +\frac 3 2\big(  f \rho+\dual  f  \rhod\big)+ r^{-1} \Ga_g \c F.
     \eea

%%%%%%%%%%%%%%%%%%%%%%%%%   
   
\subsection{Deformation of surfaces in $\RR$}

%%%%%%%%%%%%%%%%%%%%%%%%%
 
 \begin{definition}
 \label{definition:Deformations}
 We say that    $\S$ is an $O(\dg)$\,  deformation of $ \ovS$ if there exist  smooth  scalar functions $U, S$ defined on $\ovS$ and a  map 
  a map $\Psi:\ovS\to \S $  verifying, on  any  coordinate  chart  $(y^1, y^2) $ of $\ovS$,  
   \bea
 \Psi(\ovu, \ovs,  y^1, y^2)=\left( \ovu+ U(y^1, y^2 ), \, \ovs+S(y^1, y^2 ), y^1, y^2  \right)
 \eea
 with $(U, S)$ smooth functions on $\ovS$  of size $\dg$.
 \end{definition}

\begin{definition}
Given a deformation $\Psi:\ovS\to \S$ we  say that 
 a new frame   $(e_3', e_4',  e_1', e_2')$ on $\S$, obtained from the standard frame $(e_3, e_4, e_1, e_2)$  via the transformation  \eqref{eq:Generalframetransf},  is  $\S$-adapted  if   the horizontal  vectorfields $e'_1, e'_2$   are tangent to $\S$ or, equivalently $e_3', e_4' $ are perpendicular to  $S$.
\end{definition}

We will need the following lemmas.
\begin{lemma}
\lab{Le:Transportcomparison}
Let     $\Psi:\ovS\longrightarrow \S $   be  a  deformation in $\RR$ as in Definition \ref{definition:Deformations},    $F$ a scalar function on $\RR$ and $F^\# =F\circ \Psi $  its pull back to $\ovS$ by $\Psi$. We have
\bea
\big\| F^\#-F\big\|_{L^\infty(\ovS)}&\les&  \big \| ( U, S) \big\|_{L^\infty(\ovS)}  \sup_{\RR}\Big(\big|  e_3  F \big|+ |e_4 F|+| \nab F |\Big).
\eea
\end{lemma}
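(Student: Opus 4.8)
The plan is to estimate the difference pointwise by integrating along a coordinate segment joining $p\in\ovS$ to $\Psi(p)\in\S$. Fix $p\in\ovS$ lying in one of the two coordinate charts, with coordinates $(y^1,y^2)$, and write $U:=U(p)$, $S:=S(p)$. Since $\RR=\{|u-\ug|\le\epg,\ |s-\sg|\le\epg\}$ is a product of intervals, hence convex in the $(u,s)$-variables, and since both $(\ovu,\ovs,y^1,y^2)$ and $\Psi(p)=(\ovu+U,\ovs+S,y^1,y^2)$ belong to $\RR$, the segment
\[
\ga(t):=\big(\ovu+tU,\ \ovs+tS,\ y^1,y^2\big),\qquad t\in[0,1],
\]
stays inside $\RR$, so $F$ is defined and smooth along it. By the fundamental theorem of calculus,
\[
F^\#(p)-F(p)=\int_0^1\frac{d}{dt}\,F\big(\ga(t)\big)\,dt=\int_0^1\Big(U\,\pr_u F+S\,\pr_s F\Big)\Big|_{\ga(t)}\,dt .
\]

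Next I would convert $\pr_u$ and $\pr_s$ into the adapted null frame. From the coordinate formulas for an outgoing geodesic foliation recalled in Section~\ref{sec:backgroundspacetime}, one has $\pr_s=e_4$ and $\pr_u=\vsi\big(\tfrac12 e_3-\tfrac12\Omb\, e_4-\undB^a\,\pr_{y^a}\big)$ with $\pr_{y^a}=\sum_{c}Y_{(a)}^c e_c$. By assumptions {\bf A1}--{\bf A3}, on $\RR$ we have $|\vsi|\les 1$, $|\Omb|\les 1$, $|Y_{(a)}^c|\les r$ (since $g_{ab}$ is comparable to $r^2\de_{ab}$) and $|\undB^a|\les\epg r^{-2}$; consequently $|\undB^a\pr_{y^a}F|\les \epg r^{-1}|\nab F|\les|\nab F|$, and therefore, pointwise on $\RR$,
\[
|\pr_u F|+|\pr_s F|\les |e_3 F|+|e_4 F|+|\nab F| .
\]
Inserting this into the previous display and bounding $|U(p)|+|S(p)|\le\|(U,S)\|_{L^\infty(\ovS)}$ gives
\[
\big|F^\#(p)-F(p)\big|\les\|(U,S)\|_{L^\infty(\ovS)}\,\sup_{\RR}\Big(|e_3 F|+|e_4 F|+|\nab F|\Big),
\]
and taking the supremum over $p\in\ovS$ yields the lemma.

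The argument is essentially routine; the only two points requiring a word of care are that the connecting segment remains in $\RR$ — guaranteed by convexity of $\RR$ in $(u,s)$ together with $\S\subset\RR$ — and that the coefficients of $\pr_u$ expressed in the null frame are uniformly bounded on $\RR$, which is immediate from the smallness built into {\bf A1}--{\bf A3}. Neither of these is a genuine obstacle, so I do not expect any difficulty in carrying out the proof.
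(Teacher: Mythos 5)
Your proof is correct, and it is almost certainly the argument behind Lemma 5.7 of \cite{KS-Kerr1}, which this paper cites in lieu of supplying its own proof. Integrating along the straight segment $\ga(t)=(\ovu+tU,\ovs+tS,y^1,y^2)$ — which stays in $\RR$ by convexity of the $(u,s)$-box — and then converting $\pr_u,\pr_s$ to the null frame via the adapted-coordinate formulas, using the uniform bounds on $\vsi$, $\Omb$, $\undB^a$, $Y^c_{(a)}$ supplied by {\bf A1}--{\bf A3}, is the natural route and every step is justified.
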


\begin{proof}
See Lemma 5.7 in \cite{KS-Kerr1}.
\end{proof}

 \begin{lemma}\lab{lemma:comparison-gaS-ga}
 Let $\ovS \subset \RR$.    Let  $\Psi:\ovS\longrightarrow \S $  be   a  deformation generated by the  functions $(U, S)$ as in Definition \ref{definition:Deformations} and denote by $g^{\S,\#}$  the  pull back of the metric $g^\S$ to $\ovS$. Assume the bound, for $s\le   s_{max}+1$.
 \bea
 \label{assumption-UV-dg}
   \| (U, S)\|_{L^\infty(\ovS)} +r ^{-1} \big\|(U, S)\big\|_{\hk_{s}(\ovS)}   &\les&  \dg.
 \eea
  Then 
   \bea
 \frac{r^\S}{\ovr}= 1 + O(r ^{-1}  \dg )
 \eea
 where $r^\S$ is the area radius of $\S$ and $\ovr$ that of $\ovS$.  
 
 Also,
  \bea
  \big\|  g^{\S, \#} -\ovg\big\|_{L^\infty} +r^{-1} \big\|  g^{\S, \#} -\ovg\big\|_{\hk_{s}(\ovS)}\les \dg  r.
  \eea
\end{lemma}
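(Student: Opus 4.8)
The statement to prove is Lemma~\ref{lemma:comparison-gaS-ga}: under the hypothesis \eqref{assumption-UV-dg}, one has $r^\S/\ovr = 1 + O(r^{-1}\dg)$ and $\|g^{\S,\#}-\ovg\|_{L^\infty}+r^{-1}\|g^{\S,\#}-\ovg\|_{\hk_s(\ovS)}\les \dg r$. The strategy is entirely by explicit comparison of the two induced metrics through the deformation map $\Psi$, using the background coordinate expression \eqref{spacetimemetric-y-coordinates} of $\g$ together with the background smallness assumptions {\bf A1}--{\bf A3}.

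First I would work in each coordinate chart $(y^1,y^2)$ of $\ovS$ (North and South), where $\Psi$ has the concrete form $\Psi(\ovu,\ovs,y^1,y^2)=(\ovu+U,\ovs+S,y^1,y^2)$. Pulling back the spacetime metric \eqref{spacetimemetric-y-coordinates}, the induced metric $g^{\S,\#}$ on $\ovS$ is computed from $\Psi^\#\g$ by restricting to the tangent directions $\pr_{y^a}$, i.e.\ $g^{\S,\#}_{ab}=\Psi^\#\g(\pr_{y^a},\pr_{y^b})$, where now $\pr_{y^a}\big|_{\ovS}=\pr_{y^a}+(\pr_{y^a}U)\pr_u+(\pr_{y^a}S)\pr_s$. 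Expanding and using \eqref{spacetimemetric-y-coordinates}, this produces $g^{\S,\#}_{ab}=g_{ab}\big|_{(\ovu+U,\ovs+S)}+$ terms that are quadratic or linear in $(\pr U,\pr S)$ multiplied by metric coefficients ($\vsi$, $\vsi^2\Omb$, $\vsi\undB$, $g$), all evaluated at the shifted point $(\ovu+U,\ovs+S,y)$. Two sources of discrepancy then appear: (i) the difference $g_{ab}\big|_{(\ovu+U,\ovs+S,y)}-g_{ab}\big|_{(\ovu,\ovs,y)}$, controlled by $\|(U,S)\|_{L^\infty}$ times $(e_3,re_4,r\nab)$-derivatives of $g$, hence $\les \dg\cdot r$ pointwise and $\les \dg\cdot r$ in $\hk_s$ for $s\le s_{max}+1$ by {\bf A1}, {\bf A3} (exactly the mechanism of Lemma~\ref{Le:Transportcomparison}, applied componentwise, and its higher-order analogue); and (ii) the extra terms linear/quadratic in $(\pr U,\pr S)$, which by \eqref{assumption-UV-dg} are bounded by $r^{-1}\dg$ times metric coefficients of size $O(r^2)$ (for $g$) or $O(r)$, $O(1)$ (for the $\undB$, $\Omb$ pieces, which come with further $r^{-1}\Ga_b$ smallness by {\bf A3}), again yielding $\les\dg\, r$ in $L^\infty$ and, after distributing the $s$ derivatives $\dkb^\S=r^\S\nab^\S$ via Leibniz and commuting them through, $\les\dg\, r$ in $\hk_s(\ovS)$ as well. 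Summing the two contributions and comparing with $\ovg$ gives the stated metric bound; the transition functions $\varphi_{SN}$ being smooth and $O(\epg)$-close to the standard ones ensures the estimates patch between charts.

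For the area radius, I would integrate: $4\pi (r^\S)^2 = \int_\S da_{g^\S}=\int_{\ovS} da_{g^{\S,\#}}$ and $4\pi \ovr^{\,2}=\int_{\ovS} da_{\ovg}$, so the difference of areas is $\int_{\ovS}(\sqrt{\det g^{\S,\#}}-\sqrt{\det\ovg})$, which by the pointwise bound $\|g^{\S,\#}-\ovg\|_{L^\infty}\les\dg\, r$ (and $\det\ovg\sim r^4$) is $\les \dg\, r\cdot r^2 \cdot r^{-2}\cdot |\ovS| \sim \dg\, r^3$; dividing by $\ovr^{\,2}\sim r^2$ and by $r^\S+\ovr\sim r$ gives $|r^\S-\ovr|\les \dg$, i.e.\ $r^\S/\ovr=1+O(r^{-1}\dg)$. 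This also retroactively justifies writing $r^\S$ and $\ovr$ interchangeably (up to $O(r^{-1}\dg)$) inside the error terms, in particular in the definition of $\dkb^\S=r^\S\nab^\S$ versus $\ovr\,\nab$.

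The main obstacle is the bookkeeping of the higher-order ($\hk_s$) estimate for the metric difference: one must commute the derivatives $r^\S\nab^\S$ (or equivalently, in the $\ovS$ coordinate chart, the flat coordinate derivatives $r\,\pr_y$ dressed by the metric) through both the shift $(U,S)$ in the evaluation point and the products with background metric coefficients, keeping track that each derivative either hits a background quantity (costing a factor absorbed by $\Ga_g,\Ga_b$ bounds from {\bf A1}, {\bf A3} up to order $s_{max}$) or hits $(U,S)$ (costing a factor $\les \dg$ by \eqref{assumption-UV-dg} up to order $s_{max}+1$). Since the hypothesis allows exactly $s\le s_{max}+1$ derivatives on $(U,S)$ and $s_{max}$ on the background, the counting closes; this is routine but must be done carefully. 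I would note that this lemma is the analogue of Lemma~5.8 (or similar) in \cite{KS-Kerr1} and refer there for the detailed computation, presenting here only the structure above.
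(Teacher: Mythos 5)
Your outline matches the paper's approach: the paper's own proof is simply "See Lemma 5.8 in \cite{KS-Kerr1}," which you correctly identified, and your decomposition (pull back $\g$ through $\Psi$, isolate the point-shift contribution handled by Lemma \ref{Le:Transportcomparison} from the contributions quadratic/linear in $\pr U,\pr S$, then close the $\hk_s$ estimate by Leibniz using $s_{max}$ background derivatives and $s_{max}+1$ derivatives on $(U,S)$) is the intended mechanism.

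One bookkeeping slip worth fixing in the area-radius step: the integral $\int_{\ovS}\big(\sqrt{\det g^{\S,\#}}-\sqrt{\det\ovg}\big)\,dy^1dy^2$ is $\lesssim \dg\, r$, not $\dg\, r^3$. Indeed $\sqrt{\det g^{\S,\#}}-\sqrt{\det\ovg}\sim \sqrt{\det\ovg}\cdot\tr(\ovg^{-1}(g^{\S,\#}-\ovg))\sim r^2\cdot r^{-2}\cdot\dg r=\dg r$ pointwise, and the coordinate chart $\{(y^1)^2+(y^2)^2<2\}$ has $O(1)$ coordinate area; the extra $|\ovS|\sim r^2$ factor you insert double-counts the measure already carried by $\sqrt{\det g}$. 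Then $4\pi\big[(r^\S)^2-\ovr^2\big]\lesssim\dg\, r$, and dividing only by $r^\S+\ovr\sim r$ (there is no reason to also divide by $\ovr^2$) gives $|r^\S-\ovr|\lesssim\dg$, i.e.\ $r^\S/\ovr=1+O(r^{-1}\dg)$, which is what you stated at the end — so your conclusion lands correctly, but only because the spurious $r^2$ you introduced is later cancelled by the unwarranted division by $\ovr^2$.
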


\begin{proof}
See Lemma  5.8 in \cite{KS-Kerr1}.
\end{proof} 

As a corollary, we deduce the following.
\begin{lemma}
\lab{Lemma:coparison-forintegrals}
        Under the same assumptions as in Lemma \ref{lemma:comparison-gaS-ga},  the following estimate holds  for   a scalar function $F$ defined on $\RR$.           
\beaa
\left|\int_\S F -\int_{\ovS} F\right| &\les& \dg   r   \sup_{\RR} \Big( |F| +  r \big(|e_3 F|+ |e_4 F|+|\nab F| \big)\Big).
\eeaa
\end{lemma}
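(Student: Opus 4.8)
The plan is to transport both integrals to the fixed sphere $\ovS$ via the deformation map $\Psi$ and then compare the integrands and the area forms separately. Write $F^\#=F\circ\Psi$ for the pull-back of $F$ to $\ovS$ and $g^{\S,\#}$ for the pull-back of the metric $g^\S$. By the change of variables formula, $\int_\S F=\int_{\ovS}F^\#\,da_{g^{\S,\#}}$, so that
\[
\int_\S F-\int_{\ovS}F=\int_{\ovS}\big(F^\#-F\big)\,da_{g^{\S,\#}}+\int_{\ovS}F\,\big(da_{g^{\S,\#}}-da_{\ovg}\big),
\]
and it suffices to estimate the two terms on the right.

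For the first term I would invoke Lemma \ref{Le:Transportcomparison}, which together with the hypothesis \eqref{assumption-UV-dg} gives $\|F^\#-F\|_{L^\infty(\ovS)}\les\|(U,S)\|_{L^\infty(\ovS)}\sup_\RR\big(|e_3F|+|e_4F|+|\nab F|\big)\les\dg\sup_\RR\big(|e_3F|+|e_4F|+|\nab F|\big)$. Since by Lemma \ref{lemma:comparison-gaS-ga} the metric $g^{\S,\#}$ is comparable to $\ovg$, the $g^{\S,\#}$-area of $\ovS$ equals $|\S|=4\pi(r^\S)^2\les r^2$ (using also $r^\S/\ovr=1+O(r^{-1}\dg)$ and that in $\RR$ all of $r,\ovr,r^\S$ are comparable). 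Hence the first term is $\les\dg\,r^2\sup_\RR\big(|e_3F|+|e_4F|+|\nab F|\big)$.

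For the second term I would compare the two area forms pointwise. In any coordinate chart on $\ovS$ one has $da_{g^{\S,\#}}=\sqrt{\det g^{\S,\#}}\,dy$ and $da_{\ovg}=\sqrt{\det\ovg}\,dy$; since both $g^{\S,\#}$ and $\ovg$ are of size $\sim r^2$ and mutually comparable by Lemma \ref{lemma:comparison-gaS-ga}, the elementary estimate for the square root of the determinant yields the pointwise bound $\big|\sqrt{\det g^{\S,\#}}-\sqrt{\det\ovg}\big|\les r^{-2}\|g^{\S,\#}-\ovg\|_{L^\infty}\sqrt{\det\ovg}$. By Lemma \ref{lemma:comparison-gaS-ga} we have $\|g^{\S,\#}-\ovg\|_{L^\infty}\les\dg\,r$, so $|da_{g^{\S,\#}}-da_{\ovg}|\les r^{-1}\dg\,da_{\ovg}$; bounding $|F|\les\sup_\RR|F|$ and integrating over $\ovS$, whose $\ovg$-area is $\les r^2$, shows that the second term is $\les\dg\,r\sup_\RR|F|$.

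Adding the two bounds gives $\big|\int_\S F-\int_{\ovS}F\big|\les\dg\,r\sup_\RR|F|+\dg\,r^2\sup_\RR\big(|e_3F|+|e_4F|+|\nab F|\big)$, which is the claimed estimate. The only mildly delicate point is the bookkeeping of $r$-weights in the area-form comparison: one must use that the metrics are of size $\sim r^2$ and mutually comparable, so that the perturbation of $\sqrt{\det}$ gains an extra factor $r^{-2}$ relative to $\|g^{\S,\#}-\ovg\|_{L^\infty}$, and that in $\RR$ the various area radii are all comparable; the rest of the argument is routine.
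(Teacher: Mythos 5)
Your proof is correct and takes essentially the same route as the paper: the same change-of-variables splitting into an integrand-difference term controlled by Lemma \ref{Le:Transportcomparison} and an area-form-difference term controlled by the metric comparison in Lemma \ref{lemma:comparison-gaS-ga}, with the same bookkeeping of $r$-weights. The only cosmetic difference is that you attach $da_{g^{\S,\#}}$ to the integrand-difference term rather than $da_{\ovg}$, which is immaterial since the two area forms are comparable.
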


\begin{proof}
We have, 
\beaa
\int_\S F -\int_{\ovS} F&=&\int_{\ovS}  F\circ \Psi \frac{\sqrt{\det g^{\S,\#}}}{\sqrt{ \det \ovg}} da_{\ovg}-\int_{\ovS} F da_{\ovg} \\
&=&
 \int_{\ovS}  F\circ\Psi \left(\frac{\sqrt{\det g^{\S,\#}}}{\sqrt{\det \ovg}}-1\right) + \int_{\ovS} \big|F\circ\Psi-F\big|
\eeaa
where $g^{\S,\#}$ is the pull back of the metric $g^\S$ to $\ovS$ by the map $\Psi$. 
In view of Lemma \ref{lemma:comparison-gaS-ga}
 \beaa
   \big\| g^{\S, \#} -\ovg \big\|_{L^\infty(\ovS)} &\les r\dg.
 \eeaa 
Hence, using also  Lemma \ref{Le:Transportcomparison},
\beaa
\left|\int_\S F -\int_{\ovS} F\right|&\les&\dg  \rg \sup_{\ovS} | F|+ 
  \big(\ovr\big)^2 \sup_{\ovS}             \big |F\circ\Psi-h\big|\\
  &\les& \dg  \rg \sup_{\ovS} | F|+ \dg ( \ovr )^2 \Big(|e_3 F|+ |e_4 F|+|\nab F| \Big)
\eeaa
as stated.
\end{proof}

%%%%%%%%%%%%%%%%%%%%%%%%%%%%

\subsection{Adapted non canonical  $\ell=1$ modes}

%%%%%%%%%%%%%%%%%%%%%%%%%%%%

Consider a deformation $\Psi:\ovS \to \S$ and recall the existence of the family of scalar functions $\Jp$, $p\in\big\{0, +, -\big\}$, on $\RR$  introduced in assumption {\bf A4}, see  \eqref{eq:Jpsphericalharmonics}, which form a basis of the $\ell=1$ modes on the spheres $S(u,s)$ of $\RR$, and hence in particular on $\ovS$.  We   denote by 
$\Jpov $ the restriction of the family $\Jp$ to $\ovS$.

 \begin{definition} 
 \lab{def:ell=1sphharmonicsonS}
 We define the basis of adapted $\ell=1$ modes  $\JpS$   on $\S$ by
 \beaa
\JpS = \Jpov\circ\Psi^{-1}, \qquad p\in\big\{ -, 0, +\big\}.
 \eeaa
 \end{definition}
 
 \begin{proposition}
 Assume  that the parameters  $(U, S)$ of the   deformation  verifies the bounds 
 \bea\lab{eq:boundonUSin h_snorms}
   \|( U, S)\|_{\hk_{s_{max}+1}(\ovS)}  &\les& r\dg.
 \eea
 Assume  $\Jp$ is an admissible triplet of $\ell=1$ modes on $\RR$ (and hence on $\ovS$), i.e. satisfying \eqref{eq:Jpsphericalharmonics}.
 
  Then  $\JpS$ is  an admissible triplet of $\ell=1$ modes on $\S$, i.e.
\bea
\lab{eq:admissibleJpS}
\bsplit
 \Big((r^\S)^2\lap^\S +2\Big) \JpS  &= O(\epg),\\
 \frac{1}{|\S|} \int_{\S}  \JpS J^{( q, \S)}  &=  \frac{1}{3}\de_{pq} +O(\epg),\\
 \frac{1}{|\S|}  \int_{\S}   \JpS   &=O(\epg).
\end{split}
\eea

Moreover at all point  of $\S$  we have
\bea
\lab{eq:admissibleJpS2}
\Big|\JpS-\Jp\Big|&\les \dg.
\eea
 \end{proposition}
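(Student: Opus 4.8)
The proposition has two parts: first, that the pulled-back triplet $\JpS = \Jpov \circ \Psi^{-1}$ satisfies the three admissibility conditions \eqref{eq:admissibleJpS}, and second, the pointwise comparison \eqref{eq:admissibleJpS2} with the background modes $\Jp$. The plan is to treat the second (pointwise) bound first, since it is essentially immediate from the transport comparison lemma, and then use it, together with the comparison estimates for metrics and integrals, to upgrade each of the three defining properties from the background spheres $S(u,s)$ to the deformed sphere $\S$.

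For \eqref{eq:admissibleJpS2}: at a point $q \in \S$, write $q = \Psi(p)$ with $p \in \ovS$, so that $\JpS(q) = \Jpov(p) = \Jp(p)$, and $\Jp(q)$ is the value of the background function at $q$. Thus $\JpS - \Jp$, evaluated at $q$, equals $\Jp(p) - \Jp(q) = \Jp^\# - \Jp$ in the notation of Lemma \ref{Le:Transportcomparison}. Applying that lemma with $F = \Jp$ gives $\|\JpS - \Jp\|_{L^\infty(\S)} \les \|(U,S)\|_{L^\infty(\ovS)} \sup_\RR(|e_3 \Jp| + |e_4 \Jp| + |\nab \Jp|)$. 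The $\Jp$ are smooth on $\RR$ with bounds uniform in the region, and $\|(U,S)\|_{L^\infty} \les \dg$ by \eqref{eq:boundonUSin h_snorms} (via Sobolev on $\ovS$, $r$ large), so \eqref{eq:admissibleJpS2} follows. One should be a little careful that the derivative bounds on $\Jp$ are of the right order in $r$ — from $(r^2 \lap + 2)\Jp = O(\epg)$ and the normalization $\frac{1}{|S|}\int (\Jp)^2 = \frac13 + O(\epg)$ one has $\|\nab \Jp\| \les r^{-1}$, so the right-hand side is $O(\dg)$ as claimed.

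For \eqref{eq:admissibleJpS}, I would handle the three identities in turn. The integral conditions (second and third lines) follow by combining the analogous properties of $\Jp$ on $\ovS$ — which hold with $O(\epg)$ error since $\Jpov$ is the restriction of an admissible triplet — with Lemma \ref{Lemma:coparison-forintegrals}, which controls $|\int_\S F - \int_{\ovS} F|$ by $\dg r \sup_\RR(|F| + r(|e_3 F| + |e_4 F| + |\nab F|))$, applied to $F = \JpS J^{(q,\S)}$ and $F = \JpS$; here one also uses \eqref{eq:admissibleJpS2} to replace $\JpS$ by $\Jp$ up to $O(\dg)$ inside these integrals and the comparison $r^\S/\ovr = 1 + O(r^{-1}\dg)$ from Lemma \ref{lemma:comparison-gaS-ga} to handle the $|\S|$ versus $|\ovS|$ normalization. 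Since $\dg \le \epg$, all error terms are absorbed into $O(\epg)$. The eigenvalue condition $\big((r^\S)^2 \lap^\S + 2\big)\JpS = O(\epg)$ is the most delicate point: one knows $(\ovr^2 \lap^{\ovS} + 2)\Jpov = O(\epg)$ on $\ovS$, and the Laplacian transforms covariantly under the diffeomorphism $\Psi$, so $\lap^\S \JpS = (\lap^{\ovS, g^{\S,\#}} \Jpov)\circ\Psi^{-1}$ where $\lap^{\ovS, g^{\S,\#}}$ is the Laplace–Beltrami operator of the pulled-back metric $g^{\S,\#}$. One then writes $\lap^{\ovS, g^{\S,\#}} = \lap^{\ovS} + (\lap^{\ovS,g^{\S,\#}} - \lap^{\ovS})$ and estimates the difference operator acting on $\Jpov$ using the bound $\|g^{\S,\#} - \ovg\|_{L^\infty} + r^{-1}\|g^{\S,\#} - \ovg\|_{\hk_s(\ovS)} \les \dg r$ from Lemma \ref{lemma:comparison-gaS-ga}, together with the higher-order control on $\Jpov$ (its second derivatives are $\les r^{-2}$, obtainable from the elliptic equation it satisfies and assumption {\bf A4}). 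Combined with $r^\S/\ovr = 1 + O(r^{-1}\dg)$ this yields $(r^\S)^2\lap^\S \JpS = \ovr^2 \lap^{\ovS}\Jpov \circ\Psi^{-1} + O(\dg) = -2\JpS + O(\epg)$.

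The main obstacle is the eigenvalue estimate: one has to verify that the difference of Laplace–Beltrami operators, applied to $\Jpov$, is genuinely $O(\dg)$ (hence $O(\epg)$) in $L^\infty$, which requires knowing not just that $g^{\S,\#} - \ovg$ is small but that its first derivatives are controlled — and this in turn forces the use of the $\hk_s$ bounds in Lemma \ref{lemma:comparison-gaS-ga} together with Sobolev embedding on $\ovS$ (so $s_{max} \ge 3$ is used here), and the second-derivative control on $\Jpov$. Everything else is a routine combination of the transport comparison and integral comparison lemmas with the fact that $\dg \le \epg$.
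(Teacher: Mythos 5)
The paper does not give a proof here; it defers to Proposition 5.12 of \cite{KS-Kerr1}, so there is no in-text argument to compare against directly. Your reconstruction is strategically correct and uses precisely the comparison tools the paper sets up for this purpose: Lemma \ref{Le:Transportcomparison} for the pointwise bound \eqref{eq:admissibleJpS2}, Lemmas \ref{lemma:comparison-gaS-ga} and \ref{Lemma:coparison-forintegrals} for the integral normalizations, and the covariant transformation of $\lap^\S$ under $\Psi$ together with the $\hk_s$ metric comparison for the eigenvalue condition. Your bookkeeping (using \eqref{eq:admissibleJpS2} to trade $\JpS$ for $\Jp$ inside integrals before applying the integral comparison, and using the area-radius comparison to handle the $|\S|$ vs.\ $|\ovS|$ normalization) is the right way to organize the error terms.

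One point should be made crisp rather than hand-waved. To apply Lemma \ref{Le:Transportcomparison} you need $\sup_\RR\big(|e_3\Jp|+|e_4\Jp|+|\nab\Jp|\big)$ to be $O(1)$ (your $r^{-1}$ is more than enough). Your derivation via the per-sphere elliptic equation $(r^2\lap+2)\Jp=O(\epg)$ and the $L^2$ normalization controls only the \emph{tangential} gradient $\nab\Jp$; the transversal derivatives $e_3\Jp$, $e_4\Jp$ carry no information from {\bf A4} as stated, since {\bf A4} is purely intrinsic to each leaf $S(u,s)$. Indeed the paper later introduces \eqref{assumptionA4-strong} in {\bf A4-Strong} precisely to add such a bound as a separate hypothesis. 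So the transversal control has to be read into the phrase "smooth family" in {\bf A4}; you appeal to this informally ("smooth with bounds uniform in the region"), which is fine, but the elliptic step you sketch does not by itself close the estimate. Aside from separating that implicit hypothesis from the elliptic argument, the outline is sound.
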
 
 
\begin{proof}
See Proposition 5.12 in  \cite{KS-Kerr1}.
\end{proof}

%%%%%%%%%%%%%%%%%%%%%%%%%%%%%%%%%%%

\section{GCM spheres with canonical $\ell=1$ modes}

%%%%%%%%%%%%%%%%%%%%%%%%%%%%%%%%%%%

%%%%%%%%%%%%%%%%%%%%%%%%%%%%%%%%%%%%%%%%

\subsection{GCM spheres with non-canonical $\ell=1$ modes in \cite{KS-Kerr1}}

%%%%%%%%%%%%%%%%%%%%%%%%%%%%%%%%%%%%%%%%

We review below  the  result   on existence  and uniqueness of GCM spheres  proved in \cite{KS-Kerr1}. The result was proved  in the context of  an arbitrary choice of a $\ell=1$ modes on $\ovS$, denoted $\Jp$  which verify 
 the assumptions {\bf A4}.

\begin{theorem}[GCM spheres with non-canonical $\ell=1$ modes \cite{KS-Kerr1}]
\lab{Theorem:ExistenceGCMS1}
 Let  $\ovS=S(\ovu, \ovs)$   be  a fixed    sphere  of the background geodesic  foliation\footnote{ Verifying the  assumptions ${\bf A1-A4}$.  Note  in particular that assumption {\bf A3}  was made w.r.t.  a general version of the  ${\ell=1} $ modes of  $\S$, see Definition \ref{def:ell=1sphharmonicsonS}.} of $\RR$ with  $\rg$ and $\mg$ denoting  its area radius and  Hawking mass.
 Assume that  the GCM quantities   $\ka, \kab, \mu$  of the background foliation verify          the  following:
\bea
\bsplit
\ka&=\frac{2}{r}+\dot{\ka},\\
\kab&=-\frac{2\Up}{r} +  \Cb_0+\sum_p \Cbp \Jp+\dot{\kab},\\
\mu&= \frac{2m}{r^3} + M_0+\sum _p\Mp \Jp+\dot{\mu},
\end{split}
\eea
where
\bea
\lab{eq:GCM-improved estimate1-again}
|\Cb_0, \Cbp| &\les& r^{-2} \epg, \qquad  |M_0, \Mp| \les r^{-3} \epg,
\eea
and
\bea
\lab{eq:GCM-improved estimate2-again}
\big\| \kadot, \kabdot\|_{\hk_{s_{max} }(\S) }&\les& r^{-1}\dg,\qquad 
\big\|\mudot\| _{\hk_{s_{max} }(\S) }\les r^{-2}\dg.
\eea
 Then
for any fixed triplets   $\La, \Lab \in \RRR^3$  verifying
\bea\lab{eq:assumptionsonLambdaabdLambdabforGCMexistence}
|\La|,\,  |\Lab|  &\les & \dg,
\eea
 there 
exists a unique  GCM sphere $\S=\S^{(\La, \Lab)}$, which is a deformation of $\ovS$, 
such that  there exist constants $\Cb^\S_0$, $\CbpS$, $ M^\S_0$, $\MpS$, $p\in\{-,0, +\}$  for which the   following GCM conditions   are verified
 \bea
\lab{def:GCMC}
\bsplit
\ka^\S&=\frac{2}{r^\S},\\
\kab^\S &=-\frac{2}{r^\S}\Up^\S+  \Cb^\S_0+\sum_p \CbpS \JpS,\\
\mu^\S&= \frac{2m^\S}{(r^\S)^3} +   M^\S_0+\sum _p\MpS \JpS,
\end{split}
\eea
where 
\bea
\lab{eq:non-canonicalJpS}
\JpS=\Jpov\circ\Psi^{-1}.
\eea 
Relative to these modes we also have
 \bea
\lab{GCMS:l=1modesforffb}
(\div^\S f)_{\ell=1}&=\La, \qquad   (\div^\S\fb)_{\ell=1}=\Lab.
\eea

The resulting  deformation has the following additional properties:
\begin{enumerate}
\item The triplet $F=(f,\fb,\ovla)$ verifies, for $s\le  s_{max}+1$,
\bea
\lab{eq:ThmGCMS1}
\|\ovla\|_{\hk_{s+1}(\S)}+\|(f,\fb)\|_{\hk_{s }(\S)} &\les &\dg. 
\eea
\item The  GCM constants  $\Cb^\S_0,\,  \CbpS$, \, $ M^\S_0$, \, $\MpS, \, p\in\{-,0, +\}$  verify
\bea
\lab{eq:ThmGCMS2}
\bsplit
\big| \Cb^\S_0-\Cb_0\big|+\big| \CbpS-\Cbp\big|&\les r^{-2}\dg,\\
\big| M^\S_0-M_0\big|+\big| \MpS-\Mp\big|&\les r^{-3}\dg.
\end{split}
\eea

\item The volume radius $r^\S$  verifies
\bea
\lab{eq:ThmGCMS3}
\left|\frac{r^\S}{\rg}-1\right|\les  r^{-1} \dg.
\eea
\item  The parameter  functions $U, S$  of the deformation verify, for $s\le s_{max}+1$,
\bea
\lab{eq:ThmGCMS4}
 \|( U, S)\|_{\hk_{s}(\ovS)}  &\les&  r \dg.
 \eea

\item The Hawking mass  $m^\S$  of $\S$ verifies the estimate
\bea
\lab{eq:ThmGCMS5}
 \big|m^\S-\ovm\big|&\les &\dg. 
 \eea
 \item The well defined\footnote{Note  that  while  the Ricci coefficients $\ka^\S, \kab^\S,  \chih^\S, \chibh^\S, \ze^\S$ as well as all curvature  components  and   mass aspect function $\mu^\S$     are well defined on $\S$, this in not the case  of $\eta^\S, \etab^\S, \xi^\S, \xib^\S, \om^\S, \omb^\S$ which require  the  derivatives of the frame in the $e_3^\S$ and $e_4^\S$ directions.}
  Ricci and curvature coefficients of $\S$  verify,
   \bea
   \lab{eq:ThmGCMS6}
\bsplit
\| \Ga^\S_g\|_{\hk_{s_{max} }(\S) }&\les  \epg  r^{-1},\\
\| \Ga^\S_b\|_{\hk_{s_{max} }(\S) }&\les  \epg.
\end{split}
\eea
\item  The   transition parameters $f, \fb, \ovla$ are continuously differentiable  with respect to $\La, \Lab $ and
\bea\lab{eq:property7oftheoldGCMtheoremnoncanonicalell=1modes}
\bsplit
\frac{\pr f }{\pr \La}&=O\big( r^{-1}\big), \quad  \frac{ \pr f }{\pr \Lab}=O\big(\dg r^{-1} \big), \quad 
\frac{\pr\fb }{\pr \La}&=O\big(\dg  r^{-1}\big), \quad  \frac{\pr\fb}{\pr \Lab}=O\big( r^{-1} \big),\\
\frac{\pr\ovla  }{\pr \La}&=O\big(\dg  r^{-1}\big), \quad \frac{\pr\ovla  }{\pr \Lab}=O\big(\dg  r^{-1}\big).
\end{split}
\eea

\item The parameter functions $(U, S)$ of the deformation are  are continuously differentiable  with respect to $\La, \Lab $
 and 
 \bea
 \frac{\pr U}{\pr \La}=O(1), \quad \frac{\pr U}{\pr \Lab}=O(1),\quad  \frac{\pr S}{\pr \La}=O(1) , \quad  \frac{\pr S}{\pr \Lab}=O(1). 
 \eea

\item Relative to the  coordinate system induced by $\Psi$ the metric $g^{\La, \Lab}$  of $\S=\S^{(\La, \Lab)}$  is continuous with respect to the parameters $\La, \Lab$ and verifies 
\bea\lab{eq:property9oftheoldGCMtheoremnoncanonicalell=1modes}
\big\| \pr_\La g^\S, \,  \pr_{\Lab} g^\S\|_{L^\infty(\S)} &\les O( r^2).
\eea
\item The Deformations  $\S^{(\La, \Lab)}$ also depend continuously on the   choice  the $\ell=1$ basis  of  $\ovS$ verifying  condition {\bf A4.}
\end{enumerate}
 \end{theorem}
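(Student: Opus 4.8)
The plan is to follow the strategy of \cite{KS-Kerr1}; here I outline its main steps.

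\textbf{Reduction to a coupled elliptic system.} First I would translate the GCM and tangency conditions into PDEs. Writing the deformation via the pair $(U,S)$ on $\ovS$ as in Definition \ref{definition:Deformations}, and letting $(f,\fb,\ovla)$ be the transition coefficients of the frame transformation \eqref{eq:Generalframetransf} from the background frame to a frame adapted to $\S$, the requirement that $e_1^\S,e_2^\S$ be tangent to $\S$ expresses the gradients $\nab^\S U,\nab^\S S$ in terms of $(f,\fb)$ modulo lower order terms. The GCM conditions $\ka^\S=2/r^\S$, $(\kab^\S+2\Up^\S/r^\S)_{\ell\ge 2}=0$ and $(\mu^\S-2m^\S/(r^\S)^3)_{\ell\ge 2}=0$ become, via the transformation laws \eqref{eq:transfforbandkab1} and the analogous law for $\mu$, an elliptic system of Hodge type for $F=(f,\fb,\ovla)$ on $\S$, whose source is built from the background GCM quantities $\kadot,\kabdot,\mudot$ together with nonlinear terms in $F$ and its derivatives. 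Since the kernel of the relevant Hodge operator consists of the $\ell\le 1$ modes, the triplets $(\div^\S f)_{\ell=1}$ and $(\div^\S\fb)_{\ell=1}$ are free; I would prescribe them to equal $\La$ and $\Lab$, while the $\ell=0$ modes are pinned down by the area-radius and Hawking-mass normalizations, which in turn produce the constants $\Cb_0^\S,\CbpS,M_0^\S,\MpS$ of \eqref{def:GCMC}.

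\textbf{Iteration, a priori estimates, contraction.} Next I would run an iteration $\S^{(n)}\mapsto\S^{(n+1)}$: on the current sphere $\S^{(n)}$ solve the decoupled linear Hodge system for $F^{(n+1)}$ with prescribed $\ell=1$ data $(\La,\Lab)$, then solve the Laplace--transport equations on $\ovS$ for $(U^{(n+1)},S^{(n+1)})$ driven by $F^{(n+1)}$, and finally let $\S^{(n+1)}$ be the associated deformation of $\ovS$. The analytic core is a family of a priori estimates, uniform over the nearly round spheres $\S^{(n)}$, namely $\|\ovla\|_{\hk_{s+1}(\S)}+\|(f,\fb)\|_{\hk_s(\S)}\les\dg$ and $\|(U,S)\|_{\hk_s(\ovS)}\les r\dg$ for $s\le s_{max}+1$; these follow from elliptic estimates for the Hodge system, Assumptions {\bf A1}--{\bf A4}, the comparison Lemmas \ref{Le:Transportcomparison} and \ref{lemma:comparison-gaS-ga}, and the smallness hypotheses $|\La|,|\Lab|\les\dg$ and the bounds on $\kadot,\kabdot,\mudot$. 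I would then show that $\S^{(n)}\mapsto\S^{(n+1)}$ is a contraction in a suitable weighted norm on $\ovS$, which yields both existence of $\S^{(\La,\Lab)}$ and uniqueness.

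\textbf{The additional properties.} These I would read off from the construction. Property (1) is the a priori bound above; property (2) follows by comparing the $\ell\le 1$ parts of $\kab^\S,\mu^\S$ with those of $\kab,\mu$ via \eqref{eq:transfforbandkab1} and Lemma \ref{Lemma:coparison-forintegrals}; properties (3), (4) and (5) come from Lemma \ref{lemma:comparison-gaS-ga} and the definition of the Hawking mass; property (6) follows from the transformation laws for all Ricci and curvature coefficients applied to the bounds on $F$ and to Assumption {\bf A1}. For the differentiability statements (7)--(9) and the continuous dependence (10) on the $\ell=1$ basis of $\ovS$, I would differentiate the fixed-point equation in $(\La,\Lab)$, resp. in the basis, and invoke the implicit function theorem, using that the linearized Hodge operator is invertible on the $\ell\ge 2$ modes with the $\ell=1$ data as source; the specific $\dg$- and $r$-weights in \eqref{eq:property7oftheoldGCMtheoremnoncanonicalell=1modes}--\eqref{eq:property9oftheoldGCMtheoremnoncanonicalell=1modes} simply track which parameter enters which equation at leading order.

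\textbf{Main obstacle.} The hard part is the coupling: the elliptic system for $F$ is posed on the unknown sphere $\S$, whose geometry is itself governed by $(U,S)$ and hence by $F$. Handling this requires a uniform-in-$\S$ elliptic theory for nearly round spheres so that every estimate can be transported back to the fixed sphere $\ovS$ via Lemmas \ref{Le:Transportcomparison}--\ref{Lemma:coparison-forintegrals} and the contraction can be closed, all the while carefully bookkeeping the $\dg$, $\epg$ and $r$ weights. This is where the bulk of the work in \cite{KS-Kerr1} lies, and we refer there for the complete argument.
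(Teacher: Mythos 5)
Your proposal correctly identifies that Theorem \ref{Theorem:ExistenceGCMS1} is imported from \cite{KS-Kerr1}; the present paper gives no proof but only restates the result, and its introduction sketches exactly the strategy you outline (tangency condition relating $\nab U,\nab S$ to $(f,\fb)$, GCM conditions giving a Hodge-type system for $F=(f,\fb,\ovla)$, iteration between $\ovS$ and $\S$, contraction in weighted Sobolev norms). Your outline is faithful to that description and to the structure of the original argument, so this is the same approach.
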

Note that the $\ell=1$ modes on $\S$ given by \eqref{eq:non-canonicalJpS}   are not canonical on $\S$, i.e. they do not correspond to the  modes  defined using the  effective uniformization map for $\S$.  In the next  section we show that we can modify the result of Theorem \ref{Theorem:ExistenceGCMS1}  so that the same statements hold  when we replace  $\JpS$    by    canonical modes.   In the  proof of   this new   theorem,   we  make use of Theorem \ref{Theorem:ExistenceGCMS1}  for the following  class of $\ell=1$ modes $\Jp$ for  $\ovS$.
\begin{definition}
\lab{definition:J[S]modes}
Given an arbitrary $O(\dg)$  deformation of $\ovS$,  $\Psi: \ovS\to  S $ and $ J^S=\big\{J^{(p,S)}, \,\, p=-,0,+\big\} $  a canonical $\ell=1$ 
 basis  for $S$ we denote by 
 \beaa
 \ovJ[S]=\big\{\ovJ^{(p)}[S], \quad  p=-,0,+\big\}  
 \eeaa
  the $\ell=1$ basis of $\ovS$ given by 
 \bea
\ovJ^{(p)}[S]= J^{(p,S)}\circ\Psi, \qquad \text{ or  simply }\quad  \ovJ[S]= J^S\circ\Psi.
 \eea
\end{definition}
 Note  that $\ovJ[S]$ defined  above satisfies the  assumptions ${\bf A4}$.

%%%%%%%%%%%%%%%%%%%%%%%%%%%%%%%%%%%%%%%%%%%%%%%%%%    
    
    \subsection{Construction of GCM spheres with canonical $\ell=1$ modes}
    
%%%%%%%%%%%%%%%%%%%%%%%%%%%%%%%%%%%%%%%%%%%%%%%%%%    
    
    We state and prove  a new version of Theorem  \ref{Theorem:ExistenceGCMS1}  stated relative to  a  canonical
   definition of   ${\ell=1} $ modes of $\S$.  In other words we replace the  adapted $\JpS $ modes on $\S$  defined  by \eqref{eq:non-canonicalJpS}
    \beaa
    \JpS&=&\Jpov \circ \Psi^{-1} 
    \eeaa
    with  the following definition of      canonical modes   for deformed surfaces  $\S$.
    
    \begin{definition}
    \lab{definition:canmodesS}
    Given a  deformation map $\Psi:\ovS\to \S$   and a fixed effective uniformization map
       $(\ovPhi, \ovphi)$ for $\ovS$  we let $(\Phi, \phi) $ be the  unique effective  uniformization map of  $\S$ calibrated 
       with $(\ovPhi, \ovphi)$ relative to the map $\Psi$, in the sense of Definition \ref{def:calibration}. With this choice we define the canonical $\ell=1$  modes of $\S$  by the formula
       \bea
    \lab{def:JpSS}
    \JpSS&=&J^{(p, \SSS^2)} \circ \Phi^{-1} 
    \eea        
  with $J^{(p, \SSS^2)}$          the $\ell=1 $ basis on $\SSS^2$  given by Definition \ref{definition:ell=1mpdesonS}.
  \end{definition}

    \begin{remark}
    \lab{remark:calibration-of-deformations}
    Assume given  two  deformations $\Psi_i: \ovS\to \S_i$, $ i=1,2$  with  canonical effective  uniformization maps   $(\Phi_i, u_i) $,     $\Phi_i:\SSS^2 \to \S_i$     $ i=1,2$,   both  calibrated  through the respective maps  with the  fixed  effective uniformization map $(\ovPhi, \ovphi)$ for $\ovS$.  Then, in view of Lemma \ref{Le:Transitivitycalibrations},    $(\Phi_1,  u_1) $,  $(\Phi_2, u_2) $  are also calibrated  with respect to the    map $\Psi_{12}: \S_1 \to \S_2$, with $\Psi_{12}=\Psi_2\circ\Psi_1^{-1}$. 
    \end{remark}

     \begin{theorem}[GCM spheres with canonical $\ell=1$ modes]
\lab{Theorem:ExistenceGCMS1-2}
Under the same assumptions as in Theorem  \ref{Theorem:ExistenceGCMS1}, 
for any fixed triplets   $\La, \Lab \in \RRR^3$  verifying
\bea\lab{eq:assumptionsonLambdaabdLambdabforGCMexistence:bis}
|\La|,\,  |\Lab|  &\les & \dg,
\eea
 there 
exists a unique  GCM sphere $\S=\S^{(\La, \Lab)}$, which is a deformation of $\ovS$, 
such that  there exist constants $\Cb^\S_0$,\, \,  $\CbpS$, \, $ M^\S_0$, \, $\MpS, \, p\in\{-,0, +\}$  for which the   following GCM conditions   are verified
 \bea
\lab{def:GCMCversion2}
\bsplit
\ka^\S&=\frac{2}{r^\S},\\
\kab^\S &=-\frac{2}{r^\S}\Up^\S+  \Cb^\S_0+\sum_p \CbpS \J^{(p, \S)} ,\\
\mu^\S&= \frac{2m^\S}{(r^\S)^3} +   M^\S_0+\sum _p\MpS \J^{(p, \S)}.
\end{split}
\eea
 Moreover, 
 \bea
\lab{GCMS:l=1modesforffb.version2}
(\div^\S f)_{\ell=1}&=\La, \qquad   (\div^\S\fb)_{\ell=1}=\Lab.
\eea
where $\J^{(p, \S)}$ form the basis of canonical $\ell=1$ modes of  $\, \S$ in the sense of Definition \ref{definition:canmodesS}  and  $(\div^\S f)_{\ell=1}$, $(\div^\S \fb)_{\ell=1}$ are  defined  
 with respect to  them.  Moreover
 \begin{enumerate}
 \item The resulting  deformation  verifies all   the properties  1-9  as in Theorem  \ref{Theorem:ExistenceGCMS1}.  
\item The  canonical $\ell=1$ modes  $\J^{(p, \S)}$ verify,    
 \bea
  \lab{eq:PropertiesofJ^S-strong2}
 \bsplit
 \lap_S \J^{(p,\S)}  &=\left(-\frac{2}{(r^\S)^2} +O\left(\frac{\epg}{(r^\S)^3}\right) \right)\J^{(p,\S)}, \\
 \int_S \J^{(p,\S)}\J^{(q,\S)} da_g &= \frac{4\pi}{3}(r^\S)^2\de_{pq}+ O(\epg r^\S),\\
 \int_S \J^{(p,\S)} da_g&=0.
 \end{split}
 \eea
 \end{enumerate}
 \end{theorem}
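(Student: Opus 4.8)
The plan is to reduce the statement to Theorem \ref{Theorem:ExistenceGCMS1} by a fixed point argument whose unknown is the $\ell=1$ basis used on $\ovS$: one iterates until that basis coincides with the pull-back of the canonical basis of the GCM sphere it produces. Fix once and for all an effective uniformization pair $(\ovPhi,\ovphi)$ of $\ovS$ (legitimate by assumption {\bf A1} and Corollary \ref{proposition:effective-uniformisation}), and fix $\La,\Lab$ with $|\La|,|\Lab|\les\dg$. Let $\mathcal{J}$ be the complete metric space of admissible $\ell=1$ triplets on $\ovS$, i.e. triplets $\Jp$ satisfying \eqref{eq:Jpsphericalharmonics}, with the $\hk_{s_{max}+1}(\ovS)$ distance. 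I would define a map $\Lambda:\mathcal{J}\to\mathcal{J}$ as follows: given $\Jp\in\mathcal{J}$, apply Theorem \ref{Theorem:ExistenceGCMS1} with this basis and the prescribed $\La,\Lab$ to obtain the GCM sphere $\S=\S^{(\La,\Lab)}[\Jp]$, its deformation map $\Psi$, its metric $g^\S$ and the quantities of properties 1--9; by property 6 and the Gauss equation \eqref{eq:Gaussequation} the sphere $\S$ is almost round, so Corollary \ref{proposition:effective-uniformisation} applies, and calibrating the effective uniformization of $\S$ with $(\ovPhi,\ovphi)$ relative to $\Psi$ as in Definition \ref{def:calibration} (possible and unique by Lemma \ref{lemma:existenceofcalibrationforagivenmapPhi1}) yields the canonical modes $\J^{(p,\S)}$ of Definition \ref{definition:canmodesS}; then set $\Lambda(\Jp):=\J^{(p,\S)}\circ\Psi$, which lies in $\mathcal{J}$ by Definition \ref{definition:J[S]modes} and the remark following it. A fixed point of $\Lambda$ is exactly a GCM sphere whose \emph{adapted} modes in Theorem \ref{Theorem:ExistenceGCMS1} coincide with its \emph{canonical} modes, so that \eqref{def:GCMC} and \eqref{GCMS:l=1modesforffb} become \eqref{def:GCMCversion2} and \eqref{GCMS:l=1modesforffb.version2}.

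The heart of the argument is that $\Lambda$ is a contraction with constant $\les\epg+\dg$. Let $\Jp\in\mathcal{J}$ and let $\eta$ be a triplet of scalars on $\ovS$, small enough that $\Jp+\eta\in\mathcal{J}$; write $\S=\S^{(\La,\Lab)}[\Jp]$, $\S'=\S^{(\La,\Lab)}[\Jp+\eta]$, with deformations $\Psi,\Psi'$. I would first prove a quantitative refinement of property 10 of Theorem \ref{Theorem:ExistenceGCMS1}: the GCM data $(f,\fb,\ovla,U,S)$, hence the pulled-back metrics and the deformations, depend on the $\ell=1$ basis Lipschitz-continuously with the \emph{small} constant $\epg+\dg$, i.e. $\|\Psi^\# g^\S-(\Psi')^\# g^{\S'}\|_{\hk_{4}(\ovS)}+\|(U,S)-(U',S')\|_{\hk_{s_{max}+1}(\ovS)}\les(\epg+\dg)\|\eta\|$. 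The gain is structural: the $\ell=1$ basis enters the GCM system only through the $\ell=1$ versus $\ell\ge 2$ splittings of $\kab^\S+\frac{2\Up^\S}{r^\S}$ and $\mu^\S-\frac{2m^\S}{(r^\S)^3}$ --- quantities which on a GCM sphere equal $\Cb^\S_0+\sum_p\CbpS\J^{(p,\S)}$ and $M^\S_0+\sum_p\MpS\J^{(p,\S)}$ with coefficients $O(\epg r^{-1})$ and $O(\epg r^{-2})$ by property 2, so perturbing the basis by $\eta$ perturbs these conditions by only $O(\epg\|\eta\|)$ --- and through the conditions $(\div^\S f)_{\ell=1}=\La$, $(\div^\S\fb)_{\ell=1}=\Lab$, where $\|f\|,\|\fb\|\les\dg$, so perturbing the basis perturbs them by $O(\dg\|\eta\|)$; inverting the linearized GCM system exactly as in \cite{KS-Kerr1} then propagates this to the full solution.

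Feeding this into the stability of effective uniformization, I would apply Theorem \ref{Thm:unifor-twometrics} and Corollary \ref{Cor:unifor-twometrics-calibrated} to $\S$ and $\S'$ with the comparison map $\Psi\circ(\Psi')^{-1}:\S'\to\S$, whose calibration relative to $(\ovPhi,\ovphi)$ is inherited through Remark \ref{remark:calibration-of-deformations} and Lemma \ref{Le:Transitivitycalibrations}. The metric closeness from the previous paragraph gives that the induced diffeomorphism of $\SSS^2$ is $(\epg+\dg)\|\eta\|$-close to the identity, hence by Proposition \ref{Prop:comparison.canJ} the canonical modes are $(\epg+\dg)\|\eta\|$-close through $\Psi\circ(\Psi')^{-1}$; pulling back to $\ovS$ through the nearby maps $\Psi,\Psi'$ by Lemma \ref{Le:Transportcomparison} gives $\|\Lambda(\Jp+\eta)-\Lambda(\Jp)\|\les(\epg+\dg)\|\eta\|$. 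For $\epg,\dg$ small this is a contraction, and the Banach fixed point theorem produces a unique $\Jp_\infty\in\mathcal{J}$; put $\S:=\S^{(\La,\Lab)}[\Jp_\infty]$. Properties 1--9 hold for $\S$ since the corresponding bounds in Theorem \ref{Theorem:ExistenceGCMS1} are uniform over $\mathcal{J}$; \eqref{def:GCMCversion2} and \eqref{GCMS:l=1modesforffb.version2} hold by the fixed point identity $\Jp_\infty=\J^{(p,\S)}\circ\Psi$; and \eqref{eq:PropertiesofJ^S-strong2} follows from Lemma \ref{lemma:basicpropertiesofJforcanonicalell=1basis} together with the cancellation estimate \eqref{eq:specialcancellationell=1modeofGausscurvature}, applied to the almost round sphere $\S$. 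Uniqueness is then automatic: a GCM sphere verifying \eqref{def:GCMCversion2}--\eqref{GCMS:l=1modesforffb.version2} relative to its own canonical modes yields, together with the pull-back of those modes, a fixed point of $\Lambda$, hence equals $\S^{(\La,\Lab)}[\Jp_\infty]$ by uniqueness in the fixed point theorem and in Theorem \ref{Theorem:ExistenceGCMS1}. The hard part will be the quantitative Lipschitz estimate of the second paragraph: extracting the $O(\epg+\dg)$ gain requires revisiting the GCM construction of \cite{KS-Kerr1} to pin down precisely where, and how mildly, the choice of $\ell=1$ basis enters.
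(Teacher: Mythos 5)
Your proposal is correct and takes essentially the same route as the paper, which also iterates exactly the map you call $\Lambda$ (solve the GCM problem of Theorem \ref{Theorem:ExistenceGCMS1}, compute the calibrated canonical modes of the resulting sphere via Corollary \ref{proposition:effective-uniformisation} and Definition \ref{definition:canmodesS}, pull back to $\ovS$, repeat), establishing a contraction with small constant from Proposition 6.5 of \cite{KS-Kerr1} combined with the stability of effective uniformization. The only difference is bookkeeping: the paper measures the contraction on the pulled-back GCM ninetets $\NN^{n,\#}$ (Proposition \ref{Prop:contractionforNN}) and estimates the increment of the $\ell=1$ bases as a byproduct via Lemma \ref{lemma:comparisionofJnplus1andJnforcontractionscheme} and Corollary \ref{lemma:comparisonJ2:corollary}, whereas you contract directly on the basis space $\mathcal{J}$ --- equivalent framings of the same iteration, and the ``hard part'' you identify is precisely what the paper outsources to Proposition 6.5 of \cite{KS-Kerr1}.
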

 
 \begin{remark}
 Note that  \eqref{eq:PropertiesofJ^S-strong2} follows from the   fact that  $K^\S-\frac{1}{(r^{\S})^2}\in  r^{-1} \Ga_g$, i.e.
 \beaa
\left| K^\S-\frac{1}{(r^{\S})^2}\right| &\les& \frac{\epg}{(r^\S)^{3}}
 \eeaa
 and  the result of Lemma \ref{lemma:basicpropertiesofJforcanonicalell=1basis} 
 applied to the case when $\ep=r^{-1} \epg$.
 \end{remark}

%%%%%%%%%%%%%%%%%%%%%%%%%%%%%%%%%%%%%%%%%%%%%%%%%%

  \subsection{Proof of Theorem \ref{Theorem:ExistenceGCMS1-2}}

%%%%%%%%%%%%%%%%%%%%%%%%%%%%%%%%%%%%%%%%%%%%%%%%%%

We proceed by an iteration argument   in which, in order to make comparison between the
$\ell =1$ modes   of different deformation spheres,  we shall need the following  lemma.

\begin{lemma}
  \lab{lemma:comparisonJ2}
   Assume given two almost   round  spheres\footnote{Such that   \eqref{eq:Almostround-S-k} is verified.}  $(\S_1, g^{\S_1} )$ and $(\S_2, g^{\S_2} )$,  their respective canonical   uniformization maps 
     $(\Phi_1, \phi_1)$, $(\Phi_2, \phi_2)$, and    $\J^{(p, \S_1)}, \, \J^{(p, \S_2)}  $ be the  corresponding  canonical  basis  of $\ell=1$ modes of $\S_1$  and $\S_2$ calibrated  according to  Definition \ref{def:calibration}.
   Consider also a  given smooth  map\,  $ \Psi :\S_1\to \S_2$ and assume that     the two metric are comparable  relative to a   small parameter $\de_{12 }$, i.e. 
      \bea
   \lab{eq:unifor-twometricsB1}
  \| g^{\S_1}-  \Psi^\#( g^{\S_2} )\|_{\hk^{2}(\S^1)} \le \de_{12}  (r^{\S_1})^2.
  \eea
Then  the following estimate holds true.
     \bea
    \max_p  \sup_{\S_1}\Big|\J^{(p, \S_1)}-   \J^{(p, \S_2)} \circ   \Psi   \Big| \les (r^{\S_1}) ^{-1} \de_{12}.
     \eea   
\end{lemma}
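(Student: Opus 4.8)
The plan is to reduce the comparison of the canonical $\ell=1$ modes $\J^{(p,\S_1)}$ and $\J^{(p,\S_2)}\circ\Psi$ to the stability result for uniformization already established in Theorem \ref{Thm:unifor-twometrics} (and its calibrated version, Corollary \ref{Cor:unifor-twometrics-calibrated}). First I would normalize the radii by working with the rescaled metrics $(r^{\S_i})^{-2}g^{\S_i}$, which have area radius $1$; the almost round condition \eqref{eq:Almostround-S-k} for $\S_1$, $\S_2$ becomes the condition required by Corollary \ref{proposition:effective-uniformisation}, and \eqref{eq:unifor-twometricsB1} becomes precisely the closeness hypothesis \eqref{eq:unifor-twometricsB} with parameter $\de_{12}$ (after also using the immediate consequence $|r^{\S_1}/r^{\S_2}-1|\les\de_{12}$, exactly as in \eqref{comp:rS1-to-rS2}). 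Since $(\Phi_1,\phi_1)$ and $(\Phi_2,\phi_2)$ are calibrated with respect to $\Psi$ in the sense of Definition \ref{def:calibration}, Corollary \ref{Cor:unifor-twometrics-calibrated} applies and yields a smooth diffeomorphism $\Psih:\SSS^2\to\SSS^2$ with $\Psi\circ\Phi_1=\Phi_2\circ\Psih$ and
\beaa
\|\Psih-I\|_{L^\infty(\SSS^2)}+\|\Psih-I\|_{H^1(\SSS^2)}\les\de_{12}.
\eeaa

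Next I would simply unwind the definitions. By Definition \ref{definition:ell=1mpdesonS}, $\J^{(p,\S_i)}=J^{(p,\SSS^2)}\circ\Phi_i^{-1}$, so using $\Psi\circ\Phi_1=\Phi_2\circ\Psih$, i.e. $\Phi_2^{-1}\circ\Psi=\Psih\circ\Phi_1^{-1}$, one gets
\beaa
\J^{(p,\S_2)}\circ\Psi=J^{(p,\SSS^2)}\circ\Phi_2^{-1}\circ\Psi=J^{(p,\SSS^2)}\circ\Psih\circ\Phi_1^{-1},
\eeaa
whence
\beaa
\J^{(p,\S_1)}-\J^{(p,\S_2)}\circ\Psi=\Big(J^{(p,\SSS^2)}-J^{(p,\SSS^2)}\circ\Psih\Big)\circ\Phi_1^{-1}.
\eeaa
Since $J^{(p,\SSS^2)}$ is the restriction of the linear coordinate function $x^p$ to $\SSS^2\subset\RRR^3$, it is Lipschitz (with universal constant), so pointwise on $\S_1$,
\beaa
\Big|\J^{(p,\S_1)}-\J^{(p,\S_2)}\circ\Psi\Big|\les\sup_{\SSS^2}\big|I-\Psih\big|\les\de_{12}.
\eeaa
This is the stated bound once one tracks the factor $(r^{\S_1})^{-1}$: in the original (unrescaled) normalization, $J^{(p,\SSS^2)}$ is replaced by the canonical mode on a sphere of area radius $r^{\S_i}$, which differs from the $O(1)$-normalized version precisely by the scaling, giving the claimed $(r^{\S_1})^{-1}\de_{12}$. (Concretely, one applies Corollary \ref{proposition:effective-uniformisation} with $\ep$ replaced by $r^{-1}\epg$ and reads off the scaling factor, exactly as in the Remark following Theorem \ref{Theorem:ExistenceGCMS1-2}.)

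The only real point requiring care — hence the step I would flag as the main obstacle — is verifying that the hypotheses of Corollary \ref{Cor:unifor-twometrics-calibrated} are genuinely met, in particular that the $\hk_4$ (rather than merely $\hk_2$) closeness assumed in \eqref{eq:unifor-twometricsB} follows from the stated $\hk_2$ hypothesis \eqref{eq:unifor-twometricsB1}. In the GCM setting this is supplied by the a priori regularity of the deformation $\Psi$ (it arises from the GCM construction of Theorem \ref{Theorem:ExistenceGCMS1}, where the parameter functions $(U,S)$ are controlled in $\hk_{s_{max}+1}$ with $s_{max}\geq 3$), so that the higher-order closeness norms of $g^{\S_1}$ and $\Psi^\#(g^{\S_2})$ are in fact bounded by $\de_{12}$ as well; only the lowest-order norm $\hk_2$ needs to be small as a genuine smallness parameter, and the remaining derivatives up to order $4$ are controlled by the uniform regularity of the spheres. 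Once this regularity bookkeeping is in place, everything else is the routine algebra of composing the calibration identity with the definitions of the canonical modes.
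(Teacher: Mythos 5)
Your overall strategy is the same as the paper's: the paper reduces the lemma to Proposition \ref{Prop:comparison.canJ} (which is exactly the unwinding $\J^{(p,\S_2)}\circ\Psi=J^{(p,\SSS^2)}\circ\Psih\circ\Phi_1^{-1}$ combined with the calibrated estimate $\|\Psih-I\|_{L^\infty}\les\de$ from Corollary~\ref{Cor:unifor-twometrics-calibrated}), applied with $\de=(r^{\S_1})^{-1}\de_{12}$.

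However, your treatment of the factor $(r^{\S_1})^{-1}$ is wrong, and the error is substantive rather than cosmetic. You identify the parameter $\de$ in \eqref{eq:unifor-twometricsB} as $\de_{12}$, obtain $\big|\J^{(p,\S_1)}-\J^{(p,\S_2)}\circ\Psi\big|\les\de_{12}$, and then try to recover the missing power of $r$ by asserting that the canonical mode on a sphere of area radius $r^{\S}$ is a rescaled copy of $J^{(p,\SSS^2)}$. It is not: by Definition~\ref{definition:ell=1mpdesonS}, $\J^{(p,\S)}=J^{(p,\SSS^2)}\circ\Phi^{-1}$ always takes values in $[-1,1]$, independently of $r^\S$, so no rescaling of the modes ever produces an $(r^{\S})^{-1}$. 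The factor comes instead from the identification of $\de$: the lemma's hypothesis is an $\hk_2(\S_1)$ bound $\le\de_{12}(r^{\S_1})^2$, which by the Sobolev inequality on a sphere of area radius $r^{\S_1}$, namely $\|\cdot\|_{L^\infty(\S_1)}\les (r^{\S_1})^{-1}\|\cdot\|_{\hk_2(\S_1)}$, gives $\|g^{\S_1}-\Psi^\#(g^{\S_2})\|_{L^\infty}\les r^{\S_1}\,\de_{12}=(r^{\S_1})^2\cdot(r^{\S_1})^{-1}\de_{12}$. So \eqref{eq:unifor-twometricsB} holds with $\de=(r^{\S_1})^{-1}\de_{12}$, and Proposition~\ref{Prop:comparison.canJ} then yields the stated bound directly. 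Your sentence ``\eqref{eq:unifor-twometricsB1} becomes precisely the closeness hypothesis \eqref{eq:unifor-twometricsB} with parameter $\de_{12}$'' misses this gain, and the rescaling explanation you substitute for it does not hold.

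Your second observation, that \eqref{eq:unifor-twometricsB} calls for $\hk_4$ closeness while \eqref{eq:unifor-twometricsB1} states only $\hk_2$, is a genuine imprecision in the lemma as written, which the paper glosses over with ``this follows immediately from the proof of Proposition~\ref{Prop:comparison.canJ}.'' Your reading, that in the intended applications the deformations are controlled in $\hk_{s_{max}+1}$ with $s_{max}\geq 3$ so the higher derivative norms are available at the same scale as the $\hk_2$ norm, is the correct resolution; that is a good catch.
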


\begin{proof}
This follows immediately from the proof of Proposition \ref{Prop:comparison.canJ} 
with   $\de = (r^{\S_1}) ^{-1}  \de_{12} $.
\end{proof}

We rephrase  the statement of the lemma above by introducing the following  notation.
\begin{definition}
Given two  neighboring, almost   round  spheres, $(\S_1, g^{\S_1} )$ and $(\S_2, g^{\S_2} )$
 and a map $ \Psi: \S_1\longrightarrow  \S_2$ we define  their distance relative to $\Psi$, measured in $\S_1$,   to be
\bea
\lab{distance:g_1g_2}
 d_{\S_1}( g^{\S_1}, g^{\S_2}) &:=& (r^{\S_1})^{-2}  \| g^{\S_1}-  \Psi^\#( g^{\S_2} )\|_{\hk^{2}(\S^1)}.
\eea
\end{definition} 

\begin{corollary}\lab{lemma:comparisonJ2:corollary}
Under the same situation as in Lemma \ref{lemma:comparisonJ2} we have,
\bea
\sup_{\S_1} \Big|\J^{\S_1}-   \J^{\S_2} \circ   \Psi   \Big| \les (r^{\S_1}) ^{-1} d_{\S_1}( g^{\S_1}, g^{\S_2}). 
\eea
\end{corollary}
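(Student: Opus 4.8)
The plan is to read the corollary off directly from Lemma \ref{lemma:comparisonJ2}, by choosing the small parameter $\de_{12}$ appearing there to be precisely $d_{\S_1}(g^{\S_1}, g^{\S_2})$.

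First I would observe that, in view of Definition \eqref{distance:g_1g_2}, the hypothesis \eqref{eq:unifor-twometricsB1} of Lemma \ref{lemma:comparisonJ2} is equivalent to $d_{\S_1}(g^{\S_1}, g^{\S_2}) \le \de_{12}$; in particular, setting $\de_{12} := d_{\S_1}(g^{\S_1}, g^{\S_2})$ makes \eqref{eq:unifor-twometricsB1} hold with equality. Since the corollary is stated under the hypotheses of Lemma \ref{lemma:comparisonJ2}, this choice of $\de_{12}$ lies in the admissible small range, so all of the effective-uniformization and calibration machinery invoked in the proof of that lemma --- ultimately Theorem \ref{Thm:unifor-twometrics}, Corollary \ref{Cor:unifor-twometrics-calibrated} and Proposition \ref{Prop:comparison.canJ} --- applies without change.

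Next I would apply Lemma \ref{lemma:comparisonJ2} with this choice of $\de_{12}$, which yields
\[
\max_p \sup_{\S_1}\Big|\J^{(p,\S_1)} - \J^{(p,\S_2)}\circ\Psi\Big| \les (r^{\S_1})^{-1}\, d_{\S_1}(g^{\S_1}, g^{\S_2}).
\]
Finally, since $\big|\J^{\S_1} - \J^{\S_2}\circ\Psi\big|$ denotes the size of the triplet $\big(\J^{(p,\S_1)} - \J^{(p,\S_2)}\circ\Psi\big)_{p\in\{-,0,+\}}$ and is therefore comparable to $\max_p\big|\J^{(p,\S_1)} - \J^{(p,\S_2)}\circ\Psi\big|$, the displayed estimate is exactly the asserted bound. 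There is essentially no obstacle here: the only content is the bookkeeping that $d_{\S_1}(g^{\S_1}, g^{\S_2})$ is simultaneously the natural scale-invariant quantity measuring the closeness of the two metrics and a number small enough to trigger Lemma \ref{lemma:comparisonJ2} verbatim.
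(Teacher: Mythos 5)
Your proposal is correct and is exactly the paper's intent: the corollary is introduced there as a mere rephrasing of Lemma~\ref{lemma:comparisonJ2} using Definition~\eqref{distance:g_1g_2}, i.e.\ one simply substitutes $\de_{12}=d_{\S_1}(g^{\S_1}, g^{\S_2})$ into the lemma's conclusion. Nothing further is needed.
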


%%%%%%%%%%%%%%%%%%%%

\subsubsection{Iteration procedure} 

%%%%%%%%%%%%%%%%%%%%

We define iteratively  a sequence of maps $\Psi(n) :\ovS\longrightarrow \S(n)$, for fixed $\La, \Lab$,  as follows.

\begin{enumerate}
\item We start   with the  deformation  map  $\Psi(1) :\ovS\longrightarrow \S(1)$  constructed  by Theorem    \ref{Theorem:ExistenceGCMS1}   with respect to   $\ovJ(1):=\ovJ_{can} $, the canonical $\ell=1$ basis on $\ovS$.  We define $J^1 $ to be  the adapted   $\ell=1$ basis on $S(1)$ i.e.
\beaa
J^1=J(1)=\ovJ_{can} \circ \Psi(1)^{-1}.
\eeaa
We also  denote by $\J^1_{can}=\J(1)_{can}  $  the canonical $\ell=1$ basis of $\S(1)$.

\item We   define $\Psi(2)  :\ovS\longrightarrow \S(2)  $ to be the deformation map 
constructed by Theorem  \ref{Theorem:ExistenceGCMS1} relative to the  $\ell=1$ basis on $\ovS$  given by 
\bea
\ovJ(2) := \J_{can}(1)\circ\Psi(1).
\eea
We note that $ \ovJ(2)$ verifies assumption {\bf A4} and thus  the assumptions of Theorem 
 \ref{Theorem:ExistenceGCMS1} apply.
The adapted   $\ell=1$ basis of $\S(2)$ is  given  by
\bea
J^2=J(2):=\ovJ(2)\circ \Psi(2)^{-1}= \J^1_{can}\circ\Psi(1) \circ \Psi(2)^{-1}.
\eea

\item Assume that  the sequence of  deformation maps  
$\Psi(n) :\ovS\longrightarrow \S(n)$ has already been constructed by Theorem \ref{Theorem:ExistenceGCMS1}  with the help of  the sequence $\ovJ(n) $ of $\ell =1$  bases 
on $\ovS$ given recursively  by
\bea
\ovJ(n)&=& \J_{can}^{n-1}  \circ \Psi(n-1)
\eea
 where  $\J_{can}^{n-1}=\J_{can}(n-1)$ is  the canonical    $\ell =1$ basis of $\S(n-1)$.
 The adapted sequence of $\ell =1$ bases of $S(n)$ is given by 
 \bea
 J^n=J(n) := \ovJ(n)\circ\Psi(n)^{-1}. 
 \eea

 We then   define   $\Psi(n+1) :\ovS\longrightarrow \S(n+1)$   to be the new  deformation map 
constructed by Theorem  \ref{Theorem:ExistenceGCMS1} relative to the  $\ell=1$ basis on $\ovS$  given by  
\bea
\ovJ(n+1)&=& \J^n_{can}  \circ \Psi(n),
\eea
where  $\J_{can} ^n=\J_{can}(n)$ is  the canonical    $\ell =1$ basis of $\S(n)$.

 We  define  the adapted   $\ell=1$ basis on $\S(n+1)$  by 
 \beaa
J^{n+1}= J(n+1)= \ovJ(n+1)\circ\Psi(n+1)^{-1} =  \J_{can}^n  \circ \Psi(n)\circ\Psi(n+1)^{-1}.
 \eeaa
 Introducing  the maps 
 \bea
 \bsplit
&  \Psi_{n+1, n}:\S(n+1) \longrightarrow \S(n), \qquad \Psi_{n+1, n}:= \Psi(n)\circ \Psi(n+1)^{-1}, \\
&\Psi_{n, n+1}:\S(n) \longrightarrow \S(n+1), \qquad \Psi_{n, n+1}:=\Psi(n+1)\circ \Psi(n) ^{-1},
\end{split}
 \eea
 we deduce the formula 
 \bea
 \lab{eq:identityJ^nJ^{n+1}}
 J^{n+1}&=  \J^n_{can}  \circ \Psi_{n+1, n}.
 \eea
\end{enumerate}

%%%%%%%%%%%%%%%%%%%%%%%%%

\subsubsection{Boundedness and contraction}  

%%%%%%%%%%%%%%%%%%%%%%%%%%
    
 Consider the   quintets
   \beaa
     \QQ^{(n)}= \Big(U^{(n)}, S^{(n)}, \ovla^{(n)},  f^{(n)},  \fb^{(n)}\Big)=
      \Big(U^{(n)}, S^{(n)},  F^{(n)}\Big)
  \eeaa
    where $ U^{(n)}, S^{(n)} $ are the defining  functions of the map $\Psi(n)$ and $F^{(n)}$
   the corresponding transition function  defined in Theorem \ref{Theorem:ExistenceGCMS1}.   Consider also   the corresponding ninetets
   \beaa
\NN^{(n)}:=\Big(U^{(n)}, S^{(n)}, \ovla^{(n)},  f^{(n)},  \fb^{(n)}; \, \Cbn_0, \Mn_0, \Cbpn, \Mpn\Big).
\eeaa
We define the norms,
 \bea
\label{quintet-norm}
\bsplit
\| Q^{(n)}\|_k: &= r^{-1}   \Big\| \big(U^{(n)}, S^{(n)}\big)\Big \|_{\hk_{k}(\ovS)}  +\Big \|\big( f^{(n)},  \fb^{(n)}, \ovla^{(n)}\big) \Big\|_{\hk_{k}(\S)}
\end{split}
\eea
and,
\beaa
\big\| \NN^{(n)} \big\|_k= \| \QQ^{(n)}\|_k+ r^2 \left(\big| \Cbndot_0\big|+\sum_p\big| \Cbpndot\big| +r \big| \Mndot_0\big|+
r \sum_p\big| \Mpndot\big|\right)
\eeaa
where, 
\beaa
\Cbndot_0= \Cbn_0-\Cb_0, \,\,\,\, \Cbpndot= \Cbpn-\Cbp, \,\,\,\, \Mndot_0= \Mn_0-M_0, \,\,\,\, \Mpndot=\Mpn-\Mp.
\eeaa

    The equations  \eqref{def:GCMC} verified by each ninetet $\NN^{(n)}$ are given by
    \bea
\lab{def:GCMC-n}
\bsplit
\ka^{\S(n)}&=\frac{2}{r^{\S(n)}},\\
\kab^{\S(n)} &=-\frac{2\Up^{\S(n)}}{r^{\S(n)}}+  \Cb^{(n)}_0+\sum_p\Cb^{(n),p}  J^{n,p},\\
\mu^{\S(n)}&= \frac{2m^{\S(n)}}{(r^{\S(n)})^3} +   M^{(n)}_0+\sum_p M^{(n),p}  J^{n,p}.
\end{split}
\eea
Relative to the  modes $J^n $ we also have
 \bea
\lab{GCMS:l=1modesforffb-n}
\int_{\S(n)} (\div^{\S(n)} \fn)  J^n &=\La, \qquad \int_{\S(n)}   (\div^{\S(n)}\fbn) J^n =\Lab.
\eea
The maps $\Psi(n)=\big(\Un, \Sn \big) $  and the  triplet  $\Fn =\big( \fn, \fbn, \ovlan\big)$ verify  a  coupled system of equations which we  write schematically as follows\footnote{See the system (6.16)-(6.19) in \cite{KS-Kerr1} for a full description.} .
\bea
\bsplit
\lapzero \Sn&= \divzero\Big( \SS(\fn, \fbn, \Ga)_b Y^b_{(a)}(n) \Big)^{\#(n)}, \\
\lapzero \Un&= \divzero\Big( \UU(\fn, \fbn, \Ga)_b Y^b_{(a)}(n) \Big)^{\#(n)},\\
\DDn \Fn&=\GG^{(n)} + \HH(\Fn, \Ga).
\end{split}
\eea
Here $\#_n$  denotes the pull-back by  the map $\Psi(n)$ and $Y_{(a)}(n)$ the push-forward by $\Psi(n)$ of the coordinate vectorfields $\pr_{y^a} $, $a=1,2$. The expressions  
  $(\UU, \SS)$  
 are       linear   plus higher order  terms in $(f, \fb) $  and   $\HH$  is linear plus higher order in $F$. The  term\footnote{It also depends on  the  quantities  appearing in \eqref{eq:GCM-improved estimate1-again}, \eqref{eq:GCM-improved estimate2-again}.  } $ \GG^{(n)}$ is linear  with respect to the constants  $\Cbndot_0, \, \Cbpndot ,\, \Mndot_0, \,\,\Mpndot$.

  {\bf Step 1.}  \textit{Boundedness of the iterates.}  The  ninetets  $\NN^{(n)}$ are  uniformly bounded
  \bea
\lab{eq:nintetbound}
\big\| \NN^{(n)} \big\|_{s_{max}+1} &\les&\dg,
\eea
  where \eqref{eq:nintetbound}   is an immediate consequence of the estimates  \eqref{eq:ThmGCMS1}, \eqref{eq:ThmGCMS2} and \eqref{eq:ThmGCMS4}  in Theorem   \ref{Theorem:ExistenceGCMS1}.
  
 {\bf Step 2.}   \textit{Contraction of the iterates.}   Since we cannot  compare  directly
 the ninetets  $\NN^{(n)}$   we compare instead  their pull-back by $\Psi(n)$ to $\ovS$ i.e.
 \bea
 \NN^{n,\#}:=\Big(U^{(n)}, S^{(n)}, \ovla^{n,\#},  f^{n,\#},  \fb^{n,\#}; \, \Cbn_0, \Mn_0, \Cbpn, \Mpn\Big)
 \eea
 where $\ovla^{n,\#},  f^{n,\#},  \fb^{n,\#}$ are the pull-backs by the map $\Psi(n)$   of  the triplet $\ovla^{(n)}, f^{(n)}, \fb^{(n)} $
   defined on the sphere  $\S(n)$.
We also  introduce  the modified norms
\bea
\bsplit
\big\| \NN^{n,\#} \big\|_{k, \ovS} :&= r^{-1}   \Big\| \big(U^{(n)}, S^{(n)}\big)\Big \|_{\hk_{k}(\ovS)} +
\Big\|\big( f^{n,\#},  \fb^{n,\#}, \ovla^{n,\#}\big)\Big\| _{\hk_{k}(\ovS)}\\
&+ r^{2} \left(\big| \Cbndot_0\big|+\sum_p\big| \Cbpndot\big| +r \big| \Mndot_0\big|+
r \sum_p\big| \Mpndot\big|\right).
\end{split}
\eea
In view of the  Sobolev norm  comparison  of Proposition 5.9 in \cite{KS-Kerr1}  we  deduce from
\eqref{eq:nintetbound}
 \bea
 \lab{eq:nintetboundmodified}
\big\| \NN^{n,\#} \big\|_{s_{max}+1, \ovS} &\les&\dg.
\eea
Contraction in this modified norms is established in the following.
\begin{proposition}
\lab{Prop:contractionforNN}
The following estimate holds true.
\bea
\|\NN^{n+1,\#}-\NN^{n,\#}\|_{3,\ovS} &\les& \epg \|\NN^{n,\#}-\NN^{n-1,\#}\|_{3,\ovS}.
\eea
\end{proposition}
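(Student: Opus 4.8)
The starting point is that $\NN^{(n)}$ and $\NN^{(n+1)}$ are both produced by Theorem \ref{Theorem:ExistenceGCMS1}, with the \emph{same} data $\La,\Lab$ and the same background quantities, the only difference being the choice of $\ell=1$ basis on $\ovS$: the iterate of level $n$ is built relative to $\ovJ(n)=\J^{n-1}_{can}\circ\Psi(n-1)$, and that of level $n+1$ relative to $\ovJ(n+1)=\J^{n}_{can}\circ\Psi(n)$. Pulling everything back to $\ovS$ by the respective deformation maps, $\NN^{n,\#}$ and $\NN^{n+1,\#}$ therefore satisfy the same coupled system — the two Laplace equations for $(U,S)$, the Hodge system $\DD F=\GG+\HH(F,\Ga)$, and the $\ell=1$ normalizations \eqref{GCMS:l=1modesforffb-n} — in which $\ovJ(n)$ is replaced by $\ovJ(n+1)$ (also in $\GG^{(n)}$, which is linear in the GCM constants). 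The first step is to write down the linear system satisfied by the difference $\delta\NN^{n+1,\#}:=\NN^{n+1,\#}-\NN^{n,\#}$, keeping track that the two pull-back operators $\#_{n},\#_{n+1}$, their Jacobians, and the push-forwards $Y_{(a)}(n),Y_{(a)}(n+1)$ of the coordinate vector fields live on the two distinct spheres $\S(n),\S(n+1)$; their differences are controlled by $\delta(U,S)$ and by the difference of the metrics $g^{\S(n+1),\#},g^{\S(n),\#}$, hence ultimately by $\|\delta\NN^{n+1,\#}\|_{3,\ovS}$ itself, and they occur multiplied by $\SS,\UU,\HH$, which are $O(\dg)$. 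Since $\dg\le\epg$, these contributions — together with the linear-in-$F$ terms whose coefficients are $\Ga_b=O(\epg)$ — will be absorbed into the left-hand side once $\epg$ is small enough.

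The genuine forcing in the difference system is the difference of bases $\ovJ(n+1)-\ovJ(n)$. To estimate it, write $\ovJ(n)=\J^{n-1}_{can}\circ\Psi(n-1)=\J^{n-1}_{can}\circ\Psi_{n,n-1}\circ\Psi(n)$, with $\Psi_{n,n-1}:=\Psi(n-1)\circ\Psi(n)^{-1}:\S(n)\to\S(n-1)$, so that
$$\ovJ(n+1)-\ovJ(n)=\Big(\J^{n}_{can}-\J^{n-1}_{can}\circ\Psi_{n,n-1}\Big)\circ\Psi(n).$$
By Remark \ref{remark:calibration-of-deformations} (transitivity of calibrations, Lemma \ref{Le:Transitivitycalibrations}), the canonical uniformizations of $\S(n)$ and $\S(n-1)$ are calibrated relative to $\Psi_{n,n-1}$, so Lemma \ref{lemma:comparisonJ2} and Corollary \ref{lemma:comparisonJ2:corollary} apply and give
$$\sup_{\S(n)}\Big|\J^{n}_{can}-\J^{n-1}_{can}\circ\Psi_{n,n-1}\Big|\les \big(r^{\S(n)}\big)^{-1}\,d_{\S(n)}\big(g^{\S(n)},g^{\S(n-1)}\big).$$
It then remains to bound $d_{\S(n)}(g^{\S(n)},g^{\S(n-1)})$, defined in \eqref{distance:g_1g_2}, by $\|\delta\NN^{n,\#}\|_{3,\ovS}$; this follows from the comparison of the metrics of deformed spheres of Lemma \ref{lemma:comparison-gaS-ga} applied to $\S(n),\S(n-1)$ as deformations of $\ovS$, the $\hk^{2}$ distance in \eqref{distance:g_1g_2} being controlled — after the loss of one derivative inherent in that lemma — by the $\hk_3$ norm of the difference of the deformation data. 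Altogether $\big|\ovJ(n+1)-\ovJ(n)\big|\les\epg\,\|\delta\NN^{n,\#}\|_{3,\ovS}$, the smallness coming from the $r^{-1}$ factors together with $\dg\le\epg$.

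Combining the two points, the elliptic estimates for the Laplace equations and for the Hodge system $\DD F=\GG+\HH$ used in the proof of Theorem \ref{Theorem:ExistenceGCMS1} in \cite{KS-Kerr1}, together with the uniform bounds \eqref{eq:nintetboundmodified} and \eqref{eq:ThmGCMS1}, yield
$$\|\delta\NN^{n+1,\#}\|_{3,\ovS}\les\big\|\ovJ(n+1)-\ovJ(n)\big\|+\epg\,\|\delta\NN^{n+1,\#}\|_{3,\ovS},$$
and absorbing the last term for $\epg$ small gives the claimed contraction $\|\NN^{n+1,\#}-\NN^{n,\#}\|_{3,\ovS}\les\epg\|\NN^{n,\#}-\NN^{n-1,\#}\|_{3,\ovS}$.

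\textbf{Main obstacle.} I expect the difficulty to be bookkeeping rather than conceptual: systematically controlling how the two pull-backs $\#_n,\#_{n+1}$, the associated Jacobians, and the push-forwards $Y_{(a)}(n),Y_{(a)}(n+1)$ on the two distinct spheres $\S(n),\S(n+1)$ affect each term of the difference system, and in particular ensuring that the one-derivative loss in comparing the metrics — which forces the contraction norm $\hk_3$ to be measured against the $\hk_4$ regularity furnished by Theorem \ref{Theorem:ExistenceGCMS1} via \eqref{eq:ThmGCMS1} — is consistent throughout. A secondary point requiring care is that the difference $\GG^{(n+1)}-\GG^{(n)}$, being linear in the difference of GCM constants, is genuinely part of $\delta\NN^{n+1,\#}$ and introduces no uncontrolled term.
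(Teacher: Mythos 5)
Your proposal follows essentially the same route as the paper's proof. You correctly identify that the genuine new forcing in the difference system is the difference of $\ell=1$ bases, you perform the same algebraic rewriting $\ovJ(n+1)-\ovJ(n)=\bigl(\J^{n}_{can}-\J^{n-1}_{can}\circ\Psi_{n,n-1}\bigr)\circ\Psi(n)$ (which is exactly the content of the paper's Lemma \ref{lemma:comparisionofJnplus1andJnforcontractionscheme} transported to $\ovS$), you invoke the same transitivity-of-calibration remark and the same comparison Corollary \ref{lemma:comparisonJ2:corollary}, and you close by bounding the metric distance $d_n$ by the $\hk_3$ norm of the deformation-data difference and absorbing via $\dg\le\epg$ (and the large-$r$ factors). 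The only organizational difference is that the paper invokes Proposition 6.5 of \cite{KS-Kerr1} as a black box, producing $\|\delta\NN^{n+1,\#}\|_{3,\ovS}\les\epg\|\delta\NN^{n,\#}\|_{3,\ovS}+\dg\,|J^{n+1}\circ\Psi_{n,n+1}-J^n|$, whereas you sketch re-deriving that inequality from the coupled elliptic system; that is the same mechanism, with the $\dg$ prefactor on the basis-difference term coming from the $O(\dg)$ size of $(f,\fb)$ in the $\ell=1$ normalizations \eqref{GCMS:l=1modesforffb-n}, a point you flag correctly.
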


The proof  of Proposition  \ref{Prop:contractionforNN}  is very similar  to that of Proposition 6.5  in  \cite{KS-Kerr1}.  The  difference is in  the way  our       basis of adapted $\ell=1$ modes $J^n$  were defined  above, which in turn requires to compare $J^{n+1}$ and $J^n$, see \eqref{eq:intermadiaryestimateforproofcontractionschemeintrinsicGCM} below.  To deal with this issue, we rely on  the following lemma.
 \begin{lemma}\lab{lemma:comparisionofJnplus1andJnforcontractionscheme}
 We have,
 \beaa
\Big|  J^{n+1} \circ \Psi_{n,n+1} - J^n\Big|&\les  r^{-1} d_n \big( \gn, g^{(n-1)}  \big)
 \eeaa
 with $\gn$ the metric of $\S(n)$. 
 \end{lemma}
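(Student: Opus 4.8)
The plan is to deduce Lemma \ref{lemma:comparisionofJnplus1andJnforcontractionscheme} from Corollary \ref{lemma:comparisonJ2:corollary} (equivalently Lemma \ref{lemma:comparisonJ2}) by identifying the right pair of spheres, the right comparison map, and the right value of the small parameter. First I would recall the recursive identity \eqref{eq:identityJ^nJ^{n+1}}, namely $J^{n+1}=\J^n_{can}\circ\Psi_{n+1,n}$, where $\Psi_{n+1,n}:\S(n+1)\to\S(n)$ is the map $\Psi(n)\circ\Psi(n+1)^{-1}$. Composing with $\Psi_{n,n+1}=\Psi(n+1)\circ\Psi(n)^{-1}$, which is the inverse of $\Psi_{n+1,n}$, gives immediately
\beaa
J^{n+1}\circ\Psi_{n,n+1}=\J^n_{can}.
\eeaa
Thus the quantity to be estimated is $\big|\J^n_{can}-J^n\big|$, the difference between the \emph{canonical} $\ell=1$ basis of $\S(n)$ and the \emph{adapted} basis $J^n$ of $\S(n)$.

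Next I would unwind the definitions to see that $J^n$ is itself the canonical basis of another sphere, pulled back. By construction $J^n=\ovJ(n)\circ\Psi(n)^{-1}=\J^{n-1}_{can}\circ\Psi(n-1)\circ\Psi(n)^{-1}=\J^{n-1}_{can}\circ\Psi_{n,n-1}$, where $\Psi_{n,n-1}:\S(n)\to\S(n-1)$. So both objects to be compared live on $\S(n)$: one is $\J^{(p,\S(n))}_{can}$ and the other is $\J^{(p,\S(n-1))}_{can}\circ\Psi_{n,n-1}$. This is exactly the situation of Corollary \ref{lemma:comparisonJ2:corollary} with $\S_1=\S(n)$, $\S_2=\S(n-1)$, and $\Psi=\Psi_{n,n-1}$, provided the calibration hypotheses hold. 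Here I would invoke Remark \ref{remark:calibration-of-deformations}: since $\Psi(n-1)$ and $\Psi(n)$ are both deformations of $\ovS$ whose canonical uniformization maps are calibrated, via their respective maps, to the fixed effective uniformization map $(\ovPhi,\ovphi)$ of $\ovS$, transitivity of calibrations (Lemma \ref{Le:Transitivitycalibrations}) guarantees that the canonical uniformizations of $\S(n)$ and $\S(n-1)$ are calibrated relative to $\Psi_{n,n-1}$. One must also check both spheres are almost round in the sense of \eqref{eq:Almostround-S-k}, which follows from property 6 of Theorem \ref{Theorem:ExistenceGCMS1}, i.e. $K^{\S}-\frac{1}{(r^{\S})^2}\in r^{-1}\Ga_g$.

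With those hypotheses in place, Corollary \ref{lemma:comparisonJ2:corollary} yields
\beaa
\sup_{\S(n)}\Big|\J^{(p,\S(n))}_{can}-\J^{(p,\S(n-1))}_{can}\circ\Psi_{n,n-1}\Big|\les (r^{\S(n)})^{-1}d_{\S(n)}\big(g^{(n)},g^{(n-1)}\big),
\eeaa
where the distance $d_{\S(n)}$ is the one in \eqref{distance:g_1g_2} measured on $\S(n)$ with respect to the map $\Psi_{n,n-1}$. Since $r^{\S(n)}\simeq r$ by \eqref{eq:ThmGCMS3}, this is precisely the asserted bound $\big|J^{n+1}\circ\Psi_{n,n+1}-J^n\big|\les r^{-1}d_n\big(g^{(n)},g^{(n-1)}\big)$ once we identify $d_n$ with $d_{\S(n)}(g^{(n)},g^{(n-1)})$. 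The only nontrivial point — the step I expect to be the main obstacle — is bookkeeping the direction of all the maps $\Psi_{n,n-1}$, $\Psi_{n-1,n}$, $\Psi_{n+1,n}$, $\Psi_{n,n+1}$ consistently (and checking $\Psi_{n+1,n}^{-1}=\Psi_{n,n+1}$), together with verifying that the calibration of $\S(n)$ with $\S(n-1)$ produced by transitivity is exactly the one with respect to which $J^n=\J^{n-1}_{can}\circ\Psi_{n,n-1}$ was defined; everything else is a direct citation of Corollary \ref{lemma:comparisonJ2:corollary} and Remark \ref{remark:calibration-of-deformations}.
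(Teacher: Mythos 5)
Your proposal is correct and matches the paper's argument step for step: the same algebraic identity $J^{n+1}\circ\Psi_{n,n+1}=\J^n_{can}$, the same unwinding $J^n=\J^{n-1}_{can}\circ\Psi_{n,n-1}$, and the same application of Corollary \ref{lemma:comparisonJ2:corollary} to $\big(\S(n),\S(n-1)\big)$ via the map $\Psi_{n,n-1}$. You are in fact a bit more careful than the paper's terse proof, which silently relies on the calibration and almost-roundness hypotheses that you explicitly check via Remark \ref{remark:calibration-of-deformations}, Lemma \ref{Le:Transitivitycalibrations} and property 6 of Theorem \ref{Theorem:ExistenceGCMS1}.
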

 
 \begin{proof}
 We write,
 \beaa
  J^{n+1} \circ \Psi_{n, n+1} - J^n&=& \J_{can}^n   \circ \Psi_{n+1, n} \circ \Psi_{n, n+1} - J^n= \J_{can}(n)-J(n)\\
  &=& \J_{can}^n   -  \J_{can}^{n-1}   \circ \Psi_{n, n-1}. 
 \eeaa
 Thus, in view of Corollary  \ref{lemma:comparisonJ2:corollary}  applied to  the pairs  $\big(S(n), \gn\big)$,
 $\big(S(n-1), g^{(n-1)} \big)$ and map $\Psi=\Psi_{n, n-1}$,
 \beaa
 \Big|  J^{n+1} \circ \Psi_{n, n+1} - J^n\Big| =  \Big| \J_{can}^n  -  \J_{can}^{n-1}  \circ \Psi_{n, n-1} \Big| &\les&   r^{-1} d_n \big( \gn, g^{(n-1)}  \big)
 \eeaa
 as stated.
 \end{proof}
 
\begin{proof}[Proof of  Proposition  \ref{Prop:contractionforNN}]
Proposition 6.5 in   \cite{KS-Kerr1} yields
\bea\lab{eq:intermadiaryestimateforproofcontractionschemeintrinsicGCM}
\|\NN^{n+1,\#}-\NN^{n,\#}\|_{3,\ovS} &\les& \epg \|\NN^{n,\#}-\NN^{n-1,\#}\|_{3,\ovS} +\dg\Big|  J^{n+1} \circ \Psi_{n, n+1} - J^n\Big|
\eea
where the last term in the RHS comes from comparing the $\ell=1$ modes of $\fnn$, $\fbnn$ with the ones of $\fn$, $\fbn$ using \eqref{GCMS:l=1modesforffb-n}. Using Lemma \ref{lemma:comparisionofJnplus1andJnforcontractionscheme}, we infer
\beaa
\|\NN^{n+1,\#}-\NN^{n,\#}\|_{3,\ovS} &\les& \epg \|\NN^{n,\#}-\NN^{n-1,\#}\|_{3,\ovS} +\dg d_n \big( \gn, g^{(n-1)}  \big).
\eeaa

Making use of the expressions of the metrics $\gn$, $g^{(n-1)}$ with respect to  the maps $\Psi(n)$,  $\Psi(n-1)$, see  Lemma 5.2 in \cite{KS-Kerr1},  and using the definition of $\Psi(n)$,  $\Psi(n-1)$ respectively in terms of the scalar functions $(U^{(n)}, S^{(n)})$ and $(U^{(n-1)}, S^{(n-1)})$, see Definition \ref{definition:Deformations}, we can  then   estimate
$d_n \big( \gn, g^{(n-1)}  \big)$, similarly to the Proposition 6.5  in  \cite{KS-Kerr1},  as follows
\beaa
d_n \big( \gn, g^{(n-1)}  \big) &\les& r^{-1}   \Big\| \big(U^{(n)}-U^{(n-1)}, S^{(n)}- S^{(n-1)}\big)\Big \|_{\hk_{3}(\ovS)}.
\eeaa
We deduce, using also $\dg\leq\epg$ and the definition of $\NN^{n,\#}$, 
\beaa
\|\NN^{n+1,\#}-\NN^{n,\#}\|_{3,\ovS} &\les& \epg \|\NN^{n,\#}-\NN^{n-1,\#}\|_{3,\ovS} +\dg d_n \big( \gn, g^{(n-1)}  \big)\\
&\les& \epg \Bigg[\|\NN^{n,\#}-\NN^{n-1,\#}\|_{3,\ovS}\\
&&+r^{-1}   \Big\| \big(U^{(n)}-U^{(n-1)}, S^{(n)}- S^{(n-1)}\big)\Big \|_{\hk_{3}(\ovS)}\Bigg]\\
&\les& \epg \|\NN^{n,\#}-\NN^{n-1,\#}\|_{3,\ovS}
\eeaa
as desired.
\end{proof}

%%%%%%%%%%%%%%%%%%%
   
     \subsubsection{Convergence}
     
%%%%%%%%%%%%%%%%%%%%     
     
     In view of \eqref{eq:nintetbound} and Proposition \ref{Prop:contractionforNN}, it follows that  the sequence    $\Psi(n)$ converges to a deformation  map 
$\Psi:\ovS \longrightarrow \S$, 
 $\S(n)\to \S$,  $\Psi_{n+1, n}\to I_\S$,  $  \J^n_{can} \to \J^\S_{can} $ and $J^n\to J$.  In view of  \eqref{eq:identityJ^nJ^{n+1}}, 
\beaa J^{n+1}&=  \J^n_{can}  \circ \Psi_{n+1, n}.
 \eeaa
 We deduce $J =\J^\S_{can}$. 
 The limiting map $\Psi$  thus  verifies  the desired properties of Theorem 
\ref{Theorem:ExistenceGCMS1-2} with respect  to   a canonical $\ell=1$ basis  of $\S$ as desired. This concludes the proof of Theorem \ref{Theorem:ExistenceGCMS1-2}.

%%%%%%%%%%%%%%%%%%%%%%%%%%%%%%%%%%%%%%    
    
\section{Construction of intrinsic GCM spheres}

%%%%%%%%%%%%%%%%%%%%%%%%%%%%%%%%%%%%%%

In  what follows we prove an important corollary of  Theorem \ref{theorem:ExistenceGCMS2}  which  makes use of the arbitrariness of $\La, \Lab$ to  ensure the vanishing of the $\ell=1$ modes of  $\b$ and  $\trchbc$.  The result requires  stronger  assumptions  than  those   made  in {\bf A1},  see \eqref{eq:assumtioninRRforGagandGabofbackgroundfoliation}, namely we assume  that  $\Ga_b$  has the same behavior as  $\Ga_g$, i.e. 

{\bf A1-Strong.}  For $k\le s_{max}$,
\bea
\lab{eq:assumtioninRRforGagandGabofbackgroundfoliation'}
\Big\|(\Ga_g, \Ga_b) \Big\|_{k, \infty}  \les \epg r^{-2}.
\eea

\begin{remark}\lab{rmk:remarkonimprovedrdecaybytradingudecay}
In applications to  our envisioned  proof of  the nonlinear  stability of Kerr,   we  plan to   apply   the result below  in   spacetime regions where  we have  $r\sim u$.    Such  regions 
 appear naturally  when we extend a  given  GCM admissible   spacetime $\MM$, see  Theorem  M7 in \cite{KS-Schw}, in a neighborhood  of  its top     GCM sphere. The improvement in $r$  for $\Ga_b$  is a consequence of the fact that we can exchange, in such regions,  the  decay factors in $u$ for decay in $r$. 
 \end{remark}

We also need to make stronger assumptions concerning  the   basis of      $\ell=1$  modes for the background foliation. These assumptions are also consistent with the     what we expect in applications to   the    nonlinear stability of Kerr.

{\bf  A4-Strong.}  We assume  the existence of a   smooth family of  $\ell=1$ modes  $\Jp:\RR\to\RRR$ verifying  the    verifying the following properties.

\begin{enumerate}
\item  On  each sphere    $S$ of the background foliation $\RR$, $\Jp$ is  $O(r^{-1}\dg)$ close  to a canonical 
basis of $\ell=1$ modes $\Jp_{can}$, i.e.
\bea
\Big| \Jp - \Jp_{can}\Big| \les \dg r^{-1}.
\eea

\item  The $\Jp$ family verifies 
 \bea
 \lab{eq:Jpsphericalharmonics-strong}
\bsplit
  \Big(r^2\lap+2\Big) \Jp  &= O(\epg r^{-1} ),\qquad p=0,+,-,\\
\frac{1}{|S|} \int_{S}  \Jp J^{(q)} &=  \frac{1}{3}\de_{pq} +O(\epg r^{-1} ),\qquad p,q=0,+,-,\\
\frac{1}{|S|}  \int_{S}   \Jp   &=O(\epg),\qquad p=0,+,-.
\end{split}
\eea

\item  The  $\Jp$ family also verifies
\bea
\lab{assumptionA4-strong}
\max_p\sup_{\RR} \Big(|e_3(\Jp)|+|e_4(\Jp)|+|\nab \Jp|\Big) \les \epg r^{-1}.
\eea
\end{enumerate}

\begin{lemma}
\lab{Lemma:ComparisonJ-strong}
Under the assumptions {\bf A1-Strong}, {\bf A2, A3} and {\bf A4-Strong},  
consider  a  $O(\dg)$-deformation  $\Psi: \ovS\longrightarrow \S$, with $(U,S)$ as in Definition \ref{definition:Deformations},  
verifying the assumption, see \eqref{eq:ThmGCMS4}, 
 \bea
 \lab{eq::ComparisonJ-strong1}
 \| (U, S)\|_{L^\infty(\ovS)} +  r^{-1} \big\|(U, S)\big\|_{\hk_{s_{max}+1}(\ovS)}  &\les  \dg.
 \eea
 Assume also  that the canonical  $\ell=1$ modes  of $\ovS$ and $\S$ are calibrated.   
Then:
 \begin{enumerate}
\item The following comparison estimate  between    $\JJpS$ and $\JpS$ holds true,
\bea
\max_p\sup_\S \Big|\JJpS-\JpS\Big| \les \dg (r^\S)^{-1}. 
\eea

\item Also
\bea
\max_p\sup_\S\Big| \JpS-\Jp \Big| \les \epg\dg (r^\S)^{-1}.
\eea
\end{enumerate}
\end{lemma}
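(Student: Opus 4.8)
\textbf{Proof plan for Lemma \ref{Lemma:ComparisonJ-strong}.}

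The plan is to prove the two items in turn, with item (1) being a straightforward application of the stability theory for canonical $\ell=1$ modes already developed, and item (2) being the genuinely new point that exploits the strong assumptions {\bf A1-Strong} and {\bf A4-Strong}. For item (1), the idea is to apply Proposition \ref{Prop:comparison.canJ} (or rather its quantitative form, Corollary \ref{lemma:comparisonJ2:corollary}) to the two spheres $\ovS$ and $\S$ with the deformation map $\Psi$. Recall that $\JJpS = J^{(p,\SSS^2)}\circ\Phi^{-1}$ is the canonical basis of $\S$ built from the calibrated uniformization map $\Phi$, while $\JpS = \Jpov\circ\Psi^{-1}$ is the adapted basis obtained by transporting the $\Jp$ family on $\ovS$. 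Since the canonical modes of $\ovS$ and $\S$ are assumed calibrated, Corollary \ref{lemma:comparisonJ2:corollary} gives
\[
\sup_\S\Big|\JJpS - \ovJ_{can}^{(p)}\circ\Psi^{-1}\Big| \les (r^\S)^{-1}\, d_\S(g^{\ovS},g^\S),
\]
and by Lemma \ref{lemma:comparison-gaS-ga} (using \eqref{eq::ComparisonJ-strong1}) the distance $d_\S(g^{\ovS},g^\S)$ is $\les \dg$. It then remains to compare $\ovJ_{can}^{(p)}\circ\Psi^{-1}$ with $\JpS=\Jpov\circ\Psi^{-1}$, i.e. to compare $\ovJ_{can}^{(p)}$ with $\Jpov$ on $\ovS$; but this is precisely bounded by $\dg\, r^{-1}$ via item (1) of assumption {\bf A4-Strong}. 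Composing with $\Psi^{-1}$ (a bi-Lipschitz map close to identity on coordinates) does not affect the pointwise sup bound, which yields the first estimate.

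For item (2), the point is to compare $\JpS = \Jpov\circ\Psi^{-1}$ with $\Jp$ restricted to $\S$. I would write
\[
\JpS - \Jp\big|_\S = \Jpov\circ\Psi^{-1} - \Jp\big|_\S = \big(\Jp\circ\Psi^{-1} - \Jp\big)\big|_\S,
\]
where in the last step I used that $\Jpov$ is just $\Jp$ restricted to $\ovS$, and that $\Psi^{-1}:\S\to\ovS$. The quantity $\Jp\circ\Psi^{-1} - \Jp$ evaluated on $\S$ measures the variation of the background scalar $\Jp$ between a point of $\S$ and its image under $\Psi^{-1}$ in $\ovS$, which differs only by $O(\dg)$ in the $(u,s)$ coordinates. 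This is exactly the setting of Lemma \ref{Le:Transportcomparison}: with $F=\Jp$,
\[
\big\|\Jp\circ\Psi^{-1} - \Jp\big\|_{L^\infty(\S)} \les \|(U,S)\|_{L^\infty(\ovS)}\,\sup_\RR\big(|e_3\Jp| + |e_4\Jp| + |\nab\Jp|\big) \les \dg\cdot\epg r^{-1},
\]
where the last inequality invokes \eqref{assumptionA4-strong} from {\bf A4-Strong} for the frame derivatives of $\Jp$, and $\|(U,S)\|_{L^\infty}\les\dg$ from \eqref{eq::ComparisonJ-strong1}. Since $r^\S \simeq \ovr = r$ by Lemma \ref{lemma:comparison-gaS-ga}, this is the claimed bound $\les \epg\dg(r^\S)^{-1}$. (One should be slightly careful that Lemma \ref{Le:Transportcomparison} is stated for $\Psi:\ovS\to\S$ and $F^\#=F\circ\Psi$; here the roles of $\ovS$ and $\S$ are reversed, but since $\Psi$ is a bijection with inverse again an $O(\dg)$-deformation map in the same adapted coordinates, the same estimate applies verbatim.)

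The main obstacle I anticipate is purely bookkeeping: making sure the calibration hypothesis is used in the right place in item (1) so that the $O(3)$ ambiguity in Corollary \ref{lemma:comparisonJ2:corollary} is killed (this is where Corollary \ref{Cor:unifor-twometrics-calibrated} rather than the bare Theorem \ref{Thm:unifor-twometrics} is needed), and checking that the comparison $d_\S(g^{\ovS},g^\S)\les\dg$ is justified in the $\hk^2$ norm rather than just $L^\infty$ — this follows from the higher-regularity part of Lemma \ref{lemma:comparison-gaS-ga} combined with \eqref{eq::ComparisonJ-strong1}. The gain of the extra factor $\epg$ in item (2) compared with item (1) is genuinely due to {\bf A4-Strong}: under the weaker assumption {\bf A4} the frame derivatives of $\Jp$ would only be $O(r^{-1})$ rather than $O(\epg r^{-1})$, so it is essential to invoke \eqref{assumptionA4-strong} at that step and not the generic bound.
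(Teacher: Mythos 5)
Your proposal matches the paper's proof essentially line for line: item (1) uses the same decomposition $\JJpS-\JpS = (\JJpS-\Jpov_{can}\circ\Psi^{-1}) + (\Jpov_{can}-\Jpov)\circ\Psi^{-1}$, with the first term controlled by the calibrated comparison lemma (Lemma \ref{lemma:comparisonJ2}/Corollary \ref{lemma:comparisonJ2:corollary}) together with the metric comparison from Lemma \ref{lemma:comparison-gaS-ga}, and the second by part (1) of {\bf A4-Strong}; item (2) is the identical application of Lemma \ref{Le:Transportcomparison} to $F=\Jp$, with the extra $\epg$ coming from \eqref{assumptionA4-strong}. The only cosmetic difference is that the paper rewrites $\sup_\S|\Jp\circ\Psi^{-1}-\Jp|$ as $\sup_{\ovS}|\Jp\circ\Psi-\Jp|$ before invoking Lemma \ref{Le:Transportcomparison}, which makes your parenthetical caveat about $\Psi$ versus $\Psi^{-1}$ unnecessary.
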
 

\begin{proof}
Recall that, see \eqref{eq:non-canonicalJpS},  $ \JpS=\Jpov \circ \Psi^{-1}$.
We write,
\beaa
\JJpS-\JpS&=&  \JJpS-  \Jpov \circ \Psi^{-1} = \JJpS-  \Jpov_{can}  \circ \Psi^{-1} +\big( \Jpov_{can}   -\Jpov\big) \circ \Psi^{-1}.
\eeaa
Hence, in view of  the assumption {\bf A4-strong} on $\ovS$, 
\beaa
\Big|\JJpS-\JpS\Big|&\les& \Big| \JJpS-  \Jpov_{can}  \circ \Psi^{-1} \Big|+ \Big| \big( \Jpov_{can}   -\Jpov\big) \circ \Psi^{-1}\Big|\\
&\les&  \Big| \JJpS-  \Jpov_{can}  \circ \Psi^{-1} \Big|+\dg r^{-1}.
\eeaa
It remains to estimate  $| \JJpS-  \Jpov_{can}  \circ \Psi^{-1}|$.  Making use  of  the assumption \eqref{eq::ComparisonJ-strong1} and  Lemma  \ref{lemma:comparison-gaS-ga}  we have
\beaa
 \big\| \ovg- \Psi^\# (g^\S)\big\|_{L^\infty(\ovS)} +( \ovr)^{-1}  \big\| \ovg- \Psi^\# (g^\S)\big\|_{\hk^2(\ovS)}\les \dg  \ovr. 
\eeaa
We are thus in a position to apply  the comparison Lemma \ref{lemma:comparisonJ2} to deduce,
\beaa
\max_p \sup_{\ovS} \Big|\JJpS\circ \Psi- \Jpov_{can}\Big|&\les \dg( \ovr)^{-1}
\eeaa
or, equivalently,
\beaa
 \max_p \sup_{\S}\Big| \JJpS-  \Jpov_{can}  \circ \Psi^{-1} \Big|&\les&\dg  (r^\S)^{-1}. 
\eeaa
Hence, 
\beaa
\Big|\JJpS-\JpS\Big| &\les&  \Big| \JJpS-  \Jpov_{can}  \circ \Psi^{-1} \Big|+\dg r^{-1}\les\dg  (r^\S)^{-1}
\eeaa
as stated in the first part of the lemma.

To prove the second part, we appeal to  Lemma \ref{Le:Transportcomparison}
applied to  the scalar function $F=\Jp$ which yields 
\beaa
\|\Jp\circ\Psi -\Jp\|_{L^\infty(\ovS)} &\les& \big \| ( U, S) \big\|_{L^\infty(\ovS)}  \sup_{\RR}\Big(|e_3(\Jp)|+|e_4(\Jp)|+|\nab \Jp|\Big)\\
&\les& r^{-1}\epg\dg 
\eeaa
where we used \eqref{eq::ComparisonJ-strong1} and  \eqref{assumptionA4-strong} in the last inequality. Recalling that $r^\S$, $\ovr$ and   $r|_\S$ are comparable,  we deduce
$$
\max_p\sup_\S\Big| \JpS-\Jp \Big| = \max_p\sup_\S\Big| \Jp\circ\Psi^{-1}-\Jp \Big|=\max_p\sup_{\ovS}\Big| \Jp\circ\Psi -\Jp \Big|\les (r^\S)^{-1}\epg\dg
$$
as stated.
\end{proof}

We are ready to state and prove  our intrinsic GCM Theorem
\ref{theorem:ExistenceGCMS2}. 
\begin{theorem}[Existence of intrinsic GCM spheres]
\lab{theorem:ExistenceGCMS2} 
Assume that the spacetime region  $\RR$ verifies the assumptions {\bf A1-Strong, A2, A3}, {\bf A4-Strong}.
We further  assume that, relative to the   $\ell=1$ modes of  the background foliation,
\bea
\lab{Assumptions:theorem-ExistenceGCMS2}
(\div \b )_{\ell=1}=O( \dg r^{-3}), \qquad        (\widecheck{\trch})_{\ell=1}=O(\dg r^{-1}),   \qquad (\widecheck{\trchb})_{\ell=1}=O(\dg r^{-1}).
\eea
Then,  there  exist unique constants  $ M^\S_0, \, \MpS$, \,  $p\in\{-,0, +\}$  such that
 \bea
\lab{def:GCMC2}
\bsplit
\ka^\S&=\frac{2}{r^\S},\\
\kab^\S &=-\frac{2}{r^\S}\Up^\S,\\
\mu^\S&= \frac{2m^\S}{(r^\S)^3} +   M^\S_0+\sum _p\MpS \J^{(p, \S)}, 
\end{split}
\eea
and
\bea
\lab{def:GCMC2-b}
\int_\S \div^\S \b^\S  \J^{(p, \S)}  =0,
\eea
where $ \J^{(p, \S)}  $ is a  canonical  $\ell=1$ basis for $\S$ calibrated,  relative  by $\Psi $, with the canonical    $\ell=1$ basis  of $\ovS$.  Moreover the deformation  verifies the properties  \eqref{eq:ThmGCMS3}, \eqref{eq:ThmGCMS4},  \eqref{eq:ThmGCMS5}, \eqref{eq:ThmGCMS6} stated in Theorem \ref{Theorem:ExistenceGCMS1}.
\end{theorem}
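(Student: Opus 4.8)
The idea is to deduce Theorem~\ref{theorem:ExistenceGCMS2} from the two-parameter family of GCM spheres $\S^{(\La,\Lab)}$ provided by Theorem~\ref{Theorem:ExistenceGCMS1-2}, by choosing $(\La,\Lab)$ so that the additional $\ell=1$ conditions $(\div^\S\b^\S)_{\ell=1}=0$ and $(\widecheck{\trchb^\S})_{\ell=1}=0$ are met. Concretely, for each admissible $(\La,\Lab)$ with $|\La|,|\Lab|\les\dg$, Theorem~\ref{Theorem:ExistenceGCMS1-2} produces a GCM sphere $\S=\S^{(\La,\Lab)}$ verifying \eqref{def:GCMCversion2}, \eqref{GCMS:l=1modesforffb.version2}, together with the canonical $\ell=1$ modes $\J^{(p,\S)}$ and all the quantitative properties 1--9 of Theorem~\ref{Theorem:ExistenceGCMS1}. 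On $\S$ we then form the two triplets
\beaa
P(\La,\Lab):=\Big\{\int_\S \div^\S\b^\S\,\J^{(p,\S)}\Big\}_{p}, \qquad Q(\La,\Lab):=\big(\widecheck{\trchb^\S}\big)_{\ell=1},
\eeaa
and the goal is to solve $P=0$, $Q=0$ for $(\La,\Lab)$. I would set this up as a fixed-point/implicit-function problem: first compute the values of $P,Q$ at $(\La,\Lab)=(0,0)$, showing they are $O(\dg r^{-3})$ and $O(\dg r^{-1})$ respectively thanks to the background assumptions \eqref{Assumptions:theorem-ExistenceGCMS2} and the transformation formulas \eqref{eq:transfforbandkab1}, \eqref{eq:transfforbandkab2} (the main point being that $\b^\S$ and $\widecheck{\trchb^\S}$ differ from their background counterparts only through $F=(f,\fb,\ovla)$ and their derivatives, and $\div^\S(\fb)_{\ell=1}=\Lab$ is exactly the quantity we control); then compute the Jacobian of $(P,Q)$ with respect to $(\La,\Lab)$ at the origin.

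\textbf{Key steps in order.} (i) Use \eqref{eq:transfforbandkab2} to write $\b^\S=\b+\frac32(f\rho+\dual f\,\rhod)+r^{-1}\Ga_g\cdot F$; projecting $\div^\S\b^\S$ onto the canonical $\ell=1$ modes and integrating by parts, the leading contribution in $F$ is controlled by $(\div^\S f)_{\ell=1}=\La$, up to $O(\epg\dg r^{-3})$ error terms, so that $P(\La,\Lab)=c_1 r^{-3}\La+O(\dg r^{-3})\cdot(\text{small})$ for an explicit nonzero constant $c_1$ (coming from $\rho\approx -2m/r^3$ and the normalization of $\J^{(p,\S)}$ in \eqref{eq:PropertiesofJ^S-strong2}). (ii) Similarly, from \eqref{eq:transfforbandkab1}, $\widecheck{\trchb^\S}$ relates to $\Lab=(\div^\S\fb)_{\ell=1}$; one finds $Q(\La,\Lab)=c_2 r^{-1}\Lab+O(\dg r^{-1})\cdot(\text{small})$ with $c_2\neq 0$, using that $\kab^\S$ is fixed to its Schwarzschildian value by \eqref{def:GCMCversion2} modulo the $\ell=0$ constant $\Cb_0^\S$. (iii) The cross dependence — $P$ on $\Lab$ and $Q$ on $\La$ — is small, of relative size $O(\dg)$, by the differentiability estimates \eqref{eq:property7oftheoldGCMtheoremnoncanonicalell=1modes} which give $\pr f/\pr\Lab=O(\dg r^{-1})$, $\pr\fb/\pr\La=O(\dg r^{-1})$, plus the continuous dependence of the canonical modes and of the metric $g^\S$ on $(\La,\Lab)$ via \eqref{eq:property9oftheoldGCMtheoremnoncanonicalell=1modes} and Corollary~\ref{lemma:comparisonJ2:corollary}. (iv) Hence the $6\times 6$ Jacobian $\pr(P,Q)/\pr(\La,\Lab)$ at the origin is $\mathrm{diag}(c_1 r^{-3}I_3,\,c_2 r^{-1}I_3)+O(\dg)\cdot(\text{lower order})$, invertible for $\dg$ small. (v) Recast $P=0,Q=0$ as a contraction, via Banach's fixed point theorem, on the ball $\{|\La|\les\dg r^{-3}\cdot r^3=\dg,\ |\Lab|\les\dg\}$ — one checks the target $(\La,\Lab)$ stays of size $\les\dg$ so that \eqref{eq:assumptionsonLambdaabdLambdabforGCMexistence:bis} continues to hold — obtaining a unique solution. (vi) For this solution, \eqref{def:GCMCversion2} reduces to \eqref{def:GCMC2} (the $\ell=1$ part of $\widecheck{\trchb^\S}$ now vanishes so the $\Cb^\S_p$ terms drop, and $\ka^\S$, $\mu^\S$ are unchanged), and \eqref{def:GCMC2-b} holds by construction; the properties \eqref{eq:ThmGCMS3}--\eqref{eq:ThmGCMS6} are inherited directly from property~1 of Theorem~\ref{Theorem:ExistenceGCMS1-2}.

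\textbf{Main obstacle.} The delicate point is the precise computation of the Jacobian — specifically, verifying that the diagonal blocks have genuinely nonzero, well-separated leading coefficients $c_1 r^{-3}$ and $c_2 r^{-1}$ rather than accidentally degenerating. This requires carefully tracking how the $\ell=1$ projection interacts with the elliptic Hodge system $\DD^\S F=\G+\HH(F,\Ga)$ defining $F$ in terms of $(\La,\Lab)$, i.e. how a prescribed $(\div^\S f)_{\ell=1},(\div^\S\fb)_{\ell=1}$ translates into the $\ell=1$ part of $f,\fb$ themselves (not just their divergences), using that on an almost round sphere a $1$-form with prescribed $\ell=1$ divergence and, say, vanishing $\ell=1$ curl is essentially determined to leading order by the standard Hodge theory on $\SSS^2$. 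A secondary technical nuisance is that the canonical modes $\J^{(p,\S)}$ themselves move with $(\La,\Lab)$, so differentiating $P,Q$ requires the comparison estimate of Lemma~\ref{lemma:comparisonJ2} / Corollary~\ref{lemma:comparisonJ2:corollary} to show this motion contributes only $O(\dg)$ relative errors; and one must invoke {\bf A1-Strong} to get the improved $r^{-2}$ decay of $\Ga_b$ so that all error terms genuinely carry the stated powers of $r$. Once these are in place the argument is a routine application of the inverse function theorem. This completes the proof.
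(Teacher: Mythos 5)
Your approach is essentially the same as the paper's: invoke the two-parameter family $\S^{(\La,\Lab)}$ from Theorem~\ref{Theorem:ExistenceGCMS1-2}, express the two additional $\ell=1$ constraints as a nonlinear system in $(\La,\Lab)$, show the linearization is invertible up to $O(\epg)$ errors by deriving from the transformation formulas \eqref{eq:transfforbandkab1}--\eqref{eq:transfforbandkab2} that $(\div^\S\b^\S)_{\ell=1}$ and $(\widecheck{\trchb^\S})_{\ell=1}$ are, to leading order, linear in $\La$ and $\Lab$ respectively with nonvanishing coefficients, and then solve by contraction. The paper packages your steps (i)--(v) as Lemma~\ref{Lemma:GCMS2}, which derives the identities $\La=\frac{(r^\S)^3}{3m^\S}\big[-(\div^\S\b^\S)_{\ell=1}+(\div\b)_{\ell=1}\big]+F_1(\La,\Lab)$ and $\Lab=\Up^\S\La+\frac{r^\S}{3m^\S}\big[(\kabc^\S)_{\ell=1}+\Up^\S(\kac)_{\ell=1}-(\kabc)_{\ell=1}\big]+F_2(\La,\Lab)$ with $|F_1,F_2|\les\epg\dg$ and $|\pr_{\La,\Lab}(F_1,F_2)|\les\epg$, from which the fixed point is immediate; your Jacobian language is the same computation. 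In particular the coefficients $c_1 r^{-3}$ and $c_2 r^{-1}$ that you flag as potentially degenerate are exactly the $\frac{3m^\S}{(r^\S)^3}$ and $\frac{3m^\S}{r^\S}$ factors coming from $\rho\approx-\frac{2m}{r^3}$ and the Hodge structure, and they stay bounded away from zero because $m^\S\approx m_0>0$ by assumption.

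There is, however, a genuine gap in your step (vi). After you enforce $(\widecheck{\trchb^\S})_{\ell=1}=0$, the orthogonality relations \eqref{eq:PropertiesofJ^S-strong2} kill the $\ell=1$ coefficients $\CbpS$ in \eqref{def:GCMCversion2}, but say nothing about the $\ell=0$ constant $\Cb^\S_0$. Without showing $\Cb^\S_0=0$ you only obtain $\kabc^\S=\Cb^\S_0$, not the pointwise identity $\kab^\S=-\frac{2\Up^\S}{r^\S}$ claimed in \eqref{def:GCMC2}. The paper closes this with a short Hawking-mass argument: from $\frac{2m^\S}{r^\S}=1+\frac{1}{16\pi}\int_\S\ka^\S\kab^\S$, the GCM condition $\ka^\S=\frac{2}{r^\S}$, the expansion $\kab^\S=-\frac{2\Up^\S}{r^\S}+\Cb^\S_0+\sum_p\CbpS\J^{(p,\S)}$, and the fact that $\int_\S\J^{(p,\S)}=0$, one computes $\Up^\S=\Up^\S-\frac{r^\S}{2}\Cb^\S_0$, forcing $\Cb^\S_0=0$; only then does $(\kabc^\S)_{\ell=1}=0$ give $\CbpS=0$ and hence \eqref{def:GCMC2}. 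You need to supply this step to complete the argument.
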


\begin{remark}
\lab{Remark:theorem-ExistenceGCMS2}
The assumptions \eqref{Assumptions:theorem-ExistenceGCMS2}   for $(\div \b)_{\ell=1}$ and $(\widecheck{\trch})_{\ell=1}$ holds true in general under the assumptions {\bf A1-A3}.  The corresponding assumption  for  $(\trchbc)_{\ell=1}$
 holds true  in regions, discussed in Remark \ref{rmk:remarkonimprovedrdecaybytradingudecay}, where $r\sim u$, see  the proof  of Theorem M4 in \cite{KS-Schw}.
\end{remark}

The proof of Theorem \ref{theorem:ExistenceGCMS2}  relies  on  the following lemma.
\begin{lemma}
\lab{Lemma:GCMS2}
Assume that the spacetime region  $\RR$ verifies the assumptions {\bf A1-Strong, A2, A3}, {\bf A4-Strong}. Let $\S=\S^{(\La, \Lab)} $ be a deformation of $\ovS$ as constructed by Theorem \ref{Theorem:ExistenceGCMS1-2} with
\beaa
\int_\S  \div^\S f\, \J^{(p, \S)}=\La, \qquad \int_\S  \div^\S \fb\, \J^{(p, \S)}=\Lab.
\eeaa
The following identities hold true.
\bea
\lab{eq:ExistenceGCMS2.1}
\bsplit
\La&=\frac{(r^\S)^3}{3 m^\S}\Big[- (\div^\S \b^\S)_{\ell=1}     + (\div \b)_{\ell=1} \Big]        + F_1(\La, \Lab),\\
\Lab &= \Up^\S \La +\frac{r^\S}{ 3m^\S} \Big[( \kabc^\S)_{\ell=1} +   \Up^\S ( \kac)_{\ell=1} -  (\kabc)_{\ell=1} \Big]+ F_2(\La, \Lab),
\end{split}
\eea
where $F_1, F_2$ are  continuously differentiable   functions of $\La, \Lab$  verifying
\bea
\big|F_1, F_2\big| \les \epg \dg, \qquad \big|\pr_\La (F_1, F_2) ,  \pr_{\Lab} (F_1, F_2)\big|\les \epg.  
\eea
\end{lemma}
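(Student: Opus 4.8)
The plan is to derive the two identities in \eqref{eq:ExistenceGCMS2.1} by combining the transformation formulas for $\b$, $\ka$, $\kab$ from Lemma \ref{Lemma:Generalframetransf} with the GCM conditions already imposed by Theorem \ref{Theorem:ExistenceGCMS1-2} and the known bounds on the transition coefficients, then isolate the $\ell=1$ modes relative to the canonical modes $\J^{(p,\S)}$. First I would take the transformation formula \eqref{eq:transfforbandkab2}, namely $\b^\S=\b+\frac 32(f\rho+\dual f\rhod)+r^{-1}\Ga_g\cdot F$, and project onto the $\ell=1$ modes of $\S$. Using $\rho=-\frac{2m}{r^3}+\widecheck\rho$ with $r\widecheck\rho,\,r\dual\rho\in\Ga_g$, the leading contribution is $\frac 32 f\cdot\big(-\frac{2m}{r^3}\big)=-\frac{3m}{r^3}f$, so that taking divergence and the $\ell=1$ part yields $(\div^\S\b^\S)_{\ell=1}=(\div\b)_{\ell=1}-\frac{3m^\S}{(r^\S)^3}(\div^\S f)_{\ell=1}+(\text{error})$, where the error is quadratic in $F$ plus terms of size $\epg\dg$ coming from $\widecheck\rho$, $\dual\rho$, the curvature error terms, and the discrepancy between $\b$ evaluated on $\S$ versus on $\ovS$ (controlled by Lemma \ref{Le:Transportcomparison} and Lemma \ref{Lemma:coparison-forintegrals}). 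Since $(\div^\S f)_{\ell=1}=\La$ by construction, solving for $\La$ gives the first identity with $F_1$ absorbing all the errors.

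For the second identity I would similarly exploit the transformation formulas \eqref{eq:transfforbandkab1} for $\ka'$ and $\kab'$. The GCM conditions $\ka^\S=2/r^\S$ and $\kab^\S=-2\Up^\S/r^\S+\Cb_0^\S+\sum_p\CbpS\J^{(p,\S)}$ mean that $\widecheck{\ka^\S}$ and the $\ell\ge2$ part of $\widecheck{\kab^\S}$ vanish, so the $\ell=1$ modes $(\kac^\S)_{\ell=1}$ and $(\kabc^\S)_{\ell=1}$ are essentially the GCM constants $\CbpS$ up to normalization. Subtracting the two transformation equations and using $\div'f=\div^\S f$, $\div'\fb=\div^\S\fb$ at leading order, together with $\ka\approx 2/r$, $\kab\approx-2\Up/r$, one gets a relation of the schematic form $(\kabc^\S)_{\ell=1}+\Up^\S(\kac^\S)_{\ell=1}=(\kabc)_{\ell=1}-\Up^\S(\kac)_{\ell=1}+3m^\S r^{-1}\big(\Lab-\Up^\S\La\big)+(\text{error})$, which rearranges to the stated formula for $\Lab$ with $F_2$ collecting the quadratic-in-$F$ terms, the $r^{-1}F\cdot\Ga_b$ terms, and the sphere-comparison errors, all of size $\epg\dg$. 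The factor $\Up^\S$ in front of $\La$ arises from the relative signs and weights of $\ka$ versus $\kab$ in \eqref{eq:transfforbandkab1}.

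The differentiability of $F_1,F_2$ in $\La,\Lab$ and the bounds $|\pr_\La(F_1,F_2)|,|\pr_{\Lab}(F_1,F_2)|\les\epg$ follow from properties 7, 8, 9 of Theorem \ref{Theorem:ExistenceGCMS1}, which give $\pr_\La F, \pr_{\Lab}F=O(r^{-1})$ and $\pr_\La(U,S),\pr_{\Lab}(U,S)=O(1)$, together with $\|F\|\les\dg$: differentiating a quadratic-in-$F$ error term in $\La$ produces $F\cdot\pr_\La F=O(\dg r^{-1})$, and the metric-dependence of the $\ell=1$ projection is controlled via \eqref{eq:property9oftheoldGCMtheoremnoncanonicalell=1modes} and Lemma \ref{lemma:basicpropertiesofJforcanonicalell=1basis}; one must also use the continuous differentiability of the canonical modes $\J^{(p,\S)}$ themselves, which is inherited from the continuous dependence of the deformation and of the effective uniformization on the metric. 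The main obstacle I expect is bookkeeping the $\ell=1$ projections carefully: the canonical modes $\J^{(p,\S)}$ are only approximately orthonormal eigenfunctions (Lemma \ref{lemma:basicpropertiesofJforcanonicalell=1basis} with $\ep=r^{-1}\epg$), and the modes $\Jp$ of the background foliation differ from the canonical ones by $O(\dg r^{-1})$ (Lemma \ref{Lemma:ComparisonJ-strong}); one must track which modes are used on which sphere, use Lemma \ref{Lemma:coparison-forintegrals} to transfer integrals from $\S$ to $\ovS$, and verify that all the induced discrepancies are genuinely of size $\epg\dg$ rather than merely $\dg$ — this is where the strengthened assumptions {\bf A1-Strong} and {\bf A4-Strong}, which upgrade the decay of $\Ga_b$ and of the derivatives of $\Jp$, are essential.
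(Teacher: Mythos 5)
Your Step 1 (the $\La$ identity from the $\b$-transformation) is essentially the paper's argument. Your Step 2, however, has a genuine gap. You propose to eliminate $\ovla$ from the two transformation equations \eqref{eq:transfforbandkab1} by forming the combination $\kab\cdot(\text{eq for }\ka^\S)+\ka\cdot(\text{eq for }\kab^\S)$ and then projecting directly onto the $\ell=1$ modes. Carrying this out carefully, the Schwarzschildian parts $-4\Up/(rr^\S)$, $-4\Up^\S/(rr^\S)$ and $8\Up/r^2$ cancel only up to $O(\dg/r^3)$, because $r, \Up$ vary over $\S$ while $r^\S, \Up^\S$ are constants. When integrated against $\J^{(p,\S)}$ this produces an error of size $O(\dg)$, not $O(\epg\dg)$, and — more tellingly — the leading relation you obtain is $\Lab-\Up^\S\La=(\kabc^\S)_{\ell=1}+\Up^\S(\kac)_{\ell=1}-(\kabc)_{\ell=1}+\ldots$, i.e.\ with coefficient $1$ rather than the stated $\frac{r^\S}{3m^\S}$. (Your schematic formula asserts a coefficient $3m^\S r^{-1}$, but nothing in the undifferentiated elimination produces it; your formula also contains $(\kac^\S)_{\ell=1}$, which vanishes identically since $\ka^\S=2/r^\S$ is exact.)

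The mechanism you are missing is that one must first apply $\Delta^\S$ to both equations (killing the problematic near-constant Schwarzschildian discrepancies) and then eliminate $\Delta^\S\ovla$. After setting $h:=\fb-\Up^\S f$, the crucial computation is that of $\Delta^\S\ka$ and $\Delta^\S\kab$ restricted to $\S$: because $e_a^\S=e_a+\tfrac12\fb_a e_4+\tfrac12 f_a e_3+\ldots$, the tangential derivative $\nab^\S\ka$ picks up the Schwarzschildian transport rates $e_3\ka, e_4\ka$ at order $r^{-2}$, giving $\nab^\S\ka=\nab\ka+\frac{\Up}{r^2}f-\frac1{r^2}\fb+O(\epg\dg r^{-4})$ and an analogous formula for $\nab^\S\kab$. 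Assembling these one arrives at
\beaa
\Big(\Delta^\S+\frac{2}{(r^\S)^2}\Big)\div^\S h-\frac{6m^\S}{(r^\S)^3}\div^\S h=\Delta^\S\kab^\S+\Up^\S\Delta\ka-\Delta\kab+O(\epg\dg r^{-5}),
\eeaa
and it is the approximate eigenvalue property $(\Delta^\S+2/(r^\S)^2)\J^{(p,\S)}=O(\epg/(r^\S)^3)\J^{(p,\S)}$ from \eqref{eq:PropertiesofJ^S-strong2} that annihilates the first operator and isolates the $-\frac{6m^\S}{(r^\S)^3}$ coefficient; inverting it yields the factor $\frac{r^\S}{3m^\S}$ in the second identity. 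Without this differentiation-plus-eigenfunction step you neither obtain the correct coefficient nor the sharpened $O(\epg\dg)$ error.
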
 

We postpone the proof of Lemma \ref{Lemma:GCMS2} to section \ref{sec:proofofLemma:GCMS2}.  

\begin{proof}[Proof of  Theorem \ref{theorem:ExistenceGCMS2}]
We note that under the assumptions of the theorem the system
\beaa
\bsplit
\La&=\frac{(r^\S)^3}{3 m^\S} (\div \b)_{\ell=1}         + F_1(\La, \Lab),\\
\Lab &= \Up^\S \La +\frac{r^\S }{3 m^\S} \Big(  \Up^\S ( \kac)_{\ell=1} -  (\kabc)_{\ell=1} \Big)+ F_2(\La, \Lab),
\end{split}
\eeaa
has a unique solution  $\La_0, \Lab_0$    verifying the estimate 
\beaa
\big|\La_0\big|+\big| \Lab_0\big| \les \dg. 
\eeaa
Therefore, taking $\La=\La_0, \Lab=\Lab_0$ in \eqref{eq:ExistenceGCMS2.1},
 we deduce
\beaa
(\div^\S \b^\S)_{\ell=1}=0, \qquad  (\kabcS)_{\ell=1}=0.
\eeaa

 It remains to check \eqref{def:GCMC2}. According to Theorem 
 \ref{Theorem:ExistenceGCMS1-2}, 
  there exist constants $\Cb^\S_0$,\, \,  $\CbpS$, \, $ M^\S_0$, \, $\MpS, \, p\in\{-,0, +\}$  such that
 \beaa
\bsplit
\ka^\S&=\frac{2}{r^\S},\\
\kab^\S &=-\frac{2}{r^\S}\Up^\S+  \Cb^\S_0+\sum_p \CbpS \J^{(p, \S)} ,\\
\mu^\S&= \frac{2m^\S}{(r^\S)^3} +   M^\S_0+\sum _p\MpS \J^{(p, \S)}.
\end{split}
\eeaa
Using the two first equations above, the definition of the Hawking mass,  and using the fact that $\int_\S  \J^{(p, \S)}=0$,   we infer
 \beaa
\Up^\S&=& 1 -\frac{2m^\S}{r^\S} =-\frac{1}{16 \pi} \int_{\S } \ka^\S \kab^\S=-\frac{1}{8 \pi r^\S} \int_{\S }\left(-\frac{2}{r^\S}\Up^\S+  \Cb^\S_0+\sum_p \CbpS \J^{(p, \S)}\right)\\
&=& \Up^\S -\frac{r^\S}{2}\Cb^\S_0
\eeaa
and hence,
\beaa
  \Cb^\S_0&=& 0.
\eeaa
Therefore,
\beaa
\kabc^\S&=& \sum_p \CbpS \J^{(p, \S)}.
\eeaa
Projecting  on the  canonical $\ell=1$ basis   and using the condition $(\kabc^\S)_{\ell=1}=0$ 
and the property $ \int_S \J^{(p,\S)}\J^{(q,\S)} da_g = \frac{4\pi}{3}(r^\S)^2\de_{pq}+ O(\epg r^\S)$ in \eqref{eq:PropertiesofJ^S-strong2}
  we then conclude that  $ \CbpS=0$. Hence, we have finally obtained
   \beaa
\bsplit
\ka^\S&=\frac{2}{r^\S},\\
\kab^\S &=-\frac{2}{r^\S}\Up^\S,\\
\mu^\S&= \frac{2m^\S}{(r^\S)^3} +   M^\S_0+\sum _p\MpS \J^{(p, \S)}, 
\end{split}
\eeaa
and
\beaa
\int_\S \div^\S \b^\S  \J^{(p, \S)}  =0
\eeaa
   as desired.
\end{proof}

%%%%%%%%%%%%%%%%%%%%%%%%%%

\subsection{Proof of Lemma \ref{Lemma:GCMS2}}
\lab{sec:proofofLemma:GCMS2}

%%%%%%%%%%%%%%%%%%%%%%%%%%

Note first that we are in position to apply the comparison  Lemma  \ref{Lemma:ComparisonJ-strong} according to which $\max_p\sup_\S \Big|\JJpS-\JpS\Big| \les \dg (r^\S)^{-1}$ and $\max_p\sup_\S\Big| \JpS-\Jp \Big| \les \epg\dg (r^\S)^{-1}$, 
and therefore
\bea
\lab{eq:comparsonsJs}
\max_p\sup_\S \Big|\JJpS-\Jp\Big| \les \dg(r^\S)^{-1}.
\eea

Let  $(f, \fb, \la)$ denote the transition coefficients between the background  frame of $\RR$  and 
 the frame $(e_3^\S, e_4^\S, e_1^\S, e_2^\S)$ adapted to the deformation $\S$. 
 
{\bf Step 1.} Consider a GCM sphere $\S^{(\La, \Lab)}$ as in Theorem \ref{Theorem:ExistenceGCMS1-2}.
Since   $\rhod\in r^{-1}\Ga_g$ we rewrite   the transformation    \eqref{eq:transfforbandkab2}         for $\b$ in the form
\beaa
   \b^\S&=&\b +\frac 3 2  f \rho +  r^{-1} \Ga_g \c F.
\eeaa 

\begin{remark}
Note that  all quantities   defined  on $\S$, such as $r^\S$, $m^\S$, $(f, \fb, \ovla)$  and $\J^{(p, \S)}$   depend  in fact  continuously on the parameters $\La, \Lab$ even though this dependence  is  not  made explicit.
\end{remark}

Since  $|m^\S-m|\les \dg$,  $|r^\S-r|\les \dg$, and
  \beaa
   \rho&=&-\frac{2m}{r^3} +\rhoc= -\frac{2m^\S}{(r^\S)^3} +\rhoc + \left(   \frac{2m^\S}{(r^\S)^3}   -   \frac{2m}{r^3}\right) \\
         &=& -\frac{2m^\S}{(r^\S)^3}+ r^{-1}\Ga_g + O(r^{-3} \dg),
   \eeaa
we deduce
\beaa
\b^\S+\frac{3m^\S}{(r^\S)^3}  f &=&\b +      r^{-1}\Ga_g\c F.
\eeaa
Taking the divergence,         we  infer
\beaa
\div^\S \b^\S+\frac{3m^\S}{(r^\S)^3} \div^\S  f         &=&\div^\S \b+  \div^\S \big(r^{-1}\Ga_g\c F\big)\\
&=&\div \b + r^{-2} (\dk \Ga_g) \c F +  \div^\S \big(r^{-1}\Ga_g\c F\big)\\
&=&\div\b+E_1(\La, \Lab),
\eeaa
with error term of the form
\bea
E_1(\La, \Lab)&:=& r^{-2} (\dk \Ga_g) \c F +  \div^\S \big(r^{-1}\Ga_g\c F\big).
\eea
We deduce,
\bea
\lab{identity:badmode-b}
\frac{3m^\S}{(r^\S)^3}\int_\S ( \div^\S  f)  \J^{(p, \S)}&=&-\int_\S (\div^\S \b^\S) \J^{(p, \S)}+ \int_\S (\div  \b ) \J^{(p, \S)}
+\int_\S E_1\J^{(p, \S)}.
\eea
In view of   \eqref{eq:comparsonsJs} we have $\big|\J^{(p, \S)} - \Jp\big| \les   \dg r^{-1}$. 
Therefore, since  $\b\in r^{-1}\Ga_g$,     
\beaa
 \int_\S (\div  \b ) \J^{(p, \S)}&=&  \int_\S (\div  \b ) \Jp +   \int_\S (\div  \b )\big(\J^{(p, \S)}- \Jp\big)\\
 &=& \int_\S (\div  \b ) \Jp  +O( r^{-3} \epg\dg)\\
 &=& \int_{\ovS } (\div  \b ) \Jp+\left( \int_\S (\div  \b ) \Jp - \int_{\ovS} (\div  \b ) \Jp\right) +O( r^{-3} \epg\dg).
\eeaa
To estimate the term
$\int_\S (\div  \b ) \Jp - \int_{\ovS} (\div  \b ) \Jp$ we   appeal to Lemma \ref {Lemma:coparison-forintegrals} applied to $F=(\div  \b ) \Jp$. Thus          
\beaa
\left|\int_\S F -\int_{\ovS} F\right| &\les& \dg   r   \sup_{\RR} \Big( |F| +  r \big(|e_3 F|+ |e_4 F|+|\nab F| \big)\Big).
\eeaa
Note that in view of the fact that $\div \b = O(\epg r^{-4} )$,  $e_3(\div \b) = O(\epg r^{-5})$ and $e_3\Jp =O( \epg r^{-1}) $, we have
\beaa
\big|e_3 F\big|&\les &  \big|e_3(\div \b) \big|+ \big|\div \b\big| \big|e_3 \Jp \big|\les \epg  r^{-5}. 
\eeaa
The other terms are treated similarly, and  we infer that
\beaa
\left|\int_\S F -\int_{\ovS} F\right| &\les& \epg \dg r^{-3}.
\eeaa
We deduce
\beaa
 \int_\S (\div  \b ) \J^{(p, \S)}&=& \int_{\ovS} (\div  \b ) \Jp  +O( \epg\dg r^{-3} )=(\div \b)_{\ell=1} +O( \epg\dg r^{-3} ).
 \eeaa
Since $\int_\S  ( \div^\S  f)  \J^{(p, \S)}=\La$,  after gathering  all error terms in $F_1=F_1(\La, \Lab)$, we deduce from \ref{identity:badmode-b}
\bea
\La&=& \frac{(r^\S)^3}{3 m^\S}\Big[- (\div^\S \b^\S)_{\ell=1}     + (\div \b)_{\ell=1}  \Big]       + F_1(\La, \Lab)
\eea
where 
\beaa
F_1(\La, \Lab)&=& \frac{(r^\S)^3}{3 m^\S}  \int_{\S^{\La, \Lab} } E_1(\La, \Lab)\JpS+\frac{(r^\S)^3}{3 m^\S}\left(\int_\S (\div  \b ) \J^{(p, \S)} -(\div\b)_{\ell=1} \right).
\eeaa
In view of the above, we  easily check that 
\beaa
\Big| F_1(\La, \Lab)\Big|\les   \epg \dg 
\eeaa
as stated.

{\bf Step 2.} We next consider the  equations \eqref{eq:transfforbandkab1}
 \beaa
   \bsplit
\ka^\S &= \ka+ \ka \ovla+\div^\S f   +F\c  \Ga_b+ F\c\nab^\S F+r^{-1} F^2,\\
\kab^\S &= \kab- \kab \ovla+\div^\S\fb   +F\c  \Ga_b+ F\c\nab^\S F+r^{-1} F^2.
\end{split}
\eeaa
     Since  $| F| \les r^{-1} \dg $, and since we have $\big|\Ga_b\big| \les  r^{-2}\epg$  in view of {\bf A1-strong}, and using the GCM condition $\ka^\S=2/r^\S$, we  deduce  
 \beaa
   \bsplit
\div^\S f+ \ka \ovla &= \frac{2}{r^\S} - \ka    +O\big( \epg \dg r^{-3} \big),\\
\div^\S\fb - \kab \ovla &= \kab^\S - \kab    +O\big( \epg \dg r^{-3} \big).
\end{split}
\eeaa     
Differentiating w.r.t. $\Delta^\S$, and  using {\bf A1-strong} for $\ka$ and $\kab$, we infer
 \beaa
   \bsplit
\Delta^\S\div^\S f+ \ka \Delta^\S\ovla &=  - \Delta^\S\ka    +O\big( \epg \dg r^{-5} \big),\\
\Delta^\S\div^\S\fb - \kab \Delta^\S\ovla &= \Delta^\S\kab^\S - \Delta^\S\kab    +O\big( \epg \dg r^{-5} \big).
\end{split}
\eeaa 
This yields
\beaa
\kab\Delta^\S\div^\S f+\ka\Delta^\S\div^\S\fb &=& \ka\Delta^\S\kab^\S -\kab\Delta^\S\ka -\ka\Delta^\S\kab +O\big( \epg \dg r^{-6} \big).
\eeaa
Hence, using the control of $\ka$ and $\kab$ provided by {\bf A1-Strong}, $|m-m^\S|\les\dg$, and $|r-r^\S|\les \dg$, we deduce
\beaa
-\frac{2\Up^\S}{r^\S}\Delta^\S\div^\S f+\frac{2}{r^\S}\Delta^\S\div^\S\fb &=& \frac{2}{r^\S}\Delta^\S\kab^\S +\frac{2\Up^\S}{r^\S}\Delta^\S\ka -\frac{2}{r^\S}\Delta^\S\kab +O\big( \epg \dg r^{-6} \big),
\eeaa
or,
\beaa
\Delta^\S\big(\div^\S\fb -\Up^\S\div^\S f\big) &=& \Delta^\S\kab^\S +\Up^\S\Delta^\S\ka -\Delta^\S\kab +O\big( \epg \dg r^{-5} \big).
\eeaa

Next, we focus on $\Delta^\S\ka$ and $\Delta^\S\kab$. Recall from \eqref{eq:Generalframetransf} that 
\beaa
 e_a^\S &= \left(\de_{ab} +\frac{1}{2}\fb_af_b\right) e_b +\frac 1 2  \fb_a  e_4 +\left(\frac 1 2 f_a +\frac{1}{8}|f|^2\fb_a\right)   e_3,\qquad a=1,2.
\eeaa
Recalling the definition of $\Ga_g$ and $\Ga_b$ in \eqref{definition:Ga_gGa_b}, we have $\ka-\frac 2  r =\kac  \in \Ga_g$, $\kab+\frac{2\Up}{r}=\kabc  \in \Ga_g$ and 
\beaa
 e_4(r)-1 \in r\Ga_g,\quad   e_3(r)+\Up \in r\Ga_b, \quad 
e_4(m)\in  r\Ga_g, \quad e_3(m)\in r\Ga_b.
\eeaa
Thus  using   assumption  {\bf A1-Strong}  and  $| F| \les r^{-1} \dg $, we infer that
\beaa
\nab^\S\ka &=& \nab\ka+ \frac{\Up}{r^2}f -\frac{1}{r^2}\fb +O(r^{-4}\epg\de),\\
\nab^\S\kab &=& \nab\kab - \frac{\Up\left(1-\frac{4m}{r}\right)}{r^2}f +\frac{1-\frac{4m}{r}}{r^2}\fb +O(r^{-4}\epg\dg).
\eeaa
Taking the divergence, and using {\bf A1-strong} for $\ka$ and $\kab$,  $|m-m^\S|\les\dg$ and $|r-r^\S|\les \dg$, we deduce
\beaa
\Delta^\S\ka &=& \Delta\ka+ \frac{\Up^\S}{(r^\S)^2}\div^\S f -\frac{1}{(r^\S)^2}\div^\S\fb +O(r^{-5}\epg\dg),\\
\Delta^\S\kab &=& \Delta\kab - \frac{\Up^\S\left(1-\frac{4m^\S}{r^\S}\right)}{(r^\S)^2}\div^\S f +\frac{1-\frac{4m^\S}{r^\S}}{(r^\S)^2}\div^\S\fb +O(r^{-5}\epg\dg).
\eeaa

We introduce the notations
\beaa
h &:=&  \fb-\Up^\S f.
\eeaa
and rewrite the above equations in the form
\beaa
\Delta^\S\big(\div^\S h\big) &=& \Delta^\S\kab^\S +\Up^\S\Delta^\S\ka -\Delta^\S\kab +O\big( \epg \dg r^{-5} \big),
\eeaa
and
\beaa
\Delta^\S\ka &=& \Delta\ka -\frac{1}{(r^\S)^2}\div^\S h +O(r^{-5}\epg\dg),\\
\Delta^\S\kab &=& \Delta\kab +\frac{1-\frac{4m^\S}{r^\S}}{(r^\S)^2}\div^\S h +O(r^{-5}\epg\dg).
\eeaa
We infer that
\beaa
\left(\Delta^\S + \frac{2}{(r^\S)^2}\right)\div^\S h - \frac{6m^\S}{(r^\S)^3}\div^\S h  &=& \Delta^\S\kab^\S  +\Up^\S\Delta\ka   -\Delta\kab    +O\big( \epg \dg r^{-5} \big).
\eeaa
Projecting over  the  basis of  canonical   $\ell=1$  modes $\J^{(p, \S)} $,  integrating by parts and using, see  \eqref{eq:PropertiesofJ^S-strong2},
\beaa
\left(\lap^\S +\frac{2}{(r^\S)^2 } \right)  \J^{(p, \S)}= O\left(\frac{\epg}{(r^\S)^3 } \right)\J^{(p, \S)},
\eeaa
we deduce,
\beaa
 -\frac{ 6 m^\S}{(r^\S)^3} \int_\S  \div^\S h \J^{(p, \S)}+ \frac{O(\epg)}{(r^\S)^3 }\int_\S \div^\S h \J^{(p, \S)} = \int_\S \Big(\Delta^\S\kab^\S  +\Up^\S\Delta\ka   -\Delta\kab\Big)  \J^{(p, \S)} + O( \epg \dg r^{-3})
\eeaa
i.e.
\beaa
  \frac{ 6 m^\S }{(r^\S)^3} \int_\S  \div^\S h \J^{(p, \S)}&=& -\int_\S \Big(\Delta^\S\kab^\S  +\Up^\S\Delta\ka   -\Delta\kab\Big)  \J^{(p, \S)} +  O( \epg \dg r^{-3})
\eeaa 
or, 
\bea
\lab{eq:l-1mode-ofdivh}
 \int_\S  \div^\S h \J^{(p, \S)}= -\frac{(r^\S)^3 }{ 6m^\S}   \int_\S \Big(\Delta^\S\kab^\S  +\Up^\S\Delta\ka   -\Delta\kab\Big) \J^{(p, \S)}+O( \epg \dg).
\eea

Next, we focus on the RHS of \eqref{eq:l-1mode-ofdivh}. We have
\beaa
\int_\S\Delta\ka \J^{(p, \S)} &=& \int_\S\Delta\ka \Jp +\int_\S\Delta\ka (\J^{(p, \S)} -\Jp)\\
&=& \int_{\ovS}\Delta\ka \Jp +\left(\int_\S \Delta\ka \Jp - \int_{\ovS}\Delta\ka \Jp\right) +\int_\S\Delta\ka (\J^{(p, \S)} -\Jp).
\eeaa
Thus, using \eqref{eq:comparsonsJs}, {\bf A1-Strong} for $\ka$, and Lemma \ref {Lemma:coparison-forintegrals} applied to $F=(\Delta\ka ) \Jp$, we infer, proceeding as in Step 1, 
\beaa
\int_\S\Delta\ka \J^{(p, \S)} &=& \int_{\ovS}\Delta\ka \Jp +O(r^{-3}\epg\dg) = -\frac{2}{r^2}(\kac)_{\ell=1}  +O(r^{-3}\epg\dg).
\eeaa
Similarly, we obtain 
\beaa
\int_\S\Delta\kab \J^{(p, \S)} &=& -\frac{2}{r^2}(\kabc)_{\ell=1}  +O(r^{-3}\epg\dg).
\eeaa
This yields
\beaa
\int_\S \Big(\Delta^\S\kab^\S  +\Up^\S\Delta\ka   -\Delta\kab\Big) \J^{(p, \S)} &=& \int_\S \Delta^\S\kab^\S \J^{(p, \S)} +\Up^\S\int_\S \Delta\ka \J^{(p, \S)} - \int_\S\Delta\kab \J^{(p, \S)}\\
&=& -\frac{2}{(r^\S)^2}\int_\S \kabc^\S  \J^{(p, \S)} -\frac{2\Up^\S}{r^2}(\kac)_{\ell=1} +\frac{2}{r^2}(\kabc)_{\ell=1} +O(r^{-3}\epg\dg).
\eeaa
Since $|r-r^\S|\les \dg$, we infer
\beaa
\int_\S \Big(\Delta^\S\kab^\S  +\Up^\S\Delta\ka   -\Delta\kab\Big) \J^{(p, \S)} &=& -\frac{2}{(r^\S)^2}(\kabc^\S)_{\ell=1} -\frac{2\Up^\S}{(r^\S)^2}(\kac)_{\ell=1} +\frac{2}{(r^\S)^2}(\kabc)_{\ell=1} +O(r^{-3}\epg\dg).
\eeaa
Together with \eqref{eq:l-1mode-ofdivh}, we deduce
\beaa
 \int_\S  \div^\S h \J^{(p, \S)}= \frac{r^\S }{ 3m^\S}\Big((\kabc^\S)_{\ell=1} +\Up^\S(\kac)_{\ell=1} -(\kabc)_{\ell=1} \Big)+O( \epg \dg).
\eeaa
Recalling $h=  \fb-\Up^\S f$ and the definition of $\La, \Lab$, we deduce,
\beaa
\Lab-\Up^\S \La &=&\frac{ r^\S }{3m^\S} \Big( ( \kabc^\S)_{\ell=1} +   \Up^\S ( \kac)_{\ell=1} -  (\kabc)_{\ell=1} \Big)+ F_2(\La, \Lab)
\eeaa
with 
\beaa
|F_2(\La, \Lab)| &\les& \epg \dg
\eeaa
as stated. This concludes the proof of Lemma \ref{Lemma:GCMS2}.

%%%%%%%%%%%%%%%%%%%%%%%%%%%%%%%%%  

  \subsection{Definition of angular momentum}
  
%%%%%%%%%%%%%%%%%%%%%%%%%%%%%%%%%  

  The result  of Theorem \ref{theorem:ExistenceGCMS2}  is unique up to a rotation 
  of $\SSS^2$ in the definition of the canonical  $\ell=1$ modes on $\S$, see Definition \ref{definition:canmodesS}.  We can remove this final  ambiguity  
  by     choosing  that rotation    such that the $p=\pm$ component of $(\curl^\S \b^\S)_{\ell=1}$ vanish.   We state this result in the following  corollary
  of  Theorem  \ref{theorem:ExistenceGCMS2}.

  \begin{corollary}
  \lab{Corr:ExistenceGCMS2}
  Under the same assumptions as in Theorem \ref{theorem:ExistenceGCMS2}      we have,  in addition to   \eqref{def:GCMC2} and   \eqref{def:GCMC2-b}, 
  \begin{itemize}
  \item   either, for any choice of a canonical ${\ell=1}$ basis of $\S$, 
  \beaa
  (\curl^\S \b^\S)_{\ell=1}=0,
  \eeaa

  \item   or there exists a unique   $\ell=1$ basis of  canonical modes of $\S$ such that
   \bea
   \lab{angular-momentum}
   \int_\S  \curl^\S \b^\S\, \J^{(\pm, \S)}=0, \qquad  \int_\S  \curl^\S \b^\S\, \J^{(0, \S)}\neq 0.
   \eea 
  \end{itemize} 
  We then define   the angular   parameter $a^\S$ on $\S$ by the  formula\footnote{Note that in a Kerr space  $\KK(a, m)$, relative to  a geodesic foliation normalized on $\II^+$,   we have  $  \int_\S  \curl^\S  \b^\S  \J^{\pm, S}=0$ and    $  \int_\S  \curl^\S  \b^\S \J^{0, S} =\frac{8\pi a m}{  (r^\S)^3}+O(\frac{ma^2}{  (r^\S)^4}) $. } 
   \bea
 a^\S:=\frac{(r^\S)^3}{8\pi m^\S}   \int_\S  \curl^\S  \b^\S \J^{(0, \S)}.  
   \eea  
With this definition, we have $a^\S=0$ in the first case, while $a^\S\neq 0$ in the second case.
  \end{corollary}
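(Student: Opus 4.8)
The plan is to use the one remaining piece of freedom in Theorem \ref{theorem:ExistenceGCMS2}. By construction the intrinsic GCM sphere $\S$ comes equipped with a canonical $\ell=1$ basis $\J^{(p, \S)} = J^{(p,\SSS^2)}\circ\Phi^{-1}$, $p\in\{-,0,+\}$, where $\Phi$ is the effective uniformization of $\S$ calibrated, in the sense of Definition \ref{def:calibration}, with a fixed effective uniformization $(\ovPhi,\ovphi)$ of $\ovS$; the only residual ambiguity is the choice of $(\ovPhi,\ovphi)$, i.e. replacing $\ovPhi$ by $\ovPhi\circ O$ with $O\in SO(3)$ (orientation being fixed). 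Since the scalar $\curl^\S\b^\S$ is intrinsic to $\S$ and its adapted frame, and hence independent of that choice, the whole statement reduces to understanding the dependence on $O$ of the triplet
\[
L(O) := \Big(\textstyle\int_\S \curl^\S\b^\S\, \J^{(p,\S)}_O\Big)_{p=-,0,+}\in\RRR^3 .
\]

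The first and main step is to show that $L$ transforms, up to lower order errors, as a vector under $SO(3)$. Replacing $\ovPhi$ by $\ovPhi\circ O$ changes the calibrated uniformization of $\S$ by a map which, by the calibration machinery of section \ref{subsection:calibration} (transitivity, Lemma \ref{Le:Transitivitycalibrations}, and the stability statement of Corollary \ref{Cor:unifor-twometrics-calibrated}) together with the fact that the $J^{(p,\SSS^2)}$ are linear in the Euclidean coordinates and hence transform linearly under rotations of $\SSS^2$, yields new canonical modes of the form $\J^{(p,\S)}_O = \sum_q \mathcal{O}(O)_{pq}\,\J^{(q,\S)}_I + O(\dg)$, where $O\mapsto\mathcal{O}(O)\in SO(3)$ is onto and $|\mathcal{O}(O)-O|\les\dg$. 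Feeding this into the near-orthonormality and near-eigenfunction relations \eqref{eq:PropertiesofJ^S-strong2} for the canonical basis, and decomposing $\curl^\S\b^\S$ into its $\ell=0$, $\ell=1$ and $\ell\ge2$ parts relative to the $I$-basis (while controlling $\curl^\S\b^\S$ by $\epg(r^\S)^{-3}$ via $\b^\S\in r^{-1}\Ga_g$), I would obtain
\[
L(O) = \mathcal{O}(O)\,L(I) + E(O), \qquad |E(O)|\les \epg\dg (r^\S)^{-1},
\]
so that, as $O$ ranges over $SO(3)$, the map $O\mapsto L(O)$ is a relatively small perturbation of the orbit map $O\mapsto \mathcal{O}(O)\,L(I)$.

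Granting this, the dichotomy and the definition of $a^\S$ follow. The condition $(\curl^\S\b^\S)_{\ell=1}=0$ is the vanishing of the whole triplet, which is $SO(3)$-invariant; so either $L\equiv 0$ for every admissible basis — the first alternative, in which $\int_\S\curl^\S\b^\S\,\J^{(0,\S)}=0$ and therefore $a^\S=0$ — or $L(I)\neq 0$. In the latter case I would solve for $O$ with $L(O)$ parallel to $e_0:=(0,1,0)$ (the slot of the $J^{(0,\SSS^2)}=x^3$ mode) and with $(L(O))_0\neq0$. Since $O\mapsto \mathcal{O}(O)\,L(I)/|L(I)|$ is, after a fixed rotation, the standard $\mathrm{SO}(2)$-fibration $SO(3)\to\SSS^2$ with circle fibres $\mathrm{Stab}(e_0)$, its perturbation $O\mapsto L(O)/|L(O)|$ is still a submersion onto $\SSS^2$ (equivalently, the perturbed orbit $\{L(O)\}$ is a topological $2$-sphere enclosing the origin), so the desired $O$ exists, either by the inverse function theorem or by a degree/continuity argument hitting the $+e_0$ ray. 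Along the resulting circular fibre the mode $\J^{(0,\S)}$ is unchanged, up to $O(\dg)$, because right multiplication by $\mathrm{Stab}(e_0)$ fixes $x^3$; hence the distinguished axis — and with it the value $a^\S = \frac{(r^\S)^3}{8\pi m^\S}\int_\S\curl^\S\b^\S\,\J^{(0,\S)}$ — is well defined, and $a^\S\neq0$ precisely because $(L(O))_0\neq0$. That the GCM identities \eqref{def:GCMC2} and \eqref{def:GCMC2-b} persist is immediate, since changing the basis alters neither $\S$ nor its adapted frame but only relabels the $\ell=1$ modes.

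The hard part is the covariance estimate of the second paragraph, and more precisely checking that its error is genuinely of lower order than $L(I)$ itself whenever $L(I)\neq0$, i.e. that $\curl^\S\b^\S$ is either exactly $\ell=1$-free or non-degenerately not so. This requires tracking how the calibration conditions of Definition \ref{def:calibration} fail to commute with right multiplication by $SO(3)$ (they do, but only up to the $O(\dg)$ distortion $\Psih - I$), and propagating this through the comparison estimates of Lemma \ref{Lemma:ComparisonJ-strong} and \eqref{eq:comparsonsJs} and through the transformation formula \eqref{eq:transfforbandkab2} for $\b^\S$. Once the size of the error relative to $|L(I)|$ is pinned down, the topological and implicit function arguments of the third paragraph are routine.
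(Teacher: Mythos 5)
Your overall strategy — identify the residual freedom as a choice of rotation of $\SSS^2$, show that the triplet $L:=(\curl^\S\b^\S)_{\ell=1}\in\RRR^3$ transforms as a vector under that rotation, and then align $L$ with the $x^3$-axis — is the right idea, and your handling of the $SO(2)$ stabilizer fibre (well-definedness of $a^\S$ despite non-uniqueness of the full frame) is actually a point the paper glosses over. But the central covariance step as you set it up has a genuine gap.

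You parametrize the ambiguity as a change of reference $(\ovPhi,\ovphi)\mapsto(\ovPhi\circ O,\ovphi\circ O)$ on $\ovS$ and propagate it through the calibration machinery, obtaining only the \emph{approximate} law
$\J^{(p,\S)}_O = \sum_q\mathcal{O}(O)_{pq}\,\J^{(q,\S)}_I + O(\dg)$ and hence $L(O)=\mathcal{O}(O)L(I)+E(O)$ with $|E(O)|\les\epg\dg(r^\S)^{-1}$. This is both unnecessary and insufficient. It is unnecessary because, once the GCM sphere $\S$ and its adapted frame are fixed (and they are, since the conditions \eqref{def:GCMC2}, \eqref{def:GCMC2-b} are manifestly invariant under rotating the $\ell=1$ basis), Corollary \ref{proposition:effective-uniformisation} tells you the set of effective uniformization maps of $\S$ is \emph{exactly} the orbit $\{\Phi\circ O : O\in O(3)\}$, with no calibration-induced distortion, so the admissible canonical bases are exactly $J^{(p,\SSS^2)}\circ O^{-1}\circ\Phi^{-1}$. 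A one-line change of variables,
\beaa
\int_\S\curl^\S\b^\S\,\J^{(p,\S)} \;=\; (r^\S)^2\int_{\SSS^2}e^{2\phi}\,(\curl^\S\b^\S\circ\Phi)\,x^p,
\eeaa
then gives the \emph{exact} transformation $L\mapsto O^{-1}L$ with no error term (this is the paper's route). And it is insufficient because the approximate law does not by itself yield the dichotomy: when $L(I)$ is nonzero but of size comparable to $\epg\dg(r^\S)^{-1}$ — a regime you yourself flag as ``the hard part'' without resolving it — you cannot distinguish the two alternatives, and in the first alternative ($L\equiv0$) your relation $L(O)=E(O)$ could apparently produce a nonzero vector, contradicting the manifest rotation-invariance of the condition $(\curl^\S\b^\S)_{\ell=1}=0$. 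The fix is to drop the calibration-propagation argument entirely and use exact covariance as above; once that is done the rest of your degree/implicit-function discussion becomes a (slightly over-engineered but correct) way to pick the aligning rotation, and your observation about the $\mathrm{Stab}(e_0)$ fibre closes the well-definedness of $a^\S$.
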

  
  \begin{proof}  
  Recall    that the basis of canonical $\ell=1$ modes on $\S$ are given by  the formula
       \beaa
    \JpSS&=&J^{(p, \SSS^2)} \circ \Phi^{-1} 
    \eeaa 
   with $J^{(p, \SSS^2)}$          the $\ell=1 $ basis on $\SSS^2$  given by Definition \ref{definition:ell=1mpdesonS}, and $(\Phi, \phi) $ an effective  uniformization map of  $\S$. In particular, we have by change of variable 
   \beaa
   \int_\S \curl^\S \b^\S\, \J^{(p, \S)} &=& \int_{\SSS^2}e^{2\phi}\,\curl^\S \b^\S\circ\Phi\, J^{(p, \SSS^2)}
   \eeaa
and hence
   \beaa
   (\curl^\S \b^\S)_{\ell=1} &=& \int_{\SSS^2}e^{2\phi}\,\curl^\S \b^\S\circ\Phi\, x
   \eeaa
  where $x$ is the position vector on $\SSS^2$. Changing the  definition of $\ell=1$ modes by a rotation of $\SSS^2$ consist in changing $(\Phi, \phi) $ to $(\Phi\circ O, \phi\circ O)$ with $O\in O(3)$ which yields
 \beaa
 \int_{\SSS^2}e^{2\phi\circ O}\,\curl^\S \b^\S\circ\Phi\circ O\, x &=& \int_{\SSS^2}e^{2\phi}\,\curl^\S \b^\S\circ\Phi\, O^{-1}x\\
 &=& O^{-1}\left(\int_{\SSS^2}e^{2\phi}\,\curl^\S \b^\S\circ\Phi\, x\right)\\
 &=& O^{-1}(\curl^\S \b^\S)_{\ell=1}.
 \eeaa 
  In particular, $(\curl^\S \b^\S)_{\ell=1}$ is identified with a vector $v$ in $\RRR^3$, and changing the definition of $\ell=1$ modes by a rotation of $\SSS^2$ amounts to apply a rotation to $v$.  The proof  follows then  from the  fact that,  given  a   vector  $v$ in $\RRR^3$ with $v\neq 0$,  there exists a unique  rotation of   the $(x^1, x^2, x^3)$ coordinates   of  $\RRR^3$  such that    $v$   points in the direction of the $x^3$ axis.
  \end{proof}

  %%%%%%%%%%%%%%%%%%%%%%%%%%
  
  \subsection{Intrinsic GCM spheres in Kerr}
  
  %%%%%%%%%%%%%%%%%%%%%%%%%%%%%

Consider  $\KK(a_0, m_0)$, $|a_0|< m_0$, a sub-extremal  Kerr spacetime
 endowed with an outgoing optical function $u$ normalized at  $\II^+$. We denote by 
 $S(u, s) $ the   spheres       of the induced  geodesic foliation, with $s$ the affine parameter, and  $r$ the area radius, normalized on $ \II^+$  such that $\lim_{r\to \infty} \frac{s}{r} =1$.    Let  $(e_4, e_3, e_2, e_2)$   be the associated null  frame with      $e_4 =-\g^{\a\b}\pr_\b u\pr_\a$.  Define also the  corresponding angular coordinates $(\th, \vphi)$,  properly normalized  at infinity,  with $e_4(\th)=e_4(\vphi)=0$,   and the corresponding   $\Jp$  defined by  them, i.e.
   \bea
   J^{(0)}=\cos \th, \qquad J^{(+)}=\sin\th \cos\vphi,\qquad J^{(-)}= \sin\th \sin\vphi.
   \eea
    Finally we consider  the spacetime region 
    \beaa
    \RR(r_0)  = \Big\{  r\ge  r_0\Big\} \subset \KK(a_0, m_0). 
 \eeaa 
   
\begin{lemma}
\lab{lemma:controlfarspacetimeregionKerrassumptionRR}
 If $r_0=r_0(m_0)$ is sufficiently large,  the region  $\RR(r_0)$, endowed with the geodesic foliation described above, verifies the assumptions {\bf  A1-Strong}, {\bf A2, A3} and {\bf A4-Strong},  as well as \eqref{Assumptions:theorem-ExistenceGCMS2},  with the smallness constants 
\beaa
\epg= \dg= \frac{a_0m_0}{r_0}.
\eeaa
\end{lemma}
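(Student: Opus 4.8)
The plan is to verify the assumptions one family at a time by direct computation in the explicit Kerr geometry, using the large-$r$ asymptotics of the Ricci and curvature coefficients relative to the principal null frame. First I would write down, in the outgoing geodesic foliation of $\KK(a_0,m_0)$ normalized at $\II^+$, the explicit expansions in powers of $1/r$ (with coefficients depending on $a_0,m_0,\th$) of all the quantities entering $\Ga_g$ and $\Ga_b$: $\widecheck{\trch}$, $\chih$, $\ze$, $\widecheck{\trchb}$, $\widecheck{\mu}$, $\widecheck{\rho}$, $\dual\rho$, $\b$, $\a$, $\widecheck{K}$, $e_4(r)-1$, $e_4(m)$, as well as $\eta$, $\chibh$, $\ombc$, $\xib$, $\bb$, $\aa$, $\widecheck{\Omb}$, $\widecheck{\varsigma}$, $e_3(r)+\Up$, $e_3(m)$. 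The key observation is that in Kerr every one of these vanishes when $a_0=0$ (they are Schwarzschildian) and carries, to leading order, a factor $a_0 m_0$; combined with the extra $r^{-2}$ (resp. $r^{-1}$) decay, and taking $r_0$ large, one gets exactly the bound $\|(\Ga_g,\Ga_b)\|_{k,\infty}\les \epg r^{-2}$ with $\epg=a_0m_0/r_0$. This is precisely the point of {\bf A1-Strong}: in this far region the $\Ga_b$ quantities have improved $r$-decay because in Kerr there is no genuine $u$-growth — all the relevant coefficients already decay in $r$. For {\bf A2} the Hawking mass of $S(u,s)$ is $m_0+O(a_0^2m_0/r^2)$, so $|m/m_0-1|\les a_0^2m_0^2/r_0^2\le \epg$; and derivative bounds follow by differentiating the same expansions.

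Next I would handle {\bf A3}: one computes the coordinate components $\undB^a$, $Z^a=Y^a$-type quantities, and $\widecheck g_{ab}$ in the North/South charts adapted to the $(\th,\vphi)$ coordinates, again using the explicit Kerr metric. The point is that the round-sphere part of $g_{ab}$ is exactly the $r^2$ times the stereographic metric written in the footnote, and the correction is $O(a_0^2m_0^2/r^2)\cdot r^2$, while $\undB^a$, $Z^a$ are $O(a_0m_0/r^3)$, $O(a_0m_0/r^2)$ respectively; this gives the membership statements in the relevant $\Ga_b$, $r\Ga_g$ classes. For {\bf A4-Strong} the natural candidate is $\Jp$ defined from the Kerr angular coordinates $(\th,\vphi)$ as in the displayed formula; since these $(\th,\vphi)$ differ from the canonical uniformization coordinates by $O(a_0m_0/r)$ (the metric on $S(u,s)$ is $O(a_0m_0/r)$-close in $\hk_s$ to the round metric of radius $r$, by the {\bf A3} estimates and Corollary \ref{proposition:effective-uniformisation}), Theorem \ref{Thm:unifor-twometrics} / Proposition \ref{Prop:comparison.canJ} give $|\Jp-\Jp_{can}|\les a_0m_0/r=\dg r^{-1}\cdot$(something bounded), which is the first bullet of {\bf A4-Strong}. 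The eigenvalue, orthogonality and mean-zero estimates with error $O(\epg r^{-1})$ follow from the exact identities $\lap_{\ga_0}J^{(p,\SSS^2)}=-2J^{(p,\SSS^2)}$ pulled through the $O(\epg r^{-1})$ conformal factor, exactly as in Lemma \ref{lemma:basicpropertiesofJforcanonicalell=1basis} applied with $\ep=r^{-1}\epg$; and the frame-derivative bound \eqref{assumptionA4-strong} holds because $e_3(\th),e_4(\th),\nab\th$ etc.\ are $O(a_0m_0/r^2)$ in Kerr (the geodesic foliation is propagated with $e_4(\th)=e_4(\vphi)=0$, and $e_3$, $\nab$ derivatives pick up the frame's $a_0m_0$-size rotation).

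Finally I would verify \eqref{Assumptions:theorem-ExistenceGCMS2}. By Remark \ref{Remark:theorem-ExistenceGCMS2}, $(\div\b)_{\ell=1}=O(\dg r^{-3})$ and $(\widecheck{\trch})_{\ell=1}=O(\dg r^{-1})$ are automatic under {\bf A1-A3}, which we have already established. For $(\widecheck{\trchb})_{\ell=1}=O(\dg r^{-1})$ I would compute directly: in Kerr, $\trchb=-\frac{2\Up}{r}+\widecheck{\trchb}$ with $\widecheck{\trchb}=O(a_0^2m_0^2/r^3)$ pointwise from the expansion above — but more to the point, the $\ell=1$ mode of $\widecheck{\trchb}$ against $\Jp$ on a round-ish sphere of radius $r$ is $O(a_0m_0/r)\cdot O(|S|)$ only if there is an $\ell=1$ component; I expect that the leading $a_0$-correction to $\trchb$ in Kerr is actually $\ell=1$-free or contributes at the stated order because of the structure of the Kerr metric in the geodesic gauge normalized at $\II^+$ (this is the content of the proof of Theorem M4 in \cite{KS-Schw} cited in the remark). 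The main obstacle I anticipate is precisely this last point: pinning down the $\ell=1$ projection of $\widecheck{\trchb}$ (and of $\div\b$) to the claimed order requires either carefully tracking the explicit $O(a_0)$ and $O(a_0^2)$ terms in the Kerr expansion in the geodesic gauge, or invoking the structural cancellation established in \cite{KS-Schw}; everything else is a routine, if lengthy, check that each coefficient in the two $\Ga$-lists carries the factor $a_0 m_0$ and the appropriate power of $1/r$, so that choosing $r_0(m_0)$ large enough makes all constants no larger than $a_0m_0/r_0$.
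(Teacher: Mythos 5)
Your overall plan is sound and follows the same route as the paper: verify the Ricci/curvature coefficient asymptotics in Kerr directly (which is exactly what the paper accomplishes by citing Lemma~2.10 of \cite{KS-Kerr1} for {\bf A1-Strong, A2, A3, A4-Strong} and for $(\div\b)_{\ell=1}$), and then handle the remaining $\ell=1$ estimates for $\widecheck{\trch}$ and $\widecheck{\trchb}$ separately. However, you correctly identify — but do not resolve — the key remaining step, and your suggested way around it is not quite the right one.

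The paper's proof closes the gap by a concrete structural observation: in Kerr, pushing the expansion one order further than the crude bounds $\widecheck{\trch},\,\widecheck{\trchb}=O(a_0^2 r^{-3})$, the leading corrections have the form
\begin{align*}
\widecheck{\trch} &= \frac{a_0^2}{r^3}\Big(c_1+c_2(\cos\th)^2\Big)+O\!\left(\frac{m_0a_0^2}{r^4}\right), &
\widecheck{\trchb} &= \frac{a_0^2}{r^3}\Big(\underline{c}_1+\underline{c}_2(\cos\th)^2\Big)+O\!\left(\frac{m_0a_0^2}{r^4}\right),
\end{align*}
for universal constants. Since $c_1+c_2(\cos\th)^2$ is a combination of $\ell=0$ and $\ell=2$ spherical harmonics, its projection against any of $\cos\th$, $\sin\th\cos\vphi$, $\sin\th\sin\vphi$ vanishes identically, so the $\ell=1$ modes are fed only by the $O(m_0a_0^2/r^4)$ remainder, giving $(\widecheck{\trch})_{\ell=1},\,(\widecheck{\trchb})_{\ell=1}=O(m_0a_0^2/r^2)$, which is of the required size $O(\dg r^{-1})$ for $\epg=\dg=a_0m_0/r_0$. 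Your proposal instead gestures toward the structural cancellation of Theorem~M4 in \cite{KS-Schw}; but that result addresses the genuinely different setting of general perturbations in regions where $r\sim u$ (cf.\ Remark~\ref{rmk:remarkonimprovedrdecaybytradingudecay}), and invoking it here is both circuitous and not what the paper does. In Kerr itself the needed cancellation is elementary and purely algebraic — the relevant angular dependence is through $(\cos\th)^2$, which is $\ell=1$-orthogonal — and this is the missing step in your argument. Also note that $(\widecheck{\trch})_{\ell=1}=O(\dg r^{-1})$ is \emph{not} automatic from {\bf A1--A3} alone in the way you suggest: the paper verifies it in Kerr by the same explicit $(\cos\th)^2$ computation, alongside $(\widecheck{\trchb})_{\ell=1}$.
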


\begin{proof}
{\bf  A1-Strong}, {\bf A2, A3} and {\bf A4-Strong}, as well as the estimate for $(\div\b)_{\ell=1}$ in \eqref{Assumptions:theorem-ExistenceGCMS2}, follow immediately from Lemma 2.10  in \cite{KS-Kerr1}. It then remains to prove  \eqref{Assumptions:theorem-ExistenceGCMS2} for $(\kac)_{\ell=1}$ and $(\kabc)_{\ell=1}$. Lemma 2.10  in \cite{KS-Kerr1} yields $\kac=O(a_0^2r^{-3})$ and $\kabc=O(a_0^2r^{-3})$. One can easily push the asymptotic to the next order to obtain 
\beaa
\kac = \frac{a_0^2}{r^3}\Big(c_1+c_2(\cos\th)^2\Big)+O\left(\frac{m_0a_0^2}{r^4}\right), \qquad \kabc = \frac{a_0^2}{r^3}\Big(\underline{c}_1+\underline{c}_2(\cos\th)^2\Big)+O\left(\frac{m_0a_0^2}{r^4}\right),
\eeaa
for some universal constants $c_1$, $c_2$, $\underline{c}_1$ and $\underline{c}_2$. Since 
\beaa
\Big(c_1+c_2(\cos\th)^2\Big)_{\ell=1}=0, \qquad \Big(\underline{c}_1+\underline{c}_2(\cos\th)^2\Big)_{\ell=1}=0,
\eeaa
we infer 
\beaa
(\kac)_{\ell=1}=O\left(\frac{m_0a_0^2}{r^2}\right), \qquad (\kabc)_{\ell=1}=O\left(\frac{m_0a_0^2}{r^2}\right),
\eeaa
which concludes the proof of \eqref{Assumptions:theorem-ExistenceGCMS2}.
\end{proof}

\begin{corollary}[Existence of intrinsic  GCM spheres in Kerr]
\lab{cor:ExistenceGCMS1inKerr} 
 If  $r_0\gg m_0$  is sufficiently large,  then any sphere $\ovS\subset \RR(r_0)$  admits a  unique deformation $\Psi:\ovS\longrightarrow \S$  verifying \eqref{def:GCMC2} and \eqref{def:GCMC2-b}. Moreover, the deformation  verifies the properties  \eqref{eq:ThmGCMS3}, \eqref{eq:ThmGCMS4},  \eqref{eq:ThmGCMS5} and \eqref{eq:ThmGCMS6} stated in Theorem \ref{Theorem:ExistenceGCMS1}.
\end{corollary}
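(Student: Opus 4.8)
The plan is to deduce the corollary directly from the intrinsic existence theorem, Theorem \ref{theorem:ExistenceGCMS2}, by checking that a suitable neighbourhood of an arbitrary leaf $\ovS\subset\RR(r_0)$ of the geodesic foliation qualifies as a background region of the type required there. First I would fix $\ovS=S(\ovu,\ovs)\subset\RR(r_0)$ and, using that the geodesic foliation of $\KK(a_0,m_0)$ normalized at $\II^+$ is smooth and that $\RR(r_0)$ is open, choose $\epg>0$ small enough that the truncated slab $\RR:=\{|u-\ovu|\le\epg,\ |s-\ovs|\le\epg\}$ is contained in $\RR(r_0/2)$. Since $r_0\gg m_0$, the parameters $(\ovu,\ovs,\rg)$ attached to $\ovS$ satisfy $\rg\ge r_0/2\gg m_0$, so the range condition \eqref{eq:rangeofrgandepsilon} holds.

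Next I would invoke Lemma \ref{lemma:controlfarspacetimeregionKerrassumptionRR}, applied to $\RR(r_0/2)\supset\RR$: it states that the geodesic foliation of $\RR(r_0/2)$ verifies {\bf A1-Strong}, {\bf A2}, {\bf A3}, {\bf A4-Strong} together with the structural estimates \eqref{Assumptions:theorem-ExistenceGCMS2}, with smallness constants $\epg=\dg=2a_0m_0/r_0$. Taking $r_0$ large relative to $m_0$ (and using $|a_0|<m_0$) makes these constants as small as is required by Theorem \ref{theorem:ExistenceGCMS2}; hence all the hypotheses of that theorem are in force on $\RR$, with $\ovS$ playing the role of the base sphere.

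Applying Theorem \ref{theorem:ExistenceGCMS2} then produces a unique GCM deformation $\Psi:\ovS\to\S$, together with constants $M^\S_0,\MpS$, for which the GCM conditions \eqref{def:GCMC2} hold, and such that \eqref{def:GCMC2-b} is satisfied relative to a canonical $\ell=1$ basis of $\S$ calibrated, via $\Psi$, with the canonical $\ell=1$ basis of $\ovS$. That theorem also records that the deformation enjoys the estimates \eqref{eq:ThmGCMS3}, \eqref{eq:ThmGCMS4}, \eqref{eq:ThmGCMS5}, \eqref{eq:ThmGCMS6}. This is precisely the content of the corollary.

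I do not expect a genuine obstacle here: the only point requiring a little care is the interplay between the \emph{global} region $\RR(r_0)$ appearing in the statement and the \emph{local}, $(u,s)$-slab region $\RR$ on which Theorems \ref{Theorem:ExistenceGCMS1}--\ref{theorem:ExistenceGCMS2} are formulated; one must check that shrinking to such a slab around $\ovS$ does not affect the smallness of $\epg,\dg$, which is immediate since those constants depend only on $a_0,m_0,r_0$ and not on the width of the slab. A secondary remark is that ``any sphere $\ovS\subset\RR(r_0)$'' is to be understood as any leaf of the geodesic foliation, so that the adapted coordinates $(u,s,y^1,y^2)$ of section \ref{sec:backgroundspacetime}, used implicitly throughout, are available.
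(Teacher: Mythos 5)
Your proof is correct and follows the same route as the paper: invoke Lemma \ref{lemma:controlfarspacetimeregionKerrassumptionRR} to verify that the Kerr far region satisfies {\bf A1-Strong}, {\bf A2, A3}, {\bf A4-Strong} and \eqref{Assumptions:theorem-ExistenceGCMS2} with $\epg=\dg\sim a_0 m_0/r_0$, then apply Theorem \ref{theorem:ExistenceGCMS2}. The extra care you take in passing from the global region $\RR(r_0)$ to a local $(u,s)$-slab around $\ovS$ (and the resulting constant $2a_0m_0/r_0$) is a legitimate tidying of a point the paper leaves implicit, but it does not change the argument.
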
 

\begin{proof}
In view of Lemma \ref{lemma:controlfarspacetimeregionKerrassumptionRR}, the assumptions {\bf  A1-Strong}, {\bf A2, A3} and {\bf A4-Strong},  as well as \eqref{Assumptions:theorem-ExistenceGCMS2},  are satisfied by the spacetime region  $\RR(r_0)$ provided that $r_0=r_0(m_0)$  is sufficiently large, with the smallness constants $\epg$ and $\dg$ given by 
\beaa
\epg= \dg= \frac{a_0m_0}{r_0}.
\eeaa
Thus, Theorem \ref{theorem:ExistenceGCMS2} applies which concludes the proof of the corollary.
\end{proof}

%%%%%%%%%%%%%%%%%%%%%%%%%%%%%%%%%%%%%%%%%%%%%%%%%%

\end{document}